\numberwithin{equation}{section}
\numberwithin{figure}{section}
\theoremstyle{definition}
\newtheorem{definition}{Definition}[section]
\newtheorem{proposition}[definition]{Proposition}
\newtheorem{lemma}[definition]{Lemma}
\newtheorem{theorem}[definition]{Theorem}
\theoremstyle{remark}
\newcommand{\ovl}{\overline}
\title[Shock formation for 2D Isentropic Euler equations with self-similar variables]{Shock formation for 2D Isentropic Euler Equations\\with self-similar variables}
\author{Wenze Su}
\address{School of Mathematical Sciences, Fudan University, Shanghai, China. }
\email{19110180013@fudan.edu.cn}
\keywords{2D Isentropic Euler Equations; Shock Formation; Self-similar solution}
\subjclass[2010]{35Q31, 35L67, 35B44}%
\begin{document}
\maketitle

\begin{abstract}
	We study the 2D isentropic Euler equations with the ideal gas law. We exhibit a set of smooth initial data that give rise to shock formation at a single point near the planar symmetry. These solutions are associated with non-zero vorticity at the shock and have uniform-in-time 1/3-H\"older bound. Moreover, these point shocks are of self-similar type and share the same profile, which is a solution to the 2D self-similar Burgers equation. Our proof, following Buckmaster, Shkoller and Vicol \cite{buckmaster2022formation}, is based on the stable 2D self-similar Burgers profile and the modulation method. 
\end{abstract}

\section{Introduction}
The two-dimensional compressible isentropic Euler equations read
\begin{equation}
\left\{\begin{aligned}
	&\partial_\mathrm{t} \mathrm{\rho}+\nabla_\mathrm{x}\cdot(\rho u)=0\\
	&\partial_\mathrm{t}(\rho u)+\mathrm{div}_\mathrm{x}(\rho u\otimes u)+\nabla_\mathrm{x}p=0\\
	&p=\frac{1}{\gamma}\rho^\gamma,
\end{aligned}\right.
\label{original Euler equation}
\end{equation}
where $\mathrm{x}=(\mathrm{x}_1,\mathrm{x}_2)\in\mathbb{R}^2$ and $\mathrm{t}\in\mathbb{R}$ are space and time coordinates respectively, the unknown scalar $\rho$ is the fluid density, $u=(u_1,u_2)$ is the velocity field of fluid, $p=\frac{1}{\gamma}\rho^\gamma$ is the pressure with adiabatic index $\gamma>1$. This system describes the evolution of a two-dimensional compressible ideal gas without viscosity. 

We define the vorticity $\omega=\partial_{\mathrm{x}_1}u_2-\partial_{\mathrm{x}_2}u_1$ and the specific vorticity $\zeta=\omega/\rho$ at those points where $\rho>0$. One can deduce from (\ref{original Euler equation}) that $\zeta$ is purely transported by the velocity field: 
\begin{equation}
	\partial_\mathrm{t}\zeta+u\cdot\nabla_{\mathrm{x}}\zeta=0.
\end{equation}

Our main result can be stated roughly as follows:
\begin{theorem}[Rough statement of the main theorem]
	\textit{There exists a set of initial data $(u_0,\rho_0)$ with $|\nabla(u_0,\rho_0)|=\mathcal{O}(1/\varepsilon)$, such that their corresponding solutions to (\ref{original Euler equation}) develop a shock-type singularity within time $\mathcal{O}(\varepsilon)$. }
\end{theorem}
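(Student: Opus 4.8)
The plan is to follow the modulation-and-self-similar-variables strategy of Buckmaster--Shkoller--Vicol \cite{buckmaster2022formation}; the rough statement above will follow from a quantitative construction of a self-similar point shock. First I would recast \eqref{original Euler equation} in a form adapted to shock analysis: introduce the rescaled sound speed $\sigma=\tfrac1\alpha\rho^{\alpha}$ with $\alpha=\tfrac{\gamma-1}{2}$, and pass to Riemann-type variables $w=u_1+\sigma$, $z=u_1-\sigma$ together with the tangential velocity $a=u_2$. In these variables the system becomes a quasilinear transport system in which $w$ is advected by a characteristic speed that is faster than that of $z$ by $\approx\tfrac32\partial_{x_1}w$, so $w$ is the component that steepens and shocks while $z$, $a$, and the specific vorticity $\zeta=\omega/\rho$ (which is purely transported) remain regular. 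The near-planar assumption means the data are treated as an $O(\varepsilon)$ perturbation, in the $x_2$ variable, of a one-dimensional profile.

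Next I would introduce modulated self-similar coordinates: let $\xi(t)\in\mathbb R^2$ be the (unknown) shock location, $\tau^*$ the blow-up time, $\kappa(t)$ the value of $w$ at the shock; set $s=-\log(\tau^*-t)$ and the parabolically rescaled variables $y_1=(x_1-\xi_1)(\tau^*-t)^{-3/2}$, $y_2=(x_2-\xi_2)(\tau^*-t)^{-1/2}$, and write $w(x,t)=\kappa(t)+(\tau^*-t)^{1/2}W(y,s)$ with $O(1)$ (non-self-similar) rescalings $Z,A$ for $z,a$. The equation for $W$ then becomes the $2$D self-similar Burgers equation plus forcing terms carrying positive powers of $e^{-s}$ and of the perturbation size. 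The modulation functions $\tau^*,\xi,\kappa$, together with a finite number of additional parameters encoding the location of the minimum of $\partial_{x_1}w$, the slope there, and the orientation of the shock front, are determined by imposing normalization constraints on $W$ at $y=0$ --- schematically $W(0,s)=0$, $\nabla_yW(0,s)=(-1,0)$, $\nabla_y^2W(0,s)=0$ --- which convert into a closed ODE system for the modulation functions.

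I would then run a bootstrap on $s\in[-\log\varepsilon,\infty)$: the hypotheses assert that $\widetilde W:=W-\ovl W$ is small in a scale of $\langle y\rangle$-weighted Sobolev norms (with pointwise control near $y=0$ matched to the constraints), that $Z$ and $A$ together with enough derivatives stay small and bounded, and that the modulation functions remain in prescribed $O(\varepsilon)$-neighborhoods of their expected values, where $\ovl W$ is the \emph{stable} self-similar Burgers profile, the odd solution of $y=-\ovl W-\ovl W^{3}$ with $\ovl W(y)\sim-\mathrm{sgn}(y)|y|^{1/3}$ at infinity. To close the bootstrap: (i) transport/maximum-principle estimates along the rescaled characteristics give the $L^\infty$ and low-derivative bounds for $Z$, $A$ and $\widetilde W$, using that the rescaled transport is expanding near $y=0$; (ii) weighted $L^2$ energy estimates at the top order exploit the damping coming from the self-similar rescaling around $\ovl W$ --- the combination of the $-\tfrac12$ term and the outgoing transport --- which must dominate the commutator and forcing terms provided the weights are chosen compatibly with the $|y|^{1/3}$ growth of $\ovl W$; (iii) the modulation ODEs are closed by feeding in the interior estimates. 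Since the forcing carries factors $e^{-s}\le\varepsilon$, everything is perturbative for $\varepsilon$ small.

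Finally, global existence in $s$ means $t\to\tau^*$; unwinding the change of variables shows $\|w(\cdot,t)\|_{L^\infty}$ and $\|\zeta(\cdot,t)\|_{L^\infty}$ stay bounded, with $\zeta$ bounded below away from zero near $\xi$ if the initial specific vorticity is chosen so, while $\partial_{x_1}w(\xi(t),t)\sim-(\tau^*-t)^{-1}\to-\infty$. The profile $\ovl W(y)\sim-\mathrm{sgn}(y_1)|y_1|^{1/3}$ at spatial infinity yields both the $C^{1/3}_x$ bound, uniform for $t\le\tau^*$, and the self-similar shape of the limiting point shock. The initial data are produced by prescribing $W,Z,A$ at $s=-\log\varepsilon$ inside the bootstrap set and transforming back, giving smooth $(u_0,\rho_0)$ with $|\nabla(u_0,\rho_0)|=O(1/\varepsilon)$ and blow-up at time $\tau^*=O(\varepsilon)$. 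I expect the main obstacle to be step (ii): designing the weighted energy spaces so that the self-similar damping genuinely beats the destabilizing contributions of the transport commutators, of the $Z$- and $A$-couplings (the genuinely two-dimensional, non-irrotational terms absent from the scalar Burgers analysis), and of the forcing, simultaneously at every derivative level and uniformly as $s\to\infty$.
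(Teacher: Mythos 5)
Your overall strategy — modulated self-similar variables, a bootstrap split into $L^\infty$ transport bounds and weighted $L^2$ energy estimates, and modulation ODEs closed from normalization constraints on $W$ at $y=0$ — is indeed the route the paper takes. But there is a material error in your choice of blow-up profile that would cause the modulation argument to fail. You take $\ovl W$ to be ``the odd solution of $y=-\ovl W-\ovl W^{3}$,'' i.e.\ the \emph{one-dimensional} profile $W_{1d}(y_1)$, extended to be independent of $y_2$. It does solve the 2D self-similar Burgers equation, but it does not have the right third-order jet at the origin. The closure of the modulation ODEs for $\dot\xi$ and $\dot\phi$ requires inverting the Hessian-like matrix $H^0=\partial_1\nabla^2W(0,s)$, which the bootstrap keeps within $O(\varepsilon^{1/4})$ of $\partial_1\nabla^2\ovl W(0)$. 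The paper uses the genuinely 2D profile $\ovl W(y_1,y_2)=\langle y_2\rangle W_{1d}(\langle y_2\rangle^{-3}y_1)$ precisely because it gives $\partial_1\nabla^2\ovl W(0)=\mathrm{diag}(6,2)$; with $W_{1d}(y_1)$ alone one gets $\mathrm{diag}(6,0)$, which is singular, and the map from the constraints $\nabla^2W(0,s)=0$ to $(\dot\xi_2,\dot\phi)$ degenerates. The anisotropic decay of the 2D profile (e.g.\ $|\partial_1\ovl W|\le\tilde\eta^{-1/3}$ with $\tilde\eta=1+|y|^2+y_2^6$) is also what supplies the integrability along trajectories and the $y_2$-damping that close the weighted estimates; $W_{1d}(y_1)$ has no decay in $y_2$ and those damping lemmas would fail.

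A second, related gap: you set $w=u_1+\sigma$, $z=u_1-\sigma$, $a=u_2$ in a fixed frame, and treat the shock orientation as just one more modulation parameter feeding into the ODEs. This is not enough. The Riemann decomposition diagonalizes the characteristic speeds only in the normal direction of the shock, which rotates in time; the paper therefore builds the rotation $R(t)$ into the coordinate change, and then shears by a curvature function $\tilde f$ (with a modulation variable $\phi$ tracking $\partial_{\tilde x_2}^2\tilde f$ at the origin) before passing to self-similar variables, and \emph{then} defines $w=\mathring u\cdot N+\mathring\sigma$, $z=\mathring u\cdot N-\mathring\sigma$, $a=\mathring u\cdot T$ with $N,T$ the normal/tangent of that adapted frame. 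Without this, the transport operator in the $W$-equation does not take the clean form $(\partial_s-\tfrac12)W+(\tfrac32 y_1+\beta_\tau JW+\cdots)\partial_1W+\cdots$ around $\ovl W$, and you would pick up first-order terms of size $O(n_2)$ that are not perturbative. You also freeze the blow-up time $\tau^*$ in the rescaling, whereas the paper carries it as a dynamic variable $\tau(t)$ (normalized through $\partial_1W(0,s)=-1$), which is what allows the self-similar time $s=-\log(\tau(t)-t)$ to be well-defined before $\tau(t)$ has settled. Fixing these three points — the 2D profile, the rotated/sheared adapted frame with the curvature modulation $\phi$, and the time-dependent $\tau(t)$ — would bring your outline in line with the paper's proof.
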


It is well known that governing by compressible Euler equations, shock can develop from smooth initial data. In the one-dimensional case, this fact can be studying the dynamics of the Riemann variables, which were first introduced in Riemann's foundational work \cite{riemann1858uber}. See the discussion in John \cite{john1974formation}, Liu \cite{liu1979development}, and Majda \cite{majda2012compressible}. 

In multi-dimensional cases, Sideris \cite{sideris1985formation} first proved a blow-up result. However,  the shock formation remained open. In 2007, Christodoulou \cite{christodoulou2007formation} studied the relativistic fluids and he found an open set of irrotational initial data that will eventually develop shock-type singularity, this was considered to be the first proof of shock formation for the compressible Euler equations in multi-dimensional cases. Later in Christodoulou-Miao \cite{christodoulou2014compressible} the authors established the shock formation for non-relativistic and irrotational flow. In the case of irrotational flow, one can rewrite the isentropic Euler equations as a scalar second-order quasilinear wave equation. Alinhac \cite{alinhac1999blowup2,alinhac1999blowup} proved the first blow-up result for 2D quasilinear wave equations, which do not satisfy Klainerman's null condition. Using geometric method, shock formation for the 3D quasilinear wave equations was studied in Speck-Holzegel-Luk-Wong\cite{speck2016stable}, Speck\cite{speck2016shock}, Miao-Yu\cite{miao2017formation}. The first result on shock formation that admits non-zero vorticity for the compressible Euler equations was given by Luk-Speck \cite{luk2018shock}. They use the geometric framework and developed new methods to study the vorticity transport. Later in \cite{luk2021stability}, they proved shock formation for full compressible Euler equations in 3D with non-trivial vorticity and variable entropy. In An-Chen-Yin \cite{an2020low,an2021low,an2022cauchy,an2022h,an2022low}, the authors proved the low regularity ill-posedness for elastic waves and MHD equations and showed that the ill-posedness is driven by shock formation. As to the shock development problem for the compressible Euler equations, one could refer to the ‘discussions in Christodoulou-Lisibach\cite{christodoulou2016shock}, Christodoulou\cite{christodoulou2019shock},  Abbrescia-Speck\cite{abbrescia2022emergence} and  Buckmaster-Drivas-Shkoller-Vicol\cite{buckmaster2022simultaneous}. 

In \cite{buckmaster2019-2d-formation}, Buckmaster, Shkoller, and Vicol utilized the modulation method to construct shock solutions to the 2D Euler equations with azimuthal symmetry. Later in \cite{buckmaster2022formation}, they extend this method to the 3D case with no symmetry assumptions. After a dynamical rescaling, the solutions they constructed are close to a profile $\ovl W$, which solves the self-similar Burgers equation. By a singular coordinate transformation controlled by several modulation variables, proving shock formation is equivalent to showing global existence in the self-similar coordinate. This approach, known as the modulation method or dynamical rescaling, was successfully applied in \cite{merle1996asymptotics,merle2005blow,merle2020strongly} for the blow-up of Schr\"odinger equations and in \cite{merle1997stability} for the nonlinear heat equation. The proof in \cite{buckmaster2019-2d-formation} is $L^\infty$ based since there is no derivative in the forcing term, whereas in \cite{buckmaster2022formation}, an additional $L^2$ based energy estimate was used to overcome the derivative loss in the $L^\infty$-type argument. They also analyzed the non-isentropic case in \cite{buckmaster2020shock}. 

Following the work in \cite{buckmaster2022formation}, we utilize the self-similar Burgers ansatz to construct shock solutions. To keep track of the curvature of shock formation while maintaining the solution's stationarity in the far field, we make a minor modification to the construction in \cite{buckmaster2022formation}.  Different from the construction in \cite{buckmaster2019-2d-formation}, we consider shock solutions without any symmetry. 

The shock we attempt to construct is of self-similar type. We introduce a self-similar coordinate transformation $(\mathrm{t},\mathrm{x})\mapsto (s,y)$, where $(\mathrm{t},\mathrm{x})$ is the original Cartesian coordinate and $(s,y)$ is the self-similar coordinate. The new coordinate is aligned to the shock formation and will become singular when $t$ approaches the blow-up time $T_*$. Roughly speaking, we have that $$y_1\approx(T_*-t)^{-3/2}\mathrm{x}_1,\ \ y_2\approx(T_*-t)^{-1/2}\mathrm{x}_2.$$
Thus $y$ is a zoom-in version of $\mathrm{x}$. In self-similar coordinates, the Riemann invariant $W$(will be defined in the next subsection) will converge to a profile $\ovl W$, uniformly on any compact set of $y$. Moreover, $\ovl W$ solves the self-similar Burgers equation:
$$-\frac{1}{2}\overline{W}+\left(\frac{3}{2}y_1+\overline{W}\right)\partial_{y_1}\overline{W}+\frac{1}{2}y_2\partial_{y_2}\overline{W}=0.$$
In this sense, the constructed blow-up solution of the Euler equations is close to a fixed shape on a smaller and smaller scale. 

To better understand what happens, we shall examine the simplest 1D inviscid Burgers model, whose well-localized solutions are proved to become singular in finite time. It is pointed out explicitly in \cite{eggers2008role,collot2018singularity} that as we are approaching the blow-up point, the blow-up solution can be well modeled by a dynamically rescaled version of a fixed profile, which belongs to a countable family $\mathcal{F}$ of functions, and the members in $\mathcal{F}$ are solutions to the self-similar Burgers equation. The choice of profile only depends on the derivatives of initial data at the point that achieves the minimum negative slope. Thus the family $\mathcal{F}$ of solutions to the self-similar Burgers equation plays an important role in the blow-up phenomenon of the Burgers equation. For a detailed discussion, see \cite{collot2018singularity}, or the toy model in appendix \ref{universality}. 

After the asymptotic blow-up behavior of the inviscid Burgers equation was clarified systematically in \cite{collot2018singularity}, the self-similar Burgers profiles have been used to explore blow-up phenomena in various systems, see \cite{buckmaster2022formation,buckmaster2022-2d-unstableformation,yang2021shock,qiu2021shock,pasqualotto2022gradient}. The modulation method that was developed in the context of nonlinear dispersive equations, is the suitable way to utilize the self-similar Burgers profiles. 

\section{Preliminaries}
In this section we introduce a seris of coordinate transformations and the Riemann variables. 
\subsection{Coordinates adapted to the shock}
We introduce the sound speed $\sigma=\frac{1}{\alpha}\rho^\alpha$, where $\alpha=\frac{\gamma-1}{2}>0$, then the system of $(u,\rho)$ is transformed into a system of $(u,\sigma)$, which reads
\begin{equation}
	\left\{\begin{aligned}
		&\partial_\mathrm{t} \sigma+u\cdot\nabla_\mathrm{x}\sigma+\alpha\sigma\nabla_\mathrm{x}\cdot u=0\\
		&\partial_\mathrm{t}u+(u\cdot\nabla)u+\alpha\sigma\nabla_\mathrm{x}\sigma=0,\\
	\end{aligned}\right.
\end{equation}

By defining $t=\frac{1+\alpha}{2}\mathrm{t}$, the equations become
\begin{equation}
	\left\{\begin{aligned}
		&\frac{1+\alpha}{2}\partial_t \sigma+u\cdot\nabla_\mathrm{x}\sigma+\alpha\sigma\nabla_\mathrm{x}\cdot u=0\\
		&\frac{1+\alpha}{2}\partial_tu+(u\cdot\nabla)u+\alpha\sigma\nabla_\mathrm{x}\sigma=0,\\
	\end{aligned}\right.
\end{equation}

The vorticity is defined as
\begin{equation}
	\omega=\partial_{\mathrm{x}_1}u_2-\partial_{\mathrm{x}_2}u_1.
\end{equation}
We also introduce the specific vorticity $\zeta:=\omega/\rho$, which satisfies
\begin{equation}
	\frac{1+\alpha}{2}\partial_t\zeta+u\cdot\nabla_{\mathrm{x}}\zeta=0.
\end{equation}

To keep track of shock formation, we introduce six time-dependent modulation variables $\xi=(\xi_1,\xi_2)\in\mathbb{R}^2,n=(n_1,n_2)\in\mathbb{S}^1,\tau\in\mathbb{R},\phi\in\mathbb{R}$, and $\kappa\in\mathbb{R}$. $\xi$ records the location of the shock; $n$ records the direction of the shock; $\tau$ records the slope of the Riemann invariant $w$; $\phi$ measures the "curvature" of the shock; $\kappa$ records the value of the Riemann invariant at $\mathrm{x}=\xi(t)$.

Using modulation variables $\xi,n,\tau,\phi$, we define the coordinate adapted to the shock formation. 

\subsubsection{Tracing the location and direction of the shock}
With the time-dependent vector $\xi(t)=(\xi_1(t),\xi_2(t))$, and the normal vector $n(t)=(n_1(t),n_2(t))$, we define a coordinate transformation $\tilde x=R(t)^T(\mathrm{x}-\xi(t))$, where
\begin{equation}
	R(t)=\begin{bmatrix}
		n_1 & -n_2\\
		n_2 & n_1
	\end{bmatrix}\in SO(2)
\end{equation}
The origin of the $\tilde{x}$ coordinate coincides with $\xi(t)$, which dynamically tracks the spatial location of the shock formation, and $\tilde{e}_1$ aligns with $n(t)$, direction of the shock.

The functions should also be rewritten in the new coordinate: 
\begin{equation}
	\left\{\begin{aligned}
		&\tilde{u}(\tilde{x},t)=R(t)^Tu(\mathrm{x},t)\\
		&\tilde \rho(\tilde{x},t)=\rho(\mathrm{x},t)\\
		&\tilde \sigma(\tilde{x},t)=\sigma(\mathrm{x},t)\\
		&\tilde{\zeta}(\tilde{x},t)=\zeta(\mathrm{x},t).
	\end{aligned}\right.
\end{equation}
Then $(\tilde{u},\tilde{\sigma})$ satisfies
\begin{equation}
	\left\{\begin{aligned}
		&\frac{1+\alpha}{2}\partial_t \tilde{\sigma}+(\tilde{u}+\tilde{v})\cdot\nabla_{\tilde{x}}\tilde{\sigma}+\alpha\tilde{\sigma}\nabla_{\tilde{x}}\cdot\tilde{u}=0\\
		&\frac{1+\alpha}{2}\partial_t\tilde{u}-\frac{1+\alpha}{2}Q\tilde{u}+\left[(\tilde{u}+\tilde{v})\cdot\nabla_{\tilde{x}}\right]\tilde{u}+\alpha\tilde{\sigma}\nabla_{\tilde{x}}\tilde{\sigma}=0,
	\end{aligned}\right.
\end{equation}
where $Q(t)=\frac{dR(t)^T}{dt}R(t)=\dot R(t)^TR(t)$,  and $\tilde{v}(\tilde{x},t)=\frac{1+\alpha}{2}(Q\tilde{x}-R^T\dot\xi)$. 

The equation of specific vorticity is transformed into
\begin{equation}
	\frac{1+\alpha}{2}\partial_t \tilde{\zeta}+(\tilde{u}+\tilde{v})\cdot\nabla_{\tilde{x}}\tilde{\zeta}=0.
	\label{evolution of specific vorticity in tilde x coordinate}
\end{equation}

\subsubsection{Tracking the curvature of shock front}
In order to track the curvature of the shock, we introduce a time-dependent scalar function $\tilde f(\tilde{x}_1,\tilde{x}_2,t)$. 

We denote $\phi(t)\in\mathbb{R}$ as the ``curvature" of the ``wavefront of the shock formation" at the origin, and we assume that $\tilde f$ satisfies $\partial_{\tilde{x}_2}^2\tilde f(0,0,t)=\phi(t)$. 
In particular, we construct $\tilde f$ as follows. Let $\theta\in C_c^\infty(-\frac{5}{4},\frac{5}{4})$ be a bump function such that $\theta(\tilde x_2)\equiv1$ when $|\tilde x_2|\le1$, then we define
\begin{equation}
	\tilde f(\tilde{x}_1,\tilde{x}_2,t)=\theta(\varepsilon^{-\frac{1}{2}}\tilde{x}_1)\int_0^{\tilde x_2}\phi(t) \tilde x_2'\theta(\varepsilon^{-\frac{1}{6}}\tilde x_2')d\tilde x_2',
	\label{construction of f}
\end{equation}
where $\varepsilon$ is a small constant to be specified. Note that $\tilde{f}(\tilde{x}_1,\tilde{x}_2,t)=\frac{1}{2}\phi\tilde{x}_2^2$ when $|\tilde{x}|$ is small. This gurantees that in the forcing terms of $W,Z,A$ (to be defined in (\ref{definition of W,Z,A})) those related to the coordinate transformation vanish when $y$ is far from the origin, while not affecting the computation near the origin.  

Now we introduce the coordinate transformation that adapted to the shock front:
\begin{equation}
	\left\{
	\begin{aligned}
		&x_1=\tilde{x}_1-\tilde f(\tilde x_1,\tilde{x}_2,t)\\
		&x_2=\tilde{x}_2.
	\end{aligned}\right.
\end{equation}
Let $f(x_1,x_2,t):=\tilde f(\tilde x_1,\tilde{x}_2,t)$, then we have
\begin{equation}
	\left\{
	\begin{aligned}
		&\tilde x_1=x_1+f(x_1,x_2,t)\\
		&\tilde x_2=x_2.
	\end{aligned}\right.
\end{equation}

We define
\begin{equation}
	J(\tilde x_1,\tilde x_2,t)=|\nabla_{\tilde x}x_1|=\sqrt{(1-\tilde f_{\tilde x_1})^2+\tilde f_{\tilde x_2}^2}=\frac{\sqrt{1+f_{x_2}^2}}{1+f_{x_1}},
\end{equation}
\begin{equation}
	N=J^{-1}\nabla_{\tilde{x}}x_1=\frac{(1-\tilde f_{\tilde x_1},-\tilde f_{\tilde x_2})}{\sqrt{(1-\tilde f_{\tilde x_1})^2+\tilde f_{\tilde x_2}^2}}=\frac{1}{\sqrt{1+f_{x_2}^2}}(1,-f_{x_2}),
\end{equation}
\begin{equation}
	T=N^\perp=\frac{(\tilde f_{\tilde x_2},1-\tilde f_{\tilde x_1})}{\sqrt{(1-\tilde f_{\tilde x_1})^2+\tilde f_{\tilde x_2}^2}}=\frac{1}{\sqrt{1+f_{x_2}^2}}(f_{x_2},1).
\end{equation}
Note that $\{N,T\}$ forms an orthonormal basis. 

$J,N,T$ can also be viewed as functions of $(x_1,x_2,t)$ and we overload their names for the sake of convenience. One can verify that
\begin{equation}
	\mathrm{supp}_x(N-\tilde e_1,T-\tilde e_2)\subset\left\{|x_1|\le\frac{3}{2}\varepsilon^{\frac{1}{2}},|x_2|\le\frac{3}{2}\varepsilon^{\frac{1}{6}}\right\}.
	\label{support of DN and DT}
\end{equation}

Now the functions are redefined as
$$\left\{\begin{aligned}
	&\mathring{u}(x,t)=\tilde{u}(\tilde{x},t)\\
	&\mathring{\rho}(x,t)=\tilde\rho(\tilde{x},t)\\
	&\mathring{\sigma}(x,t)=\tilde \sigma(\tilde{x},t)\\
	&\mathring{\zeta}(x,t)=\tilde{\zeta}(\tilde{x},t)\\
	&v(x,t)=\tilde{v}(\tilde{x},t),
\end{aligned}\right.$$
and the system can be written as
\begin{equation}
	\left\{\begin{aligned}
		&\partial_t\mathring{u}-Q\mathring{u}+\left[-\frac{\partial_t f}{1+f_{x_1}}+2\beta_1(\mathring{u}+v)\cdot JN\right]\partial_{x_1}\mathring{u}+2\beta_1(\mathring{u}_2+v_2)\partial_{x_2}\mathring{u}=-2\beta_3JN\mathring{\sigma}\partial_{x_1}\mathring{\sigma}-2\beta_3\mathring{\sigma}\partial_{x_2}\mathring{\sigma}\tilde{e}_2\\
		&\partial_t\mathring{\sigma}+\left[-\frac{\partial_t f}{1+f_{x_1}}+2\beta_1(\mathring{u}+v)\cdot JN\right]\partial_{x_1}\mathring{\sigma}+2\beta_1(\mathring{u}_2+v_2)\partial_{x_2}\mathring{\sigma}=-2\beta_3\mathring{\sigma} JN\cdot\partial_{x_1}\mathring{u}-2\beta_3\mathring{\sigma}\partial_{x_2}\mathring{u}_2,
	\end{aligned}
	\right.
\end{equation}
where 
\begin{equation}
	\beta_1=\frac{1}{1+\alpha},\  \beta_2=\frac{1-\alpha}{1+\alpha},\  \beta_3=\frac{\alpha}{1+\alpha}.
\end{equation}

We can also deduce he equation governing the evolution of $\mathring{\zeta}$:
\begin{equation}
	\partial_t\mathring{\zeta}+\left[-\frac{\partial_t f}{1+f_{x_1}}+2\beta_1(\mathring{u}+v)\cdot JN\right]\partial_{x_1}\mathring{\zeta}+2\beta_1(\mathring{u}_2+v_2)\partial_{x_2}\mathring{\zeta}=0.
\end{equation}

\subsubsection{Riemann variables}
We define the Riemann variables by
\begin{equation}
	\left\{\begin{aligned}
		&w(x,t)=\mathring{u}(x,t)\cdot N+\mathring{\sigma}(x,t)\\
		&z(x,t)=\mathring{u}(x,t)\cdot N-\mathring{\sigma}(x,t)\\
		&a(x,y)=\mathring{u}(x,t)\cdot T.
	\end{aligned}
	\right.
\end{equation}
Then the system of $(\mathring{u},\mathring{\sigma})$ can be rewritten in terms of $(w,z,a)$ as
\begin{equation}
	\begin{aligned}
		\partial_tw&+\left(-\frac{\partial_t f}{1+f_{x_1}}+2\beta_1v\cdot JN+Jw+\beta_2Jz\right)\partial_{x_1}w+\left(2\beta_1v_2+N_2w+\beta_2N_2z+2\beta_1aT_2\right)\partial_{x_2}w\\
		=&-2\beta_3\mathring{\sigma}\partial_{x_2}aT_2+aT\cdot(\partial_t)_x N+aQ_{ij}T_jN_i+2\beta_1(\mathring{u}\cdot NN_2+aT_2+v_2)aT\cdot\partial_{x_2}N\\
		&-2\beta_3\sigma(a\partial_{x_2}T_2+\mathring{u}\cdot N\partial_{x_2}N_2)-\left(-\frac{\partial_t f}{1+f_{x_1}}+2\beta_1v\cdot JN+2\beta_1J\mathring{u}\cdot N\right)a\partial_{x_1}T\cdot N
	\end{aligned}		
\end{equation}
\begin{equation}
\begin{aligned}
	\partial_tz&+\left(-\frac{\partial_t f}{1+f_{x_1}}+2\beta_1v\cdot JN+\beta_2Jw+Jz\right)\partial_{x_1}z+\left(2\beta_1v_2+\beta_2N_2w+N_2z+2\beta_1aT_2\right)\partial_{x_2}w\\
	=&2\beta_3\mathring{\sigma}\partial_{x_2}aT_2+aT\cdot(\partial_t)_x N+aQ_{ij}T_jN_i+2\beta_1(\mathring{u}\cdot NN_2+aT_2+v_2)aT\cdot\partial_{x_2}N\\
	&+2\beta_3\mathring{\sigma}(a\partial_{x_2}T_2+\mathring{u}\cdot N\partial_{x_2}N_2)-\left(-\frac{\partial_t f}{1+f_{x_1}}+2\beta_1v\cdot JN+2\beta_1J\mathring{u}\cdot N\right)a\partial_{x_1}T\cdot N
\end{aligned}
\end{equation}	
\begin{equation}
	\begin{aligned}
		\partial_ta&+\left(-\frac{\partial_t f}{1+f_{x_1}}+2\beta_1v\cdot JN+\beta_1Jw+\beta_1Jz\right)\partial_{x_1}a+2\beta_1\left(v_2+\frac{w+z}{2}N_2+aT_2\right)\partial_{x_2}a\\
		=&-2\beta_3\mathring{\sigma} T_2\partial_{x_2}\mathring{\sigma}+\mathring{u}\cdot T N\cdot(\partial_t)_x T+\mathring{u}\cdot NQ_{ij}N_jT_i\\
		&+2\beta_1(\mathring{u}\cdot NN_2+aT_2+v_2)\mathring{u}\cdot N N\cdot\partial_{x_2}T-\left(-\frac{\partial_t f}{1+f_{x_1}}+2\beta_1v\cdot JN+2\beta_1J\mathring{u}\cdot N\right)\mathring{u}\cdot N\partial_{x_1}N\cdot T.
	\end{aligned}
\end{equation}

\subsubsection{Self-similar transformation}
We introduce self-similar variables as follows
\begin{equation}
	\left\{\begin{aligned}
		&s(t)=-\log(\tau(t)-t)\\
		&y_1=\frac{x_1}{(\tau-t)^{3/2}}=x_1e^{\frac{3}{2}s}\\
		&y_2=\frac{x_2}{(\tau-t)^{1/2}}=x_2e^{\frac{s}{2}},
	\end{aligned}\right.
\end{equation}
where $\tau(t)$ is a parameter to be determined. 

Now the original time $t$ is transformed into the self-similar time $\tau$, and the space variable $x$ is transformed into the self-similar space variable $y$. At each fixed time $t$, $y$ is a dilation of $x$. In the $y$ coordinate, we can closely observe the behavior of the solution around the shock location. 

Now we assume that
\begin{equation}
	\left\{\begin{aligned}
		&w(x,t)=e^{-\frac{s}{2}}W(y,s)+\kappa(t)\\
		&z(x,t)=Z(y,s)\\
		&a(x,t)=A(y,s),
	\end{aligned}\right.
	\label{definition of W,Z,A}
\end{equation}
where $\kappa$ is also a modulation parameter to be determined. 

In the self-similar variables, the system becomes
\begin{equation}
	\left\{\begin{aligned}
		&\left(\partial_s-\frac{1}{2}\right)W+\left(\frac{3}{2}y_1+g_W\right)\partial_1W+\left(\frac{1}{2}y_2+h_W\right)\partial_2W=F_W\\
		&\partial_sZ+\left(\frac{3}{2}y_1+g_Z\right)\partial_1Z+\left(\frac{1}{2}y_2+h_Z\right)\partial_2Z=F_Z\\
		&\partial_sA+\left(\frac{3}{2}y_1+g_A\right)\partial_1A+\left(\frac{1}{2}y_2+h_A\right)\partial_2A=F_A.
	\end{aligned}\right.
	\label{evolution of W,Z,A}
\end{equation}
Here and throughout the papar we use the notation $\partial_j=\partial_{y_j}$, and $\beta_\tau:=\frac{1}{1-\dot\tau}$. The transport terms and and the forcing terms are given by
\begin{equation}
	\left\{\begin{aligned}
		&g_W=\beta_\tau JW+\beta_\tau e^{\frac{s}{2}}\left[-\frac{\partial_t f}{1+f_{x_1}}+J\left(\kappa+\beta_2Z+2\beta_1V\cdot N\right)\right]=\beta_\tau JW+G_W\\
		&g_Z=\beta_2\beta_\tau JW+\beta_\tau e^{\frac{s}{2}}\left[-\frac{\partial_t f}{1+f_{x_1}}+J\left(\beta_2\kappa+Z+2\beta_1V\cdot N\right)\right]=\beta_2\beta_\tau JW+G_Z\\
		&g_A=\beta_1\beta_\tau JW+\beta_\tau e^{\frac{s}{2}}\left[-\frac{\partial_t f}{1+f_{x_1}}+J\left(\beta_1\kappa+\beta_1Z+2\beta_1V\cdot N\right)\right]=\beta_1\beta_\tau JW+G_A,
	\end{aligned}\right.
	\label{transport terms of W,Z,A}
\end{equation}
\begin{equation}
	\left\{\begin{aligned}
		&h_W=\beta_\tau e^{-s}N_2W+\beta_\tau e^{-\frac{s}{2}}\left(2\beta_1V_2+N_2\kappa+\beta_2N_2Z+2\beta_1AT_2\right)\\
		&h_Z=\beta_2\beta_\tau e^{-s}N_2W+\beta_\tau e^{-\frac{s}{2}}\left(2\beta_1V_2+\beta_2N_2\kappa+N_2Z+2\beta_1AT_2\right)\\
		&h_A=\beta_1\beta_\tau e^{-s}N_2W+\beta_\tau e^{-\frac{s}{2}}\left(2\beta_1V_2+\beta_1N_2\kappa+\beta_1N_2Z+2\beta_1AT_2\right),
	\end{aligned}\right.
\end{equation}
and
\begin{equation}
	\left\{\begin{aligned}
		F_W=&-2\beta_3\beta_\tau S\partial_2AT_2+\beta_\tau e^{-\frac{s}{2}}AT\cdot(\partial_t)_xN+\beta_\tau e^{-\frac{s}{2}}Q_{ij}AT_jN_i\\
		&+2\beta_1\beta_\tau \left(V_2+U\cdot NN_2+AT_2\right)AT\cdot\partial_2N-2\beta_3\beta_\tau S(U\cdot N\partial_2N_2+A\partial_2T_2)\\
		&-\beta_\tau e^{s}\left(-\frac{\partial_tf}{1+f_{x_1}}+2\beta_1V\cdot JN+2\beta_1JU\cdot N\right)A\partial_1T\cdot N-\beta_\tau e^{-\frac{s}{2}}\dot{\kappa}\\
		F_Z=&2\beta_3\beta_\tau e^{-\frac{s}{2}}S\partial_2AT_2+\beta\tau e^{-s}AT\cdot(\partial_t)_xN+\beta e^{-s}Q_{ij}AT_jN_i\\
		&+2\beta_1\beta_\tau e^{-\frac{s}{2}}(V_2+U\cdot NN_2+AT_2)AT\cdot\partial_2N+2\beta_3\beta_\tau e^{-\frac{s}{2}}(A\partial_2T_2+U\cdot N\partial_2N_2)\\
		&-\beta_\tau e^{\frac{s}{2}}\left(-\frac{\partial_tf}{1+f_{x_1}}+2\beta_1V\cdot JN+2\beta_1JU\cdot N\right)A\partial_1T\cdot N\\
		F_A=&-2\beta_3\beta_\tau e^{-\frac{s}{2}}ST_2\partial_2S+\beta_\tau e^{-s}U\cdot NN\cdot(\partial_t)_xT+\beta_\tau e^{-s}Q_{ij}(U\cdot NN_j+AT_j)T_i\\
		&+2\beta_1\beta_\tau e^{-\frac{s}{2}}(V_2+U\cdot NN_2+AT_2)U\cdot NN\cdot\partial_2T\\
		&-\beta_\tau e^{\frac{s}{2}}\left(-\frac{\partial_tf}{1+f_{x_1}}+2\beta_1V\cdot JN+2\beta_1JU\cdot N\right)U\cdot N\partial_1N\cdot T,
	\end{aligned}\right.
\label{forcing terms of W,Z,A}
\end{equation}
where $U$, $V$, $S$ are the self-similar versions of $\mathring{u}$, $v$, $\mathring{\sigma}$, for example $S(y,s)=\mathring{\sigma}(x,t)$. 

If we write the transport terms as 
\begin{equation}
	\left\{\begin{aligned}
		&\mathcal{V}_W=\left(\frac{3}{2}y_1+g_W,\frac{1}{2}y_2+h_W\right)\\
		&\mathcal{V}_Z=\left(\frac{3}{2}y_1+g_Z,\frac{1}{2}y_2+h_Z\right)\\
		&\mathcal{V}_A=\left(\frac{3}{2}y_1+g_A,\frac{1}{2}y_2+h_A\right),
	\end{aligned}\right.
\end{equation}
then the equation of $(W,Z,A)$ can be written in a compact form:
\begin{equation}
	\left\{\begin{aligned}
		&\partial_sW-\frac{1}{2}W+\mathcal{V}_W\cdot\nabla W=F_W\\
		&\partial_sZ+\mathcal{V}_Z\cdot\nabla Z=F_Z\\
		&\partial_sA+\mathcal{V}_A\cdot\nabla A=F_A.
	\end{aligned}
	\right.
\end{equation}

We also deduce the equations of $(U,S)$: 
\begin{equation}
	\left\{\begin{aligned}
		&\partial_sU_i-\beta_\tau e^{-s}Q_{ij}U_j+\mathcal{V}_A\cdot\nabla U=-2\beta_3\beta_\tau e^{\frac{s}{2}}S\partial_1SJN_i-2\beta_3\beta_\tau e^{-\frac{s}{2}}S\partial_2S\delta_{i2}\\
		&\partial_sS+\mathcal{V}_A\cdot\nabla S=-2\beta_3\beta_\tau e^{\frac{s}{2}}S\partial_1U\cdot JN-2\beta_3\beta_\tau e^{-\frac{s}{2}}S\partial_2U_2.
	\end{aligned}
	\right.
	\label{equation of U,S}
\end{equation}
We can see that $(U,S)$ are transported in the same way as $A$. The transport terms $g_A$, $h_A$ in the equation of $A$ can also be expressed in terms of $U$, $S$: 
\begin{equation}
		\left\{
	\begin{aligned}
		&g_A=\beta_\tau e^{\frac{s}{2}}\left[2\beta_1(U+V)\cdot JN-\frac{\partial_t f}{1+f_{x_1}}\right]\\
		&h_A=2\beta_1\beta_\tau e^{-\frac{s}{2}}(U_2+V_2).
	\end{aligned}\right.
\end{equation}

Here we record the relation between $(U,S)$ and $(W,Z,A)$:
\begin{equation}
	\left\{
	\begin{aligned}
		&U=\frac{1}{2}\left(e^{-\frac{s}{2}}W+Z+\kappa\right)N+AT\\
		&S=\frac{1}{2}\left(e^{-\frac{s}{2}}W-Z+\kappa\right),
	\end{aligned}\right.
\label{U,S in terms of W,Z,A}
\end{equation}
and
\begin{equation}
	\left\{
	\begin{aligned}
		&W=e^{\frac{s}{2}}(U\cdot N+S-\kappa)\\
		&Z=U\cdot N-S\\
		&A=U\cdot T.
	\end{aligned}\right.
\label{W,Z,A in terms of U,S}
\end{equation}

Although we introduce the self-similar version of functions like $V(y,s)$ of $v(x,t)$, we overload the functions $f$, $J$, $N$, $T$ as functions of $(y,s)$. For example, in the self-similar coordinates, we view $N$ as the map $y\mapsto N(x(y),t(s))$, and $\partial_2N(y)=\partial_{y_2}[N(x(y),t(s))]$. 

\subsection{Self-similar 2D Burgers profile}
We first introduce the 1D self-similar Burgers profile
\begin{equation}
	W_{1d}(y_1)=\left(-\frac{y_1}{2}+\left(\frac{1}{27}+\frac{y_1^2}{4}\right)^{\frac{1}{2}}\right)^{\frac{1}{3}}-\left(\frac{y_1}{2}+\left(\frac{1}{27}+\frac{y_1^2}{4}\right)^{\frac{1}{2}}\right)^{\frac{1}{3}},
\end{equation}
which solves the 1D self-similar Burgers equation(cf. \cite{collot2018singularity}): 
\begin{equation}
	-\frac{1}{2}W_{1d}+\left(\frac{3}{2}y_1+W_{1d}\right)\partial_{y_1}W_{1d}=0.
\end{equation}

Moreover, we introduce
\begin{equation}
	\overline{W}(y_1,y_2)=\langle y_2\rangle W_{1d}(\langle y_2\rangle^{-3}y_1),
\end{equation}
where $\langle y_2\rangle=\sqrt{1+y_2^2}$. One an verify that $\overline{W}$ is a solution to the 2D self-similar Burgers equation:
\begin{equation}
	-\frac{1}{2}\overline{W}+\left(\frac{3}{2}y_1+\overline{W}\right)\partial_{y_1}\overline{W}+\frac{1}{2}y_2\partial_{y_2}\overline{W}=0.
\end{equation}

\subsubsection{Properties of $\overline{W}$}
It can be checked via the explicit formula of $W_{1d}$ that
\begin{equation}
	\left|W_{1d}(y_1)\right|\le\min\left(|y_1|,\frac{|y_1|}{\frac{1}{3}+|y_1|^{\frac{2}{3}}}\right)\le\min\left(|y_1|,|y_1|^{\frac{1}{3}}\right),
\end{equation}
\begin{equation}
	|W_{1d}'(y_1)|\le\langle y_1\rangle^{-\frac{2}{3}},\ |W_{1d}''(y_1)|\le\langle y_1\rangle^{-\frac{5}{3}},
\end{equation}
\begin{equation}
	\left|W_{1d}(y_1)W_{1d}'(y_1)\right|\le\frac{1}{3}\langle y_1\rangle^{-\frac{1}{3}},\ \left|(W_{1d}W_{1d}')'(y_1)\right|\le\min\left(\langle y_1\rangle^{-\frac{4}{3}},\frac{1}{7}|y_1|^{-1}\langle y_1\rangle^{-\frac{1}{3}}\right).
\end{equation}

Define $\eta(y)=1+y_1^2+y_2^6$, $\tilde{\eta}(y)=1+|y|^2+y_2^6$, then the above inequalities imply that
\begin{equation}
	\left|\overline{W}\right|\le(1+y_1^2)^{\frac{1}{6}}\le\eta^{\frac{1}{6}},
	\label{estimate of ovl W}
\end{equation}
\begin{equation}
	\left|\partial_1\ovl{W}\right|\le\tilde{\eta}^{-\frac{1}{3}}, \ \left|\partial_2\ovl{W}\right|\le\frac{2}{3},
	\label{estimates of D ovl W}
\end{equation}
\begin{equation}
	\left|\partial_{11}\ovl{W}\right|\le\tilde\eta^{-\frac{5}{6}},\ 
	\left|\partial_{12}\ovl{W}\right|\le2\eta^{-\frac{1}{2}},\ \left|\partial_{22}\ovl{W}\right|\le\frac{6}{7}\eta^{-\frac{1}{6}}.
	\label{estimates of D^2 ovl W}
\end{equation}
At the origin we can check by the expression of $\ovl{W}$ that
\begin{equation}
	\ovl{W}(0)=0,\ \ \ \nabla\ovl{W}(0)=
	\begin{pmatrix}
		-1\\0
	\end{pmatrix},\ \ \ 
	\nabla^2\ovl{W}(0)=
	\begin{pmatrix}
		0&0\\
		0&0
	\end{pmatrix},\ \ \ 
	\partial_1\nabla^2\ovl{W}(0)=
	\begin{pmatrix}
		6&0\\
		0&2
	\end{pmatrix}.
	\label{evaluation of ovl W at 0}
\end{equation}

\subsection{Evolution of $\tilde W$ and higher order derivatives of the unknowns}
If we define $\widetilde{W}=W-\ovl{W}$, then $\widetilde{W}$ satisfies
\begin{equation}
	\left(\partial_s-\frac{1}{2}+\beta_\tau J\partial_1\ovl{W}\right)\widetilde{W}+\mathcal{V}_W\cdot\nabla\widetilde{W}=\widetilde{F}_W,
	\label{equation of tilde W}
\end{equation}
where
\begin{equation}
	\widetilde{F}_W=F_W+\left[(1-\beta_\tau J)\ovl{W}-G_W\right]\partial_1\ovl{W}-h_W\partial_2\ovl{W}.
\end{equation}

For a multi-index $\gamma=(\gamma_1,\gamma_2)$ satisfying $|\gamma|\ge1$, we have the evolution equation for $(\partial^\gamma W,\partial^\gamma Z,\partial^\gamma A)$:
\begin{equation}
	\left\{\begin{aligned}
		&\left(\partial_s+\frac{3\gamma_1+\gamma_2-1}{2}+\beta_\tau(1+\gamma_1\mathbbm{1}_{\gamma_1\ge2}J\partial_1W)\right)\partial^\gamma W+\mathcal{V}_W\cdot\nabla\partial^\gamma W=F_W^{(\gamma)}\\
		&\left(\partial_s+\frac{3\gamma_1+\gamma_2}{2}+\beta_2\beta_\tau\gamma_1J\partial_1W\right)\partial^\gamma Z+\mathcal{V}_Z\cdot\nabla\partial^\gamma Z=F_Z^{(\gamma)}\\
		&\left(\partial_s+\frac{3\gamma_1+\gamma_2}{2}+\beta_2\beta_\tau\gamma_1J\partial_1W\right)\partial^\gamma A+\mathcal{V}_A\cdot\nabla\partial^\gamma A=F_A^{(\gamma)},
	\end{aligned}
	\right.
\end{equation}
where the forcing terms are
\begin{equation}
	\begin{aligned}
		F_W^{(\gamma)}=&\partial^\gamma F_W-\beta_\tau\partial_1W[\partial^\gamma,J]W-\beta_\tau\mathbbm{1}_{|\gamma|\ge2}\sum_{\substack{|\beta|=|\gamma|-1\\\beta_1=\gamma_1}}\binom{\gamma}{
			\beta}\partial^{\gamma-\beta}(JW)\partial_1\partial^\beta W\\
		&-\beta_\tau\mathbbm{1}_{|\gamma|\ge3}\sum_{\substack{1\le|\beta|\le|\gamma|-1\\\beta\le\gamma}}\binom{\gamma}{\beta}\partial^{\gamma-\beta}(JW)\partial_1\partial^\beta W-\sum_{0\le\beta<\gamma}\binom{\gamma}{\beta}\left(\partial^{\gamma-\beta}G_W\partial_1\partial^\beta W+\partial^{\gamma-\beta}h_W\partial_2\partial^\beta W\right),
	\end{aligned}
	\label{forcing terms of derivatives of W}
\end{equation}
\begin{equation}
	\begin{aligned}
		F_Z^{(\gamma)}=&\partial^\gamma F_Z-\beta_2\beta_\tau\sum_{\substack{|\beta|=|\gamma|-1\\\beta_1=\gamma_1}}\binom{\gamma}{\beta}\partial^{\gamma-\beta}(JW)\partial_1\partial^\beta Z\\
		&-\beta_2\beta_\tau\mathbbm{1}_{|\gamma|\ge2}\sum_{\substack{0\le|\beta|\le|\gamma|-2\\\beta\le\gamma}}\binom{\gamma}{\beta}\partial^{\gamma-\beta}(JW)\partial_1\partial^\beta Z-\sum_{0\le\beta<\gamma}\binom{\gamma}{\beta}\left(\partial^{\gamma-\beta}G_Z\partial_1\partial^\beta Z+\partial^{\gamma-\beta}h_Z\partial_2\partial^\beta Z\right),
	\end{aligned}
	\label{forcing terms of derivative of Z}
\end{equation}
\begin{equation}
	\begin{aligned}
		F_A^{(\gamma)}=&\partial^\gamma F_A-\beta_2\beta_\tau\sum_{\substack{|\beta|=|\gamma|-1\\\beta_1=\gamma_1}}\binom{\gamma}{\beta}\partial^{\gamma-\beta}(JW)\partial_1\partial^\beta A\\
		&-\beta_2\beta_\tau\mathbbm{1}_{|\gamma|\ge2}\sum_{\substack{0\le|\beta|\le|\gamma|-2\\\beta\le\gamma}}\binom{\gamma}{\beta}\partial^{\gamma-\beta}(JW)\partial_1\partial^\beta A-\sum_{0\le\beta<\gamma}\binom{\gamma}{\beta}\left(\partial^{\gamma-\beta}G_A\partial_1\partial^\beta A+\partial^{\gamma-\beta}h_A\partial_2\partial^\beta A\right).
	\end{aligned}
	\label{forcing terms of derivatives of A}
\end{equation}

Similarly we can deduce the equation of $\partial^\gamma\widetilde{W}$:
\begin{equation}
	\left[\partial_s+\frac{3\gamma_1+\gamma_2-1}{2}+\beta_\tau J(\partial_1\ovl{W}+\gamma_1\partial_1W)\right]\partial^\gamma\widetilde{W}+\mathcal{V}_W\cdot\nabla\partial^\gamma\widetilde{W}=\widetilde{F}_W^{(\gamma)},
	\label{evolution of derivatives of tilde W}
\end{equation}
where
\begin{equation}
	\begin{aligned}
		\widetilde{F}_W^{(\gamma)}=&\partial^\gamma\widetilde{F}_W-\sum_{0\le\beta<\gamma}\binom{\gamma}{\beta}\left[\partial^{\gamma-\beta}G_W\partial_1\partial^\beta \widetilde{W}+\partial^{\gamma-\beta}h_W\partial_2\partial^\beta \widetilde{W}+\beta_\tau\partial^{\gamma-\beta}(J\partial_1\ovl{W})\partial^\beta \widetilde{W}\right]\\
		&-\beta_\tau\gamma_2\partial_2(JW)\partial_1^{\gamma_1+1}\partial_2^{\gamma_2-1}\widetilde{W}-\beta_\tau\mathbbm{1}_{|\gamma|\ge2}\sum_{\substack{0\le|\beta|\le|\gamma|-2\\\beta\le\gamma}}\binom{\gamma}{\beta}\partial^{\gamma-\beta}(JW)\partial_1\partial^\beta\widetilde{W}.
	\end{aligned}
\end{equation}

\section{Main result}
In this section, we state the initial condition and the main shock formation result of the 2D compressible Euler equations. The proof of the main theorem will be given in section \ref{proof of the main theorem}. 
\subsection{Initial data in physical variables}\label{Initial data in physical variables}
We assume that the initial time is $t=-\varepsilon$ with $\varepsilon$ to be determined. 
For modulation variables, we assume that
\begin{equation}
	\kappa(-\varepsilon)=\kappa_0,\ \ \xi(-\varepsilon)=0,\ \ n_2(-\varepsilon)=0,\ \ \tau(-\varepsilon)=0,\ \ \ \phi(-\varepsilon)=\phi_0=0,
	\label{initial condition of modulation variables}
\end{equation}
where
\begin{equation}
	\kappa_0\ge\frac{3}{1-\max(\beta_1,\beta_2)}.
	\label{initial condition for kappa}
\end{equation}
Since $n_2(-\varepsilon)=0$ and $\xi(-\varepsilon)=0$, $\mathrm{x}$-coordinate and $\tilde{x}$-coordinate coincide at $t=-\varepsilon$, and
\begin{equation}
	\left\{
	\begin{aligned}
		x_1&=\tilde{f}(\mathrm{x}_1,\mathrm{x}_2,-\varepsilon)\\
		x_2&=\mathrm{x}_2.
	\end{aligned}
	\right.
\end{equation}
Now we prescribe the initial data:
\begin{equation}
	u_0(\mathrm{x}):=u(\mathrm{x},-\varepsilon),\ \ \ \rho_0(\mathrm{x}):=\rho(\mathrm{x},-\varepsilon),\ \ \ \sigma_0:=\frac{\rho_0^\alpha}{\alpha}.
\end{equation}
We choose $u_0$ and $\rho_0$ such that the corresponding Riemann varibles satisfy the conditions stated in this section. The initial data of the Riemann variables are denoted as
\begin{equation}
	\begin{aligned}
		\widetilde{w}_0(\mathrm{x}):&=u_0(\mathrm{x})\cdot N(\mathrm{x},-\varepsilon)+\sigma_0(\mathrm{x})=:w_0(x)\\
		\widetilde{z}_0(\mathrm{x}):&=u_0(\mathrm{x})\cdot N(\mathrm{x},-\varepsilon)-\sigma_0(\mathrm{x})=:z_0(x)\\
		\widetilde{a}_0(\mathrm{x}):&=u_0(\mathrm{x})\cdot T(\mathrm{x},-\varepsilon)=:w_0(x).
	\end{aligned}
\end{equation}

First we assume that
\begin{equation}
	\mathrm{supp}_{\mathrm{x}}\ (\widetilde{w}_0-\kappa_0,\widetilde{z}_0,\widetilde{a}_0)\subset{\scriptstyle\mathcal{X}}_0:=\left\{|\mathrm{x}_1|\le\frac{1}{2}\varepsilon^{\frac{1}{2}},|\mathrm{x}_2|\le\varepsilon^{\frac{1}{6}}\right\}.
	\label{spatial support of initial data in physical variable, tilde x}
\end{equation}
This implies that
\begin{equation}
	\mathrm{supp}_{x}\ (w_0-\kappa_0,z_0,a_0)\subset\left\{|x_1|\le\varepsilon^{\frac{1}{2}},|x_2|\le\varepsilon^{\frac{1}{6}}\right\}
	\label{spatial support of initial data in physical variable}.
\end{equation}

The function $\widetilde{w}_0(\mathrm{x})$ is chosen such that 
\begin{equation}
	\begin{aligned}
		&\text{the minimum negative slope of } \widetilde{w}_0\text{ occurs in the $\mathrm{x}_1$ direction, }\\
		&\partial_{\mathrm{x}_1}\widetilde{w}_0\text{ attains its global minimum at }\mathrm{x}=0.
	\end{aligned}
	\label{initial slope condition for w, 1}
\end{equation}
and
\begin{equation}
	\nabla_{\mathrm{x}}\partial_{\mathrm{x}_1}\widetilde{w}_0(0)=0.
	\label{initial slope condition for w, 2}
\end{equation}
We also assume that
\begin{equation}
	\widetilde{w}_0(0)=\kappa_0,\ \ \ \partial_{\mathrm{x}_1}\widetilde{w}_0(0)=-\frac{1}{\varepsilon},\ \ \ \partial_{\mathrm{x}_2}\widetilde{w}_0(0)=0,
	\label{initial value of w at 0}
\end{equation}
Define 
\begin{equation}
	\overline{w}_\varepsilon(x):=\varepsilon^{\frac{1}{2}}\overline{W}(\varepsilon^{-\frac{3}{2}}x_1,\varepsilon^{-\frac{1}{2}}x_2),
\end{equation}
and we set
\begin{equation}
	\wideparen{w}_0(\mathrm{x}):=\widetilde{w}_0(\mathrm{x})-\overline{w}_\varepsilon(\mathrm{x}_1-\tilde{f}(\mathrm{x},t),\mathrm{x}_2)=w_0(x)-\overline{w}_\varepsilon(x)=\varepsilon^{\frac{1}{2}}\widetilde{W}(y,-\log\varepsilon)+\kappa_0.
\end{equation}

We assume that for $\mathrm{x}$ such that $\left|(\varepsilon^{-\frac{3}{2}}\mathrm{x}_1,\varepsilon^{-\frac{1}{2}}\mathrm{x}_2)\right|\le2\varepsilon^{-\frac{1}{10}}$, the following bounds hold:
\begin{equation}
	\begin{aligned}
		|\wideparen{w}_0(\mathrm{x})-\kappa_0|&\le\varepsilon^{\frac{1}{10}}\left(\varepsilon^3+\mathrm{x}_1^2+\mathrm{x}_2^6\right)^\frac{1}{6},\\
		|\partial_{\mathrm{x}_1}\wideparen{w}_0(\mathrm{x})|&\le\varepsilon^{\frac{1}{11}}\left(\varepsilon^3+\mathrm{x}_1^2+\mathrm{x}_2^6\right)^{-\frac{1}{3}},\\
		|\partial_{\mathrm{x}_2}\wideparen{w}_0(\mathrm{x})|&\le\frac{1}{2}\varepsilon^{\frac{1}{12}}
		\label{initial data for difference in physical variable, 0th and 1st order}.
	\end{aligned}
\end{equation}
For $\mathrm{x}$ such that $\left|(\varepsilon^{-\frac{3}{2}}\mathrm{x}_1,\varepsilon^{-\frac{1}{2}}\mathrm{x}_2)\right|\le1$, we assume that
\begin{equation}
	|\partial_{\mathrm{x}}^\gamma\wideparen{w}_0(\mathrm{x})|\overset{|\gamma|=4}{\le}\frac{1}{2}\varepsilon^{\frac{5}{8}-\frac{1}{2}(3\gamma_1+\gamma_2)}.
	\label{initial data for difference in physical variable, 4th order}
\end{equation}
At $\mathrm{x}=0$, we assume that
\begin{equation}
	|\partial_{\mathrm{x}}^\gamma\wideparen{w}_0(\mathrm{0})|\overset{|\gamma|=3}{\le}\frac{1}{2}\varepsilon^{1-\frac{1}{2}(3\gamma_1+\gamma_2)-\frac{4}{2k-7}}.
	\label{initial data for difference in physical variable, 3rd order}
\end{equation}
For $\mathrm{x}\in{\scriptstyle\mathcal{X}}_0$ such that $\left|(\varepsilon^{-\frac{3}{2}}\mathrm{x}_1,\varepsilon^{-\frac{1}{2}}\mathrm{x}_2)\right|\ge\frac{1}{2}\varepsilon^{-\frac{1}{10}}$, we assume that
\begin{equation}
	\begin{aligned}
		|\widetilde{w}_0(\mathrm{x})-\kappa_0|&\le(1+\varepsilon^{\frac{1}{11}})\left(\varepsilon^4+\mathrm{x}_1^2+\mathrm{x}_2^6\right)^{\frac{1}{6}},\\
		|\partial_{\mathrm{x}_1}\widetilde{w}_0(\mathrm{x})|&\le(1+\varepsilon^{\frac{1}{12}})\left(\varepsilon^4+\mathrm{x}_1^2+\mathrm{x}_2^6\right)^{-\frac{1}{3}},\\
		|\partial_{\mathrm{x}_2}\widetilde{w}_0(\mathrm{x})|&\le\frac{2}{3}+\varepsilon^{\frac{1}{13}}.
	\end{aligned}
	\label{initial data for w in physical variable, 0th and 1st order, outside}
\end{equation}
For all $\mathrm{x}\in{\scriptstyle\mathcal{X}}_0$, we assume that
\begin{equation}
	\begin{aligned}
		|\partial_{\mathrm{x}_1}^2\widetilde{w}_0(\mathrm{x})|&\le\varepsilon^{-\frac{3}{2}}\left(\varepsilon^3+\mathrm{x}_1^2+\mathrm{x}_2^6\right)^{-\frac{1}{3}},\\
		|\partial_{\mathrm{x}_1\mathrm{x}_2}\widetilde{w}_0(\mathrm{x})|&\le\frac{1}{2}\varepsilon^{-\frac{1}{2}}\left(\varepsilon^3+\mathrm{x}_1^2+\mathrm{x}_2^6\right)^{-\frac{1}{3}},\\
		|\partial_{\mathrm{x}_2}^2\widetilde{w}_0(\mathrm{x})|&\le\frac{1}{2}\left(\varepsilon^3+\mathrm{x}_1^2+\mathrm{x}_2^6\right)^{-\frac{1}{6}}.
	\end{aligned}
	\label{initial data for w in physical variable, 2nd order}
\end{equation}
Also att $\mathrm{x}=0$ we assume that
\begin{equation}
	\left|\partial_{\mathrm{x}_2}^2\widetilde{w}_0(0)\right|\le1.
	\label{initial data for w in physical variable, 2nd order at 0}
\end{equation}

For the initial data of $\widetilde{z}_0$ and $\widetilde{a}_0$ we assume that
\begin{equation}
	\begin{aligned}
		&|\widetilde{z}_0(\mathrm{x})|\le\varepsilon,\ \ \ |\partial_{\mathrm{x}_1}\widetilde{z}_0(\mathrm{x})|\le1,\ \ \ |\partial_{\mathrm{x}_2}\widetilde{z}_0(\mathrm{x})|\le\frac{1}{2}\varepsilon^{\frac{1}{2}},\\
		&|\partial_{\mathrm{x}_1}^2\widetilde{z}_0(\mathrm{x})|\le\varepsilon^{-\frac{3}{2}},\ \ \ |\partial_{\mathrm{x}_1\mathrm{x}_2}\widetilde{z}_0(\mathrm{x})|\le\frac{1}{2}\varepsilon^{-\frac{1}{2}},\ \ \ |\partial_{\mathrm{x}_2}^2\widetilde{z}_0(\mathrm{x})|\le\frac{1}{2},
	\end{aligned}
	\label{initial data for z in physical variable}
\end{equation}
and
\begin{equation}
	|\widetilde{a}_0(\mathrm{x})|\le\varepsilon,\ \ \ |\partial_{\mathrm{x}_1}\widetilde{a}_0(\mathrm{x})|\le1,\ \ \ |\partial_{\mathrm{x}_2}\widetilde{a}_0(\mathrm{x})|\le\frac{1}{2}\varepsilon^{\frac{1}{2}},\ \ \  |\partial_{\mathrm{x}_2}^2\widetilde{a}_0(\mathrm{x})|\le\frac{1}{2}.
	\label{initial data for a in physical variable}
\end{equation}
For the initial specific vorticity, we assume that
\begin{equation}
	\left\|\frac{\operatorname{curl}u_0(\mathrm{x})}{\rho_0(\mathrm{x})}\right\|_{L^\infty}\le1
	\label{initial data for vorticity in physical variable}.
\end{equation}
Finally for the Sobolev norm of the initial data we assume that for a fixed $k$ with $k\ge18$ the following holds:
\begin{equation}
	\sum_{\gamma=k}\varepsilon^2\|\partial_\mathrm{x}^\gamma \widetilde{w}_0\|_{L^2}^2+\|\partial_\mathrm{x}^\gamma \widetilde{z}_0\|_{L^2}^2+\|\partial_\mathrm{x}^\gamma \widetilde{a}_0\|_{L^2}^2\le\frac{1}{2}\varepsilon^{\frac{7}{2}-3\gamma_1-\gamma_2}.
	\label{initial sobolev norm in physical variable}
\end{equation}

\begin{theorem}[\emph{Main result in physical variables}]
	If 
	\begin{itemize}
		\item the initial values of the modulation variables satisfy (\ref{initial condition of modulation variables})(\ref{initial condition for kappa});
		\item the initial data $(u_0,\rho_0)$ of the Euler equations is smooth, and it gurantees that the corresponding Riemann variables $(w_0,z_0,a_0)$ satisfies the initial conditions (\ref{initial slope condition for w, 1})-(\ref{initial sobolev norm in physical variable}),
	\end{itemize}
	then the cooresponding solution $(u,\rho)$ to (\ref{original Euler equation}) blows up in finite time $-\varepsilon<T_*=O(\varepsilon^2)<+\infty$. Moreover, we have the following description of the shock: 
	\begin{enumerate}
		\item\emph{Blow-up speed}. We have the following inequalities for $(u,\sigma)$:
		\begin{equation}
			\frac{c}{T_*-t}\le\|\nabla_\mathrm{x} u(t)\|_{L^\infty}\le\frac{C}{T_*-t},
			\label{blow up speed for u}
		\end{equation}
		\begin{equation}
			\frac{c}{T_*-t}\le\|\nabla_\mathrm{x} \sigma(t)\|_{L^\infty}\le\frac{C}{T_*-t}.
			\label{blow up speed for sigma}
		\end{equation}
		\item\emph{Blow-up location}. For arbitrary $\delta\in(0,1)$, there holds that
		 \begin{equation}
		 	\|\nabla _\mathrm{x} u(t)\|_{L^\infty(B_\delta^c(\xi(t)))}+\|\nabla_\mathrm{x} \sigma(t)\|_{L^\infty(B_\delta^c(\xi(t)))}\le C(\delta),
		 	\label{boundedness of u and sigma away from origin}
		 \end{equation}
	 	while we have the unboundedness of gradient along $\xi(t)$:
	 	\begin{equation}
	 		|\nabla _\mathrm{x} u(\xi(t),t)|\ge\frac{c}{T_*-t},\ \ |\nabla _\mathrm{x} \sigma(\xi(t),t)|\ge\frac{c}{T_*-t}.
	 		\label{unboundednesss of u and sigma at the origin}
	 	\end{equation}
 		Moreover, the limit of $\xi(t)$ exists:
 		\begin{equation}
 			\lim_{t\rightarrow T_*}\xi(t)=\xi_*\in\mathbb{R}^2.
 		\end{equation}
 		\item\emph{Direction of the shock}. The gradient of $(u,\sigma)$ blows up only in one direction:
 		\begin{equation}
 			|[(R(t)N)\cdot\nabla _{\mathrm{x}}] u(\xi(t),t)|\ge\frac{c}{T_*-t},\ \ |(R(t)N)\cdot\nabla _{\mathrm{x}} \sigma(\xi(t),t)|\ge\frac{c}{T_*-t};
 			\label{unboundedness at the N direction}
 		\end{equation}
 		\begin{equation}
 			\|[(R(t)T)\cdot\nabla _{\mathrm{x}}] u(t)\|_{L^\infty}+ \|[(R(t)T)\cdot\nabla _{\mathrm{x}}] \sigma(t)\|_{L^\infty}\le C.
 			\label{boundedness at the T direction}
 		\end{equation}
 		Moreover, we have $n(t)=R(t)N(0,t)$, and the limit of $n(t)$ exists:
 		\begin{equation}
 			\lim_{t\rightarrow T_*}n(t)=n_*\in\mathbb{S}^1.
 			\label{limit of n}
 		\end{equation}
		\item \emph{1/3-H\"older continuity}. The solution has a uniform-in-time $C^{1/3}$ bound. More precisely, we have that
		\begin{equation}
			(u,\sigma)\in L^\infty_t([-\varepsilon,T_*),C^{1/3}_\mathrm{x}).
		\end{equation}
	\end{enumerate}
\end{theorem}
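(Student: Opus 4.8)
The plan is to run the modulation / dynamical-rescaling scheme of \cite{buckmaster2022formation}: recast the asserted finite-time blow-up as \emph{global existence in self-similar time} $s\in[-\log\varepsilon,\infty)$ for the system \eqref{evolution of W,Z,A}, close a bootstrap argument there, and then read off all the physical statements from the large-$s$ behaviour of $(W,Z,A)$ and of the six modulation variables. First I would fix $s_0=-\log\varepsilon$ and, for $s\ge s_0$, posit a package of bootstrap assumptions: weighted pointwise bounds on $\widetilde W=W-\ovl W$ and its derivatives up to order $k-1$ in terms of the weights $\eta,\widetilde\eta$ of \eqref{estimate of ovl W}--\eqref{estimates of D^2 ovl W}; smallness of $Z,A$ and their low-order derivatives with the powers of $e^{-s/2}$ dictated by \eqref{U,S in terms of W,Z,A}; closeness of $\kappa,\xi,n,\tau,\phi$ to their initial values \eqref{initial condition of modulation variables}, in particular $|\dot\tau|,|\dot\kappa|,|\dot\xi|,|\dot n|,|\dot\phi|$ small; and the top-order $L^2$ energy bound matching \eqref{initial sobolev norm in physical variable}. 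The six modulation variables are \emph{defined} by the constraints $W(0,s)=0$, $\nabla W(0,s)=(-1,0)^{T}$, $\nabla^2 W(0,s)=0$ (compare \eqref{evaluation of ovl W at 0}); differentiating these six scalar relations in $s$ and substituting \eqref{evolution of W,Z,A} yields a closed ODE system for $(\dot\xi,\dot n,\dot\tau,\dot\phi,\dot\kappa)$, which I would show is solvable and consistent with the assumed smallness.

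The technical core is to bound the forcing terms $F_W^{(\gamma)},F_Z^{(\gamma)},F_A^{(\gamma)}$ and $\widetilde F_W^{(\gamma)}$ of \eqref{forcing terms of derivatives of W}--\eqref{forcing terms of derivatives of A}, and then to propagate the weighted $L^\infty$ bounds along the characteristics of $\mathcal V_W,\mathcal V_Z,\mathcal V_A$. Two structural features do the work: every dangerous contribution carries a prefactor $e^{-s/2}$ or $e^{-s}$ coming from the rescaling \eqref{definition of W,Z,A} and \eqref{transport terms of W,Z,A}, and every term produced by the shock-adapted coordinate change is supported, by \eqref{support of DN and DT}, in a set that the rescaling pushes out to $|y|\gtrsim\varepsilon^{-c}$, where the weights are already large --- this is precisely why the cut-offs in \eqref{construction of f} were chosen at those scales. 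For the characteristic estimates one needs trajectory bounds for the flows of the $\mathcal V$'s, using that $\tfrac32 y_1\partial_1+\tfrac12 y_2\partial_2$ is the linearisation of the Burgers self-similar rescaling and that $\ovl W$ is a stable profile, so trajectories either relax toward $y=0$ at a controlled rate or escape to spatial infinity. The damping coefficient $\tfrac{3\gamma_1+\gamma_2-1}{2}+\beta_\tau(1+\gamma_1\mathbbm{1}_{\gamma_1\ge2}J\partial_1 W)$ is strictly positive for $|\gamma|\ge1$, producing exponential-in-$s$ decay of every derivative norm; for the undifferentiated $\widetilde W$ one uses instead the extra damping $\beta_\tau J\partial_1\ovl W$ in \eqref{equation of tilde W} together with $\partial_1\ovl W(0)=-1$ and the weight behaviour in \eqref{estimates of D ovl W}.

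Since the $L^\infty$ scheme loses one derivative through the forcing, the top order $|\gamma|=k$ is handled by an $L^2$ energy estimate for $(\varepsilon\partial^\gamma W,\partial^\gamma Z,\partial^\gamma A)$: pairing the $\partial^\gamma$-differentiated equations with the unknowns, the transport operator supplies the positive damping, while the commutators and the vorticity-coupling terms (including those from \eqref{equation of U,S}) are absorbed by the lower-order weighted bounds already established. Collecting everything, each bootstrap inequality is recovered with a strictly better constant, so a continuity argument yields existence for all $s\ge-\log\varepsilon$. Undoing \eqref{definition of W,Z,A} and the self-similar change, $s\to\infty$ corresponds to $t\uparrow T_*:=\lim_t\tau(t)$, which is finite with $T_*=O(\varepsilon^2)$ since $|\dot\tau|\lesssim\varepsilon$ on a $t$-interval of length $O(\varepsilon)$ and $\tau(-\varepsilon)=0$. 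From $\partial_{\mathrm x_1}w=e^{s}\partial_1 W$ and $\partial_1 W(0,s)=-1$ one gets $|\nabla_{\mathrm x}u(\xi(t),t)|,\,|\nabla_{\mathrm x}\sigma(\xi(t),t)|\gtrsim e^{s}=(\tau-t)^{-1}\sim(T_*-t)^{-1}$, giving \eqref{blow up speed for u}--\eqref{blow up speed for sigma} and \eqref{unboundednesss of u and sigma at the origin}; the matching upper bounds come from the uniform-in-$s$ control of $\partial_1 W$ and the analogous quantities for $Z,A$. The compact support of the data and the decay of the weights away from $y=0$ give \eqref{boundedness of u and sigma away from origin}; boundedness of $A$ and its derivatives gives \eqref{boundedness at the T direction}, while the blow-up is inherited in the $N$-direction, which is \eqref{unboundedness at the N direction}; integrability of $\dot\xi,\dot n$ in $t$ produces the limits $\xi_*,n_*$. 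Finally the uniform-in-$s$ bound $|W(y,s)|\lesssim\eta^{1/6}\lesssim\langle y_1\rangle^{1/3}$ converts into $|w(x,t)-\kappa(t)|\lesssim|x_1|^{1/3}$ uniformly in $t$, which together with the Lipschitz control in $x_2$ and the bounds on $z,a$ yields $(u,\sigma)\in L^\infty_t([-\varepsilon,T_*),C^{1/3}_{\mathrm x})$.

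I expect the main obstacle to be closing the \emph{low-order} estimates near the self-similar origin --- the undifferentiated, first- and second-order bounds on $\widetilde W$ --- where the transport damping is weakest and where $\partial_1 W$ must stay pinned near $-1$ with its global minimum exactly at $y=0$; this is precisely the regime in which the sharp choice of weights and the tuning of all six modulation parameters are indispensable. A second genuinely hard point is making the top-order $H^k$ energy estimate survive the non-trivial vorticity ($A\not\equiv0$): the coupling terms in \eqref{forcing terms of derivatives of A} and in \eqref{equation of U,S} must be shown not to overwhelm the damping, which is what forces the precise hierarchy of $\varepsilon$-powers assumed for the data.
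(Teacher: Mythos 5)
Your plan reproduces the paper's strategy at every structural level: bootstrap in self-similar coordinates around the 2D Burgers profile $\ovl{W}$; six constraints on $W,\nabla W,\nabla^2W$ at $y=0$ generating ODEs for $(\kappa,\tau,\xi,n,\phi)$; weighted $L^\infty$ estimates propagated along characteristics of $\mathcal{V}_W,\mathcal{V}_Z,\mathcal{V}_A$ with the damping supplied by $\beta_\tau J\partial_1\ovl{W}$ and $\tfrac{3\gamma_1+\gamma_2-1}{2}$; a top-order $H^k$ energy estimate to repair the derivative loss; and translation back to physical variables via the scalings in (\ref{definition of W,Z,A}). That is the same route as the paper.

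Two technical points in your sketch would not, however, go through as written. First, you propose the top-order $L^2$ estimate directly on $(\varepsilon\partial^\gamma W,\partial^\gamma Z,\partial^\gamma A)$. The paper instead carries it out on $(\partial^\gamma U,\partial^\gamma S)$: in the system (\ref{equation of U,S}) the highest-order acoustic terms appear symmetrically as $2\beta_3\beta_\tau S\,e^{s/2}JN_i\partial_1\partial^\gamma S$ in the $U_i$-equation and $2\beta_3\beta_\tau S\,e^{s/2}JN\cdot\partial_1\partial^\gamma U$ in the $S$-equation, so after pairing with $\partial^\gamma U_i$ and $\partial^\gamma S$ respectively, integration by parts moves the $\partial_1$ onto $SJN_i$ and the divergent $e^{s/2}$ prefactor is killed by $|\partial_1(SJN)|\lesssim e^{-s/2}$. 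The Riemann variables $(W,Z,A)$ carry different $e^{s}$-weights in their equations, so the analogous top-order terms do not symmetrize, and without passing through $(U,S)$ and transferring back via (\ref{W,Z,A in terms of U,S}) the energy estimate does not close. Relatedly, the bootstrap on $\partial_1A$ is not closed from a transport equation for $\partial_1A$ (which would cost another derivative) but from the algebraic identity relating $Je^{3s/2}\partial_1A$ to the specific vorticity $\Omega$ and already-bounded quantities; since $\Omega$ is purely transported and has an $L^\infty$ bound, this gives $\partial_1A$ directly. Your sketch omits this. Finally, a smaller imprecision: the uniform $C^{1/3}$ seminorm does not follow from $|W|\lesssim\eta^{1/6}$, which only controls $w$ against $\kappa(t)$ at the base point $\xi(t)$. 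What is needed is the two-point estimate obtained by integrating the weighted gradient bound $|\partial_1W|\lesssim\eta^{-1/3}$ from (\ref{bootstrap assumptions of W}), giving $|W(y_1,\cdot)-W(y_1',\cdot)|\lesssim|y_1-y_1'|^{1/3}$, together with $|\partial_2W|\le1$ and the exact cancellation of the $e^{-s/2}$ factors in the H\"older quotient.
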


Proof of the main result will be given in section \ref{proof of the main theorem}. 

\subsection{Initial data in self-similar variables}
Since $\tau(-\varepsilon)=0$, we have that the initial self-similar time is $s=-\log\varepsilon$. 

When $s=-\log\varepsilon$, $y_1=x_1\varepsilon^{-\frac{3}{2}}$, $y_2=x_2\varepsilon^{-\frac{1}{2}}$, from (\ref{spatial support of initial data in physical variable}) we have that the initial data of $W,Z,A$ are supported in
\begin{equation}
	\mathcal{X}_0=\{|y_1|\le\varepsilon^{-1},|y_2|\le\varepsilon^{-\frac{1}{3}}\}. 
	\label{initial spatial support}
\end{equation}

Now we introduce a large constant $M=M(\alpha,\kappa_0,k)$ to absorb universal constants, here $k$ is the order of energy estimate to be established later in section \ref{energy estimate}. In terms of $M$ and $\varepsilon$, we define a small scale $l$ and a large scale $L$ by
\begin{subequations}
	\begin{align}
		\begin{split}
			l=(\log M)^{-5},
			\label{definition of l}
		\end{split}\\
		\begin{split}
			L=\varepsilon^{-\frac{1}{10}}.
			\label{definition of L}
		\end{split}
	\end{align}
\end{subequations}

From (\ref{initial data for difference in physical variable, 0th and 1st order})(\ref{initial data for difference in physical variable, 4th order})(\ref{initial data for difference in physical variable, 3rd order}) we know that $\widetilde W(y,-\log\varepsilon)$ satisfies
\begin{subequations}
	\begin{align}
		\begin{split}
			\eta^{-\frac{1}{6}}\left|\widetilde{W}(y,-\log\varepsilon)\right|\mathbbm{1}_{|y|\le L}&\le\varepsilon^{\frac{1}{10}},
		\end{split}\\
		\begin{split}
			\eta^{\frac{1}{3}}\left|\partial_1\widetilde{W}(y,-\log\varepsilon)\right|\mathbbm{1}_{|y|\le L}&\le\varepsilon^{\frac{1}{11}},
		\end{split}\\
		\begin{split}
			\left|\partial_2\widetilde{W}(y,-\log\varepsilon)\right|\mathbbm{1}_{|y|\le L}&\le\varepsilon^{\frac{1}{12}},
		\end{split}\\
		\begin{split}
			\left|\partial^\gamma\widetilde{W}(y,-\log\varepsilon)\right|\mathbbm{1}_{|y|\le l}\overset{|\gamma|=4}&{\le}\varepsilon^{\frac{1}{8}}
		\end{split}\\
		\begin{split}
			\left|\partial^\gamma\widetilde{W}(0,-\log\varepsilon)\right|\overset{|\gamma|=3}&{\le}\varepsilon^{\frac{1}{2}-\frac{1}{k-3}}.
		\end{split}	
	\end{align}
	\label{initial condition of tilde W}
\end{subequations}

For $W(y,-\log\varepsilon)$, from (\ref{initial data for w in physical variable, 0th and 1st order, outside}) that for all $y\in\mathcal{X}_0\in\{|y|\ge L\}$, there hold that 
\begin{equation}
	\begin{aligned}
		\eta^{-\frac{1}{6}}\left|{W}(y,-\log\varepsilon)\right|&\le1+\varepsilon^{\frac{1}{11}},\\
		\eta^{\frac{1}{3}}\left|\partial_1{W}(y,-\log\varepsilon)\right|&\le1+\varepsilon^{\frac{1}{12}},\\
		\left|\partial_2{W}(y,-\log\varepsilon)\right|&\le\frac{3}{4}.
	\end{aligned}
	\label{initial condition of W, 0th and 1st order}
\end{equation}
and from (\ref{initial data for w in physical variable, 2nd order}) we have that for all $y\in\mathcal{X}_0$ there hold that
\begin{equation}
	\begin{aligned}
		\eta^{\frac{1}{3}}\left|\partial_{11}{W}(y,-\log\varepsilon)\right|&\le1,\\
		\eta^{\frac{1}{3}}\left|\partial_{12}{W}(y,-\log\varepsilon)\right|&\le1,\\
		\eta^{\frac{1}{6}}\left|\partial_{22}{W}(y,-\log\varepsilon)\right|&\le1.\\
	\end{aligned}
	\label{initial condition of W, 2nd order}
\end{equation}

From (\ref{initial data for z in physical variable})(\ref{initial data for a in physical variable}), we have that the initial data of $Z$ and $A$ satisfy
\begin{equation}
	\left|\partial^\gamma Z(y,-\log\varepsilon)\right|\le\left\{
	\begin{aligned}
		&\varepsilon^{\frac{3}{2}},\ \ \ \gamma_1>0,\ |\gamma|=2\\
		&\varepsilon,\ \ \ \ \ \gamma_1=0,\ |\gamma|\le2,
	\end{aligned}\right.
	\label{initial condition of Z}
\end{equation}
\begin{equation}
	\left|\partial^\gamma A(y,-\log\varepsilon)\right|\le\left\{
	\begin{aligned}
		&\varepsilon^{\frac{3}{2}},\ \ \ \gamma=(1,0)\\
		&\varepsilon,\ \ \ \ \ \gamma_1=0,\ |\gamma|\le2.
	\end{aligned}\right.
	\label{initial condition of A}
\end{equation}

Furthermore, from (\ref{initial data for vorticity in physical variable}) we know the specific vorticity satisfies
\begin{equation}
	\left\|\Omega(\cdot,-\log\varepsilon)\right\|_{L^\infty}\le1.
\end{equation}

Finally from (\ref{initial sobolev norm in physical variable}) we have
\begin{equation}
	\varepsilon\|W(\cdot,-\log\varepsilon)\|_{\dot H^k}^2+\|Z(\cdot,-\log\varepsilon)\|_{\dot H^k}^2+\|A(\cdot,-\log\varepsilon)\|_{\dot H^k}^2\le\varepsilon.
	\label{initial sobolev norm}
\end{equation}

\begin{theorem}[Main theorem in self-similar coordinate]
	Suppose $W(y,-\log\varepsilon)$,$Z(y,-\log\varepsilon)$, $A(y,-\log\varepsilon)\in H^k(\mathbb{R}^2)$ with integer $k$ large enough, and they satisfy (\ref{initial spatial support})-(\ref{initial sobolev norm}), and the initial data of modulation variables $(\kappa,\xi,n_2,\tau,\phi)$ satisfy (\ref{initial condition of modulation variables})(\ref{initial condition for kappa}), then there exists a choice of $\varepsilon\ll1$, such that the system (\ref{evolution of W,Z,A}) coupled with (\ref{evolution of kappa})(\ref{evolution of phi})(\ref{evolution of tau})(\ref{evolution of xi}) admits a global solution, and the solution $(W,Z,A,\kappa,\phi,\tau,\xi)$ satisfies the bootstrap assumptions(which are stated in the next section) for all time. 
\end{theorem}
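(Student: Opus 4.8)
The plan is to run a continuity (bootstrap) argument in the self-similar time $s\in[-\log\varepsilon,\infty)$. First I would fix the full list of bootstrap assumptions (to be stated in the next section): weighted pointwise bounds, with a large constant $M$ and fixed powers of $\varepsilon$, on $\widetilde W=W-\ovl W$ and on $\partial^\gamma W,\partial^\gamma Z,\partial^\gamma A$ for $1\le|\gamma|\le 4$, using the anisotropic weights $\eta=1+y_1^2+y_2^6$ and $\tilde\eta=1+|y|^2+y_2^6$ matched to the decay of $\ovl W$ in \eqref{estimate of ovl W}--\eqref{estimates of D^2 ovl W}; the top-order energy $\sqrt\varepsilon\,\|W\|_{\dot H^k}+\|Z\|_{\dot H^k}+\|A\|_{\dot H^k}$; the $L^\infty$ norm of the self-similar specific vorticity; and the sizes of $\kappa-\kappa_0,\tau,\xi,n_2,\phi$ and their $s$-derivatives. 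All of these hold at $s=-\log\varepsilon$ with room to spare, by the initial hypotheses \eqref{initial condition of tilde W}--\eqref{initial sobolev norm} and \eqref{initial condition of modulation variables}. I would then establish local well-posedness in $H^k$ for the coupled system \eqref{evolution of W,Z,A} together with the modulation ODEs, so that a solution exists on a maximal half-open interval of $s$ on which all the bootstrap bounds hold; the task is to show that, provided $\varepsilon$ is chosen small enough, on that interval every bound in fact improves by a fixed factor (e.g.\ its constant is halved), whence continuity forces the interval to be all of $[-\log\varepsilon,\infty)$ --- which is the assertion.

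The improvement breaks into three groups. \emph{(i) Pointwise estimates.} I would integrate \eqref{equation of tilde W}, \eqref{evolution of derivatives of tilde W}, and the corresponding equations for $\partial^\gamma Z,\partial^\gamma A$, along the characteristics of the transport velocities $\mathcal V_W,\mathcal V_Z,\mathcal V_A$, which are \emph{expanding} because of the $(\tfrac32 y_1,\tfrac12 y_2)$ part. For $Z$, $A$ and the higher derivatives of $W$ one reads genuine damping from the coefficients $\tfrac{3\gamma_1+\gamma_2-1}{2}+\beta_\tau(1+\gamma_1\mathbbm{1}_{\gamma_1\ge2}J\partial_1 W)$ and $\tfrac{3\gamma_1+\gamma_2}{2}+\beta_2\beta_\tau\gamma_1 J\partial_1 W$; the forcing terms $F_W^{(\gamma)},F_Z^{(\gamma)},F_A^{(\gamma)}$ of \eqref{forcing terms of derivatives of W}--\eqref{forcing terms of derivatives of A} are controlled by combining (a) the support property \eqref{support of DN and DT}, which confines all coordinate-generated terms to $|y_1|\lesssim\varepsilon^{1/2}e^{3s/2}$, $|y_2|\lesssim\varepsilon^{1/6}e^{s/2}$ and supplies the negative powers of $e^s$ that defeat the positive exponentials in \eqref{forcing terms of W,Z,A}, (b) the smallness of $Z$, $A$ and of the modulation velocities from (ii), and (c) the weighted bounds on $\ovl W$. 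The delicate sub-case is $\widetilde W$ and its first three derivatives near $y=0$, where the reaction coefficient in \eqref{evolution of derivatives of tilde W} is not sign-definitely dissipative; these are closed using the modulation constraints $\partial^\gamma W(0,s)=\partial^\gamma\ovl W(0)$ (so that by \eqref{evaluation of ovl W at 0} $\widetilde W$ vanishes to fourth order at the origin), a Taylor expansion off the origin bounded by the genuinely damped fourth-order estimate, and the fact that each characteristic dwells only a bounded amount of self-similar time near $y=0$ before being flushed to large $|y|$, where the weights take over.

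\emph{(ii) Modulation variables.} These are defined by the scalar constraints pinning $W$, $\nabla W$, $\nabla^2 W$ (and one third-order component) at $y=0$ to the values of $\ovl W$; differentiating the constraints in $s$ and substituting \eqref{evolution of W,Z,A} produces a linear algebraic system for $(\dot\kappa,\dot\tau,\dot\xi,\dot n_2,\dot\phi)$ with a near-identity matrix (its invertibility being itself part of the bootstrap), and solving it after inserting the bounds from (i) gives the claimed smallness; integrating yields the existence of $\tau_*,\xi_*,n_*$ and $\tau_*=O(\varepsilon^2)$, and one checks by a short uniqueness argument that the ODEs preserve the constraints. \emph{(iii) Top-order energy estimate.} I would apply $\partial^\gamma$ with $|\gamma|=k$ to \eqref{evolution of W,Z,A}, pair with $\varepsilon\partial^\gamma W$, $\partial^\gamma Z$, $\partial^\gamma A$ in $L^2$, and integrate by parts: the transport term gives a good contribution $\approx-\int|\partial^\gamma W|^2$ (and similarly for $Z,A$), and the explicit coefficient $\tfrac{3\gamma_1+\gamma_2-1}{2}\ge\tfrac{k-1}{2}$, for $k\ge 18$, is large enough to absorb the $\gamma_1 J\partial_1 W$ term, the commutators $[\partial^\gamma,J]W$ and $\partial^{\gamma-\beta}(JW)\,\partial_1\partial^\beta W$ and their analogues, and the forcing $\partial^\gamma F_{W,Z,A}$, all of these lower-times-top-order products being estimated by Moser-type and Gagliardo--Nirenberg interpolation between the pointwise bootstrap bounds and the $\dot H^k$ norm itself; this produces a differential inequality $\tfrac{d}{ds}E_k\le-cE_k+(\text{small})$ for the energy $E_k$, which closes by Gr\"onwall. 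The specific vorticity is purely transported, so its $L^\infty$ bound is propagated directly.

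The main obstacle I expect is closing (i) and (iii) \emph{simultaneously}: the pointwise scheme loses a derivative through the coordinate forcing and so must borrow the $\dot H^k$ bound, while the energy estimate needs the pointwise bounds for its nonlinear terms, so the two must be arranged so the loop closes without circularity. Within (i) the genuinely tight spot is the behaviour of $\widetilde W,\partial_1\widetilde W,\partial_1^2\widetilde W$ near the origin --- the non-shock direction carries room but the shock direction is only marginally controlled, forcing one to use the precise Burgers-profile data \eqref{evaluation of ovl W at 0}, the exact choice of constraints, and a careful accounting of how long characteristics stay near $y=0$, together with checking that the anisotropic weights $\eta,\tilde\eta$ are propagated by all three flows. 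The coupling to the modulation ODEs is the last delicate layer: the near-identity system for the modulation velocities must stay invertible along the whole evolution, which in turn reduces to the bootstrap smallness of $\widetilde W$ and its low-order derivatives at the origin.
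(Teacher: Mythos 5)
Your overall scaffolding---local well-posedness, a continuity argument on $s\in[-\log\varepsilon,\infty)$, pointwise estimates along the characteristics of $\mathcal V_W,\mathcal V_Z,\mathcal V_A$, closure of the modulation ODEs via the origin constraints, and a top-order $\dot H^k$ energy estimate to supply the missing derivative---matches the paper's strategy. But the energy-estimate step (iii) as you describe it has a real gap. You propose to apply $\partial^\gamma$ with $|\gamma|=k$ to the $(W,Z,A)$ system and pair with $(\varepsilon\partial^\gamma W,\partial^\gamma Z,\partial^\gamma A)$. The trouble is in the forcing: from \eqref{forcing terms of W,Z,A}, $F_W$ contains $-2\beta_3\beta_\tau S\,T_2\,\partial_2 A$ at the \emph{same} derivative order as the unknown, so $\partial^\gamma F_W$ produces $S\,T_2\,\partial_2\partial^\gamma A$, i.e.\ $k+1$ derivatives on $A$. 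Pairing with $\partial^\gamma W$ does not let you integrate this by parts into something controllable, because $W$ and $A$ are different fields; and a fixed $\varepsilon$-weight cannot mimic the $s$-dependent rescaling that relates $W$ to the symmetric variables. What the paper actually does is pass to $(U,S)$, which satisfy \eqref{equation of U,S}: these are transported by the \emph{same} velocity $\mathcal V_A$, and the acoustic cross-terms $-2\beta_3\beta_\tau S(e^{s/2}JN_i\partial_1\partial^\gamma S+e^{-s/2}\delta_{i2}\partial_2\partial^\gamma S)$ in the $U_i$-equation come in exact mirror image with those in the $S$-equation, so that after multiplying by $\partial^\gamma U_i$ and $\partial^\gamma S$ and summing, the top-order terms combine into a total derivative and can be integrated by parts onto $D(SJN)$. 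On top of that the paper uses the modified norm $E_k^2=\sum_{|\gamma|=k}\lambda^{\gamma_2}(\|\partial^\gamma U\|_{L^2}^2+\|\partial^\gamma S\|_{L^2}^2)$ with a small $\lambda$; this $\lambda^{\gamma_2}$-weight is essential to absorb the $\gamma_2\,\partial_2 g_A\,\partial_1\partial^{\gamma-e_2}$ commutator terms by Young's inequality, and nothing in your sketch accounts for them. Having obtained the $(U,S)$ energy bound, the paper then transfers it back to $(W,Z,A)$ via Lemma~\ref{W,Z,A controlled by U,S}; this is what produces the $e^{s/2}$ factor separating the $W$ bound from the $Z,A$ bounds. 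Your formulation, as written, would not close.

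Two smaller deviations. First, the paper closes the $\partial_1A$ bootstrap not through the transport equation but through the algebraic identity $Je^{3s/2}\partial_1 A=-(\alpha S)^{1/\alpha}\Omega-\dots$ that relates $\partial_1 A$ to the (uniformly bounded, purely transported) specific vorticity $\Omega$, entirely bypassing the dynamics; your scheme, which closes each derivative of $A$ through characteristics, would need a separate argument for this component. Second, you describe the modulation system as having a ``near-identity'' matrix, but from \eqref{evaluation of ovl W at 0} and the constraint $\partial^\gamma\widetilde W^0=O(\varepsilon^{1/4})$ the relevant matrix is $\partial_1\nabla^2W^0=\mathrm{diag}(6,2)+O(\varepsilon^{1/4})$---a fixed nondegenerate Hessian coming from the cubic structure of $\ovl W$, not a perturbation of the identity. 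Finally, the paper closes the spatial-support bootstrap (IB-S) not merely by bounding the trajectories but through a separate local energy argument on $Q_{big}\setminus Q_{small}$, showing $W,Z,A$ remain identically constant outside $\tfrac78\mathcal X(s)$; your sketch does not touch this piece.
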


\section{Bootstrap argument}
To establish global existence in self-similar coordinate, we set up bootstrap argument. 
\subsection{Bootstrap assumption}
We first state the bootstrap assumptions. 
\begin{enumerate}
	\item[(1)]\emph{Assumptions on modulation variables}. For the modulation variables, we assume that
	\begin{equation}\tag{B-M}
		\left\{
		\begin{aligned}
			&\frac{1}{2}\kappa_0\le\kappa\le2\kappa_0,   &&|\dot\kappa|\le M\\
			&|\tau|\le M\varepsilon^2,   &&|\dot{\tau}|\le Me^{-s}\\
			&|\xi|\le M^{\frac{1}{4}}\varepsilon, &&|\dot\xi|\le M^{\frac{1}{4}}\\
			&|n_2|\le M^2\varepsilon^{\frac{3}{2}}, &&|\dot n_2|\le M^2\varepsilon^{\frac{1}{2}}\\
			&|\phi|\le M^2\varepsilon, &&|\dot\phi|\le M^2.
		\end{aligned}
		\right.
		\label{bootstrap assumptions of dynamic variables}
	\end{equation}
	
	\item[(2)]\emph{Assumptions on Spatial support bootstrap}. We define  $\mathcal{X}(s):=\left\{|y_1|\le2\varepsilon^{\frac{1}{2}}e^{\frac{3}{2}s},|y_2|\le2\varepsilon^{\frac{1}{6}}e^{\frac{s}{2}}\right\}$, and assume that
	\begin{equation}\tag{B-S}
		\mathrm{supp}(DW,DZ,DA)\subset\mathcal{X}(s).
		\label{bootstrap assumptions for the spatial support}
	\end{equation}
	We will show that this assumption together with (\ref{support of DN and DT}) imply that $\mathrm{supp}(DU,DS)\subset\mathcal{X}(s)$ in Lemma \ref{spatial support of DU,DS}. 
	
	\item[(3)]\emph{Assumptions on $W$ and $\widetilde{W}$}. For $|\gamma|\le2$, we assume that either $\partial^\gamma W$ is close to $\partial^\gamma\ovl W$ , or it behaves like $\partial^\gamma\ovl W$. More precisely, we assume that
	\begin{equation}\tag{B-$W$}
		\left\{\begin{aligned}
			&|W|\le(1+\varepsilon^{\frac{1}{20}})\eta^{\frac{1}{6}},\ \ 
			&|\partial_1W|\le2\eta^{-\frac{1}{3}},\ \ 
			&|\partial_2W|\le1\\
			&|\partial_{11}W|\le M^{\frac{1}{3}}\eta^{-\frac{1}{3}},\ \ 
			&|\partial_{12}W|\le M^{\frac{2}{3}}\eta^{-\frac{1}{3}},\ \ 
			&|\partial_{22}W|\le M\eta^{-\frac{1}{6}}.
		\end{aligned}\right.
		\label{bootstrap assumptions of W}
	\end{equation}
	Noting that by $\mathrm{supp}\ DW\subset\mathcal{X}(s)$ and $W(0)=\ovl{W}(0)=0$, we have
	\begin{equation}
		|W(y)|\le \int_0^{y_1}2\eta^{-\frac{1}{3}}(y_1',0)dy_1'+\|\partial_2W\|_{L^\infty}|y_2|\lesssim \varepsilon^{\frac{1}{6}}e^{\frac{s}{2}}.
		\label{estimate of W}
	\end{equation}
	
	For $\widetilde{W}$ we assume that
	\begin{equation}\tag{B-$\widetilde{W}$-1}
		\left\{
		\begin{aligned}
			&\left|\widetilde{W}\right|\mathbbm{1}_{|y|\le L}\le\varepsilon^{\frac{1}{11}}\eta^{\frac{1}{6}}\\
			&\left|\partial_1\widetilde{W}\right|\mathbbm{1}_{|y|\le L}\le\varepsilon^{\frac{1}{12}}\eta^{-\frac{1}{3}}\\
			&\left|\partial_2\widetilde{W}\right|\mathbbm{1}_{|y|\le L}\le\varepsilon^{\frac{1}{13}}.
		\end{aligned}
		\right.
		\label{bootstrap assumptions of tilde W when |y|<L}
	\end{equation}
	where $L=\varepsilon^{-\frac{1}{10}}$, and
	\begin{equation}\tag{B-$\widetilde{W}$-2}
		\left|\partial^\gamma\widetilde{W}\right|\mathbbm{1}_{|y|\le l}\le\log^4 M\varepsilon^{\frac{1}{10}}|y|^{4-|\gamma|}+M\varepsilon^{\frac{1}{4}}|y|^{3-|\gamma|}\ \ \ \ \ (\forall|\gamma|\le3),
		\label{bootstrap assumptions of tilde W when |y|<l, |gamma|<4}
	\end{equation}
	\begin{equation}\tag{B-$\widetilde{W}$-3}
		\left|\partial^\gamma\widetilde{W}\right|\mathbbm{1}_{|y|\le l}\le\frac{1}{2}\log^{|\check\gamma|} M\varepsilon^{\frac{1}{10}}\ \ \ \ \ (\forall|\gamma|=4).
		\label{bootstrap assumptions of tilde W when |y|<l, |gamma|=4}
	\end{equation}
	where $l=(\log M)^{-5}$, and
	\begin{equation}\tag{B-$\widetilde{W}^0$}
		\left|\partial^\gamma\widetilde{W}(0,s)\right|\le\varepsilon^{\frac{1}{4}}\ \ \ \ \ (\forall|\gamma|=3,\forall s\ge s_0).
		\label{bootstrap assumptions of tilde W when y=0, |gamma|=3}
	\end{equation}

	\item [(4)]\emph{Assumptions on $Z$ and $A$}. For $Z$, $A$ and their derivatives up to second order, we assume they are small or have decay properties. More precisely, we assume that
	\begin{equation}\tag{B-$Z$}
		\left\{
		\begin{aligned}
			&|Z|\le M\varepsilon,\ \ 
			&|\partial_1Z|\le M^{\frac{1}{2}}e^{-\frac{3}{2}s},\ \ 
			&|\partial_2Z|\le M\varepsilon^{\frac{1}{2}}e^{-\frac{s}{2}}\\
			&|\partial_{11}Z|\le M^{\frac{1}{2}}e^{-\frac{3}{2}s},\ \ 
			&|\partial_{12}Z|\le Me^{-\frac{3}{2}s},\ \ 
			&|\partial_{22}Z|\le Me^{-s}.
		\end{aligned}
		\right.
		\label{bootstrap assumptions of Z}
	\end{equation}
	and
	\begin{equation}\tag{B-$A$}
		\left\{
		\begin{aligned}
			&|A|\le M\varepsilon,\ \ 
			&|\partial_1A|\le Me^{-\frac{3}{2}s}\\
			&|\partial_2A|\le M\varepsilon^{\frac{1}{2}}e^{-\frac{s}{2}},\ \ 
			&|\partial_{22}A|\le Me^{-s}.
		\end{aligned}
		\right.
		\label{bootstrap assumptions of A}
	\end{equation}
\end{enumerate}

\subsection{Bootstrap procedure}
Now we state the improved bootstrap inequality (IB), which supposedly can be deduced from the bootstrap assumptions and the initial conditions: 
\begin{equation}\tag{IB-M}
	\left\{
	\begin{aligned}
		&\frac{3}{4}\kappa_0\le\kappa\le\frac{5}{4}\kappa_0,   &&|\dot\kappa|\le \frac{1}{2}M\\
		&|\tau|\le \frac{1}{4}M\varepsilon^2,   &&|\dot{\tau}|\le \frac{1}{4}Me^{-s}\\
		&|\xi|\le \frac{1}{2}M^{\frac{1}{4}}\varepsilon, &&|\dot\xi|\le \frac{1}{2}M^{\frac{1}{4}}\\
		&|n_2|\le \frac{1}{2}M\varepsilon, &&|\dot n_2|\le \frac{1}{2}M^2\varepsilon^{\frac{1}{2}}\\
		&|\phi|\le \frac{1}{2}M^2\varepsilon, &&|\dot\phi|\le \frac{1}{10}M^2,
	\end{aligned}
	\right.
	\label{refined bootstrap inequality of dynamic variables}
\end{equation}

\begin{equation}\tag{IB-S}
	\mathrm{supp}(DW,DZ,DA)\subset\frac{7}{8}\mathcal{X}(s),
	\label{refined spatial support}
\end{equation}

\begin{equation}\tag{IB-$W$}
	\left\{\begin{aligned}
		&|W|\le(1+\varepsilon^{\frac{1}{19}})\eta^{\frac{1}{6}},\ \ 
		&|\partial_1W|\le\left(1+\varepsilon^{\frac{1}{13}}\right)\eta^{-\frac{1}{3}},\ \ 
		&|\partial_2W|\le\frac{5}{6}\\
		&|\partial_{11}W|\le \frac{1}{2}M^{\frac{1}{3}}\eta^{-\frac{1}{3}},\ \ 
		&|\partial_{12}W|\le \frac{1}{2}M^{\frac{2}{3}}\eta^{-\frac{1}{3}},\ \ 
		&|\partial_{22}W|\le \frac{1}{2}M\eta^{-\frac{1}{6}},
	\end{aligned}\right.
	\label{refined bootstrap inequality of W}
\end{equation}

\begin{equation}\tag{IB-$\widetilde{W}$-1}
	\left\{
	\begin{aligned}
		&\left|\widetilde{W}\right|\mathbbm{1}_{|y|\le L}\le\frac{1}{2}\varepsilon^{\frac{1}{11}}\eta^{\frac{1}{6}}\\
		&\left|\partial_1\widetilde{W}\right|\mathbbm{1}_{|y|\le L}\le\frac{1}{2}\varepsilon^{\frac{1}{12}}\eta^{-\frac{1}{3}}\\
		&\left|\partial_2\widetilde{W}\right|\mathbbm{1}_{|y|\le L}\le\frac{1}{2}\varepsilon^{\frac{1}{13}},
	\end{aligned}
	\right.
	\label{refined bootstrap inequality of tilde W when |y|<L}
\end{equation}

\begin{equation}\tag{IB-$\widetilde{W}$-2}
	\left|\partial^\gamma\widetilde{W}\right|\mathbbm{1}_{|y|\le l}\le\frac{1}{2}\log^4 M\varepsilon^{\frac{1}{10}}|y|^{4-|\gamma|}+\frac{1}{2}M\varepsilon^{\frac{1}{4}}|y|^{3-|\gamma|}\ \ \ \ \ (\forall|\gamma|\le3),
	\label{refined bootstrap inequality of tilde W when |y|<l, |gamma|<4}
\end{equation}

\begin{equation}\tag{IB-$\widetilde{W}$-3}
	\left|\partial^\gamma\widetilde{W}\right|\mathbbm{1}_{|y|\le l}\le\frac{1}{4}\log^{|\check\gamma|} M\varepsilon^{\frac{1}{10}}\ \ \ \ \ (\forall|\gamma|=4),
	\label{refined bootstrap inequality of tilde W when |y|<l, |gamma|=4}
\end{equation}

\begin{equation}\tag{IB-$\widetilde{W}^0$}
	\left|\partial^\gamma\widetilde{W}(0,s)\right|\le\frac{1}{10}\varepsilon^{\frac{1}{4}}\ \ \ \ \ (\forall|\gamma|=3,\forall s\ge s_0),
	\label{refined bootstrap inequality of tilde W when y=0, |gamma|=3}
\end{equation}

\begin{equation}\tag{IB-$Z$}
	\left\{
	\begin{aligned}
		&|Z|\le \frac{1}{2}M\varepsilon,\ \ 
		&|\partial_1Z|\le \frac{1}{2}M^{\frac{1}{2}}e^{-\frac{3}{2}s},\ \ 
		&|\partial_2Z|\le \frac{1}{2}M\varepsilon^{\frac{1}{2}}e^{-\frac{s}{2}}\\
		&|\partial_{11}Z|\le \frac{1}{2}M^{\frac{1}{2}}e^{-\frac{3}{2}s},\ \ 
		&|\partial_{12}Z|\le \frac{1}{2}Me^{-\frac{3}{2}s},\ \ 
		&|\partial_{22}Z|\le \frac{1}{2}Me^{-s},
	\end{aligned}
	\right.
	\label{refined bootstrap inequality of Z}
\end{equation}

\begin{equation}\tag{IB-$A$}
	\left\{
	\begin{aligned}
		&|A|\le M\varepsilon,\ \ 
		&|\partial_1A|\le \frac{1}{2}Me^{-\frac{3}{2}s}\\
		&|\partial_2A|\le \frac{1}{2}M\varepsilon^{\frac{1}{2}}e^{-\frac{s}{2}},\ \ 
		&|\partial_{22}A|\le \frac{1}{2}Me^{-s}.
	\end{aligned}
	\right.
	\label{refined bootstrap inequality of A}
\end{equation}


Compare to the 3d case in \cite{buckmaster2022formation}, we carefully close the bootstrap argument of spatial support in subsection \ref{upper bound of the trajectories}. To prove $W,Z,A$ are constant outside $\frac{7}{8}\mathcal{X}(s)$, we define two rectangles $Q_{big}=\{|y_1|\le M',|y_2|\le M'\}$ and $Q_{small}(s)$ satisfying 
	$$\frac{3}{4}\mathcal{X}(s)\subset Q_{small}(s)\subset\frac{7}{8}\mathcal{X}(s)\subset Q_{big},$$
	where $M'$ can be chosen arbitrarily large. Then we consider the quantity
		$$\int_{Q_{big}\setminus Q_{small}} E(y,s)dy,$$ where $E(y,s)=\frac{1}{2}\left(e^{-s}(W-W_\infty)^2+(Z-Z_\infty)+2(A-A_\infty)^2\right)$. 
		From the equations of $W,Z,A$ and bootstrap assumptions, we find that
		$$\frac{d}{ds}\int_{Q_{big}\setminus Q_{small}} E\le C\int_{Q_{big}\setminus Q_{small}} E.$$
		By Gronwall's inequality and the initial conditions, we can deduce that $W,Z,A$ are constant outside $Q_{small}$.

\section{Immediate corollaries of bootstrap assumptions}
\subsection{Blow-up time}
By the definition of $s$, we have $t=\tau-e^{-s}$. From the bootstrap assumption of $\tau$ and $s\ge-\log\varepsilon$, we can see that if the bootstrap assumptions hold on the interval $[t_0,t]=[-\varepsilon,t]$, then $t$ satisfies
\begin{equation}
	|t-t_0|=|t+\varepsilon|\le\varepsilon+M\varepsilon^2+e^{\log\varepsilon}\le3\varepsilon.
	\label{bootstrap time}
\end{equation}
The blow-up time $T_*$ is defined to be $T_*=\tau(T_*)$. 
\subsection{Closure of bootstrap argument for $W$,$\widetilde{W}$ near the origin}
From estimates (\ref{estimate of ovl W})(\ref{estimates of D ovl W}) of $\ovl{W}$ and bootstrap assumptions (\ref{bootstrap assumptions of tilde W when |y|<L}), we have
\begin{equation}
	\left\{
	\begin{aligned}
		&\left|W\right|\mathbbm{1}_{|y|\le L}\le(1+\varepsilon^{\frac{1}{11}})\eta^{\frac{1}{6}}\\
		&\left|\partial_1W\right|\mathbbm{1}_{|y|\le L}\le(1+\varepsilon^{\frac{1}{12}})\eta^{-\frac{1}{3}}\\
		&\left|\partial_2W\right|\mathbbm{1}_{|y|\le L}\le\frac{2}{3}+\varepsilon^{\frac{1}{13}}.
	\end{aligned}
	\right.
	\label{bootstrap estimate of W and DW}
\end{equation}
Thus we closed the bootstrap argument for $W$ and $DW$ in the region $\{|y|\le L\}$, and by $D^2\widetilde{W}(0,s)=0$, the bootstrap argument for $D^2W$ in $\{|y|\le l\}$ is automatically closed. 

Note that by (\ref{bootstrap estimate of W and DW})(\ref{bootstrap assumptions of W}), for $\varepsilon$ taken small enough, we have
\begin{equation}
	\left|\partial_1W\right|\le(1+\varepsilon^{\frac{1}{12}})\eta^{-\frac{1}{3}}\mathbbm{1}_{|y|\le L}+2\eta^{-\frac{1}{3}}\mathbbm{1}_{|y|>L}\le1+\varepsilon^{\frac{1}{12}}.
	\label{estimate of D1W that will appear in damping terms}
\end{equation}
This bound will be used in the estimate of the damping terms. 

Now we prove (\ref{refined bootstrap inequality of tilde W when |y|<l, |gamma|<4}) for $\widetilde{W}$:  
\begin{equation}
	\begin{aligned}
		\left|\partial^\gamma\widetilde{W}\right|\mathbbm{1}_{|y|\le l}\overset{|\gamma|=3}&{\le}\left|\partial^\gamma\widetilde{W}(0,s)\right|+\left\|D\partial^{\gamma}\widetilde{W}\right\|_{L^\infty(|y|\le l)}|y|\\
		&{\le}\varepsilon^{\frac{1}{4}}+\frac{1}{2}\log^4 M\varepsilon^{\frac{1}{10}}|y|;
	\end{aligned}
\end{equation}
if $|\gamma|\le2$, we have that
\begin{equation}
	\left|\partial^\gamma\widetilde{W}\right|\mathbbm{1}_{|y|\le l}\overset{|\gamma|\le2}{\le}\left\|D\partial^{\gamma}\widetilde{W}(\cdot,s)\right\|_{L^\infty(|\cdot|\le |y|)}|y|.
\end{equation}

\subsection{Spatial support of unknowns}
For the support of unknowns, we have the following lemma. 
\begin{lemma}\label{spatial support of DU,DS}
	$\mathrm{supp}\ (DU,DS)\subset \mathcal{X}(s)$. 
\end{lemma}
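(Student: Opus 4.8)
The plan is to exploit the algebraic relations \eqref{U,S in terms of W,Z,A} expressing $(U,S)$ in terms of $(W,Z,A)$, together with the finite-speed-of-support information already encoded in the bootstrap assumption \eqref{bootstrap assumptions for the spatial support} and the support property \eqref{support of DN and DT} of $N-\tilde e_1$ and $T-\tilde e_2$. The point is that outside $\mathcal{X}(s)$ the triple $(W,Z,A)$ is constant in $y$ (its derivatives vanish there), and the coordinate vectors $N,T$ equal $\tilde e_1,\tilde e_2$ there (since $\operatorname{supp}_x(N-\tilde e_1,T-\tilde e_2)$ sits inside a set which, after passing to $y$, is contained in $\mathcal{X}(s)$); hence $(U,S)$, being a fixed smooth function of $(W,Z,A,N,T,\kappa,s)$, must itself be constant in $y$ outside $\mathcal{X}(s)$, so $(DU,DS)$ is supported in $\mathcal{X}(s)$.

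Concretely, first I would record that by \eqref{support of DN and DT}, written in self-similar variables, the support of $D(N-\tilde e_1)$ and $D(T-\tilde e_2)$ is contained in $\{|y_1|\le\frac{3}{2}\varepsilon^{1/2}e^{3s/2},\ |y_2|\le\frac{3}{2}\varepsilon^{1/6}e^{s/2}\}\subset\mathcal{X}(s)$, so both $N$ and $T$ are constant (equal to $\tilde e_1,\tilde e_2$ respectively) on the complement of $\mathcal{X}(s)$. Next, differentiating \eqref{U,S in terms of W,Z,A},
\begin{equation*}
	DU=\tfrac12 D\!\left(e^{-s/2}W+Z+\kappa\right)N+\tfrac12\left(e^{-s/2}W+Z+\kappa\right)DN+(DA)\,T+A\,DT,\qquad DS=\tfrac12 D\!\left(e^{-s/2}W-Z\right),
\end{equation*}
where $\kappa=\kappa(s)$ contributes nothing to the $y$-derivative. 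On the complement of $\mathcal{X}(s)$ every term on the right vanishes: $DW,DZ,DA$ vanish by \eqref{bootstrap assumptions for the spatial support}, and $DN,DT$ vanish by the support statement above. Therefore $DU=0$ and $DS=0$ there, which is exactly $\operatorname{supp}(DU,DS)\subset\mathcal{X}(s)$.

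The only subtlety — and the step I would treat most carefully — is the bookkeeping of the coordinate changes when translating \eqref{support of DN and DT} into the $y$ variable: the set $\{|x_1|\le\frac32\varepsilon^{1/2},|x_2|\le\frac32\varepsilon^{1/6}\}$ must be shown to map, under $y_1=x_1 e^{3s/2}$, $y_2=x_2 e^{s/2}$, into $\mathcal{X}(s)=\{|y_1|\le 2\varepsilon^{1/2}e^{3s/2},|y_2|\le 2\varepsilon^{1/6}e^{s/2}\}$, which is immediate since $\frac32<2$, and one must also check that the $N,T$ viewed as functions of $(y,s)$ (as stipulated in the overloading convention at the end of the self-similar transformation subsection) inherit this support. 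With that in hand the argument is a one-line consequence of the product rule; no estimates or Grönwall-type arguments are needed, so there is no genuine analytic obstacle, only the need to keep the chain of substitutions straight.
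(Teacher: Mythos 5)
Your argument is correct and essentially coincides with the paper's own proof: both reduce the claim, via the relation $U=\tfrac12(e^{-s/2}W+Z+\kappa)N+AT$, $S=\tfrac12(e^{-s/2}W-Z+\kappa)$ and the bootstrap support of $(DW,DZ,DA)$, to the support statement for $(D_xN,D_xT)$, which in $y$-variables sits inside $\mathcal{X}(s)$. The only cosmetic difference is that you invoke the already-stated inclusion \eqref{support of DN and DT} directly, whereas the paper re-derives the support of $D_xN,D_xT$ from the explicit dependence of $N,T$ on $f_{x_2}$ and the support of $\tilde f_{\tilde x_2}$.
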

\begin{proof}
	According to the spatial support assumption of $(DW,DZ,DA)$, it suffices to show $\mathrm{supp}_x(D_xN,D_xT)\subset\{|x_1|\le2\varepsilon^{\frac{1}{2}},|x_2|\le2\varepsilon^{\frac{1}{6}}\}$. Now by the expression of $N,T$, we only need to show that $\mathrm{supp}_x\ f_{x_2}\subset\{|x_1|\le2\varepsilon^{\frac{1}{2}},|x_2|\le2\varepsilon^{\frac{1}{6}}\}$. Note that $f_{x_2}=\tilde f_{\tilde x_2}(1+\frac{\tilde f_{\tilde x_1}}{1-\tilde f_{\tilde x_1}})$, and $\mathrm{supp}_{\tilde x}\tilde f_{\tilde x_2}\subset\{|\tilde x_1|\le\frac{5}{4}\varepsilon^{\frac{1}{2}},|\tilde x_2|\le\frac{5}{4}\varepsilon^{\frac{1}{6}}\}$, thus we have $\mathrm{supp}_x(D_xN,D_xT)\subset\{|x_1|\le\frac{3}{2}\varepsilon^{\frac{1}{2}},|x_2|\le\frac{3}{2}\varepsilon^{\frac{1}{6}}\}$ by choosing $\varepsilon$ small enough in terms of $M$. 
\end{proof}

From (\ref{spatial support of initial data in physical variable}), we know that in the original $\mathrm{x}$ coordinate, we have
\begin{equation}
	\lim\limits_{|\mathrm{x}|\rightarrow\infty}u(\mathrm{x},-\varepsilon)=\frac{\kappa_0}{2}e_1,\ \lim\limits_{|\mathrm{x}|\rightarrow\infty}\sigma(\mathrm{x},-\varepsilon)=\frac{\kappa_0}{2}.
\end{equation}
From the finite speed propagation of the Euler equations, we have that for all $t\in[-\varepsilon,T_*)$, there hold
\begin{equation}
	\lim\limits_{|\mathrm{x}|\rightarrow\infty}u(\mathrm{x},t)=\frac{\kappa_0}{2}e_1,\ \lim\limits_{|\mathrm{x}|\rightarrow\infty}\sigma(\mathrm{x},t)=\frac{\kappa_0}{2}.
\end{equation}
Note that the coordinate transformation is determined by the modulation variables, and from bootstrap assumptions we can deduce that
\begin{equation}
	y\notin\mathcal{X}(s)\text{ implies that }
	\left\{\begin{aligned}
		&W(y,s)=W_\infty(s)\\
		&Z(y,s)=Z_\infty(s)\\
		&A(y,s)=A_\infty(s)\\
		&S(y,s)=S_\infty(s)\\
		&U(y,s)=U_\infty(s),
	\end{aligned}\right.
\end{equation}
where
\begin{equation}
	\left\{
	\begin{aligned}
		W_\infty(s)&:=\left[\frac{\kappa_0}{2}(n_1+1)-\kappa\right]e^{\frac{s}{2}}\\
		Z_\infty(s)&:=\frac{\kappa_0}{2}(n_1-1)\\
		A_\infty(s)&:=-\frac{\kappa_0}{2}n_2\\
		S_\infty(s)&:=\frac{e^{-\frac{s}{2}}W_\infty+\kappa-Z_\infty}{2}=\frac{\kappa_0}{2}\\
		U_\infty(s)&:=\frac{e^{-\frac{s}{2}}W_\infty+\kappa+Z_\infty}{2}\tilde{e}_1+A_\infty\tilde{e}_2=\frac{\kappa_0n_1}{2}\tilde{e}_1-\frac{\kappa_0n_2}{2}\tilde{e}_2.
	\end{aligned}
	\right.
\end{equation}

\subsection{Estimates related to coordinate transformation}
In this section we will estimate the functions $f$, $J$, $N$, $T$, $Q$, $V$, which only depend on modulation variables. 

\begin{lemma}\label{estimates for functions of coordinate transformation, lemma}
	For any multi-index $\gamma\in \mathbb{Z}_{\ge0}^2$, we have
	\begin{equation}
		\left\{
		\begin{aligned}
			&|\partial_x^\gamma f|\le C_\gamma M^2\varepsilon^{\frac{4}{3}-\frac{\gamma_1}{2}-\frac{\gamma_2}{6}}\\
			&|\partial_x^\gamma(J-1)|\le C_\gamma M^2\varepsilon^{\frac{5}{6}-\frac{\gamma_1}{2}-\frac{\gamma_2}{6}}\\
			&|\partial_x^\gamma (N-\tilde e_1)|\le C_\gamma M^2\varepsilon^{\frac{7}{6}-\frac{\gamma_1}{2}-\frac{\gamma_2}{6}}\\
			&|\partial_x^\gamma (T-\tilde e_2)|\le C_\gamma M^2\varepsilon^{\frac{7}{6}-\frac{\gamma_1}{2}-\frac{\gamma_2}{6}}\\
			&|\partial_x^\gamma (JN-\tilde e_1)|\le C_\gamma M^2\varepsilon^{\frac{5}{6}-\frac{\gamma_1}{2}-\frac{\gamma_2}{6}}\\
			&|\partial_x^\gamma\partial_tf|\le C_\gamma M^2\varepsilon^{\frac{1}{3}-\frac{\gamma_1}{2}-\frac{\gamma_2}{6}}\\
			&\left|\partial_x^\gamma\frac{\partial_tf}{1+f_{x_1}}\right|\le C_\gamma M^2\varepsilon^{\frac{1}{3}-\frac{\gamma_1}{2}-\frac{\gamma_2}{6}}\\
			&\left|\partial_x^\gamma\partial_t N\right|\le C_\gamma M^2\varepsilon^{\frac{1}{6}-\frac{\gamma_1}{2}-\frac{\gamma_2}{6}}\\
			&\left|\partial_x^\gamma\partial_t T\right|\le C_\gamma M^2\varepsilon^{\frac{1}{6}-\frac{\gamma_1}{2}-\frac{\gamma_2}{6}}.\\
		\end{aligned}
		\right.
		\label{estimates of f-dependent functions, inequalities}
	\end{equation}
\end{lemma}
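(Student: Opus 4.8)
\emph{Strategy and Step 1 (estimates for $\tilde f$).} All the quantities in the lemma are built algebraically out of $f$, $f_{x_1}$, $f_{x_2}$, $\partial_t f$ and their derivatives, so the plan is to first establish the bounds in the $\tilde x$-frame for the explicit function $\tilde f$ of (\ref{construction of f}), and then transfer them to the $x$-frame. From (\ref{construction of f}), $\tilde f$ is the product of $\theta(\varepsilon^{-1/2}\tilde x_1)$ with $\int_0^{\tilde x_2}\phi(t)\tilde x_2'\theta(\varepsilon^{-1/6}\tilde x_2')\,d\tilde x_2'$. The integrand is supported in $|\tilde x_2'|\le\tfrac54\varepsilon^{1/6}$ and bounded there by $\tfrac54|\phi|\varepsilon^{1/6}$, so the integral is $\lesssim|\phi|\varepsilon^{1/3}$ uniformly in $\tilde x_2$; each $\partial_{\tilde x_2}$ either hits $\theta(\varepsilon^{-1/6}\cdot)$ inside the integral or removes the integral sign (producing $\phi(t)\tilde x_2\theta(\varepsilon^{-1/6}\tilde x_2)$), in both cases costing a factor $\varepsilon^{-1/6}$ relative to the $|\phi|\varepsilon^{1/3}$ bound, while each $\partial_{\tilde x_1}$ hits $\theta(\varepsilon^{-1/2}\cdot)$ and costs $\varepsilon^{-1/2}$. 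With (B-M) in the form $|\phi|\le M^2\varepsilon$ this yields $|\partial_{\tilde x}^\gamma\tilde f|\le C_\gamma M^2\varepsilon^{4/3-\gamma_1/2-\gamma_2/6}$. Since $\tilde f$ depends on $t$ only through the scalar $\phi(t)$, the frame-$\tilde x$ time derivative $(\partial_t)_{\tilde x}\tilde f$ has the identical structure with $\phi$ replaced by $\dot\phi$, so $|\dot\phi|\le M^2$ gives $|\partial_{\tilde x}^\gamma(\partial_t)_{\tilde x}\tilde f|\le C_\gamma M^2\varepsilon^{1/3-\gamma_1/2-\gamma_2/6}$, the $\varepsilon^{-1}$ loss being exactly the difference between $\dot\phi$ and $\phi$.

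\emph{Step 2 (passing to the $x$-coordinate).} Recall $\tilde x=(x_1+f,x_2)$ with $f(x,t)=\tilde f(\tilde x,t)$. Differentiating these relations gives $f_{x_1}=\tilde f_{\tilde x_1}/(1-\tilde f_{\tilde x_1})$, $f_{x_2}=\tilde f_{\tilde x_2}/(1-\tilde f_{\tilde x_1})$, and $\partial_t f=(\partial_t)_{\tilde x}\tilde f/(1-\tilde f_{\tilde x_1})$. By Step 1, $|\tilde f_{\tilde x_1}|\le C M^2\varepsilon^{5/6}$, so choosing $\varepsilon$ small in terms of $M$ makes $1-\tilde f_{\tilde x_1}$ (equivalently $1+f_{x_1}$) bounded above and away from zero; consequently $\partial_{x_1}=(1+f_{x_1})\partial_{\tilde x_1}$ and $\partial_{x_2}=\partial_{\tilde x_2}+f_{x_2}\partial_{\tilde x_1}$ have, respectively, an $O(1)$ and an $O(\varepsilon^{7/6})$ coefficient in front of the $\tilde x$-derivatives. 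A routine induction on $|\gamma|$ via the multivariate chain rule then shows that the $x$-derivative bounds on $f$ and $\partial_t f$ carry the same $\varepsilon$-scaling as the $\tilde x$-bounds of Step 1: every change-of-variables correction term picks up a surplus power of $\varepsilon$ (together with an extra power of $M$, harmless after shrinking $\varepsilon$) and so affects only the constants $C_\gamma$. This gives the first, sixth and seventh bounds of (\ref{estimates of f-dependent functions, inequalities}); the bound on $\partial_tf/(1+f_{x_1})$ follows by one further quotient estimate using $1+f_{x_1}\sim1$.

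\emph{Step 3 ($J-1$, $N-\tilde e_1$, $T-\tilde e_2$, $JN-\tilde e_1$, $\partial_tN$, $\partial_tT$).} From the displayed formulas for $J,N,T$ one has $J-1=\big(\sqrt{1+f_{x_2}^2}-(1+f_{x_1})\big)/(1+f_{x_1})$, $N-\tilde e_1=\big((1+f_{x_2}^2)^{-1/2}-1,\,-f_{x_2}(1+f_{x_2}^2)^{-1/2}\big)$, the analogous identity for $T-\tilde e_2$, and $JN-\tilde e_1=\big(-f_{x_1}/(1+f_{x_1}),\,-f_{x_2}/(1+f_{x_1})\big)$. Each right-hand side is a smooth function of $(f_{x_1},f_{x_2})$ vanishing at the origin, with leading linear part $-f_{x_1}$ for $J-1$ and for the first slot of $JN-\tilde e_1$, leading linear part $-f_{x_2}$ for the nontrivial slots of $N-\tilde e_1$, $T-\tilde e_2$, $JN-\tilde e_1$, and purely quadratic in the tiny quantity $f_{x_2}$ for the remaining slots. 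Since $|f_{x_1}|=|\partial^{(1,0)}f|\le CM^2\varepsilon^{5/6}$ and $|f_{x_2}|=|\partial^{(0,1)}f|\le CM^2\varepsilon^{7/6}$ by Step 2, the linear part dominates; applying $\partial_x^\gamma$ by the Leibniz and Fa\`{a} di Bruno rules (using Step 2 for $\partial_x^\beta f_{x_1}=\partial_x^{\beta+(1,0)}f$ and $\partial_x^\beta f_{x_2}=\partial_x^{\beta+(0,1)}f$) and noting that the nonlinear contributions only carry \emph{additional} powers of $\varepsilon$, we obtain the exponents $\varepsilon^{5/6-\gamma_1/2-\gamma_2/6}$ for $J-1$ and $JN-\tilde e_1$ and $\varepsilon^{7/6-\gamma_1/2-\gamma_2/6}$ for $N-\tilde e_1$ and $T-\tilde e_2$. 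Finally $N$ and $T$ depend on $t$ only through $f_{x_2}$, and $\partial_t f_{x_2}=\partial_{x_2}\partial_t f+(\text{lower order})$ is controlled by Step 2 by $CM^2\varepsilon^{1/6}$ (one $x_2$-derivative on the $\partial_tf\lesssim M^2\varepsilon^{1/3}$ bound); differentiating the identities for $N,T$ in $t$ and then applying $\partial_x^\gamma$ gives $|\partial_x^\gamma\partial_tN|,|\partial_x^\gamma\partial_tT|\le C_\gamma M^2\varepsilon^{1/6-\gamma_1/2-\gamma_2/6}$.

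\emph{Main obstacle.} The only genuine point is the induction of Step 2. A priori one might worry that composing with $x\mapsto\tilde x$ mixes the anisotropic $y_1$-scaling ($\varepsilon^{-1/2}$ per $\partial_{x_1}$) and $y_2$-scaling ($\varepsilon^{-1/6}$ per $\partial_{x_2}$); but the explicit identities $\partial_{x_1}=(1+f_{x_1})\partial_{\tilde x_1}$ and $\partial_{x_2}=\partial_{\tilde x_2}+f_{x_2}\partial_{\tilde x_1}$ show each weight is individually preserved and that the correction $f_{x_2}\partial_{\tilde x_1}$ gains a surplus $\varepsilon^{7/6}\cdot\varepsilon^{-1/2}=\varepsilon^{2/3}$, which dominates the cost of the $\partial_{\tilde x_2}$ it replaces only up to factors that are absorbed into $C_\gamma M^2$ once $\varepsilon$ is small relative to $M$. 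Everything else is bookkeeping with Leibniz and the chain rule.
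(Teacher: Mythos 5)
Your proposal is correct and takes essentially the same route as the paper's own argument: first bound $\partial_{\tilde x}^\gamma\tilde f$ and $\partial_{\tilde x}^\gamma\partial_t\tilde f$ from the explicit form of $\tilde f$ together with the bootstrap bounds on $\phi,\dot\phi$; then transfer to the $x$-frame via $\partial_{x_1}=(1+f_{x_1})\partial_{\tilde x_1}$, $\partial_{x_2}=\partial_{\tilde x_2}+f_{x_2}\partial_{\tilde x_1}$ (equivalently the paper's $\partial_{x_1}=\partial_{\tilde x_1}/(1-\tilde f_{\tilde x_1})$, etc.), with the surplus $\varepsilon$-powers of the correction terms absorbed into $C_\gamma$; and finally treat $J-1$, $N-\tilde e_1$, $T-\tilde e_2$, $JN-\tilde e_1$ as smooth functions of $(f_{x_1},f_{x_2})$ vanishing at the origin and read off the exponents from their leading linear parts. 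The paper spells out the Fa\`a di Bruno/Leibniz bookkeeping that you summarize as a "routine induction," but the underlying ideas and the structure of the argument coincide.
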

\begin{proof}
	From the expression of $\tilde f$ and the bootstrap assumption for $\phi$ and $\dot\phi$, it is not hard to see that $|\partial_{\tilde x}^\gamma\tilde f|\le C_\gamma M^2\varepsilon^{\frac{4}{3}-\frac{\gamma_1}{2}-\frac{\gamma_2}{6}}$, $|\partial_{\tilde x}^\gamma\partial_t\tilde f|\le C_\gamma M^2\varepsilon^{\frac{1}{3}-\frac{\gamma_1}{2}-\frac{\gamma_2}{6}}$.
	
	Using chain rule, one can see that
	\begin{equation}
		\left\{
		\begin{aligned}
			&\partial_{x_1}=\frac{\partial_{\tilde x_1}}{1-\tilde f_{\tilde x_1}}\\
			&\partial_{x_2}=\frac{\tilde f_{\tilde x_2}}{1-\tilde f_{\tilde x_1}}\partial_{\tilde x_1}+\partial_{\tilde x_2}.
		\end{aligned}\right.
	\end{equation}
	By Faà di Bruno's formula, we have
	\begin{equation}
		\begin{aligned}
			\left|\partial_{\tilde x}^\gamma\left(\frac{1}{1-\tilde f_{\tilde x_1}}\right)\right|\overset{\gamma>0}&{\lesssim}_{|\gamma|}\sum\limits_{\sum\limits_{\beta\le\gamma}\beta m_\beta=\gamma}\left|1-\tilde f_{\tilde x_1}\right|^{-1-\sum\limits_{\beta\le\gamma}m_\beta}\prod_{\beta\le\gamma}\left|\partial_{\tilde x}^\beta\tilde f_{\tilde x_1}\right|^{m_\beta}\\
			\overset{\varepsilon\ll1}&{\lesssim}\sum\limits_{\sum\limits_{\beta\le\gamma}\beta m_\beta=\gamma}\left(1-\varepsilon^{\frac{1}{2}}\right)^{-\sum\limits_{\beta\le\gamma}m_\beta}\prod_{\beta\le\gamma}\left(M^2\varepsilon^{\frac{4}{3}-\frac{\beta_1+1}{2}-\frac{\beta_2}{6}}\right)^{m_\beta}\\
			&\lesssim\varepsilon^{-\frac{\gamma_1}{2}-\frac{\gamma_2}{6}}\sum\limits_{\sum\limits_{\beta\le\gamma}\beta m_\beta=\gamma}\left((1-\varepsilon^{\frac{1}{2}})M^2\varepsilon^{\frac{5}{6}}\right)^{\sum\limits_{\beta\le\gamma}m_\beta}\lesssim M^2\varepsilon^{\frac{5}{6}-\frac{\gamma_1}{2}-\frac{\gamma_2}{6}}.
		\end{aligned}
	\end{equation}
	
	And by Leibniz rule, we have
	\begin{equation}
		\begin{aligned}
			\left|\partial_{\tilde x}^\gamma\left(\frac{\tilde f_{\tilde x_2}}{1-\tilde f_{\tilde x_1}}\right)\right|&\lesssim\sum_{0<\beta\le\gamma}\left|\partial^{\gamma-\beta}\tilde f_{\tilde x_2}\right|\left|\partial_{\tilde x}^\beta\left(\frac{1}{1-\tilde f_{\tilde x_1}}\right)\right|+M^2\varepsilon^{\frac{4}{3}-\frac{\gamma_1}{2}-\frac{\gamma_2+1}{6}}\lesssim M^2\varepsilon^{\frac{7}{6}-\frac{\gamma_1}{2}-\frac{\gamma_2}{6}}.
		\end{aligned}
	\end{equation}
	
	Note that
	\begin{equation}
		\begin{aligned}
			\partial_{x_2}^k&=\left(\frac{\tilde f_{\tilde x_2}}{1-\tilde f_{\tilde x_1}}\partial_{\tilde x_1}+\partial_{\tilde x_2}\right)^k=\sum_{\substack{\sum\limits_{|\beta|\le k}n_\beta=\gamma_1+\sum\limits_{|\beta|\le k}\beta_1n_\beta\\\gamma_1+\gamma_2+\sum\limits_{|\beta|\le k}n_\beta|\beta|=k}}C(k,\gamma,n_\beta)\prod_{|\beta|\le k}\left(\partial_{\tilde x}^\beta\left(\frac{\tilde f_{\tilde x_2}}{1-\tilde f_{\tilde x_1}}\right)\right)^{n_\beta}\partial_{\tilde x}^\gamma.
		\end{aligned}
	\end{equation}
	Thus we have
	\begin{equation}
		\begin{aligned}
			\left|\partial_{\tilde x_1}^j(\partial_{x_2}^kf)\right|&\lesssim\left|\partial_{\tilde x_1}^j\sum_{\substack{\sum\limits_{|\beta|\le k}n_\beta=\gamma_1+\sum\limits_{|\beta|\le k}\beta_1n_\beta\\\gamma_1+\gamma_2+\sum\limits_{|\beta|\le k}n_\beta|\beta|=k}}C(k,\gamma,n_\beta)\prod_{|\beta|\le k}\left(\partial_{\tilde x}^\beta\left(\frac{\tilde f_{\tilde x_2}}{1-\tilde f_{\tilde x_1}}\right)\right)^{n_\beta}\partial_{\tilde x}^\gamma\tilde f\right|\\
			&\lesssim_{j,k}\sum_{\substack{\sum\limits_{|\beta|\le k+j}n_\beta+j=\gamma_1+\sum\limits_{|\beta|\le k+j}\beta_1n_\beta\\\gamma_1+\gamma_2+\sum\limits_{|\beta|\le k+j}n_\beta|\beta|=k+j}}\ \prod_{|\beta|\le k+j}\left|\partial_{\tilde x}^\beta\left(\frac{\tilde f_{\tilde x_2}}{1-\tilde f_{\tilde x_1}}\right)\right|^{n_\beta}|\partial_{\tilde x}^\gamma f|\\
			&\lesssim\sum_{\substack{\sum\limits_{|\beta|\le k+j}n_\beta+j=\gamma_1+\sum\limits_{|\beta|\le k+j}\beta_1n_\beta\\\gamma_1+\gamma_2+\sum\limits_{|\beta|\le k+j}n_\beta|\beta|=k+j}}\ \prod_{|\beta|\le k+j}\left(M^2\varepsilon^{\frac{7}{6}-\frac{\beta_1}{2}-\frac{\beta_2}{6}}\right)^{n_\beta}M^2\varepsilon^{\frac{4}{3}-\frac{\gamma_1}{2}-\frac{\gamma_2}{6}}\\
			&\lesssim M^2\varepsilon^{\frac{4}{3}-\frac{j}{2}-\frac{k}{6}}\sum_{\substack{\sum\limits_{|\beta|\le k+j}n_\beta+j=\gamma_1+\sum\limits_{|\beta|\le k+j}\beta_1n_\beta\\\gamma_1+\gamma_2+\sum\limits_{|\beta|\le k+j}n_\beta|\beta|=k+j}}\left(M^2\varepsilon\right)^{\sum\limits_{|\beta|\le k+j}n_\beta}\lesssim M^2\varepsilon^{\frac{4}{3}-\frac{j}{2}-\frac{k}{6}}.
		\end{aligned}
	\end{equation}
	Finally, we have
	\begin{equation}
		\begin{aligned}
			\left|\partial_x^\gamma f\right|&=\left|\left(\frac{\partial_{\tilde x_1}}{1-\tilde f_{\tilde x_1}}\right)^{\gamma_1}\partial_{x_2}^{\gamma_2}f\right|\\
			\overset{\gamma_1\ge1}&{\lesssim}\sum_{j=1}^{\gamma_1}\sum_{\substack{n_1+2n_2+\cdots+\gamma_1n_{\gamma_1}=\gamma_1-j\\ n_0+n_1+\cdots+n_{\gamma_1}=\gamma_1}}\left|\frac{1}{1-\tilde f_{\tilde x_1}}\right|^{n_0}\left|\partial_{\tilde x_1}\left(\frac{1}{1-\tilde f_{\tilde x_1}}\right)\right|^{n_1}\cdots\left|\partial_{\tilde x_1}^{\gamma_1}\left(\frac{1}{1-\tilde f_{\tilde x_1}}\right)\right|^{n_{\gamma_1}}\left|\partial_{\tilde x_1}^j\partial_{x_2}^{\gamma_2}f\right|\\
			&\lesssim\sum_{j=1}^{\gamma_1}\sum_{\substack{n_1+2n_2+\cdots+\gamma_1n_{\gamma_1}=\gamma_1-j\\ n_0+n_1+\cdots+n_{\gamma_1}=\gamma_1}}\left(1-\varepsilon^{\frac{1}{2}}\right)^{-n_0}\left(M^2\varepsilon^{\frac{5}{6}-\frac{1}{2}}\right)^{n_1}\cdots\left(M^2\varepsilon^{\frac{5}{6}-\frac{\gamma_1}{2}}\right)^{n_{\gamma_1}}\left|\partial_{\tilde x_1}^j\partial_{x_2}^{\gamma_2}f\right|\\
			&\lesssim\sum_{j=1}^{\gamma_1}\varepsilon^{-\frac{\gamma_1-j}{2}}\left|\partial_{\tilde x_1}^j\partial_{x_2}^{\gamma_2}f\right|\sum_{\substack{n_1+2n_2+\cdots+\gamma_1n_{\gamma_1}=\gamma_1-j\\ n_0+n_1+\cdots+n_{\gamma_1}=\gamma_1}}\left[(1-\varepsilon^{\frac{1}{2}})M^2\varepsilon^{\frac{5}{6}}\right]^{\gamma_1-n_0}\\
			&\lesssim\varepsilon^{-\frac{\gamma_1}{2}}\sum_{j=1}^{\gamma_1}\varepsilon^{\frac{j}{2}}M^2\varepsilon^{\frac{4}{3}-\frac{j}{2}-\frac{\gamma_2}{6}}\lesssim M^2\varepsilon^{\frac{4}{3}-\frac{\gamma_1}{2}-\frac{\gamma_2}{6}}.
		\end{aligned}
	\end{equation}
	One can check the same estimate holds when $\gamma_1=0$. 
	
	Also from Faà di Bruno's formula one can see that for $\alpha\in\mathbb{R}$ and $\gamma>0$, we have  $\left|\partial_x^\gamma(1+f_{x_2}^2)^\alpha\right|\lesssim_{\alpha,\gamma} M^4\varepsilon^{\frac{7}{3}-\frac{\gamma_1}{2}-\frac{\gamma_2}{6}}$, this estimate combining with Leibniz rule gives that $|\partial_x^\gamma N|\lesssim M^2\varepsilon^{\frac{7}{6}-\frac{\gamma_1}{2}-\frac{\gamma_2}{6}}$ for $\gamma>0$. $|N-\tilde e_1|\lesssim M^4\varepsilon^{\frac{7}{6}}$ can be checked separately. The estimates of $N$ implies $|\partial_x^\gamma (T-\tilde e_2)|\lesssim M^2\varepsilon^{\frac{7}{6}-\frac{\gamma_1}{2}-\frac{\gamma_2}{6}}$ for $\gamma\ge0$ since $T=N^\perp$. The estimate of $JN$ is similar. 
	
	As for $J=\frac{\sqrt{1+f_{x_2}^2}}{1+f_{x_1}}$, we use Leibniz rule to deduce that $|\partial^\gamma J|\lesssim M^2\varepsilon^{\frac{5}{6}-\frac{\gamma_1}{2}-\frac{\gamma_2}{6}}$ holds for $\gamma>0$, then one can check $|J-1|\lesssim M^2\varepsilon^{\frac{5}{6}}$. 
	
	The estimates of $\partial_t f$ and $\frac{\partial_t f}{1+f_{x_1}}$ is much the same and rely on the facts that $|\partial_{\tilde x}^\gamma\partial_t\tilde f|\lesssim M^2\varepsilon^{\frac{1}{3}-\frac{\gamma_1}{2}-\frac{\gamma_2}{6}}$ and $(\partial_t)_{x}f=\frac{(\partial_t)_{\tilde x}\tilde f}{1-\tilde f_{\tilde x_1}}$.  
\end{proof}

Here we emphasize that $C_\gamma$ in Lemma \ref{estimates for functions of coordinate transformation, lemma} grows at least exponentially since $f$ is compactly supported and cannot be analytic. 

\begin{lemma}
	For $\varepsilon\ll1$ small enough and $M\gg1$ large enough we have 
	\begin{equation}
		|Q|\le M^2\varepsilon^{\frac{1}{2}}.
		\label{estimate of Q}
	\end{equation}
\end{lemma}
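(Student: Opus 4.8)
The plan is purely algebraic: compute $Q=\dot R(t)^{T}R(t)$ in closed form and then read off its size from the modulation bootstrap bounds (\ref{bootstrap assumptions of dynamic variables}).

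First I would use the explicit form $R=\begin{bmatrix} n_1 & -n_2\\ n_2 & n_1\end{bmatrix}$, so that $\dot R^{T}=\begin{bmatrix} \dot n_1 & \dot n_2\\ -\dot n_2 & \dot n_1\end{bmatrix}$ and hence
\[
	Q=\dot R^{T}R=\begin{bmatrix} n_1\dot n_1+n_2\dot n_2 & n_1\dot n_2-n_2\dot n_1\\ n_2\dot n_1-n_1\dot n_2 & n_1\dot n_1+n_2\dot n_2\end{bmatrix}.
\]
Since $n(t)\in\mathbb{S}^1$, differentiating the identity $n_1^2+n_2^2=1$ gives $n_1\dot n_1+n_2\dot n_2=0$; therefore the diagonal entries of $Q$ vanish, $Q$ is the antisymmetric matrix whose off-diagonal entry is $\pm(n_1\dot n_2-n_2\dot n_1)$ (geometrically, $Q$ is just the angular velocity of the frame $R$), and every natural matrix norm of $Q$ is a fixed multiple of $|n_1\dot n_2-n_2\dot n_1|$.

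Next I would simplify this scalar using the same constraint once more: multiplying by $n_1$ and substituting $n_1\dot n_1=-n_2\dot n_2$ yields $n_1(n_1\dot n_2-n_2\dot n_1)=n_1^2\dot n_2+n_2^2\dot n_2=\dot n_2$, so that $n_1\dot n_2-n_2\dot n_1=\dot n_2/n_1$ once we know $n_1$ stays away from $0$. The latter is immediate from (\ref{bootstrap assumptions of dynamic variables}): $|n_2|\le M^2\varepsilon^{3/2}$ forces $n_1=\sqrt{1-n_2^2}\ge\sqrt{1-M^4\varepsilon^3}$, which is as close to $1$ as we like once $\varepsilon$ is chosen small relative to $M$. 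Combining with the bootstrap bound $|\dot n_2|\le M^2\varepsilon^{1/2}$ gives $|Q|=|\dot n_2|/|n_1|\le M^2\varepsilon^{1/2}(1-M^4\varepsilon^3)^{-1/2}$, and the harmless factor $(1-M^4\varepsilon^3)^{-1/2}=1+O(M^4\varepsilon^3)$ is absorbed for $\varepsilon\ll1$, $M\gg1$ (equivalently, one keeps a fixed sub-unit fraction of the $M^2$ budget for $\dot n_2$ in reserve).

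There is no genuine analytic obstacle here: the statement follows from two lines of linear algebra plus the modulation bounds. The only points that need care are (i) invoking the unit-length constraint $|n|=1$ twice, first to annihilate the diagonal of $Q$ and then to rewrite the off-diagonal entry as $\dot n_2/n_1$, and (ii) recording that $n_1$ is bounded below — here by the smallness of $n_2$ — which is what legitimizes the division by $n_1$ and keeps the constant at $M^2$.
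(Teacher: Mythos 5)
Your proposal is correct and follows essentially the same route as the paper: compute $Q=\dot R^TR$ explicitly, use $n_1^2+n_2^2=1$ (equivalently $n_1=\sqrt{1-n_2^2}$) to reduce $Q$ to the single scalar $\dot n_2/n_1$, and conclude from the bootstrap bounds on $n_2$ and $\dot n_2$. In fact you spell out the absorption of the factor $n_1^{-1}=(1-n_2^2)^{-1/2}$ more carefully than the paper's one-line argument does.
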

\begin{proof}Since we have
	\begin{equation}
		Q=\dot R^TR=
		\begin{bmatrix}
			0 &-n_1\dot n_2+n_2\dot n_1\\
			-n_2\dot n_1+n_1\dot n_2 &0
		\end{bmatrix},
	\end{equation}
the rest is appealing to $n_1=\sqrt{1-n_2^2}$ and the bootstrap assumptions (\ref{bootstrap assumptions of dynamic variables}) for $n_2$ and $\dot n_2$. 
\end{proof}

\begin{lemma}
	For $y\in10\mathcal{X}(s)=\{|y_1|\le20\varepsilon^{\frac{1}{2}}e^{\frac{3}{2}s},|y_2|\le20\varepsilon^{\frac{1}{6}}e^{\frac{s}{2}}\}$, we have
	\begin{equation}
		|V|\lesssim M^{\frac{1}{4}}
		\label{estimate of of V}
	\end{equation}
	and for $\forall y\in\mathbb{R}^2$, it holds that
	\begin{equation}
		\left\{
		\begin{aligned}
			&|\partial_1V|\lesssim M^2\varepsilon^{\frac{1}{2}}e^{-\frac{3}{2}s}\\
			&|\partial_2V|\lesssim M^2\varepsilon^{\frac{1}{2}}e^{-\frac{s}{2}}\\
			&|\partial_{11}V|\lesssim M^4\varepsilon^{\frac{5}{6}}e^{-3s}\\
			&|\partial_{12}V|\lesssim M^4\varepsilon^{\frac{7}{6}}e^{-2s}\\
			&|\partial_{22}V|\lesssim M^4\varepsilon^{\frac{3}{2}}e^{-s}\\
			&|\partial^\gamma V|\overset{|\gamma|\ge3}{\lesssim} M^4\varepsilon^{\frac{11}{6}}e^{-(\gamma_1+\gamma_2/3)s}\\
			&|\partial^\gamma V|\overset{|\gamma|\ge 1}{\lesssim} M^4\varepsilon^{\frac{2}{3}}e^{-(\gamma_1+\gamma_2/3)s}.
		\end{aligned}
		\right.
		\label{estimates of derivatives of V}
	\end{equation}
\end{lemma}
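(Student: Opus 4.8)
The plan is to exploit the explicit affine structure of $V$. Recall that $v(x,t)=\tilde v(\tilde x,t)=\tfrac{1+\alpha}{2}\bigl(Q\tilde x-R^T\dot\xi\bigr)$, so in self-similar variables $V(y,s)=\tfrac{1+\alpha}{2}\bigl(Q\,\tilde x(y,s)-R^T\dot\xi\bigr)$ with $\tilde x_1=y_1e^{-\frac32 s}+f$ and $\tilde x_2=y_2e^{-\frac s2}$, where $f=f(x_1,x_2,t)$ is evaluated at $x_1=y_1e^{-\frac32 s}$, $x_2=y_2e^{-\frac s2}$. Thus $V$ is affine in $y$ apart from the contribution of $f$, and the only quantities that enter are $Q$, $\dot\xi$ and the $x$--derivatives of $f$, all already controlled: $|Q|\le M^2\varepsilon^{\frac12}$ by \eqref{estimate of Q}, $|\dot\xi|\le M^{\frac14}$ by \eqref{bootstrap assumptions of dynamic variables}, and $|\partial_x^\gamma f|\lesssim_\gamma M^2\varepsilon^{\frac43-\frac{\gamma_1}{2}-\frac{\gamma_2}{6}}$ by Lemma \ref{estimates for functions of coordinate transformation, lemma}. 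I will also use throughout the scaling identity $\partial_y^\gamma=e^{-\frac{3\gamma_1+\gamma_2}{2}s}\partial_x^\gamma$ (valid at each fixed $s$) and the fact that $e^{-s}\le\varepsilon$ since $s\ge-\log\varepsilon$.

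For the pointwise bound on $10\mathcal{X}(s)$ I would first note that on this set $|x_1|\le20\varepsilon^{\frac12}$ and $|x_2|\le20\varepsilon^{\frac16}$; combined with $|f|\lesssim M^2\varepsilon^{\frac43}$ this gives $|\tilde x|\lesssim\varepsilon^{\frac16}$, hence $|Q\tilde x|\lesssim M^2\varepsilon^{\frac12}\cdot\varepsilon^{\frac16}=M^2\varepsilon^{\frac23}$, which is dominated by $|R^T\dot\xi|=|\dot\xi|\le M^{\frac14}$. This yields $|V|\lesssim M^{\frac14}$.

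For the derivative bounds the constant term $R^T\dot\xi$ drops out, so $\partial_y^\gamma V=\tfrac{1+\alpha}{2}Q\,\partial_y^\gamma\tilde x$ for all $|\gamma|\ge1$. When $|\gamma|=1$ a direct computation gives $\partial_{y_1}\tilde x=e^{-\frac32 s}(1+f_{x_1},0)$ and $\partial_{y_2}\tilde x=e^{-\frac s2}(f_{x_2},1)$; when $|\gamma|\ge2$ the linear-in-$y$ parts of $\tilde x$ are annihilated, so $\partial_y^\gamma\tilde x=(\partial_y^\gamma f,0)=e^{-\frac{3\gamma_1+\gamma_2}{2}s}(\partial_x^\gamma f,0)$. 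Substituting these, using \eqref{estimate of Q} and the bound on $\partial_x^\gamma f$ (in particular $|f_{x_i}|\ll1$), reproduces the six low-order estimates exactly — for instance $\gamma=(0,2)$ gives $|\partial_{22}V|\lesssim M^2\varepsilon^{\frac12}\cdot e^{-s}M^2\varepsilon^{\frac43-\frac13}=M^4\varepsilon^{\frac32}e^{-s}$. For $|\gamma|\ge3$ I would split $e^{-\frac{3\gamma_1+\gamma_2}{2}s}=e^{-(\gamma_1+\gamma_2/3)s}\,e^{-(\frac{\gamma_1}{2}+\frac{\gamma_2}{6})s}$ and absorb the second factor into $\varepsilon^{\frac{\gamma_1}{2}+\frac{\gamma_2}{6}}$, which cancels the negative $\varepsilon$--power in the bound for $\partial_x^\gamma f$; together with \eqref{estimate of Q} this gives $|\partial^\gamma V|\lesssim M^4\varepsilon^{\frac12+\frac43}e^{-(\gamma_1+\gamma_2/3)s}=M^4\varepsilon^{\frac{11}{6}}e^{-(\gamma_1+\gamma_2/3)s}$. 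The weaker $|\gamma|\ge1$ estimate then follows from the same trade, using additionally $e^{-\frac32 s}\le\varepsilon^{\frac12}e^{-s}$ in the $|\gamma|=1$ case and the crude inequalities $\varepsilon\le\varepsilon^{\frac23}$, $M^2\le M^4$.

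I expect the only genuinely delicate point to be this conversion of surplus $e^{-s}$--decay into positive powers of $\varepsilon$ via $s\ge-\log\varepsilon$: it is precisely what turns the negative $\varepsilon$--exponents appearing in the high-order bounds for $f$ into the nonnegative exponents claimed for $V$. Everything else is bookkeeping of exponents. One should also bear in mind that the constants $C_\gamma$ inherited from Lemma \ref{estimates for functions of coordinate transformation, lemma} grow at least exponentially in $|\gamma|$, but this is harmless since only finitely many multi-indices $\gamma$ appear in the bootstrap.
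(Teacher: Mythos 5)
Your proof is correct and follows essentially the same route as the paper: write $V(y,s)=\tfrac{1+\alpha}{2}\bigl(Q(y_1e^{-\frac32 s}+f,\,y_2e^{-\frac s2})^T-R^T\dot\xi\bigr)$, then invoke $R\in SO(2)$, the $Q$-estimate \eqref{estimate of Q}, the bootstrap bound on $\dot\xi$, and Lemma \ref{estimates for functions of coordinate transformation, lemma} for $f$, converting surplus $e^{-s}$-decay into $\varepsilon$-powers via $e^{-s}\le\varepsilon$. You simply spell out the exponent bookkeeping that the paper leaves implicit.
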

\begin{proof}
	Note that
	\begin{equation}
		V(y,s)=\frac{1+\alpha}{2}\left(Q\begin{bmatrix}
			y_1e^{-\frac{3}{2}s}+f\\ y_2e^{-\frac{s}{2}}
		\end{bmatrix}-R^T\dot\xi\right).
	\end{equation}
	By $R\in SO(2)$ and (\ref{bootstrap assumptions of dynamic variables})(\ref{estimates of f-dependent functions, inequalities}), we have the above estimates. 
\end{proof}

\subsection{Estimates for $U,S$}
\begin{lemma}
	For $U\cdot N$ and $S$, we have that
	\begin{equation}
		|\partial^\gamma(U\cdot N)|+|\partial^\gamma S|\lesssim\left\{
		\begin{aligned}
			&M^{\frac{1}{4}}\ \ &\gamma=(0,0)\\
			&e^{-\frac{s}{2}}\eta^{-\frac{1}{3}}\ \  &\gamma=(1,0)\\
			&e^{-\frac{s}{2}}\ \  &\gamma=(0,1)\\
			&M^{\frac{1}{3}}e^{-\frac{s}{2}}\eta^{-\frac{1}{3}}\ \  &\gamma=(2,0)\\
			&M^{\frac{2}{3}}e^{-\frac{s}{2}}\eta^{-\frac{1}{3}}\ \  &\gamma=(1,1)\\
			&Me^{-\frac{s}{2}}\eta^{-\frac{1}{6}}\ \  &\gamma=(0,2).
		\end{aligned}
		\right.
		\label{estimates of U dot N,S}
	\end{equation}
\end{lemma}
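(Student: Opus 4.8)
The plan is to reduce the lemma to the algebraic identities (\ref{U,S in terms of W,Z,A})--(\ref{W,Z,A in terms of U,S}) together with the bootstrap bounds (\ref{bootstrap assumptions of W}), (\ref{bootstrap assumptions of Z}), (\ref{bootstrap assumptions for the spatial support}) and the estimate (\ref{estimate of W}). First I would record the pointwise identities
\begin{equation*}
	U\cdot N=\tfrac12\bigl(e^{-\frac{s}{2}}W+Z+\kappa\bigr),\qquad S=\tfrac12\bigl(e^{-\frac{s}{2}}W-Z+\kappa\bigr),
\end{equation*}
the first of which follows by taking the Euclidean inner product of the first line of (\ref{U,S in terms of W,Z,A}) with $N$ and using $|N|^2=1$ and $N\cdot T=0$. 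The key point is that no derivatives of the coordinate functions $N$, $T$ enter. Since $\kappa=\kappa(s)$ does not depend on $y$, applying $\partial^\gamma$ for $|\gamma|\ge1$ gives $\partial^\gamma(U\cdot N)=\tfrac12(e^{-\frac{s}{2}}\partial^\gamma W+\partial^\gamma Z)$ and $\partial^\gamma S=\tfrac12(e^{-\frac{s}{2}}\partial^\gamma W-\partial^\gamma Z)$, whereas for $\gamma=(0,0)$ the constant $\kappa$ is kept.

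For $\gamma=(0,0)$ I would estimate $|U\cdot N|+|S|\le e^{-\frac{s}{2}}|W|+|Z|+\kappa$ and bound the three summands by (\ref{estimate of W}) (so $e^{-\frac{s}{2}}|W|\lesssim\varepsilon^{\frac16}$), by (\ref{bootstrap assumptions of Z}) (so $|Z|\le M\varepsilon$), and by (\ref{bootstrap assumptions of dynamic variables}) (so $\kappa\le2\kappa_0$); as $M=M(\alpha,\kappa_0,k)$ is large and $\varepsilon$ small, the sum is $\lesssim\kappa_0\le M^{\frac14}$. For each of the five multi-indices with $|\gamma|\ge1$, the triangle inequality reduces matters to $e^{-\frac{s}{2}}|\partial^\gamma W|+|\partial^\gamma Z|$; the $W$-contribution already has precisely the advertised form once weighted by $e^{-s/2}$, e.g. $e^{-\frac{s}{2}}|\partial_1W|\le2e^{-\frac{s}{2}}\eta^{-\frac13}$ and $e^{-\frac{s}{2}}|\partial_{22}W|\le Me^{-\frac{s}{2}}\eta^{-\frac16}$ by (\ref{bootstrap assumptions of W}), so only the $Z$-term needs to be absorbed.

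The one step that is not pure bookkeeping is this absorption: the bounds in (\ref{bootstrap assumptions of Z}) carry plain exponential factors ($e^{-3s/2}$ or $e^{-s}$) rather than the $\eta$-weights in the statement. Here I would use that $\partial^\gamma Z$ (and $\partial^\gamma W$ for $|\gamma|\ge1$) is supported in $\mathcal{X}(s)$ by (\ref{bootstrap assumptions for the spatial support}), and that on $\mathcal{X}(s)$ one has $\eta=1+y_1^2+y_2^6\lesssim\varepsilon e^{3s}$ (using $s\ge-\log\varepsilon$), hence $e^{-s}\eta^{\frac13}\lesssim\varepsilon^{\frac13}$ and $e^{-\frac{s}{2}}\eta^{\frac16}\lesssim\varepsilon^{\frac16}$ there. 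This converts, for instance, $|\partial_{11}Z|\le M^{\frac12}e^{-\frac{3s}{2}}\le M^{\frac12}\varepsilon^{\frac13}e^{-\frac{s}{2}}\eta^{-\frac13}\lesssim M^{\frac13}e^{-\frac{s}{2}}\eta^{-\frac13}$ for $\varepsilon$ small, and the cases $(1,0)$, $(0,1)$, $(1,1)$, $(0,2)$ are handled identically; outside $\mathcal{X}(s)$ every $|\gamma|\ge1$ quantity vanishes, so the bounds are trivial there. I do not expect any genuine obstacle: with the two identities and this weight conversion in place, the remainder is a short case check.
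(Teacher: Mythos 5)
Your proposal is correct and follows essentially the same route as the paper: express $U\cdot N$ and $S$ in terms of $W,Z,\kappa$ via (\ref{U,S in terms of W,Z,A}) and appeal to the bootstrap assumptions. The paper's proof is terse and omits the support-driven conversion $e^{-s}\lesssim\varepsilon^{1/3}\eta^{-1/3}$ on $\mathcal{X}(s)$ that absorbs the $Z$-terms; you identify and supply precisely this step.
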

\begin{proof}
	One can express $U\cdot N$, $S$ in terms of $W$, $Z$, $A$ as in (\ref{U,S in terms of W,Z,A}). Then by directly appealing to the bootstrap assumptions we obtain the desired estimates. 
\end{proof}

\begin{lemma}
	By taking $\varepsilon$ sufficiently small, we have
	\begin{equation}
		\left\{
		\begin{aligned}
			&|U|\lesssim M^{\frac{1}{4}}\\
			&|\partial_1U|\le\left(1+\varepsilon^{\frac{3}{4}}\right)e^{-\frac{s}{2}}\\
			&|\partial_2U|\le e^{-\frac{s}{2}}\\
			&|\partial_1S|\le(1+\varepsilon)e^{-\frac{s}{2}}\\
			&|\partial_2S|\le\left(\frac{1}{2}+\varepsilon^{\frac{1}{2}}\right)e^{-\frac{s}{2}}.
		\end{aligned}
		\right.
		\label{estimates of U,S}
	\end{equation}
\end{lemma}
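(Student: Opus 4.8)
\emph{Proof plan.} The estimates are algebraic consequences of the identities (\ref{U,S in terms of W,Z,A}) together with the bounds already obtained in this section. Writing $U$ in the orthonormal frame $\{N,T\}$,
$$U=(U\cdot N)\,N+A\,T,\qquad U\cdot N=\tfrac12\big(e^{-s/2}W+Z+\kappa\big),\qquad S=\tfrac12\big(e^{-s/2}W-Z+\kappa\big),$$
so it suffices to control $W,Z,A,\kappa$ and the frame vectors $N,T$ and their $y$-derivatives, the latter via Lemma \ref{estimates for functions of coordinate transformation, lemma} after converting $\partial_{x_j}$ to $\partial_{y_j}$ through $\partial_{y_1}=e^{-3s/2}\partial_{x_1}$ and $\partial_{y_2}=e^{-s/2}\partial_{x_2}$.

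First I would dispose of $|U|$: since $|N|=|T|=1$, $|U|\le|U\cdot N|+|A|\lesssim M^{1/4}+M\varepsilon\lesssim M^{1/4}$ by (\ref{estimates of U dot N,S}) and (\ref{bootstrap assumptions of A}). For $S$, differentiating and using that $\kappa$ is $y$-independent gives $\partial_jS=\tfrac12\big(e^{-s/2}\partial_jW-\partial_jZ\big)$. When $j=1$, insert $|\partial_1W|\le 1+\varepsilon^{1/12}$ from (\ref{estimate of D1W that will appear in damping terms}) and $|\partial_1Z|\le M^{1/2}e^{-3s/2}\le M^{1/2}\varepsilon\,e^{-s/2}$ from (\ref{bootstrap assumptions of Z}) (using $e^{-s}\le\varepsilon$), so that $|\partial_1S|\le\tfrac12\big(1+\varepsilon^{1/12}+M^{1/2}\varepsilon\big)e^{-s/2}\le(1+\varepsilon)e^{-s/2}$ for $\varepsilon$ small. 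When $j=2$ the coefficient $\tfrac12$ is essentially sharp, so one must use the near-optimal bound $|\partial_2W|\le\tfrac23+\varepsilon^{1/13}$ --- valid on $\{|y|\le L\}$ by (\ref{bootstrap estimate of W and DW}), while on the complement $\partial_2W$ vanishes off $\mathrm{supp}\,DW$ and obeys a comparable bound on the rest --- together with $|\partial_2Z|\le M\varepsilon^{1/2}e^{-s/2}$ from (\ref{bootstrap assumptions of Z}); the gap between $\tfrac13$ and $\tfrac12$ then absorbs the $\partial_2Z$ term and gives $|\partial_2S|\le(\tfrac12+\varepsilon^{1/2})e^{-s/2}$.

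For $\partial_jU$ I would expand by the product rule,
$$\partial_jU=\big(\partial_j(U\cdot N)\big)N+(U\cdot N)\,\partial_jN+(\partial_jA)\,T+A\,\partial_jT,$$
where $\partial_j(U\cdot N)=\tfrac12\big(e^{-s/2}\partial_jW+\partial_jZ\big)$ is bounded exactly as the corresponding piece of $\partial_jS$. The other three terms are of lower order: by (\ref{bootstrap assumptions of A}), $|\partial_1A|\le Me^{-3s/2}\le M\varepsilon\,e^{-s/2}$ and $|\partial_2A|\le M\varepsilon^{1/2}e^{-s/2}$; and by Lemma \ref{estimates for functions of coordinate transformation, lemma} with the rescaling above, $|\partial_1N|,|\partial_1T|\lesssim M^2\varepsilon^{2/3}e^{-3s/2}$ and $|\partial_2N|,|\partial_2T|\lesssim M^2\varepsilon\,e^{-s/2}$, which combined with $|U\cdot N|\lesssim M^{1/4}$ and $|A|\le M\varepsilon$ are each $\le C(M)\varepsilon^{c}e^{-s/2}$ for some $c>0$. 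Hence, $\varepsilon$ being small in terms of $M$, the totals satisfy $|\partial_1U|\le(1+\varepsilon^{3/4})e^{-s/2}$ and $|\partial_2U|\le e^{-s/2}$.

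I expect the only delicate point to be the bound on $\partial_2S$: its $e^{-s/2}$-coefficient $\tfrac12$ is saturated by the leading term $\tfrac12|\partial_2W|$, so the crude estimate $|\partial_2W|\le1$ from (\ref{bootstrap assumptions of W}) does not suffice and one must retain a bound of the form $|\partial_2W|\le\tfrac23+o(1)$, using the $\varepsilon^{1/2}$ margin only to absorb the $\partial_2Z$ contribution. Everywhere else there is room to spare, since the corrections from $Z$, $A$ and from the curvature of the moving frame each carry a fixed positive power of $\varepsilon$ (after $e^{-s}\le\varepsilon$) and are therefore negligible next to the leading $\tfrac12 e^{-s/2}\partial_jW$ terms.
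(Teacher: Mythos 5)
Your approach matches the paper's own (one-line) proof verbatim: express $U,S$ via \eqref{U,S in terms of W,Z,A}, then invoke the bootstrap assumptions for $W,Z,A$ together with Lemma \ref{estimates for functions of coordinate transformation, lemma} for $N,T$ and their derivatives (after the conversion $\partial_{y_1}=e^{-3s/2}\partial_{x_1}$, $\partial_{y_2}=e^{-s/2}\partial_{x_2}$). The bounds for $|U|$, $\partial_1U$, $\partial_2U$, $\partial_1S$ are all correctly derived, and in fact you obtain coefficients close to $\tfrac12$ where the lemma allows $1$, so there is ample slack.

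The one place where you gloss over a real difficulty is precisely the point you flag: $\partial_2S$. On $\{|y|\le L\}$ the sharp bound $|\partial_2W|\le\tfrac23+\varepsilon^{1/13}$ from \eqref{bootstrap estimate of W and DW} gives $|\partial_2S|\le(\tfrac13+o(1))e^{-s/2}$, comfortably within the claim. But for $L<|y|$ inside $\mathcal X(s)$, the bootstrap \eqref{bootstrap assumptions of W} only provides $|\partial_2W|\le 1$, and nothing in the cited estimates gives a ``comparable'' $\tfrac23+o(1)$ bound there — your assertion to that effect is unsupported. With $|\partial_2W|\le 1$ and $|\partial_2Z|\le M\varepsilon^{1/2}e^{-s/2}$ from \eqref{bootstrap assumptions of Z}, one only gets $|\partial_2S|\le(\tfrac12+\tfrac12 M\varepsilon^{1/2})e^{-s/2}$, and $\tfrac12 M\varepsilon^{1/2}\le\varepsilon^{1/2}$ would force $M\le 2$, which is false since $M$ is large. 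The issue is really with the exponent $\tfrac12$ on $\varepsilon$ in the lemma's statement (a weaker $\varepsilon^{1/4}$, say, would go through everywhere, and is all that is used downstream); the paper's terse proof has the same gap. So your argument is sound in spirit and correct everywhere else, but the justification of the $\partial_2S$ bound off $\{|y|\le L\}$ is not actually available from the bootstrap, and you should not claim a ``comparable bound on the rest.''
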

\begin{proof}
	Express $U$ in terms of $W$, $Z$, $A$, then use bootstrap assumptions and the estimates (\ref{estimates of f-dependent functions, inequalities}) of $N$, $T$. 
\end{proof}

\subsection{Transport estimates}
\begin{lemma}
	For $\varepsilon\ll1$ and $\forall y\in10\mathcal{X}(s)$, we have
	\begin{equation}
		\left\{
		\begin{aligned}
			&|\partial_1G_A|\lesssim M^2e^{-\frac{5}{6}s},\ \ 
			&|\partial_2G_A|\lesssim M^2\varepsilon^{\frac{1}{6}}\\
			&|\partial_{11}G_A|\lesssim M^\frac{1}{2}e^{-s},\ \ 
			&|\partial_{12}G_A|\lesssim Me^{-s},\ \ 
			&|\partial_{22}G_A|\lesssim M^2e^{-\frac{s}{2}}.
		\end{aligned}
		\right.
		\label{estimates of derivatives of G_A}
	\end{equation}
\end{lemma}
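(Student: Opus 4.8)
The quantity $G_A$ is, by definition (\ref{transport terms of W,Z,A}), the part of the $A$-transport speed $g_A$ that does not contain the leading $\beta_1\beta_\tau JW$ term; using the alternative expression right after (\ref{equation of U,S}), we can write
\begin{equation*}
	G_A=g_A-\beta_1\beta_\tau JW=\beta_\tau e^{\frac{s}{2}}\Bigl[2\beta_1(U+V)\cdot JN-\frac{\partial_tf}{1+f_{x_1}}\Bigr]-\beta_1\beta_\tau JW,
\end{equation*}
so that $G_A$ is manifestly built out of the coordinate-transformation functions $f$, $J$, $N$, together with $U$, $V$, and the modulation factor $\beta_\tau=(1-\dot\tau)^{-1}$. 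The plan is to differentiate this expression and bound each resulting term by the already-established estimates: Lemma \ref{estimates for functions of coordinate transformation, lemma} for $\partial_x^\gamma f$, $\partial_x^\gamma(J-1)$, $\partial_x^\gamma(N-\tilde e_1)$, $\partial_x^\gamma(JN-\tilde e_1)$ and $\partial_x^\gamma\frac{\partial_tf}{1+f_{x_1}}$; the estimate (\ref{estimates of derivatives of V}) for $\partial^\gamma V$; the estimate (\ref{estimates of U,S}) (and its companion (\ref{estimates of U dot N,S}) for the higher $y$-derivatives of $U$, obtained from the bootstrap bounds on $W$, $Z$, $A$); and the bootstrap bound $|\dot\tau|\le Me^{-s}$, which gives $|\beta_\tau-1|\lesssim Me^{-s}$ and $\beta_\tau\le 2$.

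The one bookkeeping subtlety is the mismatch of coordinates: Lemma \ref{estimates for functions of coordinate transformation, lemma} is phrased in $x$-derivatives, whereas $\partial_j$ here means $\partial_{y_j}$. Since $y_1=x_1e^{3s/2}$, $y_2=x_2e^{s/2}$, each $\partial_{y_1}$ costs a factor $e^{-3s/2}$ and each $\partial_{y_2}$ costs $e^{-s/2}$ relative to the $x$-derivative. Thus, for instance, $|\partial_1 f|=e^{-3s/2}|\partial_{x_1}f|\lesssim M^2\varepsilon^{4/3-1/2}e^{-3s/2}$, and similarly the $y$-derivatives of $J-1$, $N-\tilde e_1$, $JN-\tilde e_1$ inherit the small power of $\varepsilon$ from Lemma \ref{estimates for functions of coordinate transformation, lemma} together with the appropriate negative exponential in $s$. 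The remaining work is purely Leibniz-rule expansion: for each multi-index $\gamma$ with $1\le|\gamma|\le2$ one writes $\partial^\gamma G_A$ as a finite sum of products, in each product one factor is a derivative of $f$, $J$, $N$, or $JN-\tilde e_1$ (or a derivative of $\frac{\partial_tf}{1+f_{x_1}}$, or of $\beta_\tau$), and the others are derivatives of $U$, $V$, or $W$; one then multiplies the corresponding bounds and uses $y\in 10\mathcal{X}(s)$ (so that $e^{\frac{s}{2}}\lesssim\varepsilon^{-1/6}\langle y_2\rangle$, $\eta\lesssim e^{3s}$, etc.) to convert powers of $\varepsilon$ and $e^{s}$ into the claimed right-hand sides $M^2e^{-\frac56 s}$, $M^2\varepsilon^{1/6}$, $M^{1/2}e^{-s}$, $Me^{-s}$, $M^2e^{-s/2}$. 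In every case the dominant contribution comes from the term in which the single $f$- or $JN$-type factor carries the most $y$-derivatives while the $U$-, $V$-, $W$-factors are undifferentiated, or (for $\partial_2G_A$) from the $\beta_\tau e^{s/2}\cdot(\text{coordinate terms})$ structure, whose naked $e^{s/2}$ is exactly compensated by the $\varepsilon^{5/6}$ and $\varepsilon^{7/6}$ smallness of $\partial_x(J-1)$ and $\partial_x(N-\tilde e_1)$ together with the $e^{s}$ from converting $\eta$-type weights.

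I expect the main obstacle to be purely organizational rather than conceptual: keeping track, term by term, of the competition between the powers of $\varepsilon$ supplied by Lemma \ref{estimates for functions of coordinate transformation, lemma} and the powers of $e^{s}$ generated by the $e^{s/2}$ prefactor in $G_A$ and by the coordinate conversion $\partial_{y_j}\leftrightarrow\partial_{x_j}$, and checking that in each of the six cases the net exponent of $\varepsilon$ is nonnegative (so it can be absorbed, after using $y\in 10\mathcal{X}(s)$) and the net $s$-decay matches. No new idea is needed beyond the estimates already proved; the proof is a careful but routine Leibniz-rule bookkeeping, so in the write-up I would display the expansion of $\partial^\gamma G_A$ schematically, indicate the worst term for each $\gamma$, and leave the remaining estimates to the reader with the phrase ``the other terms are similar and smaller.''
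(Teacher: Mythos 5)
Your alternative formula
\begin{equation*}
G_A = \beta_\tau e^{s/2}\Bigl[2\beta_1(U+V)\cdot JN - \tfrac{\partial_tf}{1+f_{x_1}}\Bigr] - \beta_1\beta_\tau JW
\end{equation*}
is algebraically correct, and your general plan (Leibniz expansion, then the established estimates) is indeed the paper's plan. The problem is that this form reintroduces $W$ twice: hidden inside $U\cdot N = \tfrac12(e^{-s/2}W+Z+\kappa)$, and explicitly as $-\beta_1\beta_\tau JW$. When you apply $\partial_1$, the $U$-piece produces $+\beta_1\beta_\tau J\partial_1W$ while the explicit piece produces $-\beta_1\beta_\tau J\partial_1W$; these must cancel exactly, and neither is small on its own. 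If you bound term by term in absolute value, as your proposal plans --- applying (\ref{estimates of U dot N,S}) to get $|\partial_1(U\cdot N)|\lesssim e^{-s/2}\eta^{-1/3}$ and the bootstrap bound $|\partial_1W|\lesssim 2\eta^{-1/3}$ --- each piece contributes $\sim\eta^{-1/3}$, which near the origin is $O(1)$, not $O(M^2 e^{-5s/6})$. Your heuristic that the dominant contribution puts all the $y$-derivatives on an $f$- or $JN$-type factor is exactly what fails here: $J\partial_1W$ has no derivative on any coordinate-transformation factor, yet it is the largest piece of the raw expansion of your decomposition. The paper sidesteps this entirely by working directly from the definition (\ref{transport terms of W,Z,A}) of $G_A$, in which $W$ has already been eliminated and $G_A$ depends only on $\tfrac{\partial_tf}{1+f_{x_1}}$, $J$, $\kappa$, $Z$ and $V\cdot N$, so each Leibniz term individually carries the required smallness and the bounds from Lemma~\ref{estimates for functions of coordinate transformation, lemma}, (\ref{estimate of of V}), (\ref{estimates of derivatives of V}) and the bootstrap close the estimate. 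Either start from that form, or first substitute $U\cdot N=\tfrac12(e^{-s/2}W+Z+\kappa)$ and cancel the $\partial_1W$-terms algebraically; once that step is made explicit, the rest of your write-up reduces to the paper's proof.
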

\begin{proof}
	We first deal with $\partial_1G_A$. 
	Using the definition (\ref{transport terms of W,Z,A}) of $G_A$, the estimates (\ref{estimates of f-dependent functions, inequalities}) for functions of coordinate transformation, estimates (\ref{estimate of of V})(\ref{estimates of derivatives of V}) for $V$, and the bootstrap assumptions, and by Leibniz rule, we have that
	\begin{equation}
		\begin{aligned}
			|\partial_1G_A|&\lesssim e^{\frac{s}{2}}\left|\partial_1\frac{\partial_tf}{1+f_{x_1}}\right|+e^{\frac{s}{2}}|\partial_1J|(\kappa_0+|Z|+|V|)+e^{\frac{s}{2}}|\partial_1Z|+e^{\frac{s}{2}}|\partial_1(V\cdot N)|\\
			&\lesssim e^{\frac{s}{2}}M^2\varepsilon^{-\frac{1}{6}}e^{-\frac{3}{2}s}+e^{\frac{s}{2}}\varepsilon^{\frac{1}{3}}e^{-\frac{3}{2}s}M^{\frac{1}{4}}+e^{\frac{s}{2}}(M^{\frac{1}{2}}e^{-\frac{3}{2}s}+M^2\varepsilon^{\frac{1}{2}}e^{-\frac{3}{2}s}+M^{2+\frac{1}{4}}\varepsilon e^{-\frac{3}{2}s})\\
			&\lesssim M^2\varepsilon^{-\frac{1}{6}}e^{-s}\lesssim M^2e^{-\frac{5}{6}s}.
		\end{aligned}
	\end{equation}
The other derivatives of $G_A$ are estimated in the similar way. 
\end{proof}

\begin{lemma}
	For $\varepsilon\ll1$ and $\forall y\in\mathcal{X}(s)$, we have
	\begin{equation}
		\left\{
		\begin{aligned}
			&|g_A|\lesssim M^{\frac{1}{4}}e^{\frac{s}{2}}\\
			&|\partial_1g_A|\le3\\
			&|\partial_2g_A|\le2\\
			&|D^2g_A|\lesssim M\eta^{-\frac{1}{6}}+M^2e^{-\frac{s}{2}}\\
			&|\partial_1h_A|\lesssim e^{-s}\\
			&|\partial_2h_A|\lesssim e^{-s}.
		\end{aligned}
		\right.
		\label{estimates of transport terms for energy estimate}
	\end{equation}
\end{lemma}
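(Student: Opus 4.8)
This lemma is purely computational, so the plan is to substitute the closed forms
\[
	g_A=\beta_1\beta_\tau JW+G_A=\beta_\tau e^{\frac{s}{2}}\left[2\beta_1(U+V)\cdot JN-\frac{\partial_tf}{1+f_{x_1}}\right],\qquad
	h_A=2\beta_1\beta_\tau e^{-\frac{s}{2}}(U_2+V_2),
\]
differentiate with the Leibniz rule, and insert the estimates already established. Three facts are used repeatedly: $\beta_\tau=1/(1-\dot\tau)=1+O(Me^{-s})$ by the modulation bootstrap; $J$, $N$, $T$ and all their $x$-derivatives differ from $1$, $\tilde e_1$, $\tilde e_2$ by $M^2$ times a positive power of $\varepsilon$, by Lemma~\ref{estimates for functions of coordinate transformation, lemma}; and $e^{-s}\le\varepsilon$ for $s\ge s_0$, so any leftover factor $e^{-s}$ can be absorbed into an $\varepsilon$. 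All cited bounds for $V$, $G_A$, $U$, $S$ are valid on $10\mathcal X(s)\supset\mathcal X(s)$. One bookkeeping point to flag at the outset: since $y_1=x_1e^{3s/2}$ and $y_2=x_2e^{s/2}$, each $y_1$-derivative (resp. $y_2$-derivative) of $J$, $N$, $T$ costs a factor $e^{-3s/2}$ (resp. $e^{-s/2}$) relative to the $x$-derivative bounds of Lemma~\ref{estimates for functions of coordinate transformation, lemma}; these factors are exactly what render the associated contributions negligible.

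\emph{Zeroth order of $g_A$.} From the right-hand representation, $|(U+V)\cdot JN|\le(|U|+|V|)\,|JN|\lesssim M^{1/4}$ by \eqref{estimates of U,S}, \eqref{estimate of of V} and the bound on $JN-\tilde e_1$, while $|\partial_tf/(1+f_{x_1})|\lesssim M^2\varepsilon^{1/3}$; hence $|g_A|\lesssim e^{s/2}(M^{1/4}+M^2\varepsilon^{1/3})\lesssim M^{1/4}e^{s/2}$ once $\varepsilon$ is small relative to $M$. \emph{First derivatives of $g_A$.} Writing $\partial_jg_A=\beta_1\beta_\tau J\,\partial_jW+\beta_1\beta_\tau(\partial_jJ)W+\partial_jG_A$, the only term of size $O(1)$ (rather than a power of $\varepsilon$) is $\beta_1\beta_\tau J\,\partial_jW$: since $\beta_1=\frac1{1+\alpha}<1$ strictly, $\beta_\tau J=1+o(1)$, and $|\partial_1W|\le1+\varepsilon^{1/12}$, $|\partial_2W|\le1$ by \eqref{estimate of D1W that will appear in damping terms} and (B-$W$), this term is $<1+o(1)$, safely below the targets $3$ and $2$. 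The rest is lower order: $|(\partial_1J)W|\lesssim(e^{-3s/2}M^2\varepsilon^{1/3})(\varepsilon^{1/6}e^{s/2})=M^2\varepsilon^{1/2}e^{-s}$ and $|(\partial_2J)W|\lesssim M^2\varepsilon^{5/6}$ using \eqref{estimate of W}, and $|\partial_1G_A|\lesssim M^2e^{-5s/6}$, $|\partial_2G_A|\lesssim M^2\varepsilon^{1/6}$ by \eqref{estimates of derivatives of G_A}; so $|\partial_1g_A|\le3$ and $|\partial_2g_A|\le2$ after taking $\varepsilon$ small.

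\emph{Second order of $g_A$ and derivatives of $h_A$.} A second Leibniz expansion gives $D^2g_A=\beta_1\beta_\tau\big(JD^2W+2(DJ)(DW)+(D^2J)W\big)+D^2G_A$; the dominant term $\beta_1\beta_\tau JD^2W$ is $\lesssim M\eta^{-1/6}$ by the $D^2W$-bootstrap together with $\eta^{-1/3}\le\eta^{-1/6}$, the mixed and lowest-order $J$-terms again carry positive powers of $\varepsilon$ (via the $y$-derivative conversion plus \eqref{estimate of W}, \eqref{estimate of D1W that will appear in damping terms} and the $D^2W$ bounds), and $|D^2G_A|\lesssim M^2e^{-s/2}$ by \eqref{estimates of derivatives of G_A}, yielding $|D^2g_A|\lesssim M\eta^{-1/6}+M^2e^{-s/2}$. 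For $h_A$, $\partial_jh_A=2\beta_1\beta_\tau e^{-s/2}(\partial_jU_2+\partial_jV_2)$; by \eqref{estimates of U,S}, $|\partial_1U|\le(1+\varepsilon^{3/4})e^{-s/2}$ and $|\partial_2U|\le e^{-s/2}$, and by \eqref{estimates of derivatives of V} the $V$-derivatives are even smaller, so the prefactor $e^{-s/2}$ promotes $e^{-s/2}$ to $e^{-s}$, giving $|\partial_1h_A|\lesssim e^{-s}$ and $|\partial_2h_A|\lesssim e^{-s}$.

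\emph{Expected obstacle.} There is no genuine analytic difficulty; the delicate point is only that $\partial_1g_A$ and $\partial_2g_A$ are bounded against the explicit constants $3$ and $2$, so I must confirm that the single $O(1)$ term $\beta_1\beta_\tau J\,\partial_jW$ really stays under them — which rests on $\beta_1<1$ being strict while $\beta_\tau$, $J$ are $1+o(1)$ and the bootstrap slopes of $W$ are essentially sharp — and that every other term is honestly $M^{c}\varepsilon^{c'}$ with $c'>0$, hence absorbed by choosing $\varepsilon$ small in terms of $M$. Carefully carrying the $e^{-3s/2}$ and $e^{-s/2}$ factors from $x$- to $y$-derivatives is what makes those positive powers of $\varepsilon$ surface.
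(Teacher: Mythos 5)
Your proof is correct and follows essentially the same route as the paper's (very brief) proof: expand $g_A=\beta_1\beta_\tau JW+G_A$ and $h_A=2\beta_1\beta_\tau e^{-s/2}(U_2+V_2)$ by Leibniz, identify $\beta_1\beta_\tau J\partial_jW$ as the sole $O(1)$ contribution and bound it strictly below $1$ using $\beta_1<1$ together with \eqref{estimate of D1W that will appear in damping terms}, \eqref{bootstrap assumptions of W}, and the near-identity bounds on $J$, $\beta_\tau$, and then absorb every remaining term into positive powers of $\varepsilon$ via Lemma~\ref{estimates for functions of coordinate transformation, lemma}, \eqref{estimates of derivatives of G_A}, \eqref{estimate of W}, \eqref{estimates of U,S}, \eqref{estimates of derivatives of V}. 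The explicit tracking of the $e^{-3s/2}$ and $e^{-s/2}$ conversion from $x$- to $y$-derivatives is exactly the bookkeeping the paper compresses into ``estimate similarly \dots with more care.''
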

\begin{proof}
	Use the definition (\ref{transport terms of W,Z,A}) and the estimates (\ref{bootstrap assumptions of W})(\ref{estimates of f-dependent functions, inequalities})(\ref{estimates of derivatives of G_A}), estimate similarly as we did in the proof of (\ref{estimates of derivatives of G_A}) with more care since there is no room of a universal constant here. 
\end{proof}

\section{Energy estimate}\label{energy estimate}
To overcome the loss of derivative in $L^\infty$ estimates of $W$, $Z$, and $A$, we will establish an additional energy estimate to control the $\dot H^k$($k\ll1$) norms of $W$, $Z$, and $A$. It is crutial that in the proof of energy estimate we only use the bootstrap assumptions, not requiring any information on higher order derivatives. 
\begin{proposition}[Energy estimate for $W$, $Z$, $A$]
	For an integer $k\ge 18$, and a constant $\lambda=\lambda(k)$, 
	\begin{equation}
		\|Z(\cdot,s)\|_{\dot H^k}^2+\|A(\cdot,s)\|_{\dot H^k}^2\le2\lambda^{-k}e^{-s}+M^{4k}e^{-s}(1-\varepsilon^{-s}e^{-s})\lesssim M^{4k}e^{-s},
	\end{equation}
	\begin{equation}
		\|W(\cdot,s)\|_{\dot H^k}^2\le2\lambda^{-k}\varepsilon^{-1}e^{-s}+M^{4k}(1-\varepsilon^{-s}e^{-s}).
	\end{equation}
\end{proposition}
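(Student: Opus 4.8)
The plan is to run a weighted higher‑order $L^2$ estimate directly on the evolution equations for $\partial^\gamma W$, $\partial^\gamma Z$, $\partial^\gamma A$ with $|\gamma|=k$ recorded above, and close it by Grönwall. Set $\mathcal E_k(s):=\varepsilon\|W(\cdot,s)\|_{\dot H^k}^2+\|Z(\cdot,s)\|_{\dot H^k}^2+\|A(\cdot,s)\|_{\dot H^k}^2$. For each $\gamma$ with $|\gamma|=k$ I would multiply the $\partial^\gamma W$, $\partial^\gamma Z$ and $\partial^\gamma A$ equations by $\varepsilon\,\partial^\gamma W$, $\partial^\gamma Z$ and $2\,\partial^\gamma A$, integrate over $\mathbb R^2$, and sum over $|\gamma|=k$. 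The three pieces are genuinely coupled ($F_Z^{(\gamma)},F_A^{(\gamma)}$ contain $W$ through $JW$ and through $S$, and $F_W^{(\gamma)}$ contains $A$), so they must be estimated together; the weight $\varepsilon$ in front of $\|W\|_{\dot H^k}^2$ is exactly what tames the $O(1)$ coefficients generated when $\partial^\gamma$ hits the $W$‑equation while keeping the $W\to Z,A$ feedback small. Because $W,Z,A$ and all the transport/forcing coefficients are constant outside $\mathcal X(s)$, no boundary terms appear when integrating by parts.

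The first and decisive mechanism is damping. Integrating the transport terms $\mathcal V_W\cdot\nabla\partial^\gamma W$, etc., by parts produces $-\tfrac12\int(\nabla\cdot\mathcal V_W)(\partial^\gamma W)^2$, with $\nabla\cdot\mathcal V_W=2+\partial_1g_W+\partial_2h_W$ and $\partial_1g_W=\beta_\tau J\partial_1W+\text{l.o.t.}$ Combining this with the explicit damping constants $\tfrac{3\gamma_1+\gamma_2-1}{2}+\beta_\tau\bigl(1+\gamma_1\mathbbm{1}_{\gamma_1\ge2}J\partial_1W\bigr)$ for $W$ and $\tfrac{3\gamma_1+\gamma_2}{2}+\beta_2\beta_\tau\gamma_1J\partial_1W$ for $Z,A$, the net coefficient of $-\|\partial^\gamma W\|_{L^2}^2$ in $\tfrac{d}{ds}$ becomes $(3\gamma_1+\gamma_2)+(2\beta_\tau-3)+(2\gamma_1\mathbbm{1}_{\gamma_1\ge2}-1)\beta_\tau J\partial_1W+\text{l.o.t.}$ The key point — and the reason the $L^2$ argument succeeds where the $L^\infty$ one would lose a derivative — is that the derivative loss $\beta_\tau\gamma_1J\partial_1W$ is beaten by the scaling gain $\tfrac{3\gamma_1}{2}$: since $\beta_\tau=1+O(e^{-s})$, $|J-1|\lesssim M^2\varepsilon^{5/6}$ and $|\partial_1W|\le1+\varepsilon^{1/12}$ by \eqref{estimate of D1W that will appear in damping terms}, one has $\beta_\tau J|\partial_1W|<\tfrac32$ with room, so the net damping is $\ge\lambda k$ for a suitable $\lambda=\lambda(k)>0$ once $k\ge18$ and $\varepsilon$ is small; the same computation with $|\beta_2|<1$ gives net damping $\gtrsim|\gamma|$ for $\partial^\gamma Z$ and $\partial^\gamma A$, and the lower‑order pieces $\partial_1G_W$, $\partial_2h_W$ are negligible by Lemma \ref{estimates for functions of coordinate transformation, lemma}.

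Next I would bound the forcing $\|F_W^{(\gamma)}\|_{L^2}$, $\|F_Z^{(\gamma)}\|_{L^2}$, $\|F_A^{(\gamma)}\|_{L^2}$ against $\mathcal E_k^{1/2}$ with either a small prefactor (absorbable into the damping) or a rapidly decaying inhomogeneous term. In \eqref{forcing terms of derivatives of W}–\eqref{forcing terms of derivatives of A} the commutators — the $[\partial^\gamma,J]$ term and the $|\beta|=|\gamma|-1$, $1\le|\beta|\le|\gamma|-1$, and $0\le\beta<\gamma$ sums — are handled by Moser/Gagliardo–Nirenberg product estimates in which the top $k$‑th derivative is always placed on $W$, $Z$ or $A$ (so that only the bootstrap assumptions, not any $\dot H^{k+1}$ information, are needed) and the remaining low‑order factors are estimated in $L^\infty$ via \eqref{bootstrap assumptions of W}, \eqref{bootstrap assumptions of Z}, \eqref{bootstrap assumptions of A} and Lemma \ref{estimates for functions of coordinate transformation, lemma}. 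Each such term carries a positive power of $\varepsilon$ or $e^{-s}$ (from $G_W$, $h_W$, $J-1$, $N-\tilde e_1$, $V$, $Q$), except for the genuinely diagonal self‑transport pieces $\beta_\tau\partial_1(JW)\partial^\gamma W$ already counted in the damping, and the mild $\partial_2$‑transport commutators, which are controlled by one further integration by parts using that $\partial_2W$ is $L^\infty$‑bounded (scaling‑subcritical). For $\partial^\gamma F_W$, $\partial^\gamma F_Z$, $\partial^\gamma F_A$ one expands $(U,S)$ through $(W,Z,A)$ via \eqref{U,S in terms of W,Z,A}: the derivatives landing on $W$ carry an $e^{-s/2}$ (through $S$ or the $e^{\pm s/2}$ weights), those landing on $Z,A$ sit directly inside $\mathcal E_k$, and those landing on the geometric factors are $\lesssim M^{4k}e^{-cs}$ by Lemma \ref{estimates for functions of coordinate transformation, lemma} together with the compact $x$‑support and the $x\mapsto y$ Jacobian. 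The only non‑decaying contribution is the $\dot\kappa$ and $O(1)$ part of $F_W$ (weighted by $\varepsilon$), which is precisely why the $W$‑estimate carries a stationary term while the $Z,A$‑forcings, carrying extra $e^{\pm s/2}$ factors throughout \eqref{forcing terms of derivative of Z}–\eqref{forcing terms of derivatives of A}, contribute only $O(M^{4k}e^{-s})$.

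Assembling the above yields a differential inequality of the form $\tfrac{d}{ds}\mathcal E_k\le-\lambda k\,\mathcal E_k+CM^{4k}e^{-s}+\varepsilon\,CM^{4k}$, and an integrating‑factor/Grönwall argument started from the initial bound \eqref{initial sobolev norm} at $s_0=-\log\varepsilon$ gives the two stated inequalities, after fixing $\lambda=\lambda(k)$ and absorbing universal constants into $M=M(\alpha,\kappa_0,k)$. I expect the main obstacle to be the forcing step: one must organize the commutator and $\partial^\gamma F$ estimates so that (i) no derivative of order $k+1$ of the unknowns is ever needed — mandatory, since the proof may invoke only the bootstrap assumptions — and (ii) each resulting product genuinely carries a spare power of $\varepsilon$ or $e^{-s}$, with the scaling weights $e^{3s/2},e^{s/2}$ hidden in $\partial^\gamma$ tracked carefully so that the geometric forcing on the shrinking $x$‑support contributes only the claimed $M^{4k}e^{-s}$ and the Grönwall bound is consistent with \eqref{initial sobolev norm} at $s_0$; the top‑order $\partial_2$‑transport commutators are the subtlest single point in this bookkeeping.
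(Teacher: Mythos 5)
The paper does not run the $\dot H^k$ estimate on $(W,Z,A)$ directly; it first passes to $(U,S)$ (velocity and sound speed in self‑similar variables), defines the $\lambda^{\gamma_2}$‑weighted energy $E_k^2=\sum_{|\gamma|=k}\lambda^{\gamma_2}(\|\partial^\gamma U\|_{L^2}^2+\|\partial^\gamma S\|_{L^2}^2)$, closes a Grönwall estimate for $E_k$, and only then transfers to $(W,Z,A)$ via Lemma \ref{W,Z,A controlled by U,S}. Your plan skips this reformulation, and that is a genuine gap, not a cosmetic one: the pressure terms carry one derivative, so $\partial^\gamma F_W$ contains $-2\beta_3\beta_\tau S T_2\,\partial_2\partial^\gamma A$ (order $k+1$), and similarly $\partial^\gamma F_A$ contains $-2\beta_3\beta_\tau e^{-s/2}S T_2\,\partial_2\partial^\gamma S=-\beta_3\beta_\tau e^{-s}S T_2\,\partial_2\partial^\gamma W+\cdots$. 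In the $(U,S)$ system these top‑order pieces are exactly antisymmetric — $U_i$ feeds on $\partial_1\partial^\gamma S,\partial_2\partial^\gamma S$ while $S$ feeds on $\partial_1\partial^\gamma U,\partial_2\partial^\gamma U_2$ — so they combine into $\int S\,\nabla(\partial^\gamma S\,\partial^\gamma U)$ and drop to order $k$ after one integration by parts. With your fixed weight $\varepsilon\|W\|^2+\|Z\|^2+2\|A\|^2$ the corresponding bookkeeping leaves an uncancelled remainder proportional to $(e^{-s}-\varepsilon)\int S T_2\,\partial_2\partial^\gamma A\,\partial^\gamma W$: the cancellation needs the time‑dependent weight $e^{-s}$ (equivalently, the $(U,S)$ energy, since $e^{-s}\|W\|^2+\|Z\|^2+2\|A\|^2\approx 2(\|U\|^2+\|S\|^2)$ up to geometric factors), and $\varepsilon=e^{-s}$ only at $s_0=-\log\varepsilon$. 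The leftover term involves a $(k+1)$‑st derivative of the unknowns and cannot be estimated using only the bootstrap assumptions, which are exactly the tools you are allowed here. The same mismatch also explains why your Grönwall output $\mathcal E_k\lesssim\varepsilon M^{4k}+M^{4k}e^{-s}$ is weaker than the claimed $\|Z\|^2+\|A\|^2\lesssim M^{4k}e^{-s}$ for large $s$, a decay that the later interpolation bounds in \eqref{new estimate of Z},\eqref{new estimate of A} actually rely on.

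A second issue is your reading of $\lambda$. In the statement $\lambda$ is not a damping constant; it is the geometric weight in $E_k^2=\sum\lambda^{\gamma_2}(\cdots)$ with $\lambda=\delta^2/(12k^2)$, and $\lambda^{-k}$ enters the final bound through the norm equivalence $\lambda^k(\|U\|^2+\|S\|^2)\le E_k^2$. This weight is not optional. The commutator $-\gamma_2\,\partial_2 g_A\,\partial_1\partial^{\gamma-e_2}U_i$ (and its $S$ analogue) has an $O(1)$ coefficient $\|\partial_2 g_A\|_{L^\infty}\lesssim 2$ by \eqref{estimates of transport terms for energy estimate} together with a combinatorial factor $\gamma_2$ up to $k$, so after Young it would cost $O(k^2)\,\|D^kU\|^2$, which cannot be absorbed into the $O(k)$ damping. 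The $\lambda^{\gamma_2}$ weight trades the index $\gamma_2$ for $\gamma_2-1$ at a price of one factor of $\lambda$, turning the Young estimate into $(2k^2\lambda/\delta+2\delta)E_k^2\le 3\delta E_k^2$. Your proposed remedy — one more integration by parts using the $L^\infty$ bound on $\partial_2 W$ — leaves the $O(k)$ combinatorial factor intact and does not solve this. These two points (the $(U,S)$ reformulation with antisymmetric cancellation, and the $\lambda^{\gamma_2}$ weight) are precisely what the paper adds to the scheme you outlined, and without them the estimate does not close.
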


We will prove this by using the $\dot H^k$ bound for $(U,S)$, and the fact that the $\dot H^k$ norm of $(W,Z,A)$ can be controlled by the $\dot H^k$ norm of $(U,S)$. More precisely, we have: 
\begin{lemma}\label{W,Z,A controlled by U,S}The following inequalities hold: 
	\begin{equation}
		\begin{aligned}
			\|W\|_{\dot H^k}\lesssim_k e^{\frac{s}{2}}\left(\|U\|_{\dot H^k}+\|S\|_{\dot H^k}+M^{\frac{9}{4}}\varepsilon^{\frac{3}{2}}e^{-\frac{k-3}{3}s}\right),\\
			\|Z\|_{\dot H^k}+\|A\|_{\dot H^k}\lesssim_k \|U\|_{\dot H^k}+\|S\|_{\dot H^k}+M^{\frac{9}{4}}\varepsilon^{\frac{3}{2}}e^{-\frac{k-3}{3}s}.
		\end{aligned}
	\end{equation}
\end{lemma}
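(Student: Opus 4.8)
The plan is to invert the algebraic relations \eqref{W,Z,A in terms of U,S}, which express $(W,Z,A)$ as $W = e^{s/2}(U\cdot N + S - \kappa)$, $Z = U\cdot N - S$, $A = U\cdot T$, and apply $\partial^\gamma$ with $|\gamma|=k$. Since $\kappa$ is a function of $s$ only, it contributes nothing to $\dot H^k$ seminorms for $k\ge1$. The essential point is that $N$ and $T$ depend only on the modulation variables (through $f$), so by Lemma \ref{estimates for functions of coordinate transformation, lemma} every derivative $\partial^\beta N$, $\partial^\beta T$ with $\beta>0$ is bounded (in fact extremely small, like $M^2\varepsilon^{7/6-\beta_1/2-\beta_2/6}$ in the $x$-variable, which after converting to $y$-derivatives picks up only favorable factors of $e^{-s}$), and $|N|,|T|\le 1$.

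First I would write, via the Leibniz rule,
\begin{equation}
	\partial^\gamma(U\cdot N) = N\cdot\partial^\gamma U + \sum_{0<\beta\le\gamma}\binom{\gamma}{\beta}\partial^{\gamma-\beta}U\cdot\partial^\beta N,
\end{equation}
and similarly for $U\cdot T$. The leading term $N\cdot\partial^\gamma U$ has $L^2$ norm bounded by $\|\partial^\gamma U\|_{L^2}\le\|U\|_{\dot H^k}$ since $|N|\le1$. For the commutator sum, the top-order-minus-one and below terms $\partial^{\gamma-\beta}U$ ($|\gamma-\beta|\le k-1$) must be controlled; here I would use the $L^\infty$ bounds on low derivatives of $U$ from the estimates \eqref{estimates of U,S}--\eqref{estimates of U dot N,S} together with Gagliardo--Nirenberg interpolation between $\|DU\|_{L^\infty}$-type quantities and $\|U\|_{\dot H^k}$, so that the mixed terms are absorbed into $\|U\|_{\dot H^k}+\|S\|_{\dot H^k}$ up to a small error — the source of the $M^{9/4}\varepsilon^{3/2}e^{-(k-3)s/3}$ term, which comes from the spatial support $\mathcal X(s)$ being bounded (so $\|\partial^\beta N\|_{L^2(\mathcal X(s))}$ carries volume factors $e^{2s}$, but these are beaten by the $\varepsilon$-powers and the $e^{-s}$ decay in $\partial^\beta N$ for $|\beta|$ large). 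The $e^{s/2}$ prefactor in the $W$ estimate is simply the explicit factor in $W = e^{s/2}(\cdots)$; for $Z$ and $A$ there is no such factor.

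The main obstacle I expect is the bookkeeping of the commutator terms: one must verify that for every split $|\gamma-\beta|$ ranging from $1$ to $k-1$, the product $\|\partial^{\gamma-\beta}U\|\,\|\partial^\beta N\|$ (in a Hölder-type pairing) is controlled by $\|U\|_{\dot H^k}$ times a bounded constant, which requires invoking interpolation and the fact that $\partial^\beta N$ decays in $s$ for large $|\beta|$ uniformly. A subtlety is that $N,T$ are only smooth, not analytic — so the constants $C_\gamma$ in Lemma \ref{estimates for functions of coordinate transformation, lemma} grow, but this is harmless since $k$ is \emph{fixed}, and the constants can be absorbed into the implicit $\lesssim_k$. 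Once the forward direction $(W,Z,A)\mapsto$ bounds in terms of $\|U\|_{\dot H^k},\|S\|_{\dot H^k}$ is set up, the estimate follows directly; no differential inequality or bootstrap is needed here, as this is a purely algebraic/elliptic-type comparison.
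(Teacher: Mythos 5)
Your plan follows the same skeleton as the paper's proof: start from the algebraic relations~\eqref{W,Z,A in terms of U,S}, take $\partial^\gamma$ with $|\gamma|=k$, apply Leibniz to $U\cdot N$ (and $U\cdot T$), observe that $\kappa$ is killed by $\partial^\gamma$, and split the commutator sum into the top term $N\cdot\partial^\gamma U$ (bounded by $\|U\|_{\dot H^k}$ since $|N|\le 1$), the endpoint term $\beta=\gamma$ (bounded via $\|U\|_{L^\infty}\|\partial^\gamma N\|_{L^\infty}|\mathcal{X}(s)|^{1/2}$, giving the error $M^{9/4}\varepsilon^{3/2}e^{-(k-3)s/3}$), and the intermediate terms $0<\beta<\gamma$. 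Your account of where the error term comes from is correct, and your handling of $Z$, $A$, and $\kappa$ matches the paper.

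The one step where you diverge, and where your sketch as written has a gap, is the absorption of the intermediate terms. You propose to control $\|\partial^{\gamma-\beta}U\|_{L^2}$ by Gagliardo--Nirenberg interpolation ``between $\|DU\|_{L^\infty}$-type quantities and $\|U\|_{\dot H^k}$.'' On $\mathbb{R}^2$, however, neither
\begin{equation*}
	\|D^{j}U\|_{L^2}\lesssim\|D^kU\|_{L^2}^\theta\|DU\|_{L^\infty}^{1-\theta}
	\qquad\text{nor}\qquad
	\|D^{j}U\|_{L^2}\lesssim\|D^kU\|_{L^2}^\theta\|U\|_{L^\infty}^{1-\theta}
\end{equation*}
is a valid Gagliardo--Nirenberg inequality for $1\le j<k$: the scaling relation forces $\theta=\frac{j-2}{k-2}$ (resp.\ $\theta=\frac{j-1}{k-1}$), which violates the index constraint $\theta\ge\frac{j-1}{k-1}$ (resp.\ $\theta\ge\frac{j}{k}$) from the lemma in Appendix~B. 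The paper avoids interpolation here altogether: because $\mathrm{supp}\,\partial^{\gamma-\beta}U\subset\mathcal{X}(s)$ for $|\gamma-\beta|\ge1$, one simply applies the Poincar\'e inequality $|\beta|$ times in the $y_2$-direction to get $\|\partial^{\gamma-\beta}U\|_{L^2}\lesssim(\varepsilon^{1/6}e^{s/2})^{|\beta|}\|D^kU\|_{L^2}$, and this growing diameter factor is more than compensated by the $e^{-(\frac{3}{2}\beta_1+\frac{1}{2}\beta_2)s}$ decay in $\|\partial^\beta N\|_{L^\infty}$, so the whole sum is $\lesssim\varepsilon^{\text{pos}}\|U\|_{\dot H^k}$. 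You do invoke the compact support of $DU$ in $\mathcal{X}(s)$, but only for the endpoint term $\beta=\gamma$; to close the intermediate terms your route needs to exploit it as well, either directly via Poincar\'e as the paper does, or by first bounding $\|DU\|_{L^2(\mathcal{X}(s))}\lesssim|\mathcal{X}(s)|^{1/2}\|DU\|_{L^\infty}$ and then interpolating in $L^2$ between $\|DU\|_{L^2}$ and $\|D^kU\|_{L^2}$ --- but the latter is just Poincar\'e-plus-interpolation in disguise, with no gain.
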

\begin{proof}
	We first estimate $\|W\|_{\dot H^k}$. Note that by (\ref{W,Z,A in terms of U,S}), $\mathrm{supp}(DU,DS)\subset\mathcal{X}(s)$, we have
	\begin{equation}
		\begin{aligned}
			e^{-\frac{s}{2}}\|\partial^\gamma W\|_{L^2(\mathbb{R}^2)}\overset{|\gamma|=k}&{\lesssim_k}\|\partial^\gamma S\|_{L^2}+\sum_{\beta\le\gamma}\|\partial^{\gamma-\beta}U\cdot\partial^\beta N\|_{L^2(\mathcal{X}(s))}\\
			&\lesssim \|S\|_{\dot{H}^k}+\|U\|_{L^\infty}\|\partial^\gamma N\|_{L^\infty}|\mathcal{X}(s)|^{\frac{1}{2}}+\|\partial^\gamma U\|_{L^2}+\sum_{0<\beta<\gamma}\|\partial^{\gamma-\beta}U\|_{L^2}\|\partial^\beta N\|_{L^\infty}\\
			\overset{\text{Poincaré}}&{\lesssim_k}\|S\|_{\dot H^k}+\|U\|_{\dot H^k}+M^{\frac{1}{4}}M^2\varepsilon^{\frac{7}{6}-\frac{\gamma_1}{2}-\frac{\gamma_2}{6}}e^{-(\frac{3}{2}\gamma_1+\frac{1}{2}\gamma_2)s}\varepsilon^{\frac{1}{3}}e^s\\
			&\ \ \ \ \  +\sum_{0<\beta<\gamma}(\varepsilon^{\frac{1}{6}}e^{\frac{s}{2}})^{|\beta|}\|D^kU\|_{L^2}M^2\varepsilon^{\frac{7}{6}-\frac{\beta_1}{2}-\frac{\beta_2}{6}}e^{-(\frac{3}{2}\beta_1+\frac{1}{2}\beta_2)s}\\
			&\lesssim\|S\|_{\dot H^k}+\|U\|_{\dot H^k}+M^{\frac{9}{4}}\varepsilon^{\frac{3}{2}}e^{-\frac{|\gamma|-3}{3}s}.
		\end{aligned}
	\end{equation}
The estimates of $Z$ and $A$ are similar. 
\end{proof}

\begin{definition}[Modified $\dot H^k$ norm]We define
	\begin{equation}
		E_k^2(s):=\sum_{|\gamma|=k}\lambda^{\gamma_2}\left(\|\partial^\gamma U(\cdot,s)\|_{L^2}^2+\|\partial^\gamma S(\cdot,s)\|_{L^2}^2\right),
	\end{equation}
	where $\lambda\in(0,1)$ is to be specified below. Clearly we have the norm equivalence: 
	\begin{equation}
		\lambda^k\left(\|U\|_{\dot H^k}^2+\|S\|_{\dot H^k}^2\right)\le E_k^2\le\|U\|_{\dot H^k}^2+\|S\|_{\dot H^k}^2.
	\end{equation}
\end{definition}

\subsection{Evolution of derivatives of $(U,S)$}
Applying $\partial^\gamma$ to both sides of the $(U,S)$ equation (\ref{equation of U,S}), we see that
\begin{subequations}
	\begin{equation}
			\begin{aligned}
			\partial_s\partial^\gamma U_i&-\beta_\tau e^{-s}Q_{ij}\partial^\gamma U_j+\mathcal{V}_A\cdot\nabla\partial^\gamma U_i+D_\gamma\partial^\gamma U_i+\beta_3\beta_\tau(1+\gamma_1)JN_i\partial^\gamma S\partial_1W,\\
			&+2\beta_3\beta_\tau S\left(e^{\frac{s}{2}}JN_i\partial_1\partial^\gamma S+e^{-\frac{s}{2}}\delta_{i2}\partial_2\partial^\gamma S\right)=F_{U_i}^{(\gamma)},
			\end{aligned}
		\label{equation of derivatives of U_i}
	\end{equation}
	\begin{equation}
		\begin{aligned}
			\partial_s\partial^\gamma S&+\mathcal{V}_A\cdot\nabla\partial^\gamma S+D_\gamma\partial^\gamma S+\beta_\tau(\beta_1+\beta_3\gamma_1)JN\cdot\partial^\gamma U\partial_1W\\
			&+2\beta_3\beta_\tau S\left(e^{\frac{s}{2}}JN\cdot\partial_1\partial^\gamma U+e^{-\frac{s}{2}}\partial_2\partial^\gamma U_2\right)=F_{S}^{(\gamma)}
		\end{aligned}
		\label{equation of derivatives of S}
	\end{equation}	
\end{subequations}
where $D_\gamma=\frac{1}{2}|\gamma|+\gamma_1(1+\partial_1g_U)$, and the forcing terms are  $F_{U_i}^{(\gamma)}=F_{U_i}^{(\gamma,U)}+F_{U_i}^{(\gamma-1,U)}+F_{U_i}^{(\gamma,S)}+F_{U_i}^{(\gamma-1,S)}$, $F_{S}^{(\gamma)}=F_{S}^{(\gamma,U)}+F_{S}^{(\gamma-1,U)}+F_{S}^{(\gamma,S)}+F_{S}^{(\gamma-1,S)}$. Here
\begin{subequations}
	\begin{align}
		\begin{split}
			F_{U_i}^{(\gamma,U)}=&-2\beta_1\beta_\tau\left(e^{\frac{s}{2}}JN_j\partial^\gamma U_j\partial_1 U_i+e^{-\frac{s}{2}}\partial^\gamma U_2\partial_2 U_i\right)\\
			&-\gamma_2\partial_2g_A\partial_1\partial^{\gamma-e_2} U_i-\sum_{\substack{|\beta|=|\gamma|-1\\\beta\le\gamma}}\binom{\gamma}{\beta}\partial^{\gamma-\beta}h_A\partial_2\partial^\beta U_i\\
			=&F_{U_i,(1)}^{(\gamma,U)}+F_{U_i,(2)}^{(\gamma,U)}+F_{U_i,(3)}^{(\gamma,U)},
			\label{Ui top order forcing terms involving Ui}
		\end{split}\\
		\begin{split}
			F_{U_i}^{(\gamma-1,U)}=&-\sum_{\substack{{1\le|\beta|\le|\gamma|-2}\\{\beta\le\gamma}}}\binom{\gamma}{\beta}\left(\partial^{\gamma-\beta}g_A\partial_1\partial^\beta U_i+\partial^{\gamma-\beta}h_A\partial_2\partial^\beta U_i\right)\\
			&-2\beta_1\beta_\tau e^{\frac{s}{2}}[\partial^\gamma,JN]\cdot U\partial_1 U_i-\beta_\tau e^{\frac{s}{2}}\partial^\gamma\left(2\beta_1V\cdot JN-\frac{\partial_tf}{1+f_{x_1}}\right)\partial_1U_i-2\beta_1\beta_\tau e^{-\frac{s}{2}}\partial^\gamma V_2\partial_2U_i\\
			=&F_{U_i,(1)}^{(\gamma-1,U)}+F_{U_i,(2)}^{(\gamma-1,U)}+F_{U_i,(3)}^{(\gamma-1,U)}+F_{U_i,(4)}^{(\gamma-1,U)},
			\label{Ui lower order forcing terms involving Ui}
		\end{split}\\
		\begin{split}
			F_{U_i}^{(\gamma,S)}=&-2\beta_3\beta_\tau \gamma_2e^{\frac{s}{2}}\partial_2(SJN_i)\partial_1\partial^{\gamma-e_2}S-\beta_3\beta_\tau(1+\gamma_1)e^{\frac{s}{2}}JN_i\partial_1Z\partial^\gamma S\\
			&-2\beta_3\beta_\tau e^{-\frac{s}{2}}\delta_{i2}\sum_{\substack{|\beta|=|\gamma|-1\\\beta\le\gamma}}\binom{\gamma}{\beta}\partial^{\gamma-\beta}S\partial_2\partial^\beta S-2\beta_3\beta_\tau\delta_{i2}e^{-\frac{s}{2}}\partial^\gamma S\partial_2S-2\beta_3\beta_\tau\gamma_1e^{\frac{s}{2}}\partial_1(JN_i)S\partial^\gamma S\\
			=&F_{U_i,(1)}^{(\gamma,S)}+F_{U_i,(2)}^{(\gamma,S)}+F_{U_i,(3)}^{(\gamma,S)}+F_{U_i,(4)}^{(\gamma,S)}+F_{U_i,(5)}^{(\gamma,S)},
			\label{Ui top order forcing terms involving S}
		\end{split}\\
		\begin{split}
			F_{U_i}^{(\gamma-1,S)}=&-2\beta_3\beta_\tau\sum_{\substack{{1\le|\beta|\le|\gamma|-2}\\{\beta\le\gamma}}}\binom{\gamma}{\beta}\left(e^{\frac{s}{2}}\partial^{\gamma-\beta}(SJN_i)\partial_1\partial^\beta S+e^{-\frac{s}{2}}\delta_{i2}\partial^{\gamma-\beta}S\partial_2\partial^\beta S\right)\\
			&-2\beta_3\beta_\tau e^{\frac{s}{2}}[\partial^\gamma,JN_i]S\partial_1S\\
			=&F_{U_i,(1)}^{(\gamma-1,S)}+F_{U_i,(2)}^{(\gamma-1,S)},
			\label{Ui lower order forcing terms involving S}
		\end{split}\\
		\begin{split}
			F_{S}^{(\gamma,S)}=&-2\beta_3\beta_\tau\left(e^{\frac{s}{2}}\partial^\gamma SJN_j\partial_1 U_j+e^{-\frac{s}{2}}\partial^\gamma S\partial_2 U_2\right)\\
			&-\gamma_2\partial_2g_A\partial_1\partial^{\gamma-e_2} S-\sum_{\substack{|\beta|=|\gamma|-1\\\beta\le\gamma}}\binom{\gamma}{\beta}\partial^{\gamma-\beta}h_A\partial_2\partial^\beta S,
		\end{split}\\
		\begin{split}
			F_S^{(\gamma-1,S)}=&-\sum_{\substack{{1\le|\beta|\le|\gamma|-2}\\{\beta\le\gamma}}}\binom{\gamma}{\beta}\left(\partial^{\gamma-\beta}g_A\partial_1\partial^\beta S+\partial^{\gamma-\beta}h_A\partial_2\partial^\beta S\right)\\
			&-2\beta_3\beta_\tau\sum_{\substack{{1\le|\beta|\le|\gamma|-2}\\{\beta\le\gamma}}}\binom{\gamma}{\beta}\left(e^{\frac{s}{2}}\partial^{\gamma-\beta}(SJN)\cdot\partial_1\partial^\beta U+e^{-\frac{s}{2}}\partial^{\gamma-\beta}S\partial_2\partial^\beta U_2\right)\\
			&-2\beta_3\beta_\tau e^{\frac{s}{2}}\partial_1 U_j[\partial^\gamma,JN_j]S-\beta_\tau e^{\frac{s}{2}}\partial^\gamma\left(2\beta_1V\cdot JN-\frac{\partial_tf}{1+f_{x_1}}\right)\partial_1S-2\beta_1\beta_\tau e^{-\frac{s}{2}}\partial^\gamma V_2\partial_2S,
		\end{split}\\
		\begin{split}
			F_S^{(\gamma,U)}=&-2\beta_3\beta_\tau \gamma_2e^{\frac{s}{2}}\partial_2(SJN)\cdot\partial_1\partial^{\gamma-e_2} U+\beta_\tau(\beta_1+\beta_3\gamma_1)e^{\frac{s}{2}}JN\cdot\partial^\gamma U\partial_1Z\\
			&-2\beta_3\beta_\tau e^{-\frac{s}{2}}\sum_{\substack{|\beta|=|\gamma|-1\\\beta\le\gamma}}\binom{\gamma}{\beta}\partial^{\gamma-\beta}S\partial_2\partial^\beta U_2-2\beta_1\beta_\tau e^{-\frac{s}{2}}\partial^\gamma U_2\partial_2S-2\beta_3\beta_\tau\gamma_1 e^{\frac{s}{2}}S\partial^\gamma U_j\partial_1(JN_j),
		\end{split}\\
		\begin{split}
			F_S^{(\gamma-1,U)}=&-2\beta_1\beta_\tau e^{\frac{s}{2}}\partial_1S[\partial^\gamma,JN_j]U_j.
		\end{split}
	\end{align}
\end{subequations}

\subsection{Estimates for forcing terms}
\begin{lemma}
	Let $k\gg1$ and $\delta\in(0,\frac{1}{32}]$, $\lambda=\frac{\delta^2}{12k^2}$, then for $\varepsilon\ll1$ we have
	\begin{subequations}
		\begin{align}
			\begin{split}
				2\sum_{|\gamma|=k}\lambda^{\gamma_2}\int_{\mathbb{R}^3}\left|F_{U_i}^{(\gamma)}\partial^\gamma U_i\right|&\le(4+8\delta)E_k^2+e^{-s}M^{4k-4},
				\label{Ui forcing estimate}
			\end{split}\\
			\begin{split}
				2\sum_{|\gamma|=k}\lambda^{\gamma_2}\int_{\mathbb{R}^3}\left|F_{S}^{(\gamma)}\partial^\gamma S\right|&\le(4+8\delta)E_k^2+e^{-s}M^{4k-4}.
				\label{S forcing estimate}
			\end{split}
		\end{align}
	\end{subequations}
\end{lemma}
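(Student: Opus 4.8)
The plan is to exploit the splitting already built into the statement, $F_{U_i}^{(\gamma)}=F_{U_i}^{(\gamma,U)}+F_{U_i}^{(\gamma-1,U)}+F_{U_i}^{(\gamma,S)}+F_{U_i}^{(\gamma-1,S)}$ and its $S$-analogue, and to bound the two kinds of pieces by two different mechanisms. The \emph{top-order} pieces $F_{U_i}^{(\gamma,U)},F_{U_i}^{(\gamma,S)}$, which carry a genuine $k$-th order derivative of $U$ or $S$, are paired directly against $\partial^\gamma U_i$ by Cauchy–Schwarz and produce the $(4+8\delta)E_k^2$ term. The \emph{lower-order} pieces $F_{U_i}^{(\gamma-1,U)},F_{U_i}^{(\gamma-1,S)}$ (together with the commutators and the sub-dominant terms hiding inside the top-order brackets) are expanded by Leibniz/Faà di Bruno, estimated by interpolation, and absorbed into $e^{-s}M^{4k-4}$. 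The $S$-estimate is identical with the roles of $U$ and $S$ interchanged.

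For the top-order pieces, the key point is that every coefficient multiplying a $k$-th order derivative is an $L^\infty$ function that is either $O(1)$ or $o(1)$: by (\ref{estimates of U,S})--(\ref{estimates of U dot N,S}), (\ref{estimates of transport terms for energy estimate}), (\ref{estimates of f-dependent functions, inequalities}) and $\beta_\tau=1+O(Me^{-s})$ one has $e^{\frac s2}|\partial_1U|,\ e^{\frac s2}|\partial_1S|,\ |JN|,\ |\partial_1g_A|=O(1)$ (indeed $|N|\equiv1$, $|J-1|=o(1)$), while $e^{\frac s2}|\partial_1Z|$, $e^{-\frac s2}|\partial_2U|$, $e^{-\frac s2}|\partial_2S|$, $e^{\frac s2}|S\,\partial_1(JN)|$ and $e^{\frac s2}|S\,\partial_2(JN)|$ are all $o(1)$ after converting the coordinate estimates from $\partial_x$ to $\partial_y$. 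Consequently only one kind of top-order term has an $O(1)$ coefficient: the $2\beta_1\beta_\tau e^{\frac s2}JN\!\cdot\!\partial^\gamma U\,\partial_1U_i$ piece of $F_{U_i,(1)}^{(\gamma,U)}$ (resp. the $2\beta_3\beta_\tau e^{\frac s2}\partial^\gamma S\,JN\!\cdot\!\partial_1U$ piece of $F_S^{(\gamma,S)}$). Writing $2\int|F_{U_i}^{(\gamma)}\partial^\gamma U_i|\le 4\beta_1(1+o(1))\int|\partial^\gamma U||\partial^\gamma U_i|\le 2\beta_1(1+o(1))\big(\|\partial^\gamma U\|_{L^2}^2+\|\partial^\gamma U_i\|_{L^2}^2\big)$ and summing over $|\gamma|=k$ with weight $\lambda^{\gamma_2}$ gives $\le 4\beta_1(1+o(1))E_k^2<(4+o(1))E_k^2$ since $\beta_1=\frac1{1+\alpha}<1$ (and likewise $4\beta_3<4$ for $S$); the $o(1)$ error is folded into the $8\delta$ budget. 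The remaining top-order contributions are exactly the terms carrying an explicit factor $\gamma_2$ (the $\gamma_2\partial_2g_A\,\partial_1\partial^{\gamma-e_2}(\cdot)$ and $\gamma_2 e^{\frac s2}\partial_2(SJN_i)\partial_1\partial^{\gamma-e_2}S$ terms) or a Leibniz sum over $|\beta|=|\gamma|-1$ against $\partial^{\gamma-\beta}h_A$: for these I write $\lambda^{\gamma_2}=\lambda\cdot\lambda^{\gamma_2-1}$, use $|\partial_2g_A|,|\partial_2h_A|=O(1)$ and $\gamma_2\le k$, and apply Young's inequality with parameter $\mu=\lambda^{-1/2}$, bounding each such term by $O(k\sqrt\lambda)E_k^2$. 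With $\lambda=\frac{\delta^2}{12k^2}$ one has $k\sqrt\lambda=O(\delta)$, and summing the fixed (in $k$) number of such terms yields $8\delta E_k^2$.

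For the lower-order pieces I would expand every product by Leibniz and Faà di Bruno, estimate the coordinate- and modulation-dependent factors $\partial^{\gamma-\beta}g_A$, $\partial^{\gamma-\beta}h_A$, $\partial^{\gamma-\beta}(JN)$, $\partial^\gamma V$, $\partial^\gamma(\partial_tf/(1+f_{x_1}))$ via (\ref{estimates of f-dependent functions, inequalities}), (\ref{estimates of derivatives of G_A}), (\ref{estimates of derivatives of V}) — each carrying a power of $\varepsilon$ and/or a factor $e^{-cs}$, $c>0$ — and estimate $\partial^\beta U,\partial^\beta S$ with $1\le|\beta|\le k-1$ by the bootstrap bounds (\ref{bootstrap assumptions of Z}), (\ref{bootstrap assumptions of A}), (\ref{estimates of U dot N,S}) when $|\beta|\le2$ and by Gagliardo–Nirenberg interpolation between those $L^\infty$ bounds and $E_k$ when $3\le|\beta|\le k-1$, using $\mathrm{supp}(DU,DS)\subset\mathcal X(s)$ (Lemma \ref{spatial support of DU,DS}) and Poincaré exactly as in Lemma \ref{W,Z,A controlled by U,S}. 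The commutators $[\partial^\gamma,JN_j]U_j$, $[\partial^\gamma,JN_j]S$ also belong here, since the genuine $k$-th order term $JN\!\cdot\!\partial^\gamma U$ has been split off in (\ref{equation of derivatives of U_i})--(\ref{equation of derivatives of S}), so the commutator involves $\le k-1$ derivatives of $U$, $S$ and $\ge1$ derivative of $JN$. Collecting the powers of $\varepsilon,e^{-s},M,E_k$, each lower-order contribution is either directly $\le e^{-s}M^{4k-4}$ or of the form $(\text{small})\cdot E_k\cdot e^{-\frac s2}M^{2k-2}$, which Young's-inequalities into $\delta E_k^2+e^{-s}M^{4k-4}$; since the $8\delta$ already has room, this closes the bound.

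The main obstacle is the numerical bookkeeping in the top-order estimate: one must track the constants through Cauchy–Schwarz precisely enough to land on the coefficient $4$ (it is really $4\beta_1<4$ or $4\beta_3<4$), and verify that the specific choice $\lambda=\delta^2/(12k^2)$ keeps the sum of all $\gamma_2$-weighted contributions below $8\delta E_k^2$. A secondary but essential check is that every term in the top-order brackets of (\ref{equation of derivatives of U_i})--(\ref{equation of derivatives of S}) other than the single $O(1)$-coefficient one genuinely carries enough $\varepsilon$- or $e^{-s}$-smallness — after converting the coordinate estimates from $\partial_x$ to $\partial_y$ — to be demoted to the lower-order analysis, and that the interpolation exponents there leave a margin of at least $M^{-4}$ relative to the target size $e^{-s}M^{4k}$ of $E_k^2$.
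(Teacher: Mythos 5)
Your overall decomposition and the two mechanisms you identify — Cauchy--Schwarz with $4\beta_1<4$ (resp.\ $4\beta_3<4$) for the single genuinely top-order $O(1)$-coefficient piece, the $\lambda$-weight rebalancing for the $\gamma_2$-weighted pieces, and Leibniz/interpolation on the lower-order remainder — match the paper's proof. But there is a concrete flaw in how you dispose of the $h_A$ Leibniz sum.

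You propose to treat $F_{U_i,(3)}^{(\gamma,U)}=-\sum_{|\beta|=|\gamma|-1}\binom{\gamma}{\beta}\partial^{\gamma-\beta}h_A\,\partial_2\partial^\beta U_i$ by writing $\lambda^{\gamma_2}=\lambda\cdot\lambda^{\gamma_2-1}$, using $|\partial_2 h_A|=O(1)$, and Young. That only works when the derivative of $U$ being produced has $y_2$-degree $\gamma_2-1$. The sum runs over \emph{both} $\beta=\gamma-e_2$ (giving $\partial_2\partial^\beta U=\partial^\gamma U$, degree $\gamma_2$) and $\beta=\gamma-e_1$ (giving $\partial_2\partial^{\gamma-e_1}U$, with $y_2$-degree $\gamma_2+1$). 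In the second case the weighted product $\lambda^{\gamma_2}\|\partial^{\gamma-e_1+e_2}U\|_{L^2}\|\partial^\gamma U\|_{L^2}$ compares to $\lambda^{(\gamma_2+(\gamma_2+1))/2}=\lambda^{\gamma_2+\frac12}$ inside $E_k^2$, so the mismatch is an irreducible factor $\lambda^{-1/2}\sim k/\delta$ no matter how you choose the Young parameter, and multiplied by the binomial coefficient $\gamma_1\le k$ this is $O(k^2/\delta)$, which is not $O(\delta)$. Your claim that this piece is bounded by $O(k\sqrt\lambda)E_k^2$ is therefore false as stated.

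The rescue — and what the paper actually does — is that $|\partial_1 h_A|$, $|\partial_2 h_A|\lesssim e^{-s}$ (see (\ref{estimates of transport terms for energy estimate})), not merely $O(1)$. That $e^{-s}\le\varepsilon$ swallows the bad $\lambda^{-1}k^2$ factor for $\varepsilon$ small in terms of $k,\delta$, yielding the $\varepsilon^{1/2}E_k^2$ bound; the $\lambda$-weight Young trick is genuinely needed only for $F_{U_i,(2)}^{(\gamma,U)}=-\gamma_2\partial_2 g_A\,\partial_1\partial^{\gamma-e_2}U_i$ (and its $S$-analogues), where $|\partial_2 g_A|$ really is only $O(1)$ but the derivative index \emph{decreases} the $y_2$-degree by one. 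So your scheme is correct once you reclassify the $h_A$ terms as being controlled by the $e^{-s}$ decay of $Dh_A$ rather than by the $\lambda$-weight mechanism; as written, the argument for that piece does not close.
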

\begin{proof}
	We begin with (\ref{Ui forcing estimate}). 
	
	We first deal with the term $F_{U_i}^{(\gamma,U)}$ involving the top order derivatives of $U$, this term is decomposed as a sum $F_{U_i,(1)}^{(\gamma,U)}+F_{U_i,(2)}^{(\gamma,U)}+F_{U_i,(3)}^{(\gamma,U)}$. From (\ref{bootstrap assumptions of dynamic variables}), $0<\beta_1,\beta_\tau<1$, and (\ref{estimates of f-dependent functions, inequalities}), we have
	\begin{equation}
		\begin{aligned}
			2\sum_{|\gamma|=k}\lambda^{\gamma_2}\int_{\mathbb{R}^3}\left|F_{U_i,(1)}^{(\gamma)}\partial^\gamma U_i\right|
			\overset{(\ref{Ui top order forcing terms involving Ui})}&{\le}4\beta_1\beta_\tau\sum_{|\gamma|=k}\lambda^{\gamma_2}\left[e^{\frac{s}{2}}(1+\varepsilon^{\frac{3}{4}})\|\partial_1U\|_{L^\infty}+e^{-\frac{2}{2}}\|\partial_2U\|_{L^\infty}\right]\|\partial^\gamma U\|_{L^2}^2\\
			&\le(4+\varepsilon^{\frac{1}{2}})E_k^2.
		\end{aligned}
	\end{equation}
	By (\ref{estimates of transport terms for energy estimate}) and Young's inequality, we can see that
	\begin{equation}
		\begin{aligned}
			2\sum_{|\gamma|=k}\lambda^{\gamma_2}\int_{\mathbb{R}^3}\left|F_{U_i,(2)}^{(\gamma)}\partial^\gamma U_i\right|
			\overset{(\ref{Ui top order forcing terms involving Ui})}&{\le}2\sum_{|\gamma|=k}\lambda^{\gamma_2}\gamma_2\|\partial_2g_A\|_{L^\infty(\mathcal{X}(s))}\|\partial_1\partial^{\gamma-e_2}U_i\|_{L^2}\|\partial^\gamma U_i\|_{L^2}\\
			&\le2\sum_{|\gamma|=k}\left(\frac{\gamma_2^2}{\delta}\lambda^{\gamma_2+1}\|\partial^\gamma U\|_{L^2}^2+\mathbbm{1}_{\gamma_2>0}\delta\lambda^{\gamma_2-1}\|\partial_1\partial^{\gamma-e_2}U\|_{L^2}^2\right)\\
			&\le\lambda\frac{2k^2}{\delta}E_k^2+2\delta E_k^2
			\overset{\lambda=\frac{\delta^2}{12k^2}}{\le}3\delta E_k^2,
		\end{aligned}
	\end{equation}
	and 
	\begin{equation}
		\begin{aligned}
			2\sum_{|\gamma|=k}\lambda^{\gamma_2}\int_{\mathbb{R}^3}\left|F_{U_i,(3)}^{(\gamma)}\partial^\gamma U_i\right|\overset{(\ref{Ui top order forcing terms involving Ui})}&{\lesssim}\sum_{|\gamma|=k}\int\sum_{\substack{|\beta|=|\gamma|-1\\\beta\le\gamma}}|\partial^{\gamma-\beta}h_A||\partial_2\partial^\beta U||\partial^\gamma U|\\
			&\lesssim\varepsilon\sum_{|\gamma|=k}\ \sum_{\substack{|\beta|=|\gamma|-1\\\beta\le\gamma}}\left(\|\partial^\gamma U\|_{L^2}^2+\|\partial_2\partial^\beta U\|_{L^2}^2\right)\le\varepsilon^{\frac{1}{2}}E_k^2.
		\end{aligned}
	\end{equation}
	Combining these three estimates, we have
	\begin{equation}
		2\sum_{|\gamma|=k}\lambda^{\gamma_2}\int_{\mathbb{R}^3}\left|F_{U_i}^{(\gamma,U)}\partial^\gamma U_i\right|\le(4+3\delta+\varepsilon^{\frac{1}{2}})E_k^2.
	\end{equation}

	Next we deal with the forcing terms $F_{U_i}^{(\gamma-1,U)}$ involving lower order derivatives of $U$. We decompose its first part as $F_{U_i,(1)}^{(\gamma-1,U)}=I_{i1}+I_{i2}+I_{i3}$ where
	\begin{equation}
		\begin{aligned}
			&I_{i1}=-\sum_{\substack{1\le|\beta|\le|\gamma|-2\\\beta\le\gamma}}\binom{\gamma}{\beta}\partial^{\gamma-\beta}g_A\partial^\beta\partial_1(U\cdot NN_i),\\
			&I_{i2}=-\sum_{\substack{1\le|\beta|\le|\gamma|-2\\\beta\le\gamma}}\binom{\gamma}{\beta}\partial^{\gamma-\beta}g_A\partial^\beta\partial_1(AT_i),\\
			&I_{i3}=-\sum_{\substack{1\le|\beta|\le|\gamma|-2\\\beta\le\gamma}}\binom{\gamma}{\beta}\partial^{\gamma-\beta}h_A\partial^\beta\partial_2U_i.
		\end{aligned}
	\end{equation}
	Since $D(U\cdot N)$ is supported in $\mathcal{X}(,)$, we introduce a positive cut-off function $\tilde{\theta}\in C_c(5\mathcal{X}(0))$ such that $\tilde{\theta}\equiv1$ on $\mathcal{X}(0)$. Let $\tilde{\theta}_s(y)=\tilde{\theta}(y_1e^{-\frac{3}{2}s},y_2e^{-\frac{s}{2}})$, then $\tilde{\theta}_s\in C_c^\infty(5\mathcal{X}(s))$, $\tilde\theta_s\equiv1$ on $\mathcal{X}(s)$, and
	\begin{equation}
		\|\partial^\gamma\tilde\theta_s\|_{L^\infty}\lesssim\varepsilon^{-\frac{\gamma_1}{2}-\frac{\gamma_2}{6}}e^{-\frac{3}{2}\gamma_1s-\frac{\gamma_2}{2}s}\lesssim e^{-\frac{|\gamma|}{3}s}.
	\end{equation}
	By the interpolation inequality (\ref{interpolation of L2 norm of product}), we have
	\begin{equation}
		\begin{aligned}
			\|I_{i1}\|_{L^2(\mathbb{R}^2)}\lesssim&\left\|D^k\left(\tilde{\theta}_sg_A\right)\right\|_{L^2_y(\mathbb{R}^2)}^a\left\|D^2\left(\tilde{\theta}_sg_A\right)\right\|_{L^q(\mathbb{R}^2)}^{1-a}\|D^k(U\cdot NN)\|_{L^2(\mathbb{R}^2)}^b\|D^2(U\cdot NN)\|_{L^q(\mathbb{R}^2)}^{1-b}.
		\end{aligned}
	\end{equation}
	We estimate each factor. We first bound the $D^2g_A$ term:
	\begin{equation}
		\begin{aligned}
			\left\|D^2\left(\tilde{\theta}_sg_A\right)\right\|_{L^q(\mathbb{R}^2)}\overset{(\ref{estimates of transport terms for energy estimate})}&{\lesssim}M^{\frac{1}{4}}e^{\frac{s}{2}}e^{-\frac{2}{3}s}(\varepsilon^{\frac{2}{3}}e^{2s})^{\frac{1}{q}}+e^{-\frac{s}{3}}(\varepsilon^{\frac{2}{3}}e^{2s})^{\frac{1}{q}}+\|M\eta^{-\frac{1}{6}}+M^2e^{-\frac{s}{2}}\|_{L^q(5\mathcal{X}(s))}\\
			&\lesssim M\|\eta^{-1}\|_{L^{\frac{q}{6}}(\mathbb{R}^2)}^{\frac{1}{6}}+M^2e^{-\frac{s}{6}}\varepsilon^{\frac{2}{3q}}e^{\frac{2}{q}s}{\lesssim} M.
		\end{aligned}
	\end{equation}
	In the last inequality we require $q\ge12$ and use the fact that $(1+|y_1|^{\alpha_1}+\cdots+|y_d|^{\alpha_d})^{-1}\in L^1(R^d)$ as long as $\sum\alpha_i^{-1}<1$. From estimates (\ref{estimates of U dot N,S}) of $U\cdot N$ and estimates (\ref{estimates of f-dependent functions, inequalities}) of $N$, we have
	\begin{equation}
		\|D^2(U\cdot NN)\|_{L^q}\lesssim Me^{-\frac{s}{2}}.
	\end{equation}
	Then, as we did in the proof of lemma \ref{W,Z,A controlled by U,S}, we have
	\begin{equation}
		\begin{aligned}
			\|D^k(U\cdot JN)\|_{L^2(5\mathcal{X}(s))}
			{\lesssim}\|D^kU\|_{L^2(\mathbb{R}^2)}+M^2\varepsilon^{\frac{1}{3}}e^{-\frac{k-3}{3}},
		\end{aligned}
	\end{equation}
	\begin{equation}
		\begin{aligned}
			\|D^mg_A\|_{L^2(5\mathcal{X}(s))}&\lesssim e^\frac{s}{2}\left(\|D^m(U\cdot JN)\|_{L^2(5\mathcal{X}(s))}+\|D^m(V\cdot JN)\|_{L^2(5\mathcal{X}(s))}+\left\|D^m(\frac{\partial_tf}{1+f_{x_1}})
			\right\|_{L^2(5\mathcal{X}(s))}\right)\\
			\overset{m>0}&{\lesssim}e^{\frac{s}{2}}\left(\|D^mU\|_{L^2(\mathbb{R}^2)}+M^2\varepsilon^{\frac{1}{3}}e^{-\frac{m-3}{3}}\right),
		\end{aligned}
	\end{equation}
	\begin{equation}
		\begin{aligned}
			\left\|\partial^\gamma\left(\tilde{\theta}_sg_A\right)\right\|_{L^2(\mathbb{R}^2)}&\lesssim_\gamma\varepsilon^{-\frac{\gamma_1}{2}-\frac{\gamma_2}{6}}e^{-\frac{3}{2}\gamma_1s-\frac{\gamma_2}{2}s}\|g_A\|_{L^\infty}|5\mathcal{X}(s)|^{1/2}+\sum_{\beta<\gamma}\varepsilon^{-\frac{\beta_1}{2}-\frac{\beta_2}{6}}e^{-\frac{3}{2}\beta_1s-\frac{\beta_2}{2}s}\|\partial^{\gamma-\beta}g_A\|_{L^2(5\mathcal{X}(s))}\\
			&\lesssim e^{\frac{s}{2}}\left(\|D^{|\gamma|}U\|_{L^2(\mathbb{R}^2)}+M^2\varepsilon^{\frac{1}{3}}e^{-\frac{|\gamma|-3}{3}s}\right).
		\end{aligned}
	\end{equation}
	For $k\ge5$, we have $a+b\ge\frac{1}{2}$, $\frac{2-a-b}{1-a-b}\le2k-4$. Hence, by taking $M$ to be large enough in terms of $\lambda$ and $k$, we have
	\begin{equation}
		\begin{aligned}
			2\sum_{|\gamma|=k}\lambda^{\gamma_2}\int\left|I_{i1}\partial^\gamma U_i\right|&\lesssim\sum_{|\gamma|=k}\lambda^{\gamma_2}\|D^kU\|_{L^2}\left[\|D^kU\|_{L^2}^{a+b}+\left(M^2\varepsilon^\frac{1}{3} e^{-\frac{k-3}{3}s}\right)^{a+b}\right]M^{2-a-b}e^{\frac{a+b-1}{2}s}\\
			&\lesssim\sum_{|\gamma|=k}\lambda^{\gamma_2}\left(\lambda^{-\frac{k}{2}}E_k\right)^{1+a+b}M^{2-a-b}e^{\frac{a+b-1}{2}s}+\sum_{|\gamma|=k}\lambda^{\gamma_2}M^{2+3a+3b}\varepsilon^{\frac{a+b}{3}}e^{-\frac{a+b}{3}ks+\frac{a+b+1}{2}s}\lambda^{-\frac{k}{2}}E_k\\
			\overset{a+b<1}&{\le}2\delta E_k^2+C(a,b,\delta)e^{-s}M^{\frac{2(2-a-b)}{1-a-b}}\lambda^{-\frac{1+a+b}{1-a-b}k}+C(\delta)M^{10}\varepsilon^{\frac{2}{3}(a+b)}\lambda^{-k}e^{-\frac{2}{3}(a+b)ks+(a+b+1)s}\\
			&\le2\delta E_k^2+C(a,b,\delta)e^{-s}M^{4k-8}\lambda^{-\frac{1+a+b}{1-a-b}k}\le2\delta E_k^2+e^{-s}M^{4k-6}.
		\end{aligned}
	\end{equation}
	Next, we estimate the $L^2$ norm of $I_{i2}$:
	\begin{equation}
		\begin{aligned}
			\|I_{i2}\|_{L^2}&\lesssim e^{\frac{s}{2}}\sum_{j=1}^{k-2}\|D^{k-j}(U\cdot JN)D^j\partial_1(AT)\|_{L^2}+e^{\frac{s}{2}}\sum_{\substack{1\le|\beta|\le|\gamma|-2\\\beta\le\gamma}}\left(|\partial^{\gamma-\beta}(V\cdot JN)|+\left|\partial^{\gamma-\beta}\frac{\partial_t f}{1+f_{x_1}}\right|\right)\|\partial^\beta\partial_1(AT)\|_{L^2}\\
			&=I_{i2,1}+I_{i2,2}.
		\end{aligned}
	\end{equation}
	First, for $I_{i2,1}$, we have that 
	\begin{equation}
		\begin{aligned}
			I_{i2,1}\overset{\text{Hölder}}&{\lesssim}e^{\frac{s}{2}}\sum_{j=1}^{k-2}\|D^{k-j-1}D(\tilde{\theta}_sU\cdot JN)\|_{L^{\frac{2(k-1)}{k-1-j}}(\mathbb{R}^2)}\|D^j\partial_1(AT)\|_{L^{\frac{2(k-1)}{j}}}\\
			\overset{(\text{\ref{special case 1 of lemma A.1}})}&{\lesssim}e^{\frac{s}{2}}\sum_{j=1}^{k-2}\|D(\tilde{\theta}_sU\cdot JN)\|_{\dot H^{k-1}}^{\frac{k-j-1}{k-1}}\|D(\tilde{\theta}_sU\cdot JN)\|_{L^\infty}^{\frac{j}{k-1}}\|\partial_1(AT)\|_{\dot H^{k-1}}^{\frac{j}{k-1}}\|\partial_1(AT)\|_{L^\infty}^{\frac{k-j-1}{k-1}}\\
			&\lesssim e^{\frac{s}{2}}\sum_{j=1}^{k-2}\left(\|D^kU\|_{L^2}+M^2\varepsilon^{\frac{1}{3}} e^{-\frac{k-3}{3}s}\right)^{\frac{k-j-1}{k-1}}(Me^{-\frac{s}{2}})^{-\frac{j}{k-1}}\left(\|D^kA\|_{L^2}+M^4\varepsilon e^{-\frac{k-3}{3}s}\right)^{\frac{j}{k-1}}\left(Me^{-\frac{3}{2}s}\right)^{\frac{k-j-1}{k-1}}\\
			&\lesssim M^{\frac{1}{k-1}}e^{-\frac{1}{k-1}s}\left(\lambda^{-\frac{k}{2}}E_k+M^2\varepsilon^{\frac{1}{3}} e^{-\frac{k-3}{3}s}\right).
		\end{aligned}
	\end{equation}
	Then
	\begin{equation}
		\begin{aligned}
			I_{i2,2}&\lesssim e^{\frac{s}{2}}\sum_{\substack{1\le|\beta|\le|\gamma|-2\\\beta\le\gamma}}M^2\varepsilon^{\frac{1}{3}-\frac{\gamma_1-\beta_1}{2}-\frac{\gamma_2-\beta_2}{6}}e^{-\frac{3}{2}(\gamma_1-\beta_1)s-\frac{1}{2}(\gamma_2-\beta_2)s}(\varepsilon^{\frac{1}{6}}e^{\frac{s}{2}})^{|\gamma|-|\beta|-1}\|D^k(AT)\|_{L^2}\\
			&\lesssim \sum_{\substack{1\le|\beta|\le|\gamma|-2\\\beta\le\gamma}}M^2\varepsilon^{\frac{1}{6}-\frac{\gamma_1-\beta_1}{3}}e^{-(\gamma_1-\beta_1)s}\left(\|D^kA\|_{L^2}+M^\frac{9}{4}\varepsilon^\frac{3}{2}e^{-\frac{k-3}{3}s}\right)\\
			&\lesssim M^2\varepsilon^{\frac{1}{6}}\left(\lambda^{-\frac{k}{2}}E_k+M^\frac{9}{4}\varepsilon^\frac{3}{2}e^{-\frac{k-3}{3}s}\right).
		\end{aligned}
	\end{equation}
	Hence we have
	\begin{equation}
		\begin{aligned}
			2\sum_{|\gamma|=k}\lambda^{\gamma_2}\int\left|I_{i2}\partial^\gamma U_i\right|&\overset{k\ge7}{\lesssim}\lambda^{-\frac{k}{2}}E_kM^{\frac{1}{k-1}}e^{-\frac{1}{k-1}s}\left(\lambda^{-\frac{k}{2}}E_k+M^2\varepsilon^\frac{1}{3} e^{-\frac{k-3}{3}s}\right)\\
			&\lesssim(M\varepsilon)^{\frac{1}{k-1}}\lambda^{-k}E_k^2+M^{\frac{1}{k-1}}e^{-\frac{1}{k-1}s}M^6\varepsilon^\frac{2}{3}e^{-\frac{2}{3}(k-3)s}\le\varepsilon^{\frac{1}{k}}E_k^2+e^{-s}.
		\end{aligned}
	\end{equation}
	Similar as the estimate of $I_{i2}$, we can estimate $I_{i3}$:
	\begin{equation}
		2\sum_{|\gamma|=k}\lambda^{\gamma_2}\int\left|I_{i3}\partial^\gamma U_i\right|\le\varepsilon^{\frac{1}{2}}E_k^2+e^{-s}.
	\end{equation}
	Summing up these estimates, we obtain
	\begin{equation}
		2\sum_{|\gamma|=k}\lambda^{\gamma_2}\int\left|F_{U_i,(1)}^{(\gamma-1,U)}\partial^\gamma U_i\right|\le(2\delta+\varepsilon^{\frac{1}{2}}+\varepsilon^{\frac{1}{k}})E_k^2+e^{-s}M^{4k-5}.
	\end{equation}
	Now we turn to the estimate of $F_{U_i,(2)}^{(\gamma-1,U)}$. Using the same method in the proof of lemma \ref{W,Z,A controlled by U,S}, we have
	\begin{equation}
		\left\|[\partial^\gamma,JN]U\right\|_{L^2}\le\varepsilon^{\frac{1}{2}}\|D^kU\|_{L^2}+\varepsilon e^{-\left(\gamma_1+\frac{\gamma_2}{3}-1\right)s}.
	\end{equation}
	Thus, if we choose $\varepsilon$ small enough in terms of $\lambda$ and $k$, we have
	\begin{equation}
		\begin{aligned}
			2\sum_{|\gamma|=k}\lambda^{\gamma_2}\int\left|F_{U_i,(2)}^{(\gamma-1,U)}\partial^\gamma U_i\right|&\lesssim\sum_{|\gamma|=k}e^{\frac{s}{2}}\left\|[\partial^\gamma,JN]U\right\|_{L^2}\|\partial^\gamma U\|_{L^2}\|\partial_1U\|_{L^\infty}\\
			\overset{(\text{\ref{estimates of U,S}})}&{\lesssim} e^{\frac{s}{2}}\left(\varepsilon^{\frac{1}{2}}\|D^kU\|_{L^2}+\varepsilon e^{-\frac{k-3}{3}s}\right)\|D^kU\|_{L^2}e^{-\frac{s}{2}}\\
			&\lesssim \lambda^{-k}\varepsilon^{\frac{1}{2}}E_k^2+\varepsilon\lambda^{-\frac{k}{2}}E_ke^{-\frac{k-3}{3}s}\\
			&\lesssim\lambda^{-k}\varepsilon^{\frac{1}{2}}E_k^2+\varepsilon^{\frac{1}{2}}e^{-\frac{2(k-3)}{3}s}\le\varepsilon^{\frac{1}{4}}E_k^2+e^{-s}.
		\end{aligned}
	\end{equation}
	From the estimates $(\ref{estimate of of V})$(\ref{estimates of f-dependent functions, inequalities}) of $V$ and $J$,$N$, we can see that
	\begin{equation}
		\left|\partial^\gamma(V\cdot JN)\right|+\left|\partial^\gamma\frac{\partial_tf}{1+f_{x_1}}\right|\lesssim M^2\varepsilon^{\frac{1}{3}}e^{-\left(\gamma_1+\frac{\gamma_2}{3}\right)s}.
	\end{equation}
	Therefore, we have
	\begin{equation}
		\begin{aligned}
			2\sum_{|\gamma|=k}\lambda^{\gamma_2}\int\left|F_{U_i,(3)}^{(\gamma-1,U)}\partial^\gamma U_i\right|&\lesssim e^{\frac{s}{2}}\sum_{|\gamma|=k}M^2\varepsilon^{\frac{1}{3}}e^{-\left(\gamma_1+\frac{\gamma_2}{3}\right)s}\|\partial_1U\|_{L^\infty}\|\partial^\gamma U\|_{L^2}|\mathcal{X}(s)|^{\frac{1}{2}}\\
			&\lesssim M^2\varepsilon^{\frac{2}{3}}e^{-\frac{k-3}{3}s}\|D^kU\|_{L^2}\lesssim\varepsilon^{\frac{2}{3}}\|D^kU\|_{L^2}^2+M^4\varepsilon^{\frac{2}{3}}e^{-\frac{2(k-3)}{3}s}\\
			&\le\varepsilon^{\frac{1}{2}}E_k^2+e^{-s}.
		\end{aligned}
	\end{equation}
	The estimate of $F_{U_i,(4)}^{(\gamma-1,U)}$ is much the same, we have
	\begin{equation}
		2\sum_{|\gamma|=k}\lambda^{\gamma_2}\int\left|F_{U_i,(4)}^{(\gamma-1,U)}\partial^\gamma U_i\right|\le\varepsilon^{\frac{1}{2}}E_k^2+e^{-s}.
	\end{equation}
	Combining the above estimates, we arrive at
	\begin{equation}
		2\sum_{|\gamma|=k}\lambda^{\gamma_2}\int\left|F_{U_i}^{(\gamma-1,U)}\partial^\gamma U_i\right|\le2(\delta+\varepsilon^{\frac{1}{4}})E_k^2+e^{-s}M^{4k-4}.
	\end{equation}
	Now we estimate the terms involving $k$ order derivatives of $S$. 
	\begin{equation}
		\begin{aligned}
			2\sum_{|\gamma|=k}\lambda^{\gamma_2}\int\left|\left(F_{U_i,(2)}^{(\gamma,S)}+F_{U_i,(4)}^{(\gamma,S)}\right)\partial^\gamma U_i\right|&\lesssim\left(e^{\frac{s}{2}}\|\partial_1Z\|_{L^\infty}+e^{-\frac{s}{2}}\|\partial_2S\|_{L^\infty}\right)\lambda^{-k}E_k^2\le\varepsilon^{\frac{1}{2}}E_k^2.
		\end{aligned}
	\end{equation}
	\begin{equation}
		\begin{aligned}
			2\sum_{|\gamma|=k}\lambda^{\gamma_2}\int\left|F_{U_i,(3)}^{(\gamma,S)}\partial^\gamma U_i\right|&\lesssim\sum_{|\gamma|=k}\lambda^{\gamma_2}e^{-\frac{s}{2}}\sum_{\substack{|\beta|=|\gamma|-1\\\beta\le\gamma}}\|\nabla S\|_{L^\infty}\|\partial_2\partial^\beta S\|_{L^2}\|\partial^\gamma U\|_{L^2}\\
			&\lesssim e^{-s}\lambda^{-k}E_k^2\le\varepsilon^{\frac{1}{2}}E_k^2,
		\end{aligned}
	\end{equation}
	\begin{equation}
		\begin{aligned}
			2\sum_{|\gamma|=k}\lambda^{\gamma_2}\int\left|F_{U_i,(1)}^{(\gamma,S)}\partial^\gamma U_i\right|&\lesssim\sum_{|\gamma|=k}\lambda^{\frac{\gamma_2+1}{2}}\gamma_2\|\partial_2(SJN)\|_{L^\infty}\|\partial^\gamma U\|_{L^2}\|\partial_1\partial^{\gamma-e_2}S\|_{L^2}\lambda^{\frac{\gamma_2-1}{2}}\\
			&\lesssim\sum_{|\gamma|=k}e^{-\frac{s}{2}}\left(\lambda^{\gamma_2+1}\|\partial^\gamma U\|_{L^2}^2+\lambda^{\gamma_2-1}\gamma_2^2\|\partial_1\partial^{\gamma-e_2}S\|_{L^2}^2\right)\le\varepsilon^{\frac{1}{4}}E_k^2,
		\end{aligned}
	\end{equation}
	\begin{equation}
		\begin{aligned}
			2\sum_{|\gamma|=k}\lambda^{\gamma_2}\int\left|F_{U_i,(5)}^{(\gamma,S)}\partial^\gamma U_i\right|&\lesssim e^{\frac{s}{2}}\|\partial_1(JN)\|_{L^\infty}\|S\|_{L^\infty}\lambda^{-k}E_k^2\\
			&\lesssim e^{\frac{s}{2}}M^2\varepsilon^{\frac{5}{6}-\frac{1}{2}}e^{-\frac{3}{2}s}M^{\frac{1}{4}}\lambda^{-k}E_k^2\le\varepsilon E_k^2.
		\end{aligned}
	\end{equation}
	Summing up the above inequalities, we get
	\begin{equation}
		2\sum_{|\gamma|=k}\lambda^{\gamma_2}\int\left|F_{U_i}^{(\gamma,S)}\partial^\gamma U_i\right|\le2\varepsilon^{\frac{1}{4}}E_k^2.
	\end{equation}
	Now we look at the terms involving lower order derivatives of $S$. We decompose $F_{U_i,(1)}^{(\gamma-1,S)}=I_{i1}+I_{i2}+I_{i3}$ where
	\begin{equation}
		\begin{aligned}
			&I_{i1}=-2\beta_3\beta_\tau\sum_{\substack{{1\le|\beta|\le|\gamma|-2}\\{\beta\le\gamma}}}\binom{\gamma}{\beta}e^{\frac{s}{2}}\partial^{\gamma-\beta}((S-S_\infty)JN_i)\partial_1\partial^\beta S,\\
			&I_{i2}=-2\beta_3\beta_\tau\sum_{\substack{{1\le|\beta|\le|\gamma|-2}\\{\beta\le\gamma}}}\binom{\gamma}{\beta}e^{\frac{s}{2}}S_\infty\partial^{\gamma-\beta}(JN_i)\partial_1\partial^\beta S,\\
			&I_{i3}=-2\beta_3\beta_\tau\sum_{\substack{{1\le|\beta|\le|\gamma|-2}\\{\beta\le\gamma}}}\binom{\gamma}{\beta}e^{-\frac{s}{2}}\delta_{i2}\partial^{\gamma-\beta}S\partial_2\partial^\beta S.
		\end{aligned}
	\end{equation}
	For the first part $I_{i_1}$ we have that
	\begin{equation}
		\begin{aligned}
			2\sum_{|\gamma|=k}\lambda^{\gamma_2}\int\left|I_{i1}\partial^\gamma U_i\right|&\lesssim e^{\frac{s}{2}}\|D^kU\|_{L^2}\sum_{j=1}^{k-2}\left\|D^{k-1-(j-1)}((S-S_\infty)JN)D^{j-1}D^2S\right\|_{L^2}\\
			&\lesssim e^{\frac{s}{2}}\|D^kU\|_{L^2}\sum_{j=1}^{k-2}\|D^k((S-S_\infty)JN)\|_{L^2}^a\|D^2((S-S_\infty)JN)\|_{L^q}^{1-a}\|D^kS\|_{L^2}^b\|D^2S\|_{L^q}^{1-b}\\
		\end{aligned}
	\end{equation}
	As before we use Leibniz rule, estimates (\ref{estimates of f-dependent functions, inequalities}) of $J$,$N$ and the Poincaré inequality in $y_2$ direction to deduce that
	\begin{equation}
		\|D^k((S-S_\infty)JN)\|_{L^2(\mathbb{R}^2)}\lesssim\|D^kS\|_{L^2},\ |D^2(JN)|\lesssim\varepsilon^{\frac{1}{4}}e^{-s},\ \|D^2((S-S_\infty)JN)\|_{L^q(\mathbb{R}^2)}\lesssim Me^{-\frac{s}{2}}.
	\end{equation}
	In the last inequality we used the fact that $q>4\Rightarrow\|\eta^{-1}\|_{L^{\frac{q}{6}}(\mathbb{R}^2)}<\infty$. Thus we have
	\begin{equation}
		\begin{aligned}
			2\sum_{|\gamma|=k}\lambda^{\gamma_2}\int\left|I_{i1}\partial^\gamma U_i\right|&\lesssim e^{\frac{s}{2}}\|D^kU\|_{L^2}\sum_{j=1}^{k-2}\|D^kS\|_{L^2}^{a+b}\left(Me^{-\frac{s}{2}}\right)^{2-a-b}\\
			&\lesssim\sum_{j=1}^{k-2}\lambda^{-\frac{k}{2}(1+a+b)}M^{2-a-b}e^{-\frac{1-a-b}{2}s}E_k^{1+a+b}\\
			&\le\sum_{j=1}^{k-2}\left(\delta E_k^2+C(\delta)\lambda^{-\frac{2k(1+a+b)}{2(1-a-b)}}M^{\frac{2(2-a-b)}{1-a-b}}e^{-s}\right)\le\delta E_k^2+e^{-s}M^{4k-6}
		\end{aligned}
	\end{equation}
	$I_{i2}$ is estimated as
	\begin{equation}
		\begin{aligned}
			\|I_{i2}\|_{L^2}&\lesssim\sum_{\substack{{1\le|\beta|\le|\gamma|-2}\\{\beta\le\gamma}}}e^{\frac{s}{2}}M^3\varepsilon^{\frac{5}{6}-\frac{\gamma_1-\beta_1}{2}-\frac{\gamma_2-\beta_2}{6}}e^{-\frac{3}{2}(\gamma_1-\beta_1)s-\frac{1}{2}(\gamma_2-\beta_2)s}\cdot\left(\varepsilon^{\frac{1}{6}}e^{\frac{s}{2}}\right)^{k-1-|\beta|}\|D^kS\|_{L^2}\\
			\overset{|\gamma|=k}&{\lesssim}M^3\varepsilon^{\frac{2}{3}}\|D^kS\|_{L^2}
		\end{aligned}
	\end{equation}
	And $I_{i3}$ is estimated as
	\begin{equation}
		\begin{aligned}
			2\sum_{|\gamma|=k}\lambda^{\gamma_2}\int\left|I_{i3}\partial^\gamma U_i\right|&\lesssim\sum_{|\gamma|=k}e^{-\frac{s}{2}}\|D^kU\|_{L^2}\sum_{j=1}^{k-2}\|S\|_{\dot H^k}^{\frac{k-1-j}{k-1}}\|DS\|_{L^\infty}^{\frac{j}{k-1}}\|S\|_{\dot H^k}^{\frac{j}{k-1}}\|\partial_2S\|_{L^\infty}^{\frac{k-1-j}{k-1}}\\
			&\lesssim e^{-\frac{s}{2}}\|U\|_{\dot H^k}\|S\|_{\dot H^k}e^{-\frac{s}{2}}\le\varepsilon^{\frac{1}{2}}E_k^2
		\end{aligned}
	\end{equation}
	Hence, we have
	\begin{equation}
		2\sum_{|\gamma|=k}\lambda^{\gamma_2}\int\left|F_{U_i,(1)}^{(\gamma-1,S)}\partial^\gamma U_i\right|\le(\delta+2\varepsilon^{\frac{1}{2}})E_k^2+e^{-s}M^{4k-6}
	\end{equation}
	Next, we turn to $F_{U_i,(2)}^{(\gamma-1,S)}$. From Leibniz rule we have
	\begin{equation}
		\|[\partial^\gamma,JN_i]S\|_{L^2(\mathcal{X}(s))}\lesssim\varepsilon^{\frac{1}{2}}\|D^kS\|_{L^2(\mathbb{R}^2)}+\varepsilon e^{-\left(\gamma_1+\frac{\gamma_2}{3}-1\right)s},
	\end{equation}
	and
	\begin{equation}
		\begin{aligned}
			2\sum_{|\gamma|=k}\lambda^{\gamma_2}\int\left|F_{U_i,(2)}^{(\gamma-1,S)}\partial^\gamma U_i\right|&\lesssim\sum_{|\gamma|=k}e^{\frac{s}{2}}\left\|[\partial^\gamma,JN_i]S\right\|_{L^2(\mathcal{X}(s))}\|\partial_1S\|_{L^\infty}\|D^kU_i\|_{L^2}\\
			&\lesssim e^{\frac{s}{2}}\left(\varepsilon^{\frac{1}{2}}\|D^kS\|_{L^2}+\varepsilon e^{-\left(\gamma_1+\frac{\gamma_2}{3}-1\right)s}\right)e^{-\frac{s}{2}}\|D^kU\|_{L^2}\le\varepsilon^{\frac{1}{4}}E_k^2+e^{-s}.
		\end{aligned}
	\end{equation}
	Thus we have
	\begin{equation}
		2\sum_{|\gamma|=k}\lambda^{\gamma_2}\int_{\mathbb{R}^3}\left|F_{U_i}^{(\gamma-1,S)}\partial^\gamma U_i\right|\le(\delta+2\varepsilon^{\frac{1}{2}}+\varepsilon^{\frac{1}{4}})E_k^2+e^{-s}M^{4k-5}.
	\end{equation}
	Summing all the estimates together leads us to
	\begin{equation}
		2\sum_{|\gamma|=k}\lambda^{\gamma_2}\int_{\mathbb{R}^3}\left|F_{U_i}^{(\gamma)}\partial^\gamma U_i\right|\le\left(4+C\varepsilon^{\frac{1}{4}}+6\delta\right)E_k^2+e^{-s}M^{4k-4}.
	\end{equation}
	The proof of (\ref{S forcing estimate}) is similar. 
\end{proof}

\begin{proof}[Proof of $\dot H^k$ estimates of $U$, $S$]
	We multiply the equations of $\partial^\gamma U_i$, $\partial^\gamma S$ by $\partial^\gamma U_i$, $\partial^\gamma S$ respectively and sum over, then we arrive at
	\begin{subequations}
		\begin{align}
			\begin{split}
				\frac{1}{2}\frac{d}{ds}\|\partial^\gamma U\|_{L^2}^2\le&\frac{1}{2}\int|\partial^\gamma U|^2(\operatorname{div}\mathcal{V}_A-2D_\gamma)+\frac{1}{2}(1+\gamma_1)\beta_3\beta_\tau(1+\varepsilon^{\frac{1}{13}})\left(\|\partial^\gamma S\|_{L^2}^2+\|\partial^\gamma U\|_{L^2}^2\right)\\
				&-2\beta_3\beta_\tau\int S\left(e^{\frac{s}{2}}JN_i\partial_1\partial^\gamma S+e^{-\frac{s}{2}}\delta_{i2}\partial_2\partial^\gamma S\right)\partial^\gamma U_i+\int\left|F_{U_i}^{(\gamma)}\partial^\gamma U_i\right|,
			\end{split}\\
			\begin{split}
				\frac{1}{2}\frac{d}{d s}\left\|\partial^{\gamma} S\right\|_{2}^{2}&\le\frac{1}{2}\int\left|\partial^{\gamma}S\right|^{2}(\operatorname{div}\mathcal{V}_{A}-2D_\gamma)+\frac{1}{2}\beta_{\tau}\left(\beta_{1}+\beta_{3}\gamma_{1}\right)\left(1+\varepsilon^{\frac{1}{13}}\right)\left(\left\|\partial^{r}S\right\|_{2}^{2}+\left\|\partial^{\gamma}U\right\|_{2}^{2}\right)\\
				&-2 \beta_{3} \beta_{\tau} \int S\left(e^{\frac{s}{2}} \partial_{1} \partial^{\gamma} U_{j}JN_{j}+e^{-\frac{s}{2}} \partial_{2} \partial^{\gamma} U_{2}\right)\partial^{\gamma}S+\int\left|F_{S}^{(\gamma)} \partial^{\gamma} S\right|.
			\end{split}
		\end{align}
	\end{subequations}
	Here we used the fact that $|JN\partial_1W|\le|JN||\partial_1W|\le (1+\varepsilon^{\frac{2}{3}})(1+\varepsilon^{\frac{1}{12}})\le1+\varepsilon^{\frac{1}{13}}$. By summing up the above two inequalities and integrating by part, we get
	\begin{equation}
		\begin{aligned}
			&\frac{d}{ds}\left(\|\partial^\gamma U\|_{L^2}^2+\|\partial^\gamma S\|_{L^2}^2\right)+\int\left(2D_\gamma-\operatorname{div}\mathcal{V}_A-\beta_\tau(1+2\gamma_1\beta_3)(1+\varepsilon^{\frac{1}{13}})\right)\left(|\partial^\gamma U|^2+|\partial^\gamma S|^2\right)\\
			&\le2\int\left|F_{U_i}^{(\gamma)}\partial^\gamma U_i\right|+2\int\left|F_{S}^{(\gamma)}\partial^\gamma S\right|+4\beta_3\beta_\tau\int\left[e^{\frac{s}{2}}\partial^\gamma S\partial^\gamma U\cdot\partial_1(SJN)+e^{-\frac{s}{2}}\partial^\gamma S\partial_2U_2\partial_2S\right]\\
			&\le2\int\left|F_{U_i}^{(\gamma)}\partial^\gamma U_i\right|+2\int\left|F_{S}^{(\gamma)}\partial^\gamma S\right|+2\beta_3\beta_\tau(1+2\varepsilon^{\frac{1}{2}})\left(\|\partial^\gamma U\|_{L^2}^2+\|\partial^\gamma S\|_{L^2}^2\right).
		\end{aligned}
		\label{energy estimate partial result}
	\end{equation}
	In the last inequality we used the facts that $|\partial_1(SJN)|\le(1+\varepsilon^{\frac{1}{2}})e^{-\frac{s}{2}}$ and estimate (\ref{estimates of U,S}) of $S$, the first fact can be obtained from (\ref{estimates of U,S}) and estimates (\ref{estimates of f-dependent functions, inequalities}) of $J$, $N$. 
	
	Now we estimate the damping term: 
	\begin{equation}
		\begin{aligned}
			&2D_\gamma-\operatorname{div}\mathcal{V}_A-\beta_\tau(1+2\gamma_1\beta_3)(1+\varepsilon^{\frac{1}{13}})-2\beta_3\beta_\tau(1+2\varepsilon^{\frac{1}{2}})\\
			&\ge|\gamma|+2\gamma_1\left(1+\beta_1\beta_\tau\partial_1(JW)+\partial_1G_A\right)-2-\beta_1\beta_\tau\partial_1(JW)-\partial_1G_A-\partial_2h_A\\
			&\ \ \ \ \ \ \ \ \ \ \ \ -2\beta_3\beta_\tau(1+\varepsilon^{\frac{1}{13}})\gamma_1-\underbrace{\left[\beta_\tau(1+\varepsilon^{\frac{1}{13}})+2\beta_3\beta_\tau(1+2\varepsilon^{\frac{1}{2}})\right]}_{\le3}\\
			\overset{(\ref{estimate of D1W that will appear in damping terms})}&{\ge}|\gamma|+2\gamma_1\left(1-\beta_1\beta_\tau-\beta_3\beta_\tau\right)-6-C\varepsilon^{\frac{1}{13}}\ge k-7.
		\end{aligned}
	\end{equation}
	Multiply (\ref{energy estimate partial result}) by $\lambda^{\gamma_2}$ and take the sum, we have
	\begin{equation}
		\frac{d}{ds}E_k^2+(k-7)E_k^2\le (8+16\delta)E_k^2+e^{-s}M^{4k-3}.
	\end{equation}
	Taking $k\ge 18$ we have
	\begin{equation}
		\frac{d}{ds}E_k^2+2E_k^2\le e^{-s}M^{4k-3},
	\end{equation}
	which results in
	\begin{equation}
		E_k^2(s)\le e^{-2(s-s_0)}E_k^2(s_0)+(1-e^{-(s-s_0)})e^{-s}M^{4k-3}.
	\end{equation}
	By Leibniz rule, we have
	\begin{equation}
		\left\{
		\begin{aligned}
			&\|WN\|_{\dot H^k}\le(1+C\varepsilon^{\frac{1}{2}})\|W\|_{\dot H^k}+CM^2\varepsilon^{\frac{5}{3}}e^{-\left(\frac{k}{3}-\frac{3}{2}\right)s}\\
			&\|AT\|_{\dot H^k},\|ZN\|_{\dot H^k}\le(1+C\varepsilon^{\frac{1}{2}})\|A\text{ or }Z\|_{\dot H^k}+CM^3\varepsilon^{\frac{5}{3}}e^{-\frac{k-3}{3}s},
		\end{aligned}
		\right.
	\end{equation}
	and
	\begin{equation}
		\left\{
		\begin{aligned}
			&\|U\|_{\dot H^k}\le(1+C\varepsilon^{\frac{1}{2}})\left[\frac{1}{2}\left(e^{-\frac{s}{2}}\|W\|_{\dot H^k}+\|Z\|_{\dot H^k}\right)+\|A\|_{\dot H^k}\right]+CM^3\varepsilon^{\frac{5}{3}}e^{-\frac{k-3}{3}s}\\
			&\|S\|_{\dot H^k}\le\frac{1}{2}(1+C\varepsilon^{\frac{1}{2}})\left(e^{-\frac{s}{2}}\|W\|_{\dot H^k}+\|Z\|_{\dot H^k}\right)+CM^3\varepsilon^{\frac{5}{3}}e^{-\frac{k-3}{3}s}.
		\end{aligned}\right.
	\end{equation}
	According to the assumption (\ref{initial sobolev norm}) of $\dot H^k$ norm of $W$,$Z$,$A$, we have
	\begin{equation}
		E_k^2(s_0)\le (2+C\varepsilon^{\frac{1}{2}})\varepsilon.
	\end{equation}
	Thus we finally obtain that
	\begin{equation}
		\lambda^k\left(\|U\|_{\dot H^k}+\|S\|_{\dot H^k}\right)\le E_k^2\le(2+C\varepsilon^{\frac{1}{2}})\varepsilon^{-1}e^{-2s}+M^{4k-3}e^{-s}(1-\varepsilon^{-1}e^{-s}).
	\end{equation}
	This finishes the proof of energy estimate. 
\end{proof}

\subsection{Higher order estimates for $W,Z,A$}
Using the energy estimate, we can further obtain higher order estimates for $W,Z,A$. 
\begin{lemma}
	For $k\gg1$, we have that
	\begin{subequations}
		\begin{align}
			\begin{split}
				&|\partial^\gamma W|\lesssim\left\{
				\begin{aligned}
					&\eta^{-\frac{1}{6}}e^{\frac{s}{2(k-3)}},\ \ \ \ \ \  \gamma_1=0,\ |\gamma|=3\\
					&\eta^{-\frac{1}{3}}e^{\frac{s}{k-3}},\ \ \ \ \ \  \gamma_1>0,\ |\gamma|=3,
				\end{aligned}\right.
			\label{new estimate of W}
			\end{split}\\
			\begin{split}
				&|\partial^\gamma Z|\lesssim\left\{
				\begin{aligned}
					&e^{-\left(\frac{3}{2}-\frac{1}{2(k-3)}\right)s},\ \ \ \gamma_1\ge1,\ |\gamma|=3\\
					&e^{-\left(1-\frac{|\gamma|-1}{2(k-3)}\right)s},\ \ \ \ \  |\gamma|=3,4,5,
				\end{aligned}\right.
			\label{new estimate of Z}
			\end{split}\\
			\begin{split}
				&|\partial^\gamma A|\lesssim\left\{
				\begin{aligned}
					&e^{-\left(\frac{3}{2}-\frac{|\gamma|}{k-2}\right)s},\ \ \ \gamma_1\ge1,\ |\gamma|=2,3\\
					&e^{-\left(1-\frac{|\gamma|-1}{2(k-3)}\right)s},\ \ \ \  |\gamma|=3,4,5.
				\end{aligned}\right.
			\label{new estimate of A}
			\end{split}
		\end{align}
	\end{subequations}
\end{lemma}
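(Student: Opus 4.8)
The plan is to derive all three families of bounds by \emph{interpolating} the $\dot H^k$ energy estimates of the preceding Proposition against the low-order pointwise bootstrap bounds, using Gagliardo--Nirenberg-type inequalities (in particular the product-interpolation inequalities (\ref{interpolation of L2 norm of product}) and (\ref{special case 1 of lemma A.1}) of the appendix) together with the support confinement $\mathrm{supp}(DW,DZ,DA)\subset\mathcal{X}(s)$ of (\ref{bootstrap assumptions for the spatial support}). The two ingredients are: from the Proposition, $\|W(\cdot,s)\|_{\dot H^k}\lesssim_k M^{2k}$ and $\|Z(\cdot,s)\|_{\dot H^k}+\|A(\cdot,s)\|_{\dot H^k}\lesssim_k M^{2k}e^{-s/2}$ (using $e^{-s}\le\varepsilon$, which holds since $s\ge s_0=-\log\varepsilon$); and from (\ref{bootstrap assumptions of W}), (\ref{bootstrap assumptions of Z}), (\ref{bootstrap assumptions of A}), pointwise control of all derivatives of $W,Z,A$ up to order $2$, carrying the anisotropic weights $\eta^{-1/3},\eta^{-1/6}$ in the case of $W$. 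Throughout, the implied constants in the conclusions are allowed to depend on $k$ and $M$.

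For (\ref{new estimate of W}): fix $y_0\in\mathcal{X}(s)$ (the derivatives of $W$ vanish outside $\mathcal{X}(s)$ by (\ref{bootstrap assumptions for the spatial support})) and $\gamma$ with $|\gamma|=3$, and write $\partial^\gamma W=\partial_i\partial^\delta W$ with $\partial^\delta W$ a second-order derivative chosen so that $\partial^\delta W=\partial_{22}W$ (weight $\eta^{-1/6}$) when $\gamma_1=0$, and $\partial^\delta W\in\{\partial_{11}W,\partial_{12}W\}$ (weight $\eta^{-1/3}$) when $\gamma_1\ge1$; both choices are available for $|\gamma|=3$. Introduce a cutoff $\chi_{y_0}$ adapted to the anisotropic scaling $y_1\sim e^{3s/2}$, $y_2\sim e^{s/2}$, supported on a box around $y_0$ on which $\eta$ varies by at most a fixed factor and equal to $1$ near $y_0$, and apply to $g:=\chi_{y_0}\partial^\delta W$ the one-derivative Gagliardo--Nirenberg inequality in $\mathbb{R}^2$, $\|Dg\|_{L^\infty}\lesssim\|g\|_{L^\infty}^{1-\theta}\|D^{k-2}g\|_{L^2}^\theta$ with $\theta=\tfrac1{k-3}$; the terms in $\|D^{k-2}g\|_{L^2}$ in which derivatives fall on $\chi_{y_0}$ are handled by (\ref{interpolation of L2 norm of product})/(\ref{special case 1 of lemma A.1}) and dominated by $\|D^kW\|_{L^2}\lesssim M^{2k}$. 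Since $\|g\|_{L^\infty}\lesssim M\,\eta(y_0)^{-\theta_0}$ with $\theta_0\in\{\tfrac16,\tfrac13\}$ by (\ref{bootstrap assumptions of W}), one gets $|\partial^\gamma W(y_0)|\lesssim\eta(y_0)^{-\theta_0}\,\eta(y_0)^{\theta_0\theta}$, and $\eta(y_0)^{\theta_0\theta}\le(\sup_{\mathcal{X}(s)}\eta)^{\theta_0\theta}\lesssim(\varepsilon e^{3s})^{\theta_0\theta}$ converts the residual factor into $e^{s/(2(k-3))}$ when $\theta_0=\tfrac16$ and $e^{s/(k-3)}$ when $\theta_0=\tfrac13$, exactly as claimed. (Near $y_0=0$ one may alternatively use $W=\ovl{W}+\widetilde{W}$ with (\ref{estimates of D^2 ovl W}), (\ref{evaluation of ovl W at 0}) and (\ref{bootstrap assumptions of tilde W when |y|<l, |gamma|<4}), since there $\eta\approx1$ and the claimed right-hand side is $\gtrsim1$.)

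For (\ref{new estimate of Z}) and (\ref{new estimate of A}) there are no weights and no profile to subtract, so the interpolation is global on $\mathbb{R}^2$: for the relevant $|\gamma|$ write $\partial^\gamma Z$ (resp.\ $\partial^\gamma A$) as $\partial^\nu(\partial^\mu Z)$ with $\partial^\mu$ a low-order derivative and interpolate $\|\partial^\nu(\partial^\mu Z)\|_{L^\infty}$ between $\|\partial^\mu Z\|_{L^\infty}$ and $\|D^kZ\|_{L^2}\lesssim M^{2k}e^{-s/2}$. When $\gamma_1\ge1$ one takes $\partial^\mu=\partial_1$ to exploit the faster decay $|\partial_1Z|,|\partial_{11}Z|,|\partial_{12}Z|\lesssim Me^{-3s/2}$ from (\ref{bootstrap assumptions of Z}) (resp.\ $|\partial_1A|\lesssim Me^{-3s/2}$ from (\ref{bootstrap assumptions of A})), and otherwise one interpolates off the slow-decaying second derivatives $|\partial_{22}Z|,|\partial_{22}A|\lesssim Me^{-s}$; the volume factor $|\mathcal{X}(s)|\sim\varepsilon^{2/3}e^{2s}$ entering because $DZ,DA$ are supported in $\mathcal{X}(s)$ is absorbed into the $M$- and $\varepsilon$-powers. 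Matching the Gagliardo--Nirenberg exponent against these decay rates yields the stated bounds; in the borderline $\gamma_1\ge1$, $|\gamma|=3$ case for $Z$ one additionally invokes the damping in the $\partial^\gamma Z$ equation (\ref{forcing terms of derivative of Z}) if the bare interpolation falls short of the exponent $\tfrac1{2(k-3)}$.

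The main obstacle is the localized anisotropic Gagliardo--Nirenberg step for $W$: the cutoff $\chi_{y_0}$ must be chosen so that simultaneously $\eta$ is comparable to $\eta(y_0)$ on its support, the box respects the $e^{3s/2}:e^{s/2}$ anisotropy so that the rescaled interpolation constants are $s$-independent, and the errors produced by derivatives of $\chi_{y_0}$ --- inverse powers of the anisotropic localization scale times intermediate Sobolev norms of $W$ --- recombine into exactly the claimed powers of $\eta(y_0)$ and $e^{s}$ rather than something lossier; pinning down the small exponent losses for $Z$ and $A$ (the right interpolation endpoints and the clean absorption of the domain-volume and $M$-dependent constants) is the other place where care is needed. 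Everything else is a routine application of the energy estimate and the bootstrap bounds.
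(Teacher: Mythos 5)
Your treatment of $Z$ and $A$ is essentially the paper's: both interpolate a global Gagliardo--Nirenberg inequality in $\mathbb{R}^2$ between the $\dot H^k$ energy bound $\|A\|_{\dot H^k}\lesssim M^{2k}e^{-s/2}$ (resp.\ $Z$) and the pointwise bootstrap bounds on low-order derivatives, choosing the better-decaying interpolation endpoint ($\partial_1 A$, $\partial_1\nabla Z$) when $\gamma_1\ge1$ and a second-order derivative otherwise. Two small corrections there: no ``volume factor $|\mathcal{X}(s)|$'' enters --- the $L^\infty$ and $\dot H^k$ norms used are global, and compact support is only needed so that $\nabla W,\,\nabla Z,\,\nabla A$ qualify for Gagliardo--Nirenberg at all; and no appeal to the damping in the $\partial^\gamma Z$ equation is necessary for the $\gamma_1\ge1$, $|\gamma|=3$ case: the paper closes it with the bare interpolation $\|\partial_1\nabla Z\|_{\dot H^{k-2}}^{1/(k-3)}\|\partial_1\nabla Z\|_{L^\infty}^{1-1/(k-3)}\lesssim M^{2k}e^{-(3/2-1/(k-3))s}$, absorbing $M^{2k}$ into the missing half of the exponent via $e^{-s}\le\varepsilon$ and $\varepsilon$ small.

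Your $W$ estimate takes a genuinely different technical route than the paper's, and the obstacle you flag at the end is real. The paper does not localize around $y_0$ at all: it multiplies by the weight $\eta^\mu$, splits $\gamma=\gamma'+\gamma''$ with $|\gamma'|=1$, $\gamma_1''=\min(\gamma_1,2)$, and writes $\eta^\mu\partial^\gamma W=\partial^{\gamma'}(\eta^\mu\partial^{\gamma''}W)-(\partial^{\gamma'}\eta^\mu)\,\partial^{\gamma''}W$. The second term is bounded pointwise by $M$ via the bootstrap and the weight calculus $|\partial_1\eta^\mu|\lesssim\eta^{\mu-1/2}$, $|\partial_2\eta^\mu|\lesssim\eta^{\mu-1/6}$. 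The first is a one-derivative $L^\infty$ Gagliardo--Nirenberg of the \emph{globally} $L^\infty$-bounded weighted function $\eta^\mu\partial^{\gamma''}W$, with the $\dot H^{k-2}$ norm controlled by Leibniz, H\"older, (\ref{special case 1 of lemma A.1}) applied to $\nabla W$, and the elementary observation $\|\eta^\mu\|_{L^\infty(\mathcal{X}(s))}\lesssim\varepsilon^\mu e^{3\mu s}$, yielding $\|\eta^\mu\partial^{\gamma''}W\|_{\dot H^{k-2}}\lesssim M^{2k}\varepsilon^\mu e^{3\mu s}$ and hence $|I_1|\lesssim e^{3\mu s/(k-3)}$. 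No cutoff ever appears.

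Your localized variant could be made to work, but the issues you name are not cosmetic. First, the stated localization recipe is internally inconsistent: $\eta$-comparability forces box sidelengths of order $\eta(y_0)^{1/2}\times\eta(y_0)^{1/6}$, not $e^{3s/2}\times e^{s/2}$; those only agree at the outer edge of $\mathcal{X}(s)$. Second, once the cutoff scales vary with $y_0$, the Leibniz terms where derivatives fall on $\chi_{y_0}$ produce $y_0$-dependent factors (inverse powers of $\eta(y_0)^{1/6}$ on the cutoff, growing measure $\sim\eta(y_0)^{2/3}$ of the support) that must \emph{cancel} against the weight; a naive bound of the extreme term $\|D^{k-2}\chi_{y_0}\cdot\partial^\delta W\|_{L^2}$ by $\|\partial^\delta W\|_{L^\infty}\,|\mathrm{supp}\,\chi_{y_0}|^{1/2}$ gives a \emph{positive} power of $\eta(y_0)$, so one must exploit the decay of $D^j\chi_{y_0}$ and interpolate the companion factor more carefully (e.g.\ via (\ref{special case 1 of lemma A.1}) applied to $\nabla W$ in appropriate $L^p$'s) to see the powers of $\eta(y_0)$ recombine. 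You rightly identify this as the crux but do not carry it out; the paper's weighted decomposition sidesteps it entirely, which is precisely what makes the argument short. Finally, note the implied constants in the lemma's conclusion cannot be allowed to depend on $M$: the paper's proof absorbs all $M$-powers into $\varepsilon$-powers ($M^{2k}\varepsilon^{\mu/(k-3)}\le1$), since these bounds feed back into the $M$-sensitive forcing estimates and bootstrap closure.
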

\begin{proof}
	The proof is similar to the interpolation in \cite{buckmaster2022formation}, still for the reader's convinience we recap the proof here. 
	
	First we deal with $A$. For $\gamma_1\ge1$, $|\gamma|=2,3$, we have
	\begin{equation}
		\begin{aligned}
			|\partial^\gamma A|&\lesssim\|\partial_1A\|_{\dot H^{k-1}}^{\frac{|\gamma|-1}{k-2}}\|\partial_1A\|_{L^\infty}^{1-\frac{|\gamma|-1}{k-2}}\lesssim(M^{2k}e^{-\frac{s}{2}})^{\frac{|\gamma|-1}{k-2}}(Me^{-\frac{3}{2}s})^{1-\frac{|\gamma|-1}{k-2}}\lesssim M^{2k}e^{-\left(\frac{3}{2}-\frac{|\gamma|-1}{k-2}\right)s}\lesssim e^{-\left(\frac{3}{2}-\frac{|\gamma|}{k-2}\right)s}.
		\end{aligned}
	\end{equation}
For $|\gamma|=3,4,5$, we have
	\begin{equation}
		|\partial^\gamma A|\lesssim\|D^kA\|_{L^2}^{\frac{|\gamma|-2}{k-3}}\|D^2A\|_{L^\infty}^{1-\frac{|\gamma|-2}{k-3}}\lesssim(M^{2k}e^{-\frac{s}{2}})^{\frac{|\gamma|-2}{k-3}}(Me^{-s})^{1-\frac{|\gamma|-2}{k-3}}\lesssim M^{2k}e^{-\left(1-\frac{|\gamma|-2}{2(k-3)}\right)s}\lesssim e^{-\left(1-\frac{|\gamma|-1}{2(k-3)}\right)s}.
	\end{equation}

	Next we estimate $Z$. For $\gamma_1\ge1$, $|\gamma|=3$, we have
	\begin{equation}
		|\partial^\gamma Z|\lesssim\|\partial_1\nabla Z\|_{\dot H^{k-2}}^{\frac{1}{k-3}}\|\partial_1\nabla Z\|_{L^\infty}^{1-\frac{1}{k-3}}\lesssim(M^{2k}e^{-\frac{s}{2}})^{\frac{1}{k-3}}(Me^{-\frac{3}{2}s})^{1-\frac{1}{k-3}}\lesssim M^{2k}e^{-\left(\frac{3}{2}-\frac{1}{k-3}\right)s}\lesssim e^{-\left(\frac{3}{2}-\frac{1}{2(k-3)}\right)}.
	\end{equation}
	For $|\gamma|=3,4,5$, the estimates for $Z$ are the same as $A$. 
	
	Now we turn to $W$. Since $|\gamma|=3$, we can split $\gamma$ as $\gamma=\gamma'+\gamma''$, where $|\gamma'|=1$ and $\gamma''_1=\min(\gamma_1,2)$, then $\eta^\mu\partial^\gamma W=\partial^{\gamma'}(\eta^\mu\partial^{\gamma''}W)-\partial^{\gamma'}(\eta^{\mu})\partial^{\gamma''}W=I_1+I_2$. Let
	\begin{equation}
		\mu=\left\{
		\begin{aligned}
			&\frac{1}{6},\ \ \ \gamma_1=0\\
			&\frac{1}{3},\ \ \ \text{otherwise.}
		\end{aligned}\right.
	\end{equation}
	Note that $|\partial_1(\eta^\mu)|\lesssim\eta^{\mu-\frac{1}{2}}$, $|\partial_2(\eta^\mu)|\lesssim\eta^{\mu-\frac{1}{6}}$, thus when $\gamma_1=0$ we have $|I_2|\lesssim\eta^{\mu-\frac{1}{6}}|\partial_{22}W|\lesssim M$, when $\gamma_1>0$ we have $|I_2|\lesssim M\eta^{-\frac{1}{6}}\lesssim M$. By interpolation and bootstrap assumptions for $W$, we have
	\begin{equation}
		|I_1|\lesssim\|D(\eta^\mu\partial^{\gamma''}W)\|_{L^\infty}\lesssim\|\eta^\mu\partial^{\gamma''}W\|_{\dot H^{k-2}}^{\frac{1}{k-3}}\|\eta^\mu\partial^{\gamma''}W\|_{L^\infty}^{1-\frac{1}{k-3}}\lesssim\ M\|\eta^\mu\partial^{\gamma''}W\|_{\dot H^{k-2}}^{\frac{1}{k-3}}.
	\end{equation}
	We estimate the $\dot H^{k-2}$ norm as follows:
	\begin{equation}
		\begin{aligned}
			\|\eta^\mu\partial^{\gamma''}W\|_{\dot H^{k-2}}&\lesssim\sum_{m=0}^{k-2}\|D^m\partial^{\gamma''}WD^{k-2-m}\eta^\mu\|_{L^2}\lesssim\sum_{m=0}^{k-2}\|D^m\partial^{\gamma''}W\|_{L^{\frac{2(k-1)}{m+1}}}\|D^{k-2-m}\eta^\mu\|_{L^{\frac{2(k-1)}{k-2-m}}(\mathcal{X}(s))}\\
			&\lesssim\sum_{m=0}^{k-2}\|W\|_{\dot H^k}^{\frac{m+1}{k-1}}\|\nabla W\|_{L^\infty}^{1-\frac{m+1}{k-1}}\|D^{k-2-m}\eta^\mu\|_{L^{\frac{2(k-1)}{k-2-m}}(\mathcal{X}(s))}\\
			&\lesssim\sum_{m=0}^{k-2}(M^{2k})^{\frac{m+1}{k-1}}\|D^{k-2-m}\eta^\mu\|_{L^{\frac{2(k-1)}{k-2-m}}(\mathcal{X}(s))}.
		\end{aligned}
	\end{equation} 
	Simple calculation yields $|D^k(\eta^\mu)|\lesssim\eta^{\mu-\frac{k}{6}}$, thus we have that
	\begin{equation}
		\begin{aligned}
			\|D^{k-2-m}\eta^\mu\|_{L^{\frac{2(k-1)}{k-2-m}}(\mathcal{X}(s))}&\lesssim\|\eta^{\mu-\frac{k-2-m}{6}}\|_{L^{\frac{2(k-1)}{k-2-m}}(\mathcal{X}(s))}\lesssim\|\eta^\mu\|_{L^\infty(\mathcal{X}(s))}\|\eta^{-\frac{k-2-m}{6}}\|_{L^{\frac{2(k-1)}{k-2-m}}(\mathcal{X}(s))}\\
			&\lesssim\varepsilon e^{3\mu s}\times\left\{
			\begin{aligned}
				&1,\ \ \ \ \ \ \ \ \ &m=k-2\\
				&\|\eta^{-1}\|_{L^{\frac{k-1}{3}}(\mathcal{X}(s))},\ \ &m<k-2
			\end{aligned}\right.\\
		\overset{k>3}&{\lesssim}\varepsilon^{\mu}e^{3\mu s}.
		\end{aligned}
	\end{equation}
	Consequently, we obtain $|I_1|\lesssim M(M^{2k}\varepsilon^\mu e^{3\mu s})^{\frac{1}{k-3}}\lesssim e^{\frac{3\mu s}{k-3}}$, and $|\eta^\mu\partial^\gamma W|\lesssim e^{\frac{3\mu s}{k-3}}+M\lesssim e^{\frac{3\mu s}{k-3}}$. 
\end{proof}

\section{Constraints and evolution of modulation variables}
In this section we close the bootstrap argument for the modulation variables $\xi,n,\phi,\tau,\kappa$. The equation of these variables are deduced from the constraints that we impose on $W$. 
\subsection{Constraints}
We impose constraints on $W$ and its derivatives up to second order at the origin, i.e.
\begin{equation}
	W(0,s)=\ovl{W}(0)=0,\ \ \ 
	\nabla W(0,s)=\nabla\ovl{W}(0)=(-1,0)^T,\ \ \ 
	\nabla^2 W(0,s)=\nabla^2\ovl{W}(0)=
	\begin{pmatrix}
		0&0\\
		0&0
	\end{pmatrix}.
\end{equation}

It is possible to impose these constraints. In fact, as long as the initial data $W(y,-\log\varepsilon)$ satisfies these constraints, we can choose 6 modulation variables $\xi$, $n_2$, $\phi$, $\tau$, $\kappa$ in a continuous manner with respect to time in terms of $w(x,t)$, ensuring that $W(y,s)$ still satisfies these constraints. 

\subsection{The functions $G_W$, $h_W$, $F_W$ and their derivatives, evaluated at $y=0$}
In a neighborhood of the origin, $\tilde f$ reduces to $\tilde f(\tilde x,t)=\frac{1}{2}\phi\tilde x_2^2$, and as a consequence in a neighborhood of 0, $f(x,t)=\frac{1}{2}\phi x_2^2$. Note that any derivatives with respect to $x_1$ or $\tilde{x}_1 $ of those function vanish at the origin, we can conveniently evaluated the $f$-related functions at the origin:  
\begin{subequations}
	\label{evaluation of f-related functions at 0}
	\begin{align}
		\begin{split}
			&\tilde f^0=0,\ \ \partial_{\tilde x_2}\tilde f^0=0,\ \ \partial_{\tilde x_2}^2\tilde f^0=0;
		\end{split}\\
		\begin{split}
			&(\partial_t)_{\tilde x}\tilde f^0=0,\ \ \partial_{\tilde x_2}(\partial_t)_{\tilde x}\tilde f^0=0,\ \ \partial_{\tilde x_2}^2(\partial_t)_{\tilde x}\tilde f^0=\dot\phi;
		\end{split}\\
		\begin{split}
			&f^0=0,\ \ \partial_{x_2}f^0=0,\ \ \partial_{x_2}^2f^0=0;
		\end{split}\\
		\begin{split}
			&J^0=0,\ \ \partial_{x_2}J^0=0,\ \ \partial_{x_2}^2J^0=\phi^2,\ \ \partial_{x_2}^3J^0=0;
		\end{split}\\
		\begin{split}
			N^0=(1,0)^T,\ \ \partial_{x_2}N^0=(0,-\phi)^T,\ \ \partial_{x_2}^2N^0=(-\phi^2,0)^T,\ \ \partial_{x_2}^3N^0=(0,2\phi^3)^T;
		\end{split}\\
		\begin{split}
			T^0=(0,1)^T,\ \ \partial_{x_2}T^0=(\phi,0)^T,\ \ \partial_{x_2}^2T^0=(0,\phi^2)^T,\ \ \partial_{x_2}^3T^0=(-2\phi^3,0)^T;
		\end{split}\\
		\begin{split}
			&(\partial_t)_xf^0=0,\ \ \partial_{x_2}(\partial_t)_x f^0=0,\ \ \partial_{x_2}^0(\partial_t)_x f^0=\dot\phi;
		\end{split}\\
		\begin{split}
			&\partial_tJ^0=0,\ \ \partial_{x_2}\partial_tJ^0=0,\ \ \partial_{x_2}^2\partial_tJ^0=2\phi\dot\phi;
		\end{split}\\
		\begin{split}
			\partial_tN^0=(0,0)^T,\ \ \partial_{x_2}\partial_tN^0=(0,-\dot\phi)^T,\ \ \partial_{x_2}^2\partial_tN^0=(-2\phi\dot\phi,0)^T;
		\end{split}\\
		\begin{split}
			\partial_tT^0=(0,0)^T,\ \ \partial_{x_2}\partial_tT^0=(\dot\phi,0)^T,\ \ \partial_{x_2}^2\partial_tT^0=(0,-2\phi\dot\phi)^T.
		\end{split}
	\end{align}
\end{subequations}
By the definition of $V$, we have
\begin{subequations}
	\label{evaluaton of V at 0}
	\begin{align}
		\begin{split}
			V_i^0=-\frac{1+\alpha}{2}R_{ji}\dot\xi_j,
		\end{split}\\
		\begin{split}
			\partial_1V^0=\frac{1+\alpha}{2}e^{-\frac{3}{2}s}(0,Q_{21})^T,\ \ \partial_2V^0=\frac{1+\alpha}{2}e^{-\frac{s}{2}}(Q_{12},0)^T,
		\end{split}\\
		\begin{split}
			\partial_{11}V^0=\frac{1+\alpha}{2}\phi e^{-3s}(0,Q_{21})^T,\ \ \partial_{12}V^0=0,\ \ \partial_{22}V^0=0.
		\end{split}
	\end{align}
\end{subequations}

From the definition of $G_W$ and (\ref{evaluation of f-related functions at 0})(\ref{evaluaton of V at 0}), we have 
\begin{subequations}
	\begin{align}
		\begin{split}
			\frac {1} {\beta_{\tau}}G_W^0 &= e^{\frac s 2 }[\kappa+\beta_2Z^0-(1+\alpha)\beta_1R_{j1}\dot\xi_j],
			\label{evaluation of G_W at 0}
		\end{split}\\
		\begin{split}
			\frac {1} {\beta_{\tau}}\partial_1 G_W^0 &=\beta_2e^{\frac {s} {2} }\partial_1 Z^0,
			\label{evaluation of D1GW at 0}
		\end{split}\\
		\begin{split}
			\frac {1} {\beta_{\tau}}\partial_2 G_W^0 &=\beta_2e^{\frac {s} {2} }\partial_2 Z^0+(1+\alpha)\beta_1Q_{12}+\beta_1(1+\alpha)\phi R_{j2}\dot\xi_j,
			\label{evaluation of D2GW at 0}
		\end{split}\\
		\begin{split}
			\frac {1} {\beta_{\tau}}\partial_{11} G_W^0 &=\beta_2e^{\frac {s} {2} }\partial_{11} Z^0,
			\label{evaluation of D11GW at 0}
		\end{split}\\
		\begin{split}
			\frac {1} {\beta_{\tau}}\partial_{12} G_W^0 &=\beta_2e^{\frac {s} {2} }\partial_{12} Z^0-(1+\alpha)\beta_1 e^{-\frac 3 2 s}\phi Q_{21},
			\label{evaluation of D12GW at 0}
		\end{split}\\
		\begin{split}
			\frac {1} {\beta_{\tau}}\partial_{22} G_W^0 &= -\phi e^{-\frac {s} {2}}+\phi^2e^{-s}\frac {G_W^0}{\beta_{\tau}}+e^{-\frac {s} {2} }\beta_2\partial_{22}Z^0-(1+\alpha)\beta_1 \phi^2e^{-\frac {s} {2}} R_{j1}\dot \xi_j.
			\label{evaluation of D22GW at 0}
		\end{split}
	\end{align}
\end{subequations}
Similarly for $h_W$, we have
\begin{equation}
	\frac 1 {\beta_{\tau}}h_W^0 =\beta_1 e ^{-\frac s 2} \left(2A^0-(1+\alpha)R_{j2}\dot\xi_j\right)
	\label{evaluation of h_W at 0}.
\end{equation}
And for the forcing terms, we also insert the above evaluation to the definition (\ref{forcing terms of W,Z,A}), then we have
\begin{subequations}
	\begin{align}
		\begin{split}
			F_W^0 =&-\beta_3\beta_\tau(\kappa-Z^0)\partial_2A^0+\beta_\tau e^{-\frac{s}{2}}Q_{12}A^0\\
			& -2\phi \beta_1 \beta_{\tau} e ^{-\frac s 2}\left(-\frac {1+\alpha} 2 R_{j2}\dot\xi_j+ A^0\right)A^0+\frac 1 2 \phi \beta_3 \beta_{\tau}e^{-\frac s 2} (\kappa+Z^0)(Z^0- \kappa),
			\label{evaluation of FW at 0}
		\end{split}\\
		\begin{split}
			\partial_1 F_W^0 =& \beta_3\beta_{\tau}\partial_2 A^0(\partial_1Z^0+ e^{-\frac s 2})-\beta_3 \beta_{\tau} \partial_{12} A^0(\kappa - Z^0)+\beta_{\tau} e^{\frac s 2 } Q_{12}\partial_1 A^0 \\
			&-\phi \partial_1 A h_W^0 - \phi \beta_2 \beta_{\tau} e^{-\frac s 2} A^0\left((1+\alpha) Q_{21} e^{-\frac 3 2 s}+ 2\partial_1 A^0\right) \\
			&-\frac 1 2 \phi \beta_3 \beta_{\tau} e ^{-\frac s 2}(e^{-\frac s 2 }+\partial_1 Z^0)(\kappa +Z^0 )+ \frac 1 2 \phi \beta_3 \beta_{\tau}e ^{-\frac s 2}(\kappa -Z^0 )(\partial_1 Z^0- e^{-\frac s 2}),
			\label{evaluation of D1FW at 0}
		\end{split}\\
		\begin{split}
			\partial_2 F_W^0 =&-\beta_3 \beta_{\tau} (\kappa - Z^0) \partial_{22} A^0+ \beta_3 \beta_{\tau}\partial_2 Z^0 \partial_2  A^0 -\dot \phi \beta_{\tau} e^{-s} A^0+ \beta_{\tau} e^{-\frac s 2} Q_{12} \partial_2 A^0\\
			&-\phi \beta_3 \beta_{\tau} e^{-\frac s 2} \partial_2 Z^0 Z^0+ \phi^2  \beta_3 \beta_{\tau} e^{-s} A^0 (\kappa - Z^0)\\ &-\phi \beta_1 \beta_{\tau} e^{-\frac s 2} A^0\left(2\partial_2 A^0 -\phi e^{-\frac s 2}(\kappa+ Z^0)\right)-\phi \partial_2 A^0 h_W^0,
			\label{evaluation of D2FW at 0}
		\end{split}\\
		\begin{split}
			\partial_{11} F_W^0 =&2\beta_3 \beta_{\tau} (e^{-\frac s 2}+\partial_{12} Z^0) \partial_{12} A^0- \beta_3 \beta_{\tau}(\kappa - Z^0) \partial_{112} A^0 + \beta_{\tau} e^{-\frac s 2}Q_{12} \partial_{11} A^0\\
			&-2\phi \beta_1 \beta_{\tau} e^{-\frac s 2} \partial_{11} A^0\left(2A^0-\frac {1+\alpha} {2} R_{j2}\dot \xi_j\right)-4\phi \beta_1 \beta_{\tau} e^{-\frac s 2} \partial_{1} A^0\left(\frac {1+\alpha} {2} Q_{21} e^{-\frac 3 2 s} +\partial_1 A^0\right)\\
			&- \phi \beta_3 \beta_{\tau} e^{-\frac s 2}\left({(\partial_1 Z^0)}^2 -e^{-s} +Z^0\partial_{11} Z^0\right)+ \beta_3 \beta_{\tau} \partial_{11} Z^0 \partial_2 A^0,
			\label{evaluation of D11FW at 0}
		\end{split}\\
		\begin{split}
			\partial_{12} F_W^0 =&-2\beta_3 \beta_{\tau}(\kappa - Z^0) \partial_{122} A^0+ \beta_3 \beta_{\tau}\partial_{12} Z^0 \partial_{2} A^0 + \beta_3 \beta_{\tau}\partial_{2} Z^0 \partial_{12} A^0\\
			&+\beta_3\beta_{\tau} (e^{-\frac s 2}+ \partial_{1}Z^0)\partial_{22}A^0-\beta_{\tau} \dot\phi e^{-s} \partial_{1} A^0+ \beta_{\tau} e^{-\frac s 2} Q_{12} \partial_{12}A^0 \\
			&- \phi \beta_3 \beta_{\tau} e^{-\frac s 2}\left(\partial_{12} Z^0Z^0 +\partial_{1}Z^0\partial_{2} Z^0\right)+ \phi^2 \beta_3 \beta_{\tau} e^{-s}\left((\kappa - Z^0) \partial_{1} A^0-(e^{-\frac s 2}+ \partial_{1}Z^0)A^0\right)\\
			&- 2\phi \beta_1 \beta_{\tau} e^{-\frac s 2}\left[\partial_{1}A^0\partial_{2} A^0+\left(\frac {1+\alpha}{2} Q_{21} e^{-\frac 3 2 s}+\partial_1 A^0\right)\partial_2 A^0 +A^0\partial_{12} A^0\right]\\
			&-\phi \partial_{12} A^0 h_W^0+ \phi^2 \beta_1 \beta_{\tau} e^{-s}\left[\partial_1 A^0 (\kappa + Z^0)+ A^0(\partial_1 Z^0- e^{-\frac s 2})\right],
			\label{evaluation of D12FW at 0}
		\end{split}\\
		\begin{split}
			\partial_{22} F_W^0 =&\beta_3 \beta_{\tau}\left[\partial_{22} Z^0 \partial_2 A^0- (\kappa - Z^0) \partial_{222} A^0+\partial_2 Z^0 \partial_{22} A^0\right]+ \phi^2 \beta_3 \beta_{\tau}e^{-s}(\kappa - Z^0) \partial_{2} A^0\\
			&-2\dot\phi\beta_{\tau} e^{-s} \partial_{2} A^0-\phi \beta_3 \beta_{\tau} e^{-\frac s 2} \partial_{2}Z^0 \partial_{2}Z^0+ \beta_{\tau} e^{-\frac s 2}\partial_{22} A^0 Q_{12}\\
			&+2 {\phi}^2 \beta_3 \beta_{\tau} e^{-s}\left[(\kappa - Z^0) \partial_{2} A^0 -A^0\partial_{2}Z^0\right]- \phi^3 \beta_3 \beta_{\tau} e^{-\frac 3 2 s}(\kappa - Z^0) (\kappa + Z^0)\\
			&- 2\phi \beta_1 \beta_{\tau} e^{-\frac s 2}[2\partial_{2}A^0\partial_{2} A^0+A^0\partial_{22} A^0]+ 2\phi^3\beta_1 \beta_{\tau} e^{-\frac 3 2 s} {(A^0)}^2\\
			&-2\phi^2 \beta_1 \beta_{\tau} e^{-s} \partial_2{[A(U\cdot N)]}^0 -\phi h_W^0 \partial_{22} A^0 +\phi^3 e^{-s} h_W^0 A^0\\
			&-(1+\alpha) \phi^2 \beta_1 \beta_{\tau} e^{-\frac 3 2 s} Q_{21} A^0 - \phi \beta_3 \beta_{\tau} e^{-\frac s 2 }Z^0 \partial_{22} Z^0.
			\label{evaluation of D22FW at 0}
		\end{split}
	\end{align}
\end{subequations}

Also note that if $|\gamma|=1,2$, we have
\begin{equation}
	F_W^{(\gamma),0}=\partial^\gamma F_W^0+\partial^\gamma G_W^0.
\end{equation}

\subsection{Evolution of modulation variables}
Setting $y=0$ in the equation of $W$, we can see that
\begin{equation}
	\dot{\kappa}=\frac{1}{\beta_\tau}e^{\frac{s}{2}}(F_W^0+G_W^0).
	\label{evolution of kappa}
\end{equation}

Setting $y=0$ in the equation of $\partial_1W$, we have
\begin{equation}
	\dot\tau=\frac{1}{\beta_\tau}(\partial_1F_W^0+\partial_1G_W^0).
	\label{evolution of tau}
\end{equation}

Setting $y=0$ in the equation of $\partial_2W$, we have
\begin{equation}
	0=\partial_2F_W^0+\partial_2G_W^0.
\end{equation}
Combining this with $(\ref{evaluation of D2GW at 0})$, we obtain
\begin{equation}
	Q_{12}=-\frac{1}{\beta_1\beta_\tau(1+\alpha)}\left(\partial_2F_W^0+\beta_2\beta_\tau e^{\frac{s}{2}}\partial_2Z^0+\beta_1\beta_\tau(1+\alpha)e^{\frac{s}{2}}\phi R_{j2}\dot\xi_{j}\right).
	\label{evolution of Q12}
\end{equation}

Setting $y=0$ in the equation of $\partial_{11}W$ and $\partial_{12}W$, we have
\begin{equation}
	\begin{pmatrix}
		\partial_{111}W^0&\partial_{112}W^0\\
		\partial_{112}W^0&\partial_{122}W^0
	\end{pmatrix}
	\begin{pmatrix}
		G_W^0\\
		h_W^0
	\end{pmatrix}=
	\begin{pmatrix}
		\partial_{11}F_W^0+\partial_{11}G_W^0\\
		\partial_{12}F_W^0+\partial_{12}G_W^0
	\end{pmatrix}.
\end{equation}
Denote the matrix $\partial_1\nabla^2W^0$ by $H^0(s)$, then we have
\begin{equation}
	|G_W^0|+|h_W^0|\lesssim\left|(H^0)^{-1}\right|\left(|\partial_1\nabla F_W^0|+|\partial_1\nabla G_W^0|\right),
	\label{GW0 and hW0 controlled by D1DFW0}
\end{equation}
which shall be used to establish an upper bound for $|G_W^0|$ and $|h_W^0|$. Since $R\in SO(2)$, we have
\begin{equation}
	\dot\xi_j=R_{ji}R_{ki}\dot\xi_k=R_{j1}R_{k1}\dot\xi_k+R_{j2}R_{k2}\dot\xi_k.
\end{equation}
Combining this with (\ref{evaluation of G_W at 0})(\ref{evaluation of h_W at 0}), we have
\begin{equation}
	\dot\xi_j=\frac{R_{j1}}{(1+\alpha)\beta_1}\left(\kappa+\beta_2Z^0-\frac{1}{\beta_\tau}e^{-\frac{s}{2}}G_W^0\right)+\frac{R_{j2}}{1+\alpha}\left(2A^0-\frac{e^{\frac{s}{2}}}{\beta_1\beta_\tau}h_W^0\right).
	\label{evolution of xi}
\end{equation}

Setting $y=0$ in the equation of $\partial_{11}W$ and $\partial_{12}W$, we have
\begin{equation}
	G_W^0\partial_{122}W^0+h_W^0\partial_{222}W^0=\partial_{22}F_W^0+\partial_{22}G_W^0.
\end{equation}
Then from (\ref{evaluation of D22GW at 0}), we have
\begin{equation}
	\begin{aligned}
		\dot\phi=&\frac{e^{\frac{s}{2}}}{\beta_\tau}\left(\partial_{122}W^0G_W^0+\partial_{222}W^0h_W^0-\partial_{22}F_W^0\right)+\beta_2e^s\partial_{22}Z^0+\phi^2\left(\kappa+\beta_2Z^0-\frac{e^{-\frac{s}{2}}}{\beta_\tau}G_W^0\right)+\frac{\phi^2}{\beta_\tau}e^{-\frac{s}{2}}G_W^0.
	\end{aligned}
	\label{evolution of phi}
\end{equation}

\section{Closure of bootstrap argument for the modulation variables}
From (\ref{evaluation of ovl W at 0})(\ref{bootstrap assumptions of tilde W when y=0, |gamma|=3}), we can see that
\begin{equation}
	H^0:=\partial_1\nabla^2W^0=\partial_1\nabla^2\ovl{W}^0+\partial_1\nabla^2\widetilde W^0=\mathrm{diag}(6,2)+O(\varepsilon^{\frac{1}{4}}).
\end{equation}
As a consequence, we have
\begin{equation}
	\left|(H^0)^{-1}\right|\le1.
\end{equation}
Next we estimate $|\partial_1\nabla F_W^0|$. From (\ref{evaluation of D11FW at 0})(\ref{evaluation of D12FW at 0}), bootstrap assumptions and (\ref{new estimate of A}), we have $|\partial_{11}F_W^0|\lesssim e^{-s}$ and $|\partial_{12}F_W^0|\lesssim e^{-s}+\varepsilon^2|h_W^0|$. Then by invoking (\ref{GW0 and hW0 controlled by D1DFW0}), one can see that
\begin{equation}
	|G_W^0|+|h_W^0|\lesssim e^{-s}.
	\label{estimate of GW0 and hW0}
\end{equation}

Now we give a new estimate for $V_2=\frac{1+\alpha}{2}\left[Q_{21}\left(y_1e^{-\frac{3}{2}s}+f\right)+\frac{e^{\frac{s}{2}}}{(1+\alpha)\beta_1\beta_\tau}h_W^0+\frac{2}{1+\alpha}A^0\right]$. Recall that in (\ref{estimate of of V}) we already have a bound $|V_2|\lesssim M^{\frac{1}{4}}$, but now with the help of (\ref{estimate of GW0 and hW0}) one can see that for all $y\in\mathcal{X}(s)$, there holds that
\begin{equation}
	|V_2|\lesssim M\varepsilon^{\frac{1}{2}}.
\end{equation}

\subsection{The $\xi$ estimate}
From (\ref{evolution of xi}) we have
\begin{equation}
	\begin{aligned}
		|\dot\xi_j|=&\kappa_0+M\varepsilon+e^{-\frac{s}{2}}Me^{-s}\le\frac{1}{10}M^{\frac{1}{4}}.
	\end{aligned}
\end{equation}
From (\ref{bootstrap time}) and $\xi(-\varepsilon)=0$, we have
\begin{equation}
	|\xi_j(t)|\le\int_{-\varepsilon}^t|\dot\xi_j|dt\le\frac{1}{10}M^{\frac{1}{4}}\varepsilon.
\end{equation}

\subsection{The $\kappa$ estimate}
From (\ref{evaluation of FW at 0}) and bootstrap assumptions, we have $|F_W^0|\lesssim \varepsilon^{\frac{1}{4}}e^{-\frac{s}{2}}$, thus according to (\ref{evolution of kappa})(\ref{estimate of GW0 and hW0}), we have that
\begin{equation}
	|\dot \kappa|\lesssim e^{\frac{s}{2}}\left(Me^{-s}+\varepsilon^{\frac{1}{4}}e^{-\frac{s}{2}}\right)\le\frac{1}{2}M,
\end{equation}
and
\begin{equation}
	|\kappa-\kappa_0|\le\frac{1}{2}M|t+\varepsilon|\lesssim M\varepsilon\le\frac{1}{4}\kappa_0.
	\label{closure of kappa bootstrap}
\end{equation}

\subsection{The $\phi$ estimate}
From (\ref{evaluation of D22FW at 0}), bootstrap assumptions and (\ref{new estimate of A}), we have $|\partial_{22}F_W^0|\lesssim e^{-\frac{s}{2}}$, thus via (\ref{evolution of phi}), we obtain
\begin{equation}
	\begin{aligned}
		|\dot\phi|&\lesssim e^{\frac{s}{2}}\left(\varepsilon^{\frac{1}{4}}Me^{-s}+\varepsilon^{\frac{1}{4}}Me^{-s}+e^{-\frac{s}{2}}\right)+e^sMe^{-s} +M^4\varepsilon^2\left(\kappa_0+M\varepsilon+e^{-\frac{s}{2}}Me^{-s}\right)+M^4\varepsilon^2e^{-\frac{s}{2}}M^{-s}\\
		&\lesssim M\le\frac{1}{10}M^2.
	\end{aligned}
\end{equation}
Since $|\phi(-\varepsilon)|=|\phi_0|\le\varepsilon$, we can further obtain that
\begin{equation}
	|\phi|\le\varepsilon+|\dot\phi||t+\varepsilon|\le\frac{1}{2}M^2\varepsilon.
\end{equation}

\subsection{The $\tau$ estimate}
Also from (\ref{evaluation of D1FW at 0})(\ref{evaluation of D1GW at 0}) and bootstrap assumptions, we have $|\partial_1F_W^0|\lesssim e^{-s}$ and $|\partial_1G_W^0|\lesssim M^\frac{1}{2}e^{-s}$, thus by (\ref{evolution of tau}), we have
\begin{equation}
	|\dot\tau|\lesssim e^{-s}+M^\frac{1}{2}e^{-s}\le\frac{1}{4}Me^{-s}.
\end{equation}
Since $\tau(-\varepsilon)=0$, we get
\begin{equation}
	|\tau(t)|\le\int_{-\varepsilon}^t\frac{1}{4}M\varepsilon dt\le\frac{1}{4}M\varepsilon^2.
\end{equation}

\subsection{The $n_2$ estimate}
We first estimate $Q_{12}$. From (\ref{evaluation of D2FW at 0})(\ref{estimate of GW0 and hW0}) and bootstrap assumptions, $|\partial_2F_W^0|\lesssim M\kappa_0e^{-s}$, thus via (\ref{evolution of Q12}), we can bound $Q_{12}$ by
\begin{equation}
	|Q_{12}|\lesssim M\kappa_0e^{-s}+e^{\frac{s}{2}}M\varepsilon^{\frac{1}{2}}e^{-\frac{s}{2}}\le2M\varepsilon^{\frac{1}{2}}.
\end{equation}
From the definition of $Q$, we have
\begin{equation}
	Q_{12}=-\dot n_2\sqrt{1-n_2^2}-\frac{n_2^2\dot n_2}{\sqrt{1-n_2^2}},
\end{equation}
thus by bootstrap assumption of $n_2$, we finally can see that
\begin{equation}
	|\dot n_2|=|Q_{12}|\left(\sqrt{1-n_2^2}+\frac{n_2^2}{\sqrt{1-n_2^2}}\right)^{-1}\le\left(1+\varepsilon^{\frac{1}{2}}\right)|Q_{12}|\le\frac{1}{2}M^2\varepsilon^{\frac{1}{2}}.
\end{equation}
By $n_2(-\varepsilon)=0$, we improve the assumption of $n_2$ by factor $\frac{1}{2}$. 
\section{Estimates for transport and forcing terms}
To close the bootstrap argument of the Riemann variables $W,Z,A$, we will estimate each term in the transport-type equations they satisfy. 
\subsection{Transport estimates}
\begin{lemma}For the transport terms in the equations of $W,Z,A$, we have the following inequalities:
	\begin{equation}
		|\partial^\gamma G_W|\lesssim\left\{
		\begin{aligned}
			&Me^{-s}+M^{\frac{1}{2}}e^{-s}|y_1|+M^2\varepsilon^{\frac{1}{2}}|y_2|\lesssim\varepsilon^{\frac{1}{3}}e^{\frac{s}{2}}\ \ &\ \gamma=(0,0)\\
			&M^2e^{-\frac{5}{6}s}\ \ &\ \gamma=(1,0)\\
			&M^2\varepsilon^{\frac{1}{6}}\ \ &\ \gamma=(0,1)\\
			&M^2e^{-\frac{s}{2}}\ \ &|\gamma|=2,
		\end{aligned}\right.
		\label{estimate for GW}
	\end{equation}
	\begin{equation}
		\left|\partial^\gamma\left(G_A+(1-\beta_1)\kappa_0e^{\frac{s}{2}}\right)\right|+\left|\partial^\gamma\left(G_Z+(1-\beta_2)\kappa_0e^{\frac{s}{2}}\right)\right|\lesssim\left\{
		\begin{aligned}
			&\varepsilon^{\frac{1}{3}}e^{\frac{s}{2}}\ \ &\gamma=(0,0)\\
			&M^2e^{-\frac{5}{6}s}\ \ &\gamma=(1,0)\\
			&M^2\varepsilon^{\frac{1}{6}}\ \ &\gamma=(0,1)\\
			&M^2e^{-\frac{s}{2}}\ \ &|\gamma|=2,
		\end{aligned}\right.
		\label{estimates for GZ and GA}
	\end{equation}
	\begin{equation}
		|\partial^\gamma h_W|+|\partial^\gamma h_Z|+|\partial^\gamma h_A|\lesssim\left\{
		\begin{aligned}
			&M\varepsilon^{\frac{1}{2}}e^{-\frac{s}{2}}\ \ &\gamma=(0,0)\\
			&M\varepsilon^{\frac{1}{3}}e^{-s}\eta^{-\frac{1}{3}}\ \ &\gamma=(1,0)\\
			&\varepsilon^{\frac{1}{3}}e^{-s}\ \ &\gamma=(0,1)\\
			&\varepsilon^{\frac{1}{6}}e^{-s}\eta^{-\frac{1}{6}}\ \ &\gamma=(2,0)\\
			&\varepsilon^{\frac{1}{6}}e^{-s}\eta^{-\frac{1}{6}}\ \ &\gamma=(1,1)\\
			&e^{-s}\eta^{-\frac{1}{6}}\ \ &\gamma=(0,2).
		\end{aligned}
		\right.
		\label{estimates for h}
	\end{equation}
	Furthermore, for $|\gamma|=3,4$ we have
	\begin{equation}
		\left\{
		\begin{aligned}
			&|\partial^\gamma G_W|\lesssim e^{-\left(\frac{1}{2}-\frac{|\gamma|-1}{2(k-3)}\right)s}\\
			&|\partial^\gamma h_W|\lesssim e^{-s}.
		\end{aligned}
		\right.
		\label{higher order estimates for transport terms}
	\end{equation}
\end{lemma}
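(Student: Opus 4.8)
The plan is to carry out a systematic computation. I would substitute the explicit definitions (\ref{transport terms of W,Z,A}) of $G_W,G_Z,G_A$ and of $h_W,h_Z,h_A$, apply the Leibniz and chain rules, and feed in the estimates already at our disposal: the coordinate-transformation bounds (\ref{estimates of f-dependent functions, inequalities}) for $f,J,N,T$ and their time-derivatives, the velocity bounds (\ref{estimate of of V})(\ref{estimates of derivatives of V}) for $V$, the estimate (\ref{estimate of Q}) for $Q$, the bootstrap assumptions for $Z$, $A$, $\kappa$, and the refined facts just obtained, namely $|G_W^0|+|h_W^0|\lesssim e^{-s}$ and the improved bound $|V_2|\lesssim M\varepsilon^{\frac12}$ on $\mathcal X(s)$. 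Two elementary conversions will be used throughout: each $y$-derivative of an $x$-function costs $\partial_1=e^{-\frac32 s}\partial_{x_1}$, $\partial_2=e^{-\frac12 s}\partial_{x_2}$, so that $y$-differentiation gains exponential decay; and since $s\ge s_0=-\log\varepsilon$ one may trade $\varepsilon^{-a}\le e^{as}$ and $e^{-s}\le\varepsilon$, which is exactly what turns $M^2\varepsilon^{-\frac16}e^{-s}$ into $M^2 e^{-\frac56 s}$ for $\partial_1 G_W$, and $M^2\varepsilon^{\frac23}e^{\frac s2}$ on $\mathcal X(s)$ into $\varepsilon^{\frac13}e^{\frac s2}$ for the zeroth-order bounds.

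For the zeroth-order estimates of the $G$'s, which contain no factor of $W$, I would write $G_W(y)=G_W^0+\int_0^{y_1}\partial_1 G_W+\int_0^{y_2}\partial_2 G_W(0,\cdot)$, insert $|G_W^0|\lesssim e^{-s}$ and the first-order bounds proved in the same lemma, and restrict to $\mathcal X(s)$ where $|y_1|\le 2\varepsilon^{\frac12}e^{\frac32 s}$, $|y_2|\le 2\varepsilon^{\frac16}e^{\frac12 s}$. The quantities $G_A+(1-\beta_1)\kappa_0 e^{\frac s2}$ and $G_Z+(1-\beta_2)\kappa_0 e^{\frac s2}$ cannot be treated term by term, since $G_A$ itself is only $\mathcal O(e^{\frac s2})$; here one must exhibit a cancellation. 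Using $N=N_1\tilde e_1+N_2\tilde e_2$ and the identity $2\beta_1 V_1=Q_{12}x_2-R_{j1}\dot\xi_j$ (valid because $2\beta_1\cdot\frac{1+\alpha}{2}=1$), then substituting $R_{j1}\dot\xi_j=\kappa+\beta_2 Z^0-\beta_\tau^{-1}e^{-\frac s2}G_W^0$ from (\ref{evolution of xi}) together with $(1+\alpha)\beta_1=1$, the expression $J(\beta_1\kappa+\beta_1 Z+2\beta_1 V\cdot N)$ collapses to $(\beta_1-1)\kappa$ plus terms that are controlled, after multiplication by $\beta_\tau e^{\frac s2}$, using $|Z-Z^0|\lesssim M^{\frac12}\varepsilon^{\frac12}$, $|Q_{12}|\lesssim M\varepsilon^{\frac12}$, $|J-1|\lesssim M^2\varepsilon^{\frac56}$, $|N_2|\lesssim M^2\varepsilon^{\frac76}$, $|\beta_\tau-1|\lesssim e^{-s}$ and $|G_W^0|\lesssim e^{-s}$; since $\beta_\tau e^{\frac s2}(\beta_1-1)\kappa_0=(\beta_1-1)\kappa_0 e^{\frac s2}+\mathcal O(\kappa_0 e^{-\frac s2})$, this gives the claimed $\lesssim\varepsilon^{\frac13}e^{\frac s2}$. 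The zeroth-order bound $|h_W|\lesssim M\varepsilon^{\frac12}e^{-\frac s2}$ (and likewise for $h_Z,h_A$) is read off directly from $h_W=\beta_\tau e^{-s}N_2 W+\beta_\tau e^{-\frac s2}(2\beta_1 V_2+N_2\kappa+\beta_2 N_2 Z+2\beta_1 AT_2)$ using $|N_2|\lesssim M^2\varepsilon^{\frac76}$, $|W|\mathbbm{1}_{\mathcal X(s)}\lesssim\varepsilon^{\frac16}e^{\frac s2}$, $|A|\le M\varepsilon$, and decisively the refined $|V_2|\lesssim M\varepsilon^{\frac12}$ rather than $|V_2|\lesssim M^{\frac14}$.

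For the first- and second-order estimates I would differentiate the same formulas; a derivative kills the constants $(1-\beta_1)\kappa_0 e^{\frac s2}$, $(1-\beta_2)\kappa_0 e^{\frac s2}$, so $\partial^\gamma(G_A+\cdots)=\partial^\gamma G_A$ for $|\gamma|\ge1$, structurally identical to $\partial^\gamma G_W$ up to replacing $\beta_2$ by $\beta_1$ in the $Z$-contribution. For $|\gamma|=3,4$ one invokes in addition the energy-based interpolation bounds (\ref{new estimate of W})(\ref{new estimate of Z})(\ref{new estimate of A}) for the high derivatives of $W$, $Z$, $A$ and the $|\gamma|\ge3$ clause of (\ref{estimates of derivatives of V}): the leading contribution to $\partial^\gamma G_W$ is $\beta_\tau e^{\frac s2}J\beta_2\partial^\gamma Z$, which by (\ref{new estimate of Z}) yields exactly $e^{-(\frac12-\frac{|\gamma|-1}{2(k-3)})s}$, while for $\partial^\gamma h_W$ the potentially dangerous term $e^{-s}N_2\partial^\gamma W$ is handled by splitting $\mathcal X(s)$ into $\{\eta\lesssim1\}$, where $\overline W(0,y_2)\equiv0$ forces $\partial^\gamma W$ to be essentially $\partial^\gamma\widetilde W$ which is small, and $\{\eta\gg1\}$, where the $\eta^{-1/6}$ decay in (\ref{new estimate of W}) absorbs the slow growth $e^{s/(2(k-3))}$ against the prefactor $e^{-s}$; all remaining Leibniz terms are lower order because derivatives of $N$, $T$, $V$, $f$ carry extra powers of $\varepsilon$.

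The only step that is not pure bookkeeping is the order-zero cancellation for $G_A$ and $G_Z$: since $G_A$ itself is $\mathcal O(e^{\frac s2})$, extracting the advertised $\mathcal O(\varepsilon^{\frac13}e^{\frac s2})$ requires tracking the interaction between $\beta_1\kappa J$ and $2\beta_1 J V\cdot N$ through the $\dot\xi$ equation down to the error terms. Everything else is a long but routine matter of matching powers of $\varepsilon$ and $e^{-s}$.
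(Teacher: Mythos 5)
Your proposal is correct and covers all the pieces, but for the zeroth-order bounds on $G_Z$ and $G_A$ you take a noticeably more laborious route than the paper. You propose to substitute the modulation equation for $R_{j1}\dot\xi_j$ into the expression $J(\beta_1\kappa+\beta_1 Z+2\beta_1 V\cdot N)$ and track the cancellation by hand. That works, but it essentially re-derives the smallness of $G_W^0$ inside the $G_A$ computation. The paper instead observes that $G_Z$ and $G_A$ differ from $G_W$ by explicit algebraic pieces:
\begin{equation*}
G_Z+(1-\beta_2)\kappa_0e^{\frac{s}{2}}=G_W+(1-\beta_2)e^{\frac{s}{2}}\bigl[(\kappa_0-\kappa)+(1-\beta_\tau J)\kappa+\beta_\tau JZ\bigr],
\end{equation*}
and similarly for $G_A$, so that once $|G_W|\lesssim\varepsilon^{1/3}e^{s/2}$ is established (by exactly the Taylor expansion from $y=0$ that you describe, using $|G_W^0|\lesssim e^{-s}$ and the first-order bounds), the $G_Z$ and $G_A$ statements follow immediately from $|\kappa_0-\kappa|\lesssim M\varepsilon$, $|1-\beta_\tau J|\lesssim M^2\varepsilon^{5/6}+Me^{-s}$ and $|Z|\lesssim M\varepsilon$, with no need to unwind the $\dot\xi$ equation a second time. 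The cancellation you identified via $2\beta_1V_1=Q_{12}x_2-R_{j1}\dot\xi_j$ is the same cancellation that makes $G_W^0$ small; the paper exploits it once, where it is cleanest. Your treatment of the $h$'s, of the $\gamma>0$ cases by Leibniz expansion, and of the $|\gamma|=3,4$ cases by invoking (\ref{new estimate of W})(\ref{new estimate of Z})(\ref{new estimate of A}) and the $|\gamma|\ge3$ clause of (\ref{estimates of derivatives of V}) match the paper's proof.
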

\begin{proof}
	For $\gamma>0$, from the definition (\ref{transport terms of W,Z,A}) of $G_W$, we have
	\begin{equation}
		|\partial^\gamma G_W|\lesssim e^{\frac{s}{2}}\left|\partial^\gamma\frac{\partial_t f}{1+f_{x_1}}\right|+e^{\frac{s}{2}}\sum_{\beta\le\gamma}\left|\partial^\beta J\right|\left(\kappa_0\mathbbm{1}_{\beta=\gamma}+|\partial^{\gamma-\beta}Z|+|\partial^{\gamma-\beta}(V\cdot N)|\right).
	\end{equation}
	Then appealing to bootstrap assumptions and (\ref{estimates of f-dependent functions, inequalities})(\ref{estimate of of V})(\ref{new estimate of A})(\ref{new estimate of Z}), we obtain the desired estimates for $G_W$. 
	For the case $\gamma=0$,  we have that
	\begin{equation}
		\begin{aligned}
			|G_W|&\le\left|\left(G_W+\beta_\tau e^{\frac{s}{2}}\frac{\partial_tf}{1+f_{x_1}}\right)^0\right|+\left\|\partial_1\left(G_W+\beta_\tau e^{\frac{s}{2}}\frac{\partial_tf}{1+f_{x_1}}\right)\right\|_{L^\infty}|y_1|+\left\|\partial_2\left(G_W+\beta_\tau e^{\frac{s}{2}}\frac{\partial_tf}{1+f_{x_1}}\right)\right\|_{L^\infty}|y_2|\\
			&\ \ \left|\beta_\tau e^{\frac{s}{2}}\frac{\partial_tf}{1+f_{x_1}}\right|\\
			&\lesssim|G_W^0|+M^{\frac{1}{2}}\varepsilon^{\frac{1}{2}}e^{-s}|y_1|+M^2\varepsilon^{\frac{2}{3}}e^{\frac{s}{2}}\\
			&\lesssim Me^{-s}+M^{\frac{1}{2}}\varepsilon^{\frac{s}{2}}+M^2\varepsilon^{\frac{2}{3}}e^{\frac{s}{2}}\lesssim \varepsilon^{\frac{1}{3}}e^{\frac{s}{2}}.
		\end{aligned}
	\end{equation}
	Once we have the bounds for $G_W$ and its derivatives, the estimates of $G_Z$ and $G_A$ follow from the identities
	\begin{equation}
		\begin{aligned}
			G_Z+(1-\beta_2)\kappa_0e^{\frac{s}{2}}&=G_W+(1-\beta_2)e^{\frac{s}{2}}\left[(\kappa_0-\kappa)+(1-\beta_\tau J)\kappa+\beta_\tau J\right],\\
			G_A+(1-\beta_1)e^{\frac{s}{2}}\kappa_0&=G_W+(1-\beta_1)e^{\frac{s}{2}}\left[(\kappa_0-\kappa)+(1-\beta_\tau J)\kappa\right]+(\beta_2-\beta_1)\beta_\tau e^{\frac{s}{2}} JZ.
		\end{aligned}
	\end{equation}
	
	The estimates of $h_W$, $h_Z$, $h_A$ can be obtain by the definition of these transport terms, bootstrap assumptions and (\ref{estimates of f-dependent functions, inequalities})(\ref{estimate of of V})(\ref{new estimate of A})(\ref{new estimate of Z})(\ref{new estimate of W}). 
\end{proof}

\subsection{Forcing estimates}
Now we deal with the forcing terms that appear in the equations of $W,Z,A$. 
\begin{lemma}
	For derivatives of the forcing terms, we have the following bounds:
	\begin{equation}
		|\partial^\gamma F_W|+e^{\frac{s}{2}}|\partial^\gamma F_Z|\lesssim\left\{
		\begin{aligned}
			&e^{-\frac{s}{2}},\ &\gamma=(0,0)\\
			&e^{-s}\eta^{-\frac{1}{6}+\frac{2}{3(k-2)}},\ &\gamma=(1,0)\\
			&M^2e^{-s},\ &\gamma=(0,1)\\
			&e^{-s}\eta^{-\frac{1}{6}+\frac{1}{k-2}},\ &\gamma=(2,0)\\
			&e^{-s}\eta^{-\frac{1}{6}+\frac{1}{k-2}},\ &\gamma=(1,1)\\
			&M^{\frac{1}{4}} e^{-\left(1-\frac{1}{k-3}\right)s},\ &\gamma=(0,2),\\
		\end{aligned}
		\right.
		\label{estimates for derivatives of FW and FZ}
	\end{equation}
	\begin{equation}
		|\partial^\gamma F_W|\lesssim\left\{
		\begin{aligned}
			&e^{-\frac{s}{2}},\ &|\gamma|=3\\
			&\varepsilon^{\frac{1}{6}},\ \ \ \ \  &|\gamma|=4,|y|\le l,
		\end{aligned}
		\right.
		\label{estimates for higher order derivatives of FW}
	\end{equation}
	\begin{equation}
		|\partial^\gamma F_A|\lesssim\left\{
		\begin{aligned}
			&M^{\frac{1}{2}}e^{-s},\ &\gamma=(0,0)\\
			&M^{\frac{1}{4}}e^{-s},\ &\gamma=(0,1)\\
			&M^{\frac{1}{4}} e^{-\left(1-\frac{1}{k-3}\right)s}\eta^{-\frac{1}{6}},\ &\gamma=(0,2),\\
		\end{aligned}
		\right.
		\label{estimates for derivatives of FA}
	\end{equation}
	\begin{equation}
	|\partial^\gamma \widetilde F_W|\lesssim\left\{
		\begin{aligned}
			&M\varepsilon^{\frac{1}{6}}\eta^{-\frac{1}{6}},\ &\gamma=(0,0),\ |y|\le L\\
			&\varepsilon^{\frac{1}{6}}\eta^{-\frac{1}{2}+\frac{2}{3(k-2)}},\ &\gamma=(1,0),\ |y|\le L\\
			&M^2\varepsilon^{\frac{1}{6}}\eta^{-\frac{1}{3}},\ &\gamma=(0,1),\ |y|\le L\\
			&\varepsilon^{\frac{1}{7}},\ &|\gamma|\le4,\ |y|\le l,\\
		\end{aligned}
	\right.
	\label{estimates for derivatives of tilde FW}
	\end{equation}
and
	\begin{equation}
		\left|(\partial^\gamma \widetilde{F}_W)^0\right|\overset{|\gamma|=3}{\lesssim} e^{-\left(\frac{1}{2}-\frac{1}{k-3}\right)s}.
	\label{estimates for derivatives of tilde FW at origin}
	\end{equation}
\end{lemma}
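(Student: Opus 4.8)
The plan is to prove all the stated bounds by a term-by-term estimation, reading off the explicit expressions (\ref{forcing terms of W,Z,A}) for $F_W,F_Z,F_A$ and, for the derivatives, the Leibniz-rule decompositions (\ref{forcing terms of derivatives of W})--(\ref{forcing terms of derivatives of A}) together with the definition of $\widetilde F_W$, and then inserting every a priori estimate proved earlier. The point is that each summand of $\partial^\gamma F_W$ (and likewise of $\partial^\gamma F_Z$, $\partial^\gamma F_A$) is a product of three kinds of factors: (i) a self-similar unknown among $W,Z,A,U\cdot N,S$ and its derivatives of order $\le|\gamma|+2$, handled by the bootstrap assumptions (\ref{bootstrap assumptions of W}), (\ref{bootstrap assumptions of Z}), (\ref{bootstrap assumptions of A}), the pointwise bounds (\ref{estimates of U dot N,S}), (\ref{estimates of U,S}), and the interpolated higher-order bounds (\ref{new estimate of W})--(\ref{new estimate of A}); (ii) a coordinate quantity among $N,T,J,JN,Q,V,(\partial_t)_xN,(\partial_t)_xT$ and its derivatives, handled by Lemma \ref{estimates for functions of coordinate transformation, lemma}, (\ref{estimate of Q}), (\ref{estimate of of V}), (\ref{estimates of derivatives of V}) and the refined bound $|V_2|\lesssim M\varepsilon^{\frac12}$; and (iii) a fixed exponential weight $e^{\pm s/2}$, $e^{-s}$ or $e^{s}$. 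Since the coordinate quantities are naturally functions of $x$, each $\partial_y$ falling on them produces a scaling factor $e^{-\frac32 s}$ or $e^{-\frac12 s}$, so for instance $|\partial^\gamma(N-\tilde e_1)|\lesssim M^2\varepsilon^{\frac76-\frac{\gamma_1}{2}-\frac{\gamma_2}{6}}e^{-(\frac32\gamma_1+\frac12\gamma_2)s}$, and similarly for $J-1$, $JN-\tilde e_1$, $(\partial_t)_xN$. One then multiplies the three factor bounds, collects exponents, and checks the total against the claimed right-hand side; the small losses $\eta^{c/(k-2)}$ and $e^{s/(k-3)}$ are exactly the interpolation losses carried by (\ref{new estimate of W})--(\ref{new estimate of A}), which I would trade for $\eta$-weights using $\eta\lesssim e^{3s}$ on the support $\mathcal X(s)$. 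All powers of $M$ produced this way are absorbed by taking $\varepsilon$ small in terms of $M$.

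The only summands not covered by this naive counting are those carrying the large weight $e^{s}$, namely the $\partial_1T\cdot N$- and $\partial_1N\cdot T$-type terms in $F_W,F_Z,F_A$. For these I would use that $\{N,T\}$ is orthonormal, so $\partial_1T\cdot N=-T\cdot\partial_1N$, and that $\partial_1N=\partial_{y_1}N$ carries the extra factor $M^2\varepsilon^{\frac76-\frac12}e^{-\frac32 s}$ from Lemma \ref{estimates for functions of coordinate transformation, lemma}; the product $e^{s}\cdot e^{-\frac32 s}=e^{-s/2}$ together with the remaining small or bounded factors ($A=O(M\varepsilon)$, $V\cdot JN$ and $JU\cdot N$ of size $O(M^{1/4})$) gives a bound of the claimed order. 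This is the point where the construction (\ref{construction of f}) of $\tilde f$ and the support property (\ref{support of DN and DT}) are used.

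For $\widetilde F_W$ I would expand $\widetilde F_W=F_W+[(1-\beta_\tau J)\ovl W-G_W]\partial_1\ovl W-h_W\partial_2\ovl W$ and bound the three correction groups separately: $1-\beta_\tau J$ is controlled by $|1-\beta_\tau|+\beta_\tau|J-1|\lesssim Me^{-s}+M^2\varepsilon^{5/6}$ with derivatives from Lemma \ref{estimates for functions of coordinate transformation, lemma}; $G_W$, $h_W$ and their derivatives come from (\ref{estimate for GW}), (\ref{estimates for h}) and, for $|\gamma|=3,4$, (\ref{higher order estimates for transport terms}); and $\partial^\gamma\ovl W$ comes from (\ref{estimate of ovl W})--(\ref{estimates of D^2 ovl W}) (and for $|\gamma|\le 3$ directly from the $1$d profile). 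On $\{|y|\le L\}$ one additionally uses that $|G_W^0|\lesssim e^{-s}$ by (\ref{estimate of GW0 and hW0}) and that $\nabla G_W$ is small enough that, after integration from the origin and using $|y_1|,|y_2|\le L=\varepsilon^{-1/10}$, one gets $|G_W|\mathbbm 1_{|y|\le L}\lesssim M\varepsilon^{1/6}$; this is what produces the $\varepsilon^{1/6}$-type gains in (\ref{estimates for derivatives of tilde FW}). Near the origin ($|y|\le l$) the cruder $\varepsilon^{1/7}$ bound follows the same way, with the Taylor expansion from $0$ replaced by the bootstrap bounds (\ref{bootstrap assumptions of tilde W when |y|<l, |gamma|<4})--(\ref{bootstrap assumptions of tilde W when |y|<l, |gamma|=4}).

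Finally, for $|(\partial^\gamma\widetilde F_W)^0|\lesssim e^{-(\frac12-\frac1{k-3})s}$ with $|\gamma|=3$, I would evaluate at $y=0$, where $\tilde f$ and $f$ equal $\frac12\phi x_2^2$ on a whole neighborhood; hence every $f$-dependent quantity is $x_1$-independent there, so all $\partial_{y_1}$-derivatives of $J,N,T,\partial_tf/(1+f_{x_1})$ vanish at $0$, and in particular $\partial_1T\cdot N\equiv 0$ near $0$, so the $e^{s}$-weighted term of $F_W$ contributes nothing at the origin. What remains are $e^{-s/2}$- or smaller-weighted expressions in $\phi,\dot\phi,Q_{12},Z^0,A^0$ and their derivatives, together with $-\partial^\gamma(G_W\partial_1\ovl W)^0$; inserting the explicit values (\ref{evaluation of ovl W at 0}) of $\ovl W$, the estimate (\ref{estimate of GW0 and hW0}), the higher-order bounds (\ref{new estimate of Z}), (\ref{new estimate of A}), and the $|\gamma|=3$ case of (\ref{higher order estimates for transport terms}) (which gives $|\partial^3 G_W^0|\lesssim e^{-(\frac12-\frac1{k-3})s}$, the slowest-decaying contribution), all surviving terms are $\lesssim e^{-(\frac12-\frac1{k-3})s}$. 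The main obstacle throughout is organizational rather than conceptual: the number of summands is very large, and one must simultaneously track the exponential-in-$s$ weights, the polynomial $\eta$-weights, and the powers of $M$, relying on the orthonormality of $\{N,T\}$ and the exact near-origin form of $f$ to defeat the $e^{s}$-weighted contributions.
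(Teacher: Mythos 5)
Your overall plan matches the paper's: both proceed by differentiating the explicit expressions for $F_W,F_Z,F_A$ and $\widetilde F_W$, and then inserting the coordinate estimates of Lemma~\ref{estimates for functions of coordinate transformation, lemma}, the bounds (\ref{estimate of Q})(\ref{estimate of of V})(\ref{estimates of derivatives of V}), the bootstrap assumptions, the interpolated higher-order bounds (\ref{new estimate of W})--(\ref{new estimate of A}), and the transport bounds (\ref{estimate for GW})(\ref{estimates for GZ and GA})(\ref{estimates for h})(\ref{higher order estimates for transport terms}). Your treatment of the $e^{s}$-weighted terms via $\partial_1T\cdot N=-T\cdot\partial_1 N$ and the $x_1$-independence of the coordinate functions near the origin (so that $\partial_1 T\cdot N\equiv0$ there) is also consistent with what the paper does implicitly.

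There is, however, a concrete gap in your argument for (\ref{estimates for derivatives of tilde FW at origin}). After Leibniz-expanding $\partial^\gamma\bigl(G_W\,\partial_1\ovl W\bigr)^0$ for $|\gamma|=3$, one encounters the summand $\partial_2 G_W^0\,\partial^{\gamma-(0,1)}\partial_1\ovl W^0$ with a nonvanishing $\ovl W$-factor: for instance for $\gamma=(0,3)$ the factor is $\partial_{122}\ovl W^0=2$, and for $\gamma=(2,1)$ it is $\partial_{111}\ovl W^0=6$, by (\ref{evaluation of ovl W at 0}). The only off-the-shelf pointwise bound you have invoked for this quantity is (\ref{estimate for GW}), which gives $|\partial_2 G_W|\lesssim M^2\varepsilon^{1/6}$ uniformly on $10\mathcal X(s)$; this is constant in $s$ and dominates the target $e^{-(\frac12-\frac1{k-3})s}$ already at the initial time $s_0=-\log\varepsilon$ (since $\varepsilon^{1/2-1/(k-3)}\ll M^2\varepsilon^{1/6}$ for $k\geq18$). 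What rescues the paper is the modulation constraint $\nabla^2 W(0,s)\equiv0$: setting $y=0$ in the $\partial_2W$-equation yields $\partial_2 F_W^0+\partial_2 G_W^0=0$ (this is exactly the identity that determines $Q_{12}$ in (\ref{evolution of Q12})), and then the expression (\ref{evaluation of D2FW at 0}) combined with the bootstrap bounds for $A,Z$ gives $|\partial_2 G_W^0|=|\partial_2 F_W^0|\lesssim Me^{-s}$, which is small enough. Without invoking this cancellation, your sketch does not close. You should also make explicit that the Leibniz terms with $|\beta|$ even drop out thanks to the parity of $\ovl W$ (odd in $y_1$, even in $y_2$, so $\partial^{\beta'}\ovl W^0=0$ whenever $|\beta'|$ is even) — (\ref{evaluation of ovl W at 0}) alone does not cover the fourth-order derivatives of $\ovl W$ that arise when $\beta=0$, although those terms are in fact harmless even without parity since they are multiplied by $|G_W^0|\lesssim e^{-s}$.
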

\begin{proof}
	The proof of (\ref{estimates for derivatives of FW and FZ})(\ref{estimates for higher order derivatives of FW})(\ref{estimates for derivatives of FA})(\ref{estimates for derivatives of tilde FW}) is just taking derivatives of the forcing terms, then using the bootstrap assumptions and the estimates (\ref{estimates of f-dependent functions, inequalities})(\ref{estimate of Q})(\ref{estimate of of V})(\ref{estimates of derivatives of V})(\ref{new estimate of W})(\ref{new estimate of Z})(\ref{new estimate of A})(\ref{estimate for GW})(\ref{estimates for GZ and GA})(\ref{estimates for h})(\ref{higher order estimates for transport terms}) to estimate each term therein. Finally we prove (\ref{estimates for derivatives of tilde FW at origin}). Since $\partial^\gamma\ovl{W}^0=0$ when $|\gamma|$ is even, and $\partial_2G_W^0+\partial_2F_W^0=0$, we have
	\begin{equation}
		\begin{aligned}
			|(\partial^\gamma\widetilde{F}_W)^0|&\lesssim e^{-\frac{s}{2}}+|(1-\beta_\tau J)^0|+\sum_{m=1}^3|\nabla^m J^0|+|\nabla G_W^0|+|\nabla^3 G_W^0|+|\nabla h_W^0|+|\nabla^3 h_W^0|\\
			&\lesssim Me^{-\frac{s}{2}}+M^2e^{-\frac{5}{6}s}+e^{-\left(\frac{1}{2}-\frac{1}{k-3}\right)s}\lesssim e^{-\left(\frac{1}{2}-\frac{1}{k-3}\right)s}.
		\end{aligned}
	\end{equation}
\end{proof}

\begin{lemma}For the forcing terms of $\partial^\gamma W,\partial^\gamma Z,\partial^\gamma A$, we have that
	\begin{equation}
		|F_W^{(\gamma)}|\lesssim\left\{
		\begin{aligned}
			&e^{-\frac{s}{2}},\ &\gamma=(0,0)\\
			&\varepsilon^{\frac{1}{4}}\eta^{-\frac{1}{2}+\frac{2}{3(k-2)}},\ &\gamma=(1,0)\\
			&M^2\varepsilon^{\frac{1}{6}}\eta^{-\frac{1}{3}},\ &\gamma=(0,1)\\
			&\eta^{-\frac{1}{2}+\frac{1}{k-2}},\ &\gamma=(2,0)\\
			&M^{\frac{1}{3}}\eta^{-\frac{1}{3}},\ &\gamma=(1,1)\\
			&M^{\frac{2}{3}} \eta^{-\frac{1}{3}+\frac{1}{3(k-3)}},\ &\gamma=(0,2),\\
		\end{aligned}
		\right.
		\label{estimate of forcing terms of W}
	\end{equation}
	\begin{equation}
		|F_Z^{(\gamma)}|\lesssim\left\{
		\begin{aligned}
			&e^{-s},\ &\gamma=(0,0)\\
			&e^{-\frac{3}{2}s}\eta^{-\frac{1}{6}+\frac{2}{3(k-2)}},\ &\gamma=(1,0)\\
			&M^2e^{-\frac{3}{2}s},\ &\gamma=(0,1)\\
			&e^{-\frac{3}{2}s}\left(1+M\eta^{-\frac{1}{3}}\right),\ &\gamma=(2,0)\\
			&e^{-\frac{3}{2}s}\left(M^{\frac{1}{2}}+M^2\eta^{-\frac{1}{3}}\right),\ &\gamma=(1,1)\\
			&M^{\frac{1}{4}}e^{-\left(\frac{3}{2}-\frac{1}{k-3}\right)s},\ &\gamma=(0,2),\\
		\end{aligned}
		\right.
		\label{estimate of forcing terms of Z}
	\end{equation}
	\begin{equation}
		|F_A^{(\gamma)}|\lesssim\left\{
		\begin{aligned}
			&M^{\frac{1}{4}}e^{-s},\ &\gamma=(0,0)\\
			&M^{\frac{1}{4}}e^{-s},\ &\gamma=(0,1)\\
			&e^{-\left(1-\frac{2}{k-3}\right)s}\eta^{-\frac{1}{6}},\ &\gamma=(0,2),\\
		\end{aligned}
		\right.
		\label{estimate of forcing terms of A}
	\end{equation}
	\begin{equation}
		|\widetilde F_W^{(\gamma)}|\lesssim\left\{
		\begin{aligned}
			&\varepsilon^{\frac{1}{11}}\eta^{-\frac{1}{2}},\ &\gamma=(1,0),\ |y|\le L\\
			&\varepsilon^{\frac{1}{12}}\eta^{-\frac{1}{3}},\ &\gamma=(0,1),\ |y|\le L\\
			&\varepsilon^{\frac{1}{7}}+\varepsilon^{\frac{1}{10}}\left(\log M\right)^{\gamma_2-1},\ &|\gamma|\le4,\ |y|\le l.\\
		\end{aligned}
		\right.
		\label{estimate of forcing terms of tilde W}
	\end{equation}
And for $y=0$ and $|\gamma|=3$, we have
	\begin{equation}
		\left|\widetilde{F}_W^{(\gamma),0}\right|\lesssim e^{-\left(\frac{1}{2}-\frac{1}{k-3}\right)s},\ \ \ \ |\gamma|=3.
		\label{estimate of forcing terms of tilde W when |gamma|=3 and y=0}
	\end{equation}
\end{lemma}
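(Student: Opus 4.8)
The plan is to treat all the bounds (\ref{estimate of forcing terms of W})--(\ref{estimate of forcing terms of tilde W when |gamma|=3 and y=0}) by expanding the explicit definitions (\ref{forcing terms of derivatives of W}), (\ref{forcing terms of derivative of Z}), (\ref{forcing terms of derivatives of A}) of $F_W^{(\gamma)}$, $F_Z^{(\gamma)}$, $F_A^{(\gamma)}$ (and the analogous formula for $\widetilde F_W^{(\gamma)}$ displayed just after (\ref{evolution of derivatives of tilde W})) and estimating, one at a time, the three classes of summands that occur: (i) the ``genuine'' forcing $\partial^\gamma F_W$, $\partial^\gamma F_Z$, $\partial^\gamma F_A$, $\partial^\gamma\widetilde F_W$; (ii) the commutator terms $\beta_\tau\partial_1 W[\partial^\gamma,J]W$ together with the sums $\sum_{\beta}\binom{\gamma}{\beta}\partial^{\gamma-\beta}(JW)\partial_1\partial^\beta W$ (resp.\ with $Z$, $A$ in place of $W$); and (iii) the transport-coefficient terms $\sum_{\beta}\binom{\gamma}{\beta}\bigl(\partial^{\gamma-\beta}G_W\,\partial_1\partial^\beta W+\partial^{\gamma-\beta}h_W\,\partial_2\partial^\beta W\bigr)$. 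For class (i) I simply quote (\ref{estimates for derivatives of FW and FZ})--(\ref{estimates for derivatives of tilde FW}). For classes (ii) and (iii) I apply the Leibniz rule and insert the coordinate-transformation bounds (\ref{estimates of f-dependent functions, inequalities}) for $J$, the transport bounds (\ref{estimate for GW})--(\ref{higher order estimates for transport terms}) for $G_W,G_Z,G_A,h_W,h_Z,h_A$, the $L^\infty$ bootstrap bounds (\ref{bootstrap assumptions of W}), (\ref{bootstrap assumptions of Z}), (\ref{bootstrap assumptions of A}) for $W,Z,A$ and their first and second derivatives, and the higher-order bounds (\ref{new estimate of W})--(\ref{new estimate of A}) for third-order (and, through the $\dot H^k$ energy estimate, top-order-interpolated) derivatives; at each step one multiplies the $\eta$-powers, the powers of $e^{-s}$, of $\varepsilon$ and of $M$, and keeps only the dominant summand.

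The slightly-less-than-sharp $\eta$-exponents in (\ref{estimate of forcing terms of W}) (e.g.\ $-\tfrac12+\tfrac{2}{3(k-2)}$ for $\gamma=(1,0)$ and $-\tfrac13+\tfrac1{3(k-3)}$ for $\gamma=(0,2)$) and the time losses $\tfrac1{k-3}$ in (\ref{estimate of forcing terms of Z})--(\ref{estimate of forcing terms of A}) arise precisely from those summands in which a third- or top-order derivative lands on $W$ (resp.\ $Z$, $A$): there one must pay with (\ref{new estimate of W}) or with the $\dot H^k$-interpolated bounds rather than with the sharp $\ovl W$-type bounds, and that is the only place the spectral gap in $k$ enters. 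The $\widetilde F_W^{(\gamma)}$ bounds (\ref{estimate of forcing terms of tilde W}) follow the same scheme after writing $\widetilde F_W^{(\gamma)}=\partial^\gamma\widetilde F_W-\sum_{\beta<\gamma}\binom{\gamma}{\beta}\bigl[\partial^{\gamma-\beta}G_W\,\partial_1\partial^\beta\widetilde W+\partial^{\gamma-\beta}h_W\,\partial_2\partial^\beta\widetilde W+\beta_\tau\partial^{\gamma-\beta}(J\partial_1\ovl W)\partial^\beta\widetilde W\bigr]-\dots$: on $\{|y|\le L\}$ one uses the small bounds (\ref{bootstrap assumptions of tilde W when |y|<L}) for $\widetilde W$ and its first derivatives, while on $\{|y|\le l\}$ one uses (\ref{bootstrap assumptions of tilde W when |y|<l, |gamma|<4})--(\ref{bootstrap assumptions of tilde W when |y|<l, |gamma|=4}) for $\partial^\gamma\widetilde W$, $|\gamma|\le4$, together with a Taylor expansion of $G_W,h_W$ around the origin controlled by (\ref{estimate of GW0 and hW0}); the $(\log M)^{\gamma_2-1}$ weights in the last line of (\ref{estimate of forcing terms of tilde W}) come directly from the $\log$-weights in (\ref{bootstrap assumptions of tilde W when |y|<l, |gamma|=4}).

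For the final bound (\ref{estimate of forcing terms of tilde W when |gamma|=3 and y=0}) I specialize the expansion of $\widetilde F_W^{(\gamma)}$ at $y=0$ with $|\gamma|=3$. Since $\partial^\beta\ovl W^0=0$ for $|\beta|$ even and $\nabla^2 W^0=0$ by the imposed constraints, and since $\partial_2 G_W^0+\partial_2 F_W^0=0$, all the lower-order sums either vanish or collapse to products of $\nabla^j J^0$, $\nabla^j G_W^0$, $\nabla^j h_W^0$ with a surviving first- or third-order derivative. One then feeds in (\ref{estimate of GW0 and hW0}) (which gives $|G_W^0|+|h_W^0|\lesssim e^{-s}$), the evaluations (\ref{evaluation of f-related functions at 0}) of the $f$-related quantities at the origin, and the bound (\ref{estimates for derivatives of tilde FW at origin}) for $(\partial^\gamma\widetilde F_W)^0$, and arrives at the claimed $e^{-(1/2-1/(k-3))s}$.

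The main obstacle is purely the bookkeeping in classes (ii)--(iii): there are many summands, several are genuinely borderline, and for each of the roughly $6\times4$ cases one must verify that the exponent does not fall below the stated threshold --- concretely, that whenever a high-order derivative of $W$, $Z$ or $A$ appears it is paired with a coefficient carrying enough decay (a derivative of $G$, $h$ or $J$, each of which gains extra powers of $e^{-s}$ or $\varepsilon$ from (\ref{estimates of f-dependent functions, inequalities}) and (\ref{estimate for GW})--(\ref{higher order estimates for transport terms})) to absorb the $\tfrac1k$-type losses from (\ref{new estimate of W})--(\ref{new estimate of A}). No new idea is required; the difficulty lies entirely in the case-by-case sharpness of the estimates.
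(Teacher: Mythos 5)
Your proposal is correct and follows essentially the same route as the paper: expand the definitions (\ref{forcing terms of derivatives of W})--(\ref{forcing terms of derivatives of A}) and the formula for $\widetilde F_W^{(\gamma)}$, then bound the genuine forcing via (\ref{estimates for derivatives of FW and FZ})--(\ref{estimates for derivatives of tilde FW at origin}) and the commutator/transport-coefficient sums via Leibniz together with (\ref{estimates of f-dependent functions, inequalities}), (\ref{estimate for GW})--(\ref{higher order estimates for transport terms}), the bootstrap bounds, and (\ref{new estimate of W})--(\ref{new estimate of A}). The paper's own proof is considerably terser — it writes out the decomposition $|\partial^\gamma F_W|+I_1^{(\gamma)}+I_2^{(\gamma)}+I_3^{(\gamma)}$ for $1\le|\gamma|\le2$ and then asserts the term-by-term check — so your slightly more explicit account of where the $1/(k-2)$ and $1/(k-3)$ losses enter, and of the cancellations at $y=0$ coming from the constraints and from $\partial_2G_W^0+\partial_2F_W^0=0$, is a faithful elaboration of the same argument.
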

\begin{proof}
	First, we have
	\begin{equation}
		\left|F_W^{(0,0)}\right|=|F_W|\lesssim e^{-\frac{s}{2}}.
	\end{equation}
	For the case $1\le|\gamma|\le2$, we decompose the estimate for forcing term as
	\begin{equation}
		\begin{aligned}
			\left|F_W^{(\gamma)}\right|\overset{1\le|\gamma|\le2}{\lesssim}&|\partial^\gamma F_W|+\sum_{0\le\beta<\gamma}\left(|\partial^{\gamma-\beta}G_W||\partial_1\partial^\beta W|+|\partial^{\gamma-\beta}h_W||\partial_2\partial^\beta W|\right)\\
			&+\mathbbm{1}_{|\gamma|=2}\gamma_2|\partial_2(JW)||\partial_1^{\gamma_1+1}\partial_2^{\gamma_2-1}W|+\left|[\partial^\gamma,J]W\partial_1W\right|\\
			=&|\partial^\gamma F_W|+I_1^{(\gamma)}+I_2^{(\gamma)}+I_3^{(\gamma)}.
		\end{aligned}
	\end{equation}
	Then one can check that each term do not exceed the proposed bound. $F_Z^{(\gamma)}$, $F_A^{(\gamma)}$ and $\widetilde{F}_W^{(\gamma)}$ can be estimated in a similar fashion. 
\end{proof}

\section{Bounds on Lagrangian trajectories}
Given a point $y_0$ and an initial time $s_0\ge-\log\varepsilon$, we define the Lagrangian trajectory $\Phi_W^{y_0}$ by
\begin{equation}
	\left\{
		\begin{aligned}
			&\frac{d}{ds}\Phi_W^{y_0}(s)=\mathcal{V}_W\circ\Phi_W^{y_0}(s)\\
			&\Phi_W^{y_0}(s_0)=y_0.
		\end{aligned}
	\right.
\end{equation}
Similarly we define $\Phi_Z^{y_0}$ and $\Phi_A^{y_0}$ using the transport terms in the equations of $Z$ and $A$ respectively. 

We now discuss the upper bound and the lower bound of these Lagrangian trajectories, and we will close the bootstrap argument for the spatial support of $W,Z,A$. 

\subsection{Upper bound of the trajectories}\label{upper bound of the trajectories}
\begin{lemma}Let $\Phi$ denote either $\Phi_W^{y_0}$, $\Phi_Z^{y_0}$, $\Phi_A^{y_0}$, for any $y_0\in\mathcal{X}_0$, we have that
	\begin{equation}
		\begin{aligned}
			|\Phi_1(s)|&\le\frac{3}{2}\varepsilon^{\frac{1}{2}}e^{\frac{3}{2}s},\\
			|\Phi_2(s)|&\le\frac{3}{2}\varepsilon^{\frac{1}{6}}e^{\frac{s}{2}}.
		\end{aligned}
	\end{equation}
\end{lemma}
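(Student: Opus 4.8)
The plan is to close the trajectory bound by a continuity/barrier argument applied to the ODE $\frac{d}{ds}\Phi = \mathcal{V}\circ\Phi$, where $\mathcal{V}$ is one of $\mathcal{V}_W,\mathcal{V}_Z,\mathcal{V}_A$. The two components decouple in their leading behavior: the $\Phi_1$-equation has leading term $\tfrac{3}{2}\Phi_1$ and the $\Phi_2$-equation has leading term $\tfrac{1}{2}\Phi_2$, so the natural ansatz is that $|\Phi_1(s)| e^{-\frac{3}{2}s}$ and $|\Phi_2(s)| e^{-\frac{s}{2}}$ stay comparable to their initial values. Since $y_0\in\mathcal{X}_0$, at $s_0=-\log\varepsilon$ we have $|\Phi_1(s_0)|=|y_{0,1}|\le\varepsilon^{-1}=\varepsilon^{\frac{1}{2}}e^{\frac{3}{2}s_0}$ and $|\Phi_2(s_0)|=|y_{0,2}|\le\varepsilon^{-\frac{1}{3}}=\varepsilon^{\frac{1}{6}}e^{\frac{s_0}{2}}$, so the claimed bounds hold with room to spare at the initial time. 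The strategy is then: define $T^*$ as the supremum of times on which the doubled bounds $|\Phi_1|\le 2\varepsilon^{\frac{1}{2}}e^{\frac{3}{2}s}$, $|\Phi_2|\le 2\varepsilon^{\frac{1}{6}}e^{\frac{s}{2}}$ hold (so that $\Phi$ stays inside $\mathcal{X}(s)$ where all bootstrap estimates apply), and improve the constant $2$ to $\tfrac{3}{2}$ on $[s_0,T^*)$, forcing $T^*=+\infty$.

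The key steps, in order: First, while $\Phi(s)\in\mathcal{X}(s)$, use the decomposition $g_W=\beta_\tau JW+G_W$ (and analogously for $Z,A$) together with the bootstrap bound $|W|\lesssim\varepsilon^{\frac{1}{6}}e^{\frac{s}{2}}$ from \eqref{estimate of W}, the estimate $|J-1|\lesssim M^2\varepsilon^{\frac{5}{6}}$ from Lemma~\ref{estimates for functions of coordinate transformation, lemma}, and the $\gamma=(0,0)$ bounds $|G_W|\lesssim\varepsilon^{\frac{1}{3}}e^{\frac{s}{2}}$, $|G_{Z,A}+(\cdots)\kappa_0e^{\frac{s}{2}}|\lesssim\varepsilon^{\frac{1}{3}}e^{\frac{s}{2}}$ (for $Z,A$ one absorbs the $(1-\beta_i)\kappa_0 e^{\frac s2}$ piece, which combines with the transported $\kappa$ term and is harmless) from \eqref{estimate for GW}--\eqref{estimates for GZ and GA}, plus $|h_W|+|h_Z|+|h_A|\lesssim M\varepsilon^{\frac{1}{2}}e^{-\frac{s}{2}}$ from \eqref{estimates for h}. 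This shows $g_\bullet = o(e^{\frac{s}{2}})$ relative to the scale $\varepsilon^{\frac12}e^{\frac32 s}$ — more precisely $|g_\bullet|\lesssim (\varepsilon^{\frac16}\!\cdot\!\text{(size of }\Phi_1 e^{-\frac32 s})+\varepsilon^{\frac13})e^{\frac s2}$ — and likewise $h_\bullet$ is subleading on the $y_2$-scale. Second, write $\frac{d}{ds}|\Phi_1|^2 = 2\Phi_1(\tfrac32\Phi_1+g_\bullet)$, divide by $e^{3s}$, and integrate: the leading part gives exactly the factor $e^{\frac32 s}$, and the $g_\bullet$ contribution integrated from $s_0$ adds only $O(\varepsilon^{\frac13}e^{\frac32 s})$ after using $s_0=-\log\varepsilon$. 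Since the initial value contributes at most $\varepsilon^{\frac12}e^{\frac32 s}$, for $\varepsilon$ small one gets $|\Phi_1(s)|\le(1+C\varepsilon^{\frac1{12}})\varepsilon^{\frac12}e^{\frac32 s}\le\tfrac32\varepsilon^{\frac12}e^{\frac32 s}$. The same computation with $\tfrac12$ in place of $\tfrac32$ and the $h_\bullet$-bounds gives $|\Phi_2(s)|\le\tfrac32\varepsilon^{\frac16}e^{\frac s2}$. Third, a standard continuity argument: the improved bounds are strictly better than the defining bounds of $T^*$, so $T^*$ cannot be finite, hence the bounds hold for all $s\ge s_0$, and in particular $\Phi(s)$ never leaves $\mathcal{X}(s)$, which justifies having used the bootstrap estimates throughout.

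I expect the main obstacle to be bookkeeping rather than anything deep: one must be careful that when $\Phi_1$ is near the top of its allowed range, the term $\beta_\tau J W\,\partial_1(\cdot)$ inside $g_W$ does not itself produce a contribution of size $\sim\varepsilon^{\frac12}e^{\frac32 s}$ that would spoil the $\tfrac32$ constant — this is controlled because $|W|\mathbbm{1}_{\mathcal X(s)}\lesssim\varepsilon^{\frac16}e^{\frac s2}$ carries the extra power $\varepsilon^{\frac16}$, and $|\beta_\tau J-1|\lesssim M^2\varepsilon^{\frac56}$ is negligible, so the worst term in $\dot\Phi_1$ beyond $\tfrac32\Phi_1$ is $O(\varepsilon^{\frac16}e^{\frac s2})$-coefficient times $|\Phi_1|$-ish quantities, giving after integration a multiplicative correction $1+O(\varepsilon^{\frac1{12}})$. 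The second subtlety is the constant-in-far-field behavior: for $y_0$ with $\Phi$ exiting the "active" region, $W,Z,A$ are constant and $g_\bullet,h_\bullet$ reduce to explicit modulation-controlled quantities that are even smaller, so that case is easier; the uniform bound therefore holds for every $y_0\in\mathcal X_0$. One should also note $\dot\tau = O(Me^{-s})$, hence $\beta_\tau = (1-\dot\tau)^{-1} = 1+O(Me^{-s})$ is uniformly close to $1$ and contributes only lower-order corrections to the leading coefficients $\tfrac32$ and $\tfrac12$.
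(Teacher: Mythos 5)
Your approach is essentially the same as the paper's: multiply the trajectory ODE by the integrating factors $e^{-3s/2}$ and $e^{-s/2}$, which exactly kill the linear drift and leave $\frac{d}{ds}\bigl(e^{-3s/2}\Phi_1\bigr)=e^{-3s/2}\,g_\bullet\circ\Phi$ and $\frac{d}{ds}\bigl(e^{-s/2}\Phi_2\bigr)=e^{-s/2}\,h_\bullet\circ\Phi$, integrate from $s_0=-\log\varepsilon$, and bound the forcing contributions via the transport estimates; the paper states exactly this and defers the remaining bookkeeping to \cite{buckmaster2022formation}. Your explicit continuity argument (so that the bootstrap estimates remain usable along the trajectory) is a reasonable way to fill in that omitted step.

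There is one arithmetic slip worth correcting. You assert that the integrated $g_\bullet$-contribution is $O(\varepsilon^{1/3}e^{3s/2})$, but $\varepsilon^{1/3}>\varepsilon^{1/2}$, so if that were accurate it would dominate the initial contribution $\varepsilon^{1/2}e^{3s/2}$ and spoil the constant $3/2$. The correct accounting carries an extra power of $\varepsilon$ from $\int_{s_0}^{\infty}e^{-s'}ds'=\varepsilon$: with $|g_W|\lesssim\varepsilon^{1/6}e^{s/2}$ (coming from $|W|\lesssim\varepsilon^{1/6}e^{s/2}$ in \eqref{estimate of W} and $|G_W|\lesssim\varepsilon^{1/3}e^{s/2}$ in \eqref{estimate for GW}), one gets
\begin{equation*}
\Bigl|\int_{s_0}^{s}e^{-3s'/2}\,g_W\circ\Phi\,ds'\Bigr|\lesssim\varepsilon^{1/6}\int_{s_0}^{\infty}e^{-s'}ds'=\varepsilon^{7/6}\ll\varepsilon^{1/2},
\end{equation*}
and similarly for $\Phi_Z,\Phi_A$ the extra $\kappa_0e^{s/2}$ piece of $g_{Z},g_A$ contributes only $O(\kappa_0\varepsilon)\ll\varepsilon^{1/2}$, and the $h_\bullet$-contribution is $\lesssim M\varepsilon^{1/2}\int_{s_0}^\infty e^{-s'}ds'=M\varepsilon^{3/2}\ll\varepsilon^{1/6}$. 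So the conclusion you state is correct, but the intermediate exponent you wrote does not support it; the genuine estimate is better.
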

\begin{proof}
	We first deal with the case $\Phi=\Phi_W^{y_0}$. Note that
	\begin{equation}
		\begin{aligned}
			\frac{d}{ds}\left(e^{-\frac{3}{2}s}\Phi_1(s)\right)&=e^{-\frac{3}{2}s}g_W\circ\Phi,\\
			\frac{d}{ds}\left(e^{-\frac{s}{2}}\Phi_2(s)\right)&=e^{-\frac{s}{2}}h_W\circ\Phi,\\
			\Phi(-\log\varepsilon)&=y_0.
		\end{aligned}
	\end{equation}
	Then the estimates are direct consequences of $|g_W|\le e^{\frac{s}{2}}$ and $|h_W|\le e^{\frac{s}{2}}$. We omit the detail, which is the same as that in \cite{buckmaster2022formation} The estimates for $\Phi_Z$ and $\Phi_A$ are similar. 
\end{proof}

Now we close the bootstrap bound for spatial support. We attempt to show that
\begin{equation}
	\operatorname{supp}(DW,DZ,DA)\subset\frac{7}{8}\mathcal{X}(s)=\left\{|y_1|\le\frac{7}{4}\varepsilon^{\frac{1}{2}}e^{\frac{3}{2}s},|y_2|\le\frac{7}{4}\varepsilon^{\frac{1}{6}}e^{\frac{s}{2}}\right\}.
\end{equation}
Since $\mathrm{supp}_x(D_xN,D_xT)\subset\frac{3}{4}\mathcal{X}(s)=\{|x_1|\le\frac{3}{2}\varepsilon^{\frac{1}{2}},|x_2|\le\frac{3}{2}\varepsilon^{\frac{1}{6}}\}$, in $\left(\frac{3}{4}\mathcal{X}(s)\right)^c$, there hold
\begin{equation}
	\left\{\begin{aligned}
		&g_W=\beta_\tau JW+\beta_\tau e^{\frac{s}{2}}\left[-\frac{\partial_t f}{1+f_{x_1}}+J\left(\kappa+\beta_2Z+2\beta_1V_1\right)\right]\\
		&g_Z=\beta_2\beta_\tau JW+\beta_\tau e^{\frac{s}{2}}\left[-\frac{\partial_t f}{1+f_{x_1}}+J\left(\beta_2\kappa+Z+2\beta_1V_1\right)\right]\\
		&g_A=\beta_1\beta_\tau JW+\beta_\tau e^{\frac{s}{2}}\left[-\frac{\partial_t f}{1+f_{x_1}}+J\left(\beta_1\kappa+\beta_1Z+2\beta_1V_1\right)\right],
	\end{aligned}\right.
	\label{transport terms of W,Z,A outside the support}
\end{equation}

\begin{equation}
	h_W=h_Z=h_A=2\beta_1\beta_\tau e^{-\frac{s}{2}}\left(V_2+A\right),
\end{equation}

\begin{equation}
	\left\{
	\begin{aligned}
		F_W&=-2\beta_{3}\beta_\tau S\partial_2A+\beta_\tau e^{-\frac{s}{2}}Q_{12}A\\
		F_Z&=2\beta_{3}\beta_\tau S\partial_2A+\beta_\tau e^{-\frac{s}{2}}Q_{12}A\\
		F_A&=-2\beta_{3}\beta_\tau S\partial_2S-\beta_\tau e^{-\frac{s}{2}}Q_{12}U\cdot N.
	\end{aligned}
	\right.
\end{equation}

We also define
\begin{equation}
	\left\{
	\begin{aligned}
		W_\infty(t)&=\left[\frac{\kappa_0}{2}(n_1+1)-\kappa\right]e^{\frac{s}{2}}\\
		Z_\infty(t)&=\frac{\kappa_0}{2}(n_1-1)\\
		A_\infty(t)&=-\frac{\kappa_0}{2}n_2\\
		S_\infty(t)&=\frac{e^{-\frac{s}{2}}W_\infty+\kappa-Z_\infty}{2}=\frac{\kappa_0}{2}.
	\end{aligned}
	\right.
\end{equation}

Then $W-W_\infty$, $Z-Z_\infty$, $A-A_\infty$ satisfy transport-type equations:
\begin{equation}
	\begin{aligned}
		\left(\partial_s-\frac{1}{2}\right)(W-W_\infty)+\mathcal{V}_W\cdot\nabla(W-W_\infty)&=F_{W-W_\infty},\\
		\partial_s(Z-Z_\infty)+\mathcal{V}_Z\cdot\nabla(Z-Z_\infty)&=F_{Z-Z_\infty},\\
		\partial_s(A-A_\infty)+\mathcal{V}_A\cdot\nabla(A-A_\infty)&=F_{A-A_\infty}.
	\end{aligned}
\end{equation}
where
\begin{equation}
	\begin{aligned}
			F_{W-W_\infty}=&-\beta_3\beta_\tau e^{-\frac{s}{2}}(W-W_\infty)\partial_2A+\beta_3\beta_\tau(Z-Z_\infty)\partial_2A+\beta_\tau e^{-\frac{s}{2}}Q_{12}(A-A_\infty)-2\beta_3\beta_\tau S_\infty\partial_2A,\\
			F_{Z-Z_\infty}=&\beta_3\beta_\tau e^{-s}(W-W_\infty)\partial_2A-\beta_3\beta_\tau e^{-\frac{s}{2}}(Z-Z_\infty)\partial_2A+2\beta_3\beta_\tau e^{-\frac{s}{2}}S_\infty\partial_2A+\beta_\tau e^{-s}Q_{12}(A-A_\infty),\\
			F_{A-A_\infty}=&-\beta_3\beta_\tau e^{-s}(W-W_\infty)\partial_2S+\beta_3\beta_\tau e^{-\frac{s}{2}}(Z-Z_\infty)\partial_2S-2\beta_3\beta_\tau e^{-\frac{s}{2}}S_\infty\partial_2S\\
			&-\beta_\tau e^{-\frac{3}{2}s}Q_{12}(W-W_\infty)-\beta_\tau e^{-s}Q_{12}(Z-Z_\infty).
		\end{aligned}
\end{equation}

For $y_0\notin\frac{7}{8}\mathcal{X}(s)$, let $M'>|y_0|$ be a large enough constant. Define
\begin{equation}
	Q_{big}=\left\{|y_1|\le M',|y_2|\le M'\right\},\ Q_{small}(s)=\left\{|y_1|\le e^{\frac{3}{2}s}\mu_1(s),|y_2|\le e^{\frac{s}{2}}\mu_2(s)\right\},
\end{equation}
where
\begin{equation}
	\left\{\begin{aligned}
		\mu_1(s)&=\frac{3+\varepsilon}{2}\varepsilon^{\frac{1}{2}}-2CM^{\frac{1}{4}}e^{-s}\\
		\mu_2(s)&=\frac{3+\varepsilon}{2}\varepsilon^{\frac{1}{6}}-2CM^{\frac{1}{4}}e^{-s}.
	\end{aligned}\right.
\end{equation}
One can verify that $\frac{3}{4}\mathcal{X}(s)\subset Q_{small}\subset\frac{7}{8}\mathcal{X}(s)\subset Q_{big}$ if we take $\varepsilon$ small enough and $M'$ large enough. Define
\begin{equation}
	E(y,s)=\frac{1}{2}\left(e^{-s}(W-W_\infty)^2+(Z-Z_\infty)+2(A-A_\infty)^2\right),
\end{equation}
then we have
\begin{equation}
	\frac{d}{ds}\int_{Q_{big}\setminus Q_{small}}E\le C\int_{Q_{big}\setminus Q_{small}}E.
\end{equation}
From the initial condition, we can see that when $s=-\log\varepsilon$, $\int_{Q_{big}\setminus Q_{small}}E=0$, thus $\int_{Q_{big}\setminus Q_{small}}E\equiv0$ at each time according to Gronwall's inequality. This tells us as long as $y_0\notin\frac{7}{8}\mathcal{X}(s)$, $W(y_0,s)=W_\infty$, $Z(y_0,s)=Z_\infty$, $A(y_0,s)=A_\infty$, thus we proved (\ref{refined spatial support}). 

\subsection{Lower bounds for lagrangian trajectories}
\begin{lemma}
	\label{lower bound for trajectory of W}
	Suppose $|y_0|\ge l$, $s_0\ge-\log\varepsilon$, then we have
	\begin{equation}
		|\Phi_W^{y_0}(s)|\ge|y_0|e^{\frac{s-s0}{5}}\ \ \ \text{ for all }s\ge s_0.
	\end{equation}
\end{lemma}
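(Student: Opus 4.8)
The plan is to track the evolution of the radial-type quantity along the $W$-trajectory, exploiting the structure of the transport velocity $\mathcal V_W = \bigl(\tfrac32 y_1 + g_W, \tfrac12 y_2 + h_W\bigr)$. The dominant part of $\mathcal V_W$ is the linear dilation field $\bigl(\tfrac32 y_1, \tfrac12 y_2\bigr)$, which pushes points outward at rate at least $\tfrac12$ in each component; the corrections $g_W, h_W$ must be shown to be small enough that they cannot overcome this. Concretely, I would write
\[
\frac{d}{ds}\Phi_{W,1} = \tfrac32 \Phi_{W,1} + g_W\circ\Phi_W, \qquad
\frac{d}{ds}\Phi_{W,2} = \tfrac12 \Phi_{W,2} + h_W\circ\Phi_W,
\]
and estimate $\tfrac{d}{ds}|\Phi_W|^2$ from below. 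The cross terms give
\[
\tfrac12\tfrac{d}{ds}|\Phi_W|^2 \ge \tfrac12|\Phi_W|^2 + \Phi_{W,1} g_W\circ\Phi_W + \Phi_{W,2} h_W\circ\Phi_W,
\]
using $\tfrac32\Phi_{W,1}^2 + \tfrac12\Phi_{W,2}^2 \ge \tfrac12|\Phi_W|^2$. So it suffices to absorb the last two terms into a small multiple of $|\Phi_W|^2$, giving $\tfrac{d}{ds}|\Phi_W|^2 \ge \tfrac25|\Phi_W|^2$, which integrates to the claimed bound $|\Phi_W^{y_0}(s)|\ge|y_0|e^{(s-s_0)/5}$.

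The key step is therefore the bound on $|g_W|$ and $|h_W|$ along the trajectory, which must be compared against $|\Phi_W|$. There are two regimes. For $|\Phi_W|\ge L$ (or more precisely outside $\tfrac34\mathcal X(s)$, where $W,Z,A$ are constant), the transport terms simplify as in \eqref{transport terms of W,Z,A outside the support}, and the estimates \eqref{estimate for GW}, \eqref{estimates for h}, together with $|V|\lesssim M^{1/4}$, show $|g_W|,|h_W|\lesssim \varepsilon^{1/3}e^{s/2}$, which against $|\Phi_W|\gtrsim \varepsilon^{1/2}e^{3s/2}$ on the support (or against $|\Phi_W|\ge l$ when $L\le |\Phi_W|$) is more than small enough. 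For the intermediate regime $l\le|\Phi_W|\le L$, I would use $|g_W|\lesssim \varepsilon^{1/3}e^{s/2}$ together with the decay estimates $|h_W|\lesssim M\varepsilon^{1/2}e^{-s/2}$; the point is that $g_W$ has a factor $e^{s/2}$ but the trajectory, once it reaches radius $\gtrsim l$ and $s\ge s_0\ge -\log\varepsilon$, has $|\Phi_W|$ large enough that $|g_W|/|\Phi_W|$ stays below, say, $\tfrac{1}{100}$. One needs to be slightly careful: while $|\Phi_W|$ could in principle be smaller than the support scale $\varepsilon^{1/2}e^{3s/2}$ in the $y_1$-direction, the hypothesis $|y_0|\ge l$ and monotonicity arguments (the outward drift) keep $|\Phi_W(s)|\ge l$ for all $s\ge s_0$, and one combines this with the $e^{s/2}$ growth of the ambient scale.

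The main obstacle I expect is handling the $g_W$ term cleanly in the window where $|\Phi_W|$ is only of order $l$ (small, $O((\log M)^{-5})$) while $g_W$ carries the large prefactor $e^{s/2}$: naively $|g_W|$ is not small compared to $l$. The resolution is that at such times the trajectory has already been pushed by the dilation flow, so either $s$ is close to $s_0$ (and then $|g_W(s_0)|\lesssim \varepsilon^{1/3}e^{s_0/2}\lesssim \varepsilon^{1/3}\varepsilon^{-1/2}=\varepsilon^{-1/6}$—still not obviously small), or $|\Phi_W|$ has grown. To make this rigorous I would instead run the argument componentwise as in the proof of the upper bound: show $\tfrac{d}{ds}(e^{-3s/2}|\Phi_{W,1}|)$ and $\tfrac{d}{ds}(e^{-s/2}|\Phi_{W,2}|)$ have the right sign on the relevant region using the \emph{precise} forms of $g_W,h_W$ outside the support, so that $\Phi_W$ cannot leave the annulus $\{|y|\ge l\}$ travelling inward, and then the lower bound $|\Phi_W(s)|\ge |y_0|e^{(s-s_0)/5}$ follows by a Grönwall comparison on $|\Phi_W|^2$ with the $\tfrac25$ rate, since once we know $|\Phi_W|$ stays $\gtrsim l$ the ratio $(|g_W|+|h_W|)/|\Phi_W|$ is controlled by $C\varepsilon^{1/3}e^{s/2}/l$ only in the far field and by the small decaying bounds in the near field — and in the far field $|\Phi_W|$ is actually $\gtrsim \varepsilon^{1/2}e^{3s/2}$, killing the $e^{s/2}$. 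I would present the far-field and near-field cases separately and cite \eqref{estimate for GW}, \eqref{estimates for h}, \eqref{estimate of of V}, and the support bootstrap \eqref{bootstrap assumptions for the spatial support} for the relevant estimates.
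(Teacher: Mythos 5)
Your overall strategy (Gr\"onwall on $|\Phi_W|^2$ after bounding $\Phi_W\cdot\mathcal V_W\circ\Phi_W$ from below) matches the paper's, but there is a genuine gap at the step where you use $\tfrac32\Phi_{W,1}^2+\tfrac12\Phi_{W,2}^2\ge\tfrac12|\Phi_W|^2$ and then claim the remaining terms $\Phi_{W,1}g_W+\Phi_{W,2}h_W$ can be absorbed as a \emph{small} multiple of $|\Phi_W|^2$. That inequality discards the surplus $\Phi_{W,1}^2$ coming from $\tfrac32\Phi_{W,1}^2=\tfrac12\Phi_{W,1}^2+\Phi_{W,1}^2$, and that surplus is not optional: $g_W=\beta_\tau J W+G_W$, and near the origin $W(0)=0$, $\partial_1W(0)=-1$, so $W\approx -y_1$ and hence $\Phi_{W,1}g_W\approx-\Phi_{W,1}^2$. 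When $|\Phi_{W,2}|\ll|\Phi_{W,1}|$ this is of order $-|\Phi_W|^2$, not a small multiple, and your claimed bound $\tfrac{d}{ds}|\Phi_W|^2\ge\tfrac25|\Phi_W|^2$ does not follow. You do sense an obstacle, but you misattribute it to the $e^{s/2}$ prefactor in $G_W$ rather than to the $O(|y_1|)$ size of $\beta_\tau JW$ itself.

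The paper's proof keeps the extra $y_1^2$ explicitly, writing
\begin{equation*}
 y\cdot\mathcal V_W\ \ge\ \tfrac12|y|^2\ +\ y_1^2\ -\ \beta_\tau|y_1JW|\ -\ |y_1G_W|\ -\ |y_2h_W|,
\end{equation*}
and then controls $|W|$ sharply: for $|y|\le L$ it uses $|W(y)|\le(1+\varepsilon^{1/12})|y_1|+\varepsilon^{1/13}|y_2|$ (obtained from $W(0,\cdot)=\widetilde W(0,\cdot)$ together with the bootstrap bounds on $\partial_1 W$ and $\partial_2\widetilde W$), so that $\beta_\tau|y_1JW|\le y_1^2+O(\varepsilon^{\text{small}})|y|^2$ is cancelled by the retained $y_1^2$; for $|y|>L$ it uses $|W|\le(1+\varepsilon^{1/20})^2|y|$. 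The remaining pieces $|y_1G_W|$ and $|y_2h_W|$ are then small multiples of $|y|^2$ on $\{|y|\ge l\}$ by \eqref{estimate for GW} and \eqref{estimates for h} (together with $e^{-s}\le\varepsilon$ and $|y|\ge l$). To repair your argument you should not throw away the $+\Phi_{W,1}^2$, and you must isolate $\beta_\tau JW$ inside $g_W$ and pair it against $y_1^2$ using a linear-in-$y_1$ bound on $W$, rather than treating all of $g_W$ as an error term.
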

\begin{proof}
	It suffices to prove that $y\cdot\mathcal{V}_W\ge\frac{1}{5}|y|^2$. Note that by definition of $\mathcal{V}_W$, we can see that
	\begin{equation}
		y\cdot\mathcal{V}_W(y)\ge\frac{1}{2}|y|^2+y_1^2-\beta_\tau|y_1JW|-|y_1G_W|-|y_2h_W|.
	\end{equation}
	We split the estimate of $W$ into two cases: $|y|\le L$ and $|y|> L$. 
	If $|y|\le L$, by (\ref{bootstrap assumptions of tilde W when |y|<L})(\ref{estimates of D ovl W}) we have
	\begin{equation}
		\begin{aligned}
			|W(y)|&\le|W(y_1,y_2)-W(0,y_2)|+|W(0,y_2)-\ovl{W}(0,y_2)|+\underbrace{|\ovl{W}(0,y_2)|}_{=0}\\
			&\le(1+\varepsilon^{\frac{1}{12}})|y_1|+\varepsilon^{\frac{1}{13}}|y_2|.
		\end{aligned}
	\end{equation}
	If $|y|>L$, from bootstrap assumption we have
	\begin{equation}
		|W(y)|\le(1+\varepsilon^{\frac{1}{20}})\eta^{\frac{1}{6}}(y)\le(1+\varepsilon^{\frac{1}{20}})^2|y|.
	\end{equation}
	Then appealing to (\ref{estimate for GW})(\ref{estimates for h}) we have the desired result. 
\end{proof}

\begin{lemma}
	\label{lower bound of trajectories of Z,A}
	Let $\Phi$ denote either $\Phi_Z^{y_0}$ or $\Phi_A^{y_0}$. If
	\begin{equation}
		\kappa_0\ge\frac{3}{1-\max(\beta_1,\beta_2)}.
	\end{equation}
	then for any $0\le\sigma_1<\frac{1}{2}$ and $2\sigma_1<\sigma_2$, we have the bound
	\begin{equation}
		\int_{-\log\varepsilon}^\infty e^{\sigma_1s'}\left(1+|\Phi_1(s')|\right)^{-\sigma_2}ds'\le C(\sigma_1,\sigma_2).
		\label{integral of eta along trajectories of W and A are bounded}
	\end{equation}
\end{lemma}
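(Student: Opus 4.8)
The plan is to reduce \eqref{integral of eta along trajectories of W and A are bounded} to a one–dimensional comparison for the first component $\Phi_1$ of the trajectory. It suffices to treat $\Phi=\Phi_Z^{y_0}$, the case $\Phi=\Phi_A^{y_0}$ being identical with $\beta_1$ in place of $\beta_2$. Write $c:=(1-\beta_2)\kappa_0$, so that the hypothesis is exactly $c\ge 3$. The first component solves $\tfrac{d}{ds}\Phi_1=\tfrac32\Phi_1+g_Z\circ\Phi$, and I would track the renormalized quantity $\alpha(s):=e^{-s/2}\Phi_1(s)$, which satisfies the scalar ODE $\dot\alpha=\alpha+e^{-s/2}\,g_Z\circ\Phi$. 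One may assume $y_0\in\mathcal{X}_0$ (the only case used); then the upper bound lemma gives $\Phi(s)\in\tfrac34\mathcal{X}(s)\subset\mathcal{X}(s)$ for all $s\ge s_0\ge-\log\varepsilon$, so the transport estimates valid on $\mathcal{X}(s)$ may be evaluated at $\Phi(s)$.

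\textbf{Step 1 (sign of the transport coefficient).} Decomposing $g_Z=\beta_2\beta_\tau JW+G_Z$ and inserting $|G_Z+(1-\beta_2)\kappa_0 e^{s/2}|\lesssim\varepsilon^{1/3}e^{s/2}$ from \eqref{estimates for GZ and GA}, $|W|\lesssim\varepsilon^{1/6}e^{s/2}$ from \eqref{estimate of W}, and $0<\beta_\tau J\le2$, one obtains, for $\varepsilon$ small (in terms of $M,\alpha,\kappa_0$),
\[
g_Z\circ\Phi(s)\ \le\ -\tfrac c2\,e^{s/2},\qquad |g_Z\circ\Phi(s)|\ \le\ C_0\,e^{s/2}\qquad(s\ge s_0),
\]
the second bound also using \eqref{estimates of f-dependent functions, inequalities}\eqref{estimate of of V}. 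The first inequality is the only place the hypothesis $c\ge3>0$ enters, and it is the heart of the lemma. Consequently $\alpha-C_0\le\dot\alpha\le\alpha-\tfrac c2$, so $\tfrac{d}{ds}\big(e^{-s}(\alpha-\tfrac c2)\big)=e^{-s}(\dot\alpha-\alpha+\tfrac c2)\le0$; that is, $e^{-s}(\alpha(s)-\tfrac c2)$ is non-increasing.

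\textbf{Step 2 (dichotomy and lower bound on $|\Phi_1|$).} From this monotonicity I argue that either (A) $\alpha(s)\ge\tfrac c2$ for all $s\ge s_0$, or (B) $\alpha$ vanishes at some (then unique) time $s_*\ge s_0$: indeed, if $\alpha(\bar s)<\tfrac c2$ for some $\bar s$, then $e^{-s}(\alpha(s)-\tfrac c2)\le e^{-\bar s}(\alpha(\bar s)-\tfrac c2)<0$ for $s\ge\bar s$, forcing $\alpha(s)\le\tfrac c2-\rho e^{s}\to-\infty$ with $\rho>0$, hence a zero of $\alpha$. In case (B), integrating the monotonicity forward and backward from $s_*$ gives $\alpha(s)\le-\tfrac c2(e^{s-s_*}-1)$ for $s\ge s_*$ and $\alpha(s)\ge\tfrac c2(1-e^{-(s_*-s)})$ for $s_0\le s\le s_*$, so
\[
|\Phi_1(s)|\ \ge\ \tfrac c2\,e^{s/2}\,\psi(s),\qquad \psi(s):=\big|\,1-e^{-(s_*-s)}\,\big|\quad(=e^{s-s_*}-1\ \text{ for }s\ge s_*),
\]
where elementary estimates yield $\psi(s)\ge\tfrac12$ when $|s-s_*|\ge1$ and $\psi(s)\ge\tfrac12|s-s_*|$ when $|s-s_*|\le1$; in case (A) simply $|\Phi_1(s)|\ge\tfrac c2 e^{s/2}$.

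\textbf{Step 3 (the integral) and the main obstacle.} In case (A) the integrand is $\le(\tfrac c2)^{-\sigma_2}e^{(\sigma_1-\sigma_2/2)s}$, and $\int_{s_0}^\infty e^{(\sigma_1-\sigma_2/2)s}\,ds\le C(\sigma_1,\sigma_2)$ since $\sigma_1-\tfrac{\sigma_2}{2}<0$ by $2\sigma_1<\sigma_2$ and $s_0\ge-\log\varepsilon>0$. In case (B) I split at $\{|s-s_*|\ge1\}$, where $1+|\Phi_1(s)|\ge\tfrac c4 e^{s/2}$ so the contribution is again $\lesssim\int_{s_0}^\infty e^{(\sigma_1-\sigma_2/2)s}\,ds\le C$, and $\{|s-s_*|<1\}$, where with $b:=\tfrac c4 e^{-1/2}e^{s_*/2}\gtrsim e^{s_*/2}\gtrsim\varepsilon^{-1/2}$,
\[
\int_{|s-s_*|<1}e^{\sigma_1 s}\big(1+|\Phi_1|\big)^{-\sigma_2}\,ds\ \lesssim\ e^{\sigma_1 s_*}\int_0^1(1+br)^{-\sigma_2}\,dr\ \lesssim\ e^{\sigma_1 s_*}\,b^{-\min(\sigma_2,1)}(1+\log b),
\]
the last step being the elementary case check on whether $\sigma_2$ is above, equal to, or below $1$. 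This contribution is $\lesssim e^{(\sigma_1-\frac12\min(\sigma_2,1))s_*}(1+s_*)\le C(\sigma_1,\sigma_2)$, using $\sigma_1<\tfrac12$, $\sigma_1<\tfrac{\sigma_2}{2}$ and $s_*\ge-\log\varepsilon$. Summing gives the claimed bound. The entire argument hinges on Step 1 — the negativity $g_Z\circ\Phi\le-\tfrac c2 e^{s/2}$ coming from \eqref{estimates for GZ and GA} and from $\kappa_0\ge 3/(1-\max(\beta_1,\beta_2))$; the remaining work is a scalar ODE comparison plus elementary integration, with the only fiddly point being the uniformity of the near-crossing estimate across the ranges of $\sigma_2$, which is why I keep the scale $b\sim e^{s_*/2}$ explicit.
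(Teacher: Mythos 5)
Your argument is, in substance, the one the paper delegates to Buckmaster--Shkoller--Vicol: isolate the strictly negative part $-(1-\beta_Z)\kappa_0 e^{s/2}$ (resp.\ $-(1-\beta_A)\kappa_0 e^{s/2}$) of $g_Z$ (resp.\ $g_A$) from the transport estimate \eqref{estimates for GZ and GA} and the smallness of $\beta_{\tau}JW$, reduce to the scalar ODE inequality $\dot\alpha\le\alpha-\tfrac c2$ for $\alpha=e^{-s/2}\Phi_1$, and integrate. The sign computation in Step~1, the monotonicity of $e^{-s}(\alpha-\tfrac c2)$, and the splitting of the integral into a far region (controlled by $\sigma_1<\sigma_2/2$) and a near-crossing region of unit width (controlled by $b\sim e^{s_*/2}\gtrsim\varepsilon^{-1/2}$ together with $\sigma_1<\tfrac12\min(\sigma_2,1)$) are all correct, and the hypothesis $\kappa_0\ge3/(1-\max(\beta_1,\beta_2))$ is used in exactly the right place.

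There is one small gap in Step~2: the dichotomy ``$\alpha(s)\ge\tfrac c2$ for all $s\ge s_0$, or $\alpha$ has a zero $s_*\ge s_0$'' omits the case $\alpha(s_0)\le 0$ (i.e.\ $y_{0,1}\le 0$), in which $\alpha$ is negative at the start and never vanishes in $[s_0,\infty)$; your phrase ``hence a zero of $\alpha$'' tacitly invokes the intermediate value theorem from a positive starting value. The fix is immediate: when $\alpha(s_0)\le 0$, set $s_*:=s_0$ and keep only the forward half of the estimate, $\alpha(s)\le(\alpha(s_0)-\tfrac c2)e^{s-s_0}+\tfrac c2\le -\tfrac c2(e^{s-s_0}-1)$, which gives $|\Phi_1(s)|\ge\tfrac c2 e^{s/2}(e^{s-s_0}-1)$ for $s\ge s_0$; the near-crossing interval is then $[s_0,s_0+1)$ and is handled by exactly the same computation, with $b\sim e^{s_0/2}=\varepsilon^{-1/2}$. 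With that small addition, the dichotomy covers all starting positions and the proof is complete. It would also be worth saying explicitly why $\max(\beta_1,\beta_2)=\beta_1$ (since $\alpha>0$ gives $\beta_2<\beta_1$), so that the constant $c$ for the $A$-trajectory is the smaller of the two and the hypothesis gives $c\ge3$ in both cases.
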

\begin{proof}
	The proof is the same as that in \cite{buckmaster2022formation}. 
\end{proof}

\begin{lemma}
	Let $\Phi^{y_0}$ denote either $\Phi_Z^{y_0}$ or $\Phi_A^{y_0}$, then
	\begin{equation}
		\sup_{y_0\in\mathcal{X}_0}\int_{-\log\varepsilon}^\infty|\partial_1W|\circ \Phi^{y_0}(s')ds'\lesssim1.
		\label{integral of D1W along Phi is bounded}
	\end{equation}
\end{lemma}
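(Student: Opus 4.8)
The plan is to bound $|\partial_1 W|$ along the trajectory $\Phi^{y_0}$ by splitting into the region near the origin, an intermediate region, and the far region, and in each region exploiting the decay of $\eta^{-1/3}$ together with the lower bounds on the trajectory established in Lemma \ref{lower bound for trajectory of W} and Lemma \ref{lower bound of trajectories of Z,A}. First I would recall from the bootstrap assumption (\ref{bootstrap assumptions of W}) and the closure estimate (\ref{estimate of D1W that will appear in damping terms}) that $|\partial_1 W(y,s)| \le (1+\varepsilon^{1/12}) \eta^{-1/3}(y) \le (1+\varepsilon^{1/12})(1+|y_1|^2)^{-1/3}$ uniformly in $s$. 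Thus it suffices to show
\begin{equation}
	\sup_{y_0\in\mathcal{X}_0}\int_{-\log\varepsilon}^\infty \left(1+|\Phi_1^{y_0}(s')|^2\right)^{-\frac{1}{3}}ds'\lesssim1,
\end{equation}
where $\Phi$ is $\Phi_Z^{y_0}$ or $\Phi_A^{y_0}$.

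The key step is to apply Lemma \ref{lower bound of trajectories of Z,A} with a suitable choice of exponents. Taking $\sigma_1 = 0$ and $\sigma_2 = \frac{2}{3}$ — which satisfies $0 \le \sigma_1 < \frac12$ and $2\sigma_1 = 0 < \frac23 = \sigma_2$ — estimate (\ref{integral of eta along trajectories of W and A are bounded}) gives directly
\begin{equation}
	\int_{-\log\varepsilon}^\infty \left(1+|\Phi_1(s')|\right)^{-\frac{2}{3}}ds'\le C,
\end{equation}
with $C$ a universal constant (independent of $y_0$). Since $(1+|\Phi_1|^2)^{-1/3} \le (1+|\Phi_1|)^{-2/3}$ up to a harmless constant factor (indeed $1+t^2 \ge \tfrac12(1+t)^2$ for $t\ge0$, so $(1+t^2)^{-1/3}\le 2^{1/3}(1+t)^{-2/3}$), combining this with the pointwise bound on $|\partial_1 W|$ yields
\begin{equation}
	\sup_{y_0\in\mathcal{X}_0}\int_{-\log\varepsilon}^\infty|\partial_1W|\circ \Phi^{y_0}(s')ds' \le 2^{\frac{1}{3}}(1+\varepsilon^{\frac{1}{12}})\, C(\sigma_1,\sigma_2)\lesssim1,
\end{equation}
which is the claim.

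The main point to be careful about — though it is not really an obstacle given the tools already in place — is that the pointwise bound $|\partial_1 W| \lesssim \eta^{-1/3}$ must hold on the whole range of $y$ that the trajectory visits, including the far field where $W = W_\infty$ and $\partial_1 W = 0$; there the integrand simply vanishes, so the estimate is trivially fine. One should also note that Lemma \ref{lower bound of trajectories of Z,A} requires the hypothesis $\kappa_0 \ge \frac{3}{1-\max(\beta_1,\beta_2)}$, which is exactly (\ref{initial condition for kappa}) and hence available. No smallness in $\varepsilon$ beyond what the bootstrap already assumes is needed. The entire argument is therefore a one-line application of the trajectory integral bound, once the exponents are chosen correctly and the elementary inequality relating $\eta^{-1/3}$ to $(1+|y_1|)^{-2/3}$ is invoked.
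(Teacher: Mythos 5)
Your proof is correct and takes exactly the route the paper intends: the paper's proof is the one-liner "Using Lemma \ref{lower bound of trajectories of Z,A} and bootstrap assumption of $\partial_1W$, we can deduce the above inequality," and you have simply filled in the (correct) details — the pointwise bound $|\partial_1 W|\lesssim \eta^{-1/3}\le(1+|y_1|^2)^{-1/3}\lesssim(1+|y_1|)^{-2/3}$, the choice $\sigma_1=0$, $\sigma_2=2/3$ in (\ref{integral of eta along trajectories of W and A are bounded}), and the verification that the hypothesis (\ref{initial condition for kappa}) is in force.
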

\begin{proof}
	Using lemma \ref{lower bound of trajectories of Z,A} and bootstrap assumption of $\partial_1W$, we can deduce the above inequality. 
\end{proof}

\section{Closure of bootstrap argument for $\partial_1A$}
Since the vorticity is purely transported by $u$, the bootstrap of $\partial_1A$ is easy to close from the bound of the vorticity and bootstrap assumptions, in no need of the evolution equation of $\partial_1A$. 
\begin{lemma}[Relating $A$ and $\Omega$]
	We have the following identity:
	\begin{equation}
		\begin{aligned}
			J e^{\frac 3 2 s} \partial_1 A =&-{(\alpha S)}^{\frac 1 \alpha} \Omega- T_2 e^{\frac 3 2 s} \partial_2 \left(\frac {e^{\frac 3 2 s}W+\kappa+Z }{2}\right)- N_2 e^{-\frac 3 2 s}\partial_1 A+ U\cdot (N_2\partial_{x_2}T -T_2\partial_{x_2} N+J\partial_{x_1} T).
		\end{aligned}
	\end{equation}
\end{lemma}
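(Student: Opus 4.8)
The plan is to derive the stated identity by unwinding the definitions, starting from the vorticity in the original Cartesian frame and pushing it through the chain of coordinate changes $\mathrm{x}\mapsto\tilde x\mapsto x\mapsto y$. Recall $\omega=\partial_{\mathrm{x}_1}u_2-\partial_{\mathrm{x}_2}u_1$ and $\zeta=\omega/\rho$, and the specific vorticity in self-similar variables is $\Omega(y,s)=\mathring\zeta(x,t)=\zeta(\mathrm{x},t)$. Since $R(t)\in SO(2)$ is a rotation, curl is preserved under $\mathrm{x}\mapsto\tilde x$ up to the pointwise identification $\tilde u=R^Tu$, so $\omega=\partial_{\tilde x_1}\tilde u_2-\partial_{\tilde x_2}\tilde u_1$. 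The substantive step is to re-express $\partial_{\tilde x_1}\tilde u_2-\partial_{\tilde x_2}\tilde u_1$ using the relations $\partial_{\tilde x_1}=\frac{\partial_{x_1}}{1+f_{x_1}}$ hmm — more precisely the chain rule formulas $\partial_{\tilde x_1}=\frac{\partial_{\tilde x_1}}{1-\tilde f_{\tilde x_1}}$ ... I will use the correct transformation recorded in the proof of Lemma~\ref{estimates for functions of coordinate transformation, lemma}, namely $\partial_{x_1}=\frac{\partial_{\tilde x_1}}{1-\tilde f_{\tilde x_1}}$ and $\partial_{x_2}=\frac{\tilde f_{\tilde x_2}}{1-\tilde f_{\tilde x_1}}\partial_{\tilde x_1}+\partial_{\tilde x_2}$, together with $\mathring u(x,t)=\tilde u(\tilde x,t)$.

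**Key steps in order.** First, write $\tilde u=(U\cdot N)N + A\,T$ componentwise (this is the decomposition dual to $A=\mathring u\cdot T$, $\mathring u\cdot N = \tfrac12(e^{-s/2}W+Z+\kappa)$); then $\partial_{\tilde x_i}$ acting on $\tilde u_j$ produces terms where the derivative hits the scalar coefficients $U\cdot N$, $A$ and terms where it hits the frame vectors $N$, $T$. Second, collect the coefficient-derivative terms: the combination $\partial_{\tilde x_1}\tilde u_2 - \partial_{\tilde x_2}\tilde u_1$ on the $A\,T$ part yields $T\cdot(\text{rotated gradient of }A)$, which after using $J,N,T$ and the chain-rule formulas becomes $J\,\partial_{x_1}A$ plus a $\partial_{x_2}$-correction, while the $(U\cdot N)N$ part yields the $\partial_{x_2}\!\big(\tfrac{e^{\cdots}W+\kappa+Z}{2}\big)$ term with coefficient $T_2$ and an $N_2\partial_{x_1}A$-type leftover. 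Third, the frame-derivative terms assemble into $U\cdot(N_2\partial_{x_2}T - T_2\partial_{x_2}N + J\partial_{x_1}T)$; here one uses that $\partial_{\tilde x_1}N$, $\partial_{\tilde x_2}N$ etc. can be converted to $\partial_{x_1}N$, $\partial_{x_2}N$ via the same chain rule, and that $\{N,T\}$ is orthonormal so $N\cdot\partial N = T\cdot\partial T = 0$, which kills several terms. Fourth, multiply through by $Je^{3s/2}$ and convert $\partial_{\mathrm{x}}$-derivatives of the physical unknowns into $\partial_1,\partial_2$ in $y$ using $y_1=x_1e^{3s/2}$, $y_2=x_2e^{s/2}$ and $w=e^{-s/2}W+\kappa$, $z=Z$, $a=A$; finally substitute $\rho=(\alpha\sigma)^{1/\alpha}=(\alpha S)^{1/\alpha}$ and $\omega=\rho\zeta=(\alpha S)^{1/\alpha}\Omega$ to replace $\omega$ by $(\alpha S)^{1/\alpha}\Omega$, with the sign $-(\alpha S)^{1/\alpha}\Omega$ emerging from moving $\omega$ to the right-hand side.

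**Main obstacle.** The bookkeeping of the frame-vector derivative terms is where the real work lies: there are many terms of the form (derivative of $N$ or $T$) dotted into $\tilde u$, and they must be organized so that exactly the compact combination $U\cdot(N_2\partial_{x_2}T - T_2\partial_{x_2}N + J\partial_{x_1}T)$ survives after the orthonormality cancellations and the chain-rule conversions from $\tilde x$ to $x$ derivatives. One must be careful that $N$, $T$ depend on $\tilde x$ only through $\tilde f_{\tilde x_1},\tilde f_{\tilde x_2}$ (no $\tilde x_1$-dependence beyond the cutoff, so in the small region $\partial_{\tilde x_1}N=0$, but the general identity must hold everywhere), and that the Jacobian factor $J=\frac{\sqrt{1+f_{x_2}^2}}{1+f_{x_1}}$ is inserted in the right places. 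I do not expect to need any estimates here — the statement is a clean algebraic identity — so the difficulty is purely in doing the differential-geometric computation cleanly; I would carry it out by writing everything in the $(x,t)$ variables first, verifying the $(x,t)$-version of the identity, and only at the end applying the self-similar rescaling.
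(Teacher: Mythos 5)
Your proposal is correct and follows essentially the same route as the paper's proof: both compute $\mathrm{curl}\,\tilde u$ by decomposing $\mathring u$ in the moving orthonormal frame $\{N,T\}$, apply the chain rule between $\tilde x$ and $x$ derivatives, and use the product rule together with the orthonormality relations $N\cdot\partial N = T\cdot\partial T = 0$, $N\cdot\partial T = -T\cdot\partial N$ to collect the frame-derivative terms, before rescaling to self-similar variables and substituting $\mathrm{curl}\,\mathring u=(\alpha S)^{1/\alpha}\Omega$. The only organizational difference is that the paper first rewrites the curl via the directional derivatives $\partial_N\mathring u$ and $\partial_T\mathring u$ (exploiting that $\partial_T\mathring u=T_2\partial_{x_2}\mathring u$ after a cancellation), whereas you expand $\partial_{\tilde x_1}\tilde u_2-\partial_{\tilde x_2}\tilde u_1$ directly; these are algebraically equivalent and land in the same place.
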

\begin{proof}
	Note that $\mathrm{curl }\ \mathring u =\partial_T \mathring u \cdot N- \partial_T \mathring u \cdot T$. We compute each term as follows: 
	\begin{equation}
		\begin{aligned}
			\partial_T\mathring u &= T_1\partial_{\tilde{x}_1} \mathring u + T_2\partial_{\tilde{x}_2} \mathring u =T_1 \frac {1} {1+f_{x_1}}\partial_{x_1} \mathring u +T_2\left(-\frac{f_{x_2}}{1+f_{x_1}}\partial_{x_1} \mathring u + \partial_{x_2} \mathring u \right)\\
			&= \frac {f_{x_2}}{\sqrt{1+f_{x_2}^2}}\frac 1 {1+f_{x_1}}\partial_{x_1} \mathring u - \frac {f_{x_2}}{\sqrt{1+f_{x_2}^2}}\frac 1 {1+f_{x_1}}\partial_{x_1} \mathring u + \frac{\partial_{x_2}\mathring u}{\sqrt{1+f_{x_2}^2}}= T_2 \partial_{x_2}\mathring u,
		\end{aligned}
	\end{equation}
	\begin{equation}
		\begin{aligned}
			\partial_N\mathring u &= N_1\partial_{\tilde{x}_1} \mathring u + N_2\partial_{\tilde{x}_2} \mathring u =\frac {1}{\sqrt{1+f_{x_2}^2}}\frac {1} {1+f_{x_1}}\partial_{x_1} \mathring u -\frac{f_{x_2}}{\sqrt{1+f_{x_1}^2}} (-\frac{f_{x_2}}{1+f_{x_1}}\partial_{x_1} \mathring u + \partial_{x_2} \mathring u )\\
			&= \frac {\sqrt{1+f_{x_2}^2}}{1+f_{x_1}}\frac 1 {1+f_{x_1}}\partial_{x_1} \mathring u - \frac {f_{x_2}}{\sqrt{1+f_{x_2}^2}}\partial_{x_1} \mathring u= J \partial_{x_1}\mathring u+ N_2\partial_{x_2} \mathring u.
		\end{aligned}
	\end{equation}
	Thus, we have
	\begin{equation}
		\begin{aligned}
			\mathrm{curl}\ \mathring u &= T_2 \partial_{x_2} \mathring u \cdot N-(J \partial_{x_1}\mathring u+ N_2\partial_{x_2} \mathring u)\cdot T\\
			&= T_2 \partial_{x_2} (\mathring u \cdot N)- T_2 \mathring u \partial_{x_2} N-J\partial_{x_1}(\mathring u \cdot T)+ J\mathring u \partial_{x_1} T -N_2 \partial_{x_2}\left(\mathring u \cdot T\right)+N_2 \mathring u \cdot  \partial_{x_2}T\\
			&= T_2 \partial_{x_2}\left(\frac {w+z}{2}\right)- T_2 \mathring u \partial_{x_2}N-J \partial_{x_1}a +J\mathring u \partial_{x_1} T-N_2\partial_{x_2}a+ N_2 \mathring u \partial_{x_2} T\\
			&= T_2 \partial_{x_2}\left(\frac {w+z}{2}\right)-J\partial_{x_1}a- N_2 \partial_{x_2}a+\mathring u \cdot (N_2 \partial_{x_2}T-T_2\partial_{x_2} N+ J\partial_{x_1} T).
		\end{aligned}
	\end{equation}
	On the other hand, $\mathrm{curl}\ \mathring{u}=\tilde \rho\tilde\zeta=\left(\alpha S\right)^{\frac{1}{\alpha}}\Omega$, thus we get the desired result. 
\end{proof}

With the help of this identity, we have
\begin{equation}
	\begin{aligned}
		e^{\frac 3 2 s} |\partial_1 A| &\lesssim \kappa_0^{\frac 1 \alpha} + e^{\frac s 2} (e^{-\frac s 2}+ M\varepsilon^{\frac 1 2} e^{-\frac s 2})+ \varepsilon^{\frac 1 2} e^{\frac s 2}M \varepsilon^{-\frac 1 2} e^{\frac s 2}+ M^{\frac 1 4}(\varepsilon^{\frac 1 2} M^2 \varepsilon- M^2\varepsilon +M^2 \varepsilon^{\frac 2 3})\le \frac 1 2 M. 
	\end{aligned}
\end{equation}
This improves the bootstrap bound for $\partial_1A$. 

\section{Closure of bootstrap argument for $Z$ and $A$}
In this section we improve the bootstrap bound of $Z$ and $A$. 
\begin{lemma}[Close $Z$ bootstrap]
	For the Riemann variable $Z$, we have the improved bootstrap bound:
	\begin{equation}
		\begin{aligned}
			\left|Z\circ\Phi_Z^{y_0}(s)\right|&\le\frac{1}{2}M\varepsilon,\\
			e^{\frac{3}{2}s}\left|\partial_1Z\circ\Phi_Z^{y_0}(s)\right|&\le\frac{1}{2}M^{\frac{1}{2}},\\
			e^{\frac{s}{2}}\left|\partial_2Z\circ\Phi_Z^{y_0}(s)\right|&\le\frac{1}{2}M\varepsilon^{\frac{1}{2}},\\
			e^{\frac{3}{2}s}\left|\partial_{11}Z\circ\Phi_Z^{y_0}(s)\right|&\le\frac{1}{2}M^{\frac{1}{2}},\\
			e^{\frac{3}{2}s}\left|\partial_{12}Z\circ\Phi_Z^{y_0}(s)\right|&\le\frac{1}{2}M,\\
			e^{s}\left|\partial_{22}Z\circ\Phi_Z^{y_0}(s)\right|&\le\frac{1}{2}M.
		\end{aligned}
	\end{equation}
\end{lemma}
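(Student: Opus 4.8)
The plan is to integrate the transport equations for $\partial^\gamma Z$ ($|\gamma|\le 2$) along the Lagrangian flow $\Phi_Z^{y_0}$ and to exploit the damping generated by the factor $\tfrac{3\gamma_1+\gamma_2}{2}$. Since $y_0\mapsto\Phi_Z^{y_0}(s)$ is a diffeomorphism at each time, it suffices to take $s_0=-\log\varepsilon$ and $y_0\in\mathcal X_0$; for $y_0\notin\mathcal X_0$ one has $Z\equiv Z_\infty$, which is already accounted for by the support and modulation estimates. Writing $g_\gamma(s):=\partial^\gamma Z\circ\Phi_Z^{y_0}(s)$, the equations for $\partial^\gamma Z$ give the linear ODE
\[
\tfrac{d}{ds}g_\gamma=-D_\gamma^Z\,g_\gamma+F_Z^{(\gamma)}\circ\Phi_Z^{y_0},\qquad D_\gamma^Z:=\tfrac{3\gamma_1+\gamma_2}{2}+\beta_2\beta_\tau\gamma_1 J\partial_1W,
\]
so that $g_\gamma(s)=e^{-\int_{s_0}^sD_\gamma^Z}g_\gamma(s_0)+\int_{s_0}^s e^{-\int_{s'}^sD_\gamma^Z}\,F_Z^{(\gamma)}\circ\Phi_Z^{y_0}(s')\,ds'$. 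I would run this Duhamel formula for each of the six multi-indices in the statement, starting with $\gamma=(0,0)$.

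For the integrating factor, $|\beta_2\beta_\tau|<1$ together with $\int_{s_0}^\infty|\partial_1W|\circ\Phi_Z^{y_0}\lesssim1$ from (\ref{integral of D1W along Phi is bounded}) gives $e^{-\int_{s'}^sD_\gamma^Z}\le Ce^{-\frac{3\gamma_1+\gamma_2}{2}(s-s')}$. The first Duhamel term is then controlled by the initial bounds (\ref{initial condition of Z}): the power of $\varepsilon$ in the data matches the gain $e^{-\frac{3\gamma_1+\gamma_2}{2}(s-s_0)}\le e^{\frac{3\gamma_1+\gamma_2}{2}s_0}e^{-\frac{3\gamma_1+\gamma_2}{2}s}$, so after multiplying by the weight appearing in the statement the initial contribution is $O(1)$ when $\gamma_1>0$, $O(\varepsilon^{1/2})$ for $\gamma=(0,1)$, and $O(\varepsilon)$ for $\gamma=(0,0),(0,2)$, hence $\le\tfrac14\times$(target) once $M$ is large and $\varepsilon$ small. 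For the forcing term: when $\gamma=(0,0)$ there is no damping and $|F_Z|\lesssim e^{-s}$ is integrable, giving $O(\varepsilon)$; for $\gamma=(1,0),(0,1),(0,2)$ one inserts (\ref{estimate of forcing terms of Z}) and integrates directly — the exponential rates line up, and the $\eta$-weights are absorbed using $\int_{s_0}^\infty\eta^{-\mu}\circ\Phi_Z^{y_0}\lesssim1$, which follows from Lemma \ref{lower bound of trajectories of Z,A} with $\sigma_1=0$ (this is where the hypothesis $\kappa_0\ge\frac{3}{1-\max(\beta_1,\beta_2)}$ enters).

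The delicate cases are $\gamma=(2,0)$ and $\gamma=(1,1)$, because $F_Z^{(\gamma)}$ then contains the top-order piece $\partial^2(JW)\,\partial_1Z$, which a priori carries a power of $M$ from the $D^2W$ bootstrap ($|\partial_{11}W|\le M^{1/3}\eta^{-1/3}$, $|\partial_{12}W|\le M^{2/3}\eta^{-1/3}$) exceeding the $M$-power allowed in the target. The resolution is to split the time integral according to whether $\Phi_Z^{y_0}(s')$ lies in $\{|y|\le l\}$ or not. On the near-origin part, $\partial^2W=\partial^2\ovl W+\partial^2\widetilde W$ with $\partial^2\ovl W(0)=0$ (eq.\ (\ref{evaluation of ovl W at 0})) and $\widetilde W$ small near the origin (the near-origin $\widetilde W$ bootstrap assumptions), so there $|\partial^2(JW)|\lesssim l$ and the $M$-power disappears at the cost of an extra $l=(\log M)^{-5}$, yielding a contribution $\lesssim l\,M^{1/2}\le\tfrac14\times$(target). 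On the far part, the weight $\eta^{-1/3}$ together with the (exponential) escape of $\Phi_Z^{y_0}$ off the origin — forced by the $\kappa_0e^{s/2}$-size of $g_Z$ there and quantified by Lemma \ref{lower bound of trajectories of Z,A} — makes $\eta^{-1/3}\circ\Phi_Z^{y_0}(s')$ decay in $s'$, so this contribution gains a positive power of $\varepsilon$. Adding the two pieces and choosing $M$ large then $\varepsilon$ small gives the improved bound with the factor $\tfrac12$.

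The main obstacle is precisely this near/far decomposition for the second derivatives $\partial_{11}Z$ and $\partial_{12}Z$: closing them with the sharp constant $\tfrac12$ requires exploiting the vanishing of $\partial^2\ovl W$ at the origin together with the trajectory-escape lemmas, rather than plugging in the crude forcing bound. All the remaining steps — the zeroth- and first-order cases, the $\partial_{22}Z$ case, and the error bookkeeping — are routine combinations of Duhamel's formula with the transport estimates (\ref{estimate for GW})–(\ref{estimates for h}) and forcing estimates (\ref{estimate of forcing terms of Z}) already in hand.
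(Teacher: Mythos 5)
Your Duhamel framework along the Lagrangian flow $\Phi_Z^{y_0}$ --- multiplying by $e^{\mu s}$, absorbing the $\beta_2\beta_\tau\gamma_1 J\partial_1 W$ part of the damping via (\ref{integral of D1W along Phi is bounded}), and using the initial conditions (\ref{initial condition of Z}) together with the forcing bounds (\ref{estimate of forcing terms of Z}) --- matches the paper exactly, and your treatment of $\gamma=(0,0),(1,0),(0,1),(0,2)$ is fine. Where you deviate is in the cases $\gamma=(2,0)$ and $\gamma=(1,1)$, and there your argument has a genuine gap.

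The paper does \emph{not} perform a near/far split in $y$. With $\mu=\tfrac32$ and $\gamma=(2,0)$, the damping exponent is $\tfrac{3\gamma_1+\gamma_2}{2}-\mu=\tfrac32$, which is strictly positive; the paper keeps a small surplus $e^{-\frac18(s-s')}$ of this decay and then applies Lemma \ref{lower bound of trajectories of Z,A} with $\sigma_1=\tfrac18,\ \sigma_2=\tfrac23$, obtaining
\begin{equation*}
\int_{-\log\varepsilon}^{s} e^{-\frac18(s-s')}\bigl(1+|\Phi_1(s')|\bigr)^{-\frac23}\,ds'
\;=\;e^{-\frac{s}{8}}\int_{-\log\varepsilon}^{s} e^{\frac{s'}{8}}\bigl(1+|\Phi_1(s')|\bigr)^{-\frac23}\,ds'
\;\lesssim\; e^{-\frac{s}{8}}\;\le\;\varepsilon^{\frac18}.
\end{equation*}
This extra factor $\varepsilon^{1/8}$ is what kills the offending $M$ (or $M^2$ in the $(1,1)$ case): the forcing contribution becomes $O(M\varepsilon^{1/8})\ll M^{1/2}$ once $\varepsilon$ is taken small relative to $M$. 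No cancellation at the origin is needed.

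Your far-part argument, by contrast, is not actually closed. You invoke Lemma \ref{lower bound of trajectories of Z,A}, but earlier you say you use it with $\sigma_1=0$; with $\sigma_1=0$ the lemma only gives $\int_{-\log\varepsilon}^{\infty}\bigl(1+|\Phi_1|\bigr)^{-\frac23}\,ds'\lesssim 1$, which is a constant independent of $\varepsilon$, so the far contribution is still $\lesssim M^{5/6}$ (resp.\ $M^{7/6}$), exceeding the target $\tfrac12 M^{1/2}$ (resp.\ $\tfrac12 M$). The claim that this part ``gains a positive power of $\varepsilon$'' is asserted but never produced; the only mechanism available to produce it is the surplus exponential damping combined with $\sigma_1>0$ in the escape lemma --- and once you use that, the near/far decomposition is superfluous, since it already defeats the $M$-power uniformly in $y$. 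Your near-origin observation that $\partial^2\ovl W(0)=0$ and $\widetilde W$ is small there, giving $|\partial^2(JW)|\lesssim l$ on $\{|y|\le l\}$, is correct, but it is neither what the paper does nor sufficient on its own.
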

\begin{proof}
	Since $e^{\mu s}\partial^\gamma Z$ obeys
	\begin{equation}
		\partial_s(e^{\mu S}\partial^\gamma Z)+D_Z^{(\gamma,\mu)}(e^{\mu s}\partial^\gamma Z)+(\mathcal{V}_Z\cdot\nabla)(e^{\mu s}\partial^\gamma Z)=e^{\mu s}F_Z^{(\gamma)},
	\end{equation}
	by Gronwall's inequality we can see that
	\begin{equation}
		\begin{aligned}
			e^{\mu s}\left|\partial^\gamma Z\circ\Phi_Z^{y_0}(s)\right|\lesssim&\ \varepsilon^{-\mu}\left|\partial^\gamma Z(y_0,-\log\varepsilon)\right|\exp\left(-\int_{-\log\varepsilon}^sD_Z^{(\gamma,\mu)}\circ\Phi_Z^{y_0}(s')ds'\right)\\
			&+\int_{-\log\varepsilon}^se^{\mu s'}\left|F_Z^{(\gamma)}\circ\Phi_Z^{y_0}(s')\right|\exp\left(-\int_{s'}^sD_Z^{(\gamma,\mu)}\circ\Phi_Z^{y_0}(s'')ds''\right)ds',
		\end{aligned}
	\end{equation}
	where
	\begin{equation}
		D_Z^{(\gamma,\mu)}=D_Z^{(\gamma)}-\mu=\frac{3}{2}\gamma_1+\frac{1}{2}\gamma_2+\beta_2\beta_\tau\gamma_1J\partial_1W-\mu.
	\end{equation}
	If we require that $\frac{3}{2}\gamma_1+\frac{1}{2}\gamma_2\ge\mu$, then we have
	\begin{equation}
		D_Z^{(\gamma,\mu)}\le\beta_2\beta_\tau\gamma_1|J\partial_1W|\overset{|\gamma|\le2}{\le} 2|\partial_1W|.
	\end{equation}
	Thus the damping term is bound by
	\begin{equation}
		\begin{aligned}
			\exp\left(-\int_{s'}^sD_Z^{(\gamma,\mu)}\circ\Phi_Z^{y_0}(s'')ds''\right)&\lesssim e^{-\left(\frac{3\gamma_1+\gamma_2}{2}-\mu\right)(s-s')}\exp\left(\int_{s'}^s2|\partial_1W|\circ\Phi_Z^{y_0}(s'')ds''\right)\\
			\overset{(\ref{integral of D1W along Phi is bounded})}{\lesssim}e^{-\left(\frac{3\gamma_1+\gamma_2}{2}-\mu\right)(s-s')}.
		\end{aligned}
	\end{equation}
	And finally we have
	\begin{equation}
		e^{\mu s}\left|\partial^\gamma Z\circ\Phi_Z^{y_0}(s)\right|\lesssim\ \varepsilon^{-\mu}\left|\partial^\gamma Z(y_0,-\log\varepsilon)\right|+\int_{-\log\varepsilon}^se^{\mu s'}\left|F_Z^{(\gamma)}\circ\Phi_Z^{y_0}(s')\right|e^{-\left(\frac{3\gamma_1+\gamma_2}{2}-\mu\right)(s-s')}ds'.
	\end{equation}
	Next, for different multi-index $\gamma$, we choose different $\mu$ in the above inequality. 
	
	\emph{Case 1}. $\gamma=(0,0)$. We set $\mu=0$. From (\ref{initial condition of Z})(\ref{estimate of forcing terms of Z}), we have
	\begin{equation}
		\begin{aligned}
			\left|Z\circ\Phi_Z^{y_0}(s)\right|&\lesssim\varepsilon+\int_{-\log\varepsilon}^se^{-s'}ds'\lesssim\varepsilon\le\frac{1}{2}M\varepsilon.
			\end{aligned}
	\end{equation}
	\emph{Case 2}. $\gamma=(1,0)$. We set $\mu=\frac{3}{2}$. Also from (\ref{initial condition of Z})(\ref{estimate of forcing terms of Z}), we have
	\begin{equation}
		\begin{aligned}
			e^{\frac{3}{2}s}\left|Z\circ\Phi_Z^{y_0}(s)\right|&\lesssim\varepsilon^{-\frac{3}{2}}\varepsilon^{\frac{3}{2}}+\int_{-\log\varepsilon}^se^{-\frac{3}{2}}e^{\frac{3}{2}}\eta^{-\frac{1}{6}+\frac{2}{3(k-2)}}\circ\Phi_Z^{y_0}(s')ds'\\
			&\lesssim1+\int_{-\log\varepsilon}^s\left(1+|\Phi_1(s')|^2\right)^{-\frac{1}{6}+\frac{2}{3(k-2)}}ds'
			\overset{(\ref{integral of eta along trajectories of W and A are bounded})}{\lesssim}1\le\frac{1}{2}M^{\frac{1}{2}}.
		\end{aligned}
	\end{equation}
	\emph{Case 3}. $\gamma=(2,0)$. We set $\mu=\frac{3}{2}$ and we can duduce that
	\begin{equation}
		\begin{aligned}
			e^{\frac{3}{2}s}\left|\partial_{11}Z\circ\Phi_Z^{y_0}(s)\right|&\lesssim\varepsilon^{-\frac{3}{2}}\varepsilon^{\frac{3}{2}}+\int_{-\log\varepsilon}^se^{\frac{3}{2}s'}e^{-\frac{3}{2}s'}\left(1+M\eta^{-\frac{1}{3}}\circ\Phi(s')\right)e^{-\frac{3}{2}(s-s')}ds'\\
			&\lesssim1+M\int_{-\log\varepsilon}^se^{-\frac{1}{8}(s-s')}\left(1+|\Phi_1(s')|\right)^{-\frac{2}{3}}ds'
			\overset{(\ref{integral of eta along trajectories of W and A are bounded})}{\lesssim}1+Me^{-\frac{s}{8}}\le\frac{1}{2}M^\frac{1}{2}.
		\end{aligned}
	\end{equation}
	\emph{Case 4}. $\gamma=(1,1)$. We set $\mu=\frac{3}{2}$ and we can duduce that
	\begin{equation}
		\begin{aligned}
			e^{\frac{3}{2}s}\left|\partial_{12}Z\circ\Phi_Z^{y_0}(s)\right|&\lesssim\varepsilon^{-\frac{3}{2}}\varepsilon^{\frac{3}{2}}+\int_{-\log\varepsilon}^se^{\frac{3}{2}s'}e^{-\frac{3}{2}s'}\left(M^{\frac{1}{2}}+M^2\eta^{-\frac{1}{3}}\circ\Phi(s')\right)e^{-\frac{1}{2}(s-s')}ds'\\
			&\lesssim1+M^\frac{1}{2}+M^2\int_{-\log\varepsilon}^se^{-\frac{1}{8}(s-s')}\left(1+|\Phi_1(s')|\right)^{-\frac{2}{3}}ds'\\
			\overset{(\ref{integral of eta along trajectories of W and A are bounded})}&{\lesssim}M^\frac{1}{2}+M^2e^{-\frac{s}{8}}\le\frac{1}{2}M.
		\end{aligned}
	\end{equation}
	\emph{Case 5}. $\gamma=(0,2)$. We set $\mu=1$ and we can duduce that
	\begin{equation}
		\begin{aligned}
			e^{\frac{3}{2}s}\left|\partial_{22}Z\circ\Phi_Z^{y_0}(s)\right|&\lesssim\varepsilon^{-\frac{1}{2}}\varepsilon+\int_{-\log\varepsilon}^se^{s'}M^{\frac{1}{4}}e^{-\left(\frac{3}{2}-\frac{1}{k-3}\right)s'}ds'\lesssim\varepsilon^{\frac{1}{2}}+M^{\frac{1}{4}}\varepsilon^{\frac{1}{2}-\frac{1}{k-3}}\le\frac{1}{2}M.
		\end{aligned}
	\end{equation}
\end{proof}

Next we close the bootstrap argument oF $A$ by proving (\ref{refined bootstrap inequality of A}). 

\begin{lemma}[Close $A$ bootstrap]
	For the Riemann variable $A$, we have the improved bootstrap bound:
	\begin{equation}
		\begin{aligned}
			\left|A\circ\Phi_A^{y_0}(s)\right|&\le\frac{1}{2}M\varepsilon,\\
			e^{\frac{s}{2}}\left|\partial_2A\circ\Phi_A^{y_0}(s)\right|&\le\frac{1}{2}M\varepsilon^{\frac{1}{2}},\\
			e^{s}\left|\partial_{22}A\circ\Phi_A^{y_0}(s)\right|&\le\frac{1}{2}M.
		\end{aligned}
	\end{equation}
\end{lemma}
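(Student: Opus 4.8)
The plan is to follow verbatim the structure of the preceding $Z$-bootstrap. For each of the three multi-indices $\gamma\in\{(0,0),(0,1),(0,2)\}$ appearing in the statement, I integrate the transport equation obeyed by $e^{\mu s}\partial^\gamma A$ along the Lagrangian trajectory $\Phi_A^{y_0}$, choosing the weight exponent $\mu$ so that the associated damping coefficient is nonnegative, and then estimate the resulting Duhamel integral using the forcing bounds (\ref{estimate of forcing terms of A}) and the initial bounds (\ref{initial condition of A}).

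Concretely, recall that $\partial^\gamma A$ satisfies
\begin{equation}
\left(\partial_s+\tfrac{3\gamma_1+\gamma_2}{2}+\beta_2\beta_\tau\gamma_1 J\partial_1 W\right)\partial^\gamma A+\mathcal V_A\cdot\nabla\partial^\gamma A=F_A^{(\gamma)}.
\end{equation}
All three multi-indices here have $\gamma_1=0$ --- the remaining case $\gamma=(1,0)$ has already been handled in the previous section through the vorticity identity --- so the damping coefficient reduces to $D_A^{(\gamma)}=\tfrac{\gamma_2}{2}$ with no $\partial_1W$ contribution at all. Setting $\mu=\tfrac{\gamma_2}{2}$, i.e.\ $\mu=0,\tfrac12,1$ respectively, makes $D_A^{(\gamma,\mu)}:=D_A^{(\gamma)}-\mu=0$, and Gronwall along $\Phi_A^{y_0}$ gives the clean representation
\begin{equation}
e^{\mu s}\bigl|\partial^\gamma A\circ\Phi_A^{y_0}(s)\bigr|\lesssim\varepsilon^{-\mu}\bigl|\partial^\gamma A(y_0,-\log\varepsilon)\bigr|+\int_{-\log\varepsilon}^{s}e^{\mu s'}\bigl|F_A^{(\gamma)}\circ\Phi_A^{y_0}(s')\bigr|\,ds'.
\end{equation}
For $\gamma=(0,0)$ and $\gamma=(0,1)$, the forcing bound $|F_A^{(\gamma)}|\lesssim M^{1/4}e^{-s}$ makes $\int_{-\log\varepsilon}^{s}e^{\mu s'}M^{1/4}e^{-s'}\,ds'\lesssim M^{1/4}\varepsilon^{1-\mu}$, and, combined with $\varepsilon^{-\mu}|\partial^\gamma A(y_0,-\log\varepsilon)|\le\varepsilon^{1-\mu}$ from (\ref{initial condition of A}), this yields $|A\circ\Phi_A^{y_0}|\lesssim M^{1/4}\varepsilon$ and $e^{s/2}|\partial_2A\circ\Phi_A^{y_0}|\lesssim M^{1/4}\varepsilon^{1/2}$; taking $M$ large absorbs the implicit constant and produces the claimed $\tfrac12 M\varepsilon$ and $\tfrac12 M\varepsilon^{1/2}$.

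The only case requiring more than arithmetic is $\gamma=(0,2)$, where $\mu=1$ and $|F_A^{(0,2)}|\lesssim e^{-(1-\frac{2}{k-3})s}\eta^{-1/6}$. Then
\begin{equation}
e^{s}\bigl|\partial_{22}A\circ\Phi_A^{y_0}(s)\bigr|\lesssim 1+\int_{-\log\varepsilon}^{s}e^{\frac{2}{k-3}s'}\,\eta^{-1/6}\!\circ\Phi_A^{y_0}(s')\,ds',
\end{equation}
and using $\eta^{-1/6}\le(1+y_1^2)^{-1/6}\lesssim(1+|y_1|)^{-1/3}$ this is precisely the quantity controlled by Lemma \ref{lower bound of trajectories of Z,A}, namely (\ref{integral of eta along trajectories of W and A are bounded}) with $\sigma_1=\tfrac{2}{k-3}$ and $\sigma_2=\tfrac13$; the hypotheses $\sigma_1<\tfrac12$ and $2\sigma_1<\sigma_2$ both hold because $k\ge18$, so the integral is $O(1)$ and hence $\le\tfrac12 M$ for $M$ large. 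The main, and only mildly delicate, point of the whole proof is this last check: one must confirm that the loss exponent $\tfrac{2}{k-3}$ coming from the interpolation-based estimate of $F_A^{(0,2)}$ still satisfies the constraint $2\sigma_1<\sigma_2$ of the trajectory lemma --- which is where the assumption $k\ge18$ (in particular $k>15$) is used --- after which everything collapses to the Gronwall computation already performed for $Z$.
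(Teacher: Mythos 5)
Your proposal is correct and follows essentially the same approach as the paper: Duhamel's formula along the Lagrangian trajectories $\Phi_A^{y_0}$ with weight $\mu=\gamma_2/2$ (so the damping term vanishes exactly since $\gamma_1=0$), then the initial bound (\ref{initial condition of A}), the forcing bound (\ref{estimate of forcing terms of A}), and Lemma~\ref{lower bound of trajectories of Z,A} for the $\gamma=(0,2)$ integral. The only deviation from the paper's text is a cosmetic one: the paper's proof of Case $\gamma=(0,2)$ inserts $e^{-(1-\frac{3}{k-2})s'}$ where the stated forcing bound reads $e^{-(1-\frac{2}{k-3})s}\eta^{-1/6}$; you use the latter consistently, which still satisfies $2\sigma_1<\sigma_2$ under $k\ge18$, so both conventions yield the same conclusion.
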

\begin{proof}
	As in the closure of $Z$ bootstrap, if $\mu=\frac{3\gamma_1+\gamma_2}{2}$, we have
	\begin{equation}
		e^{\mu s}\left|\partial^\gamma A\circ\Phi_A^{y_0}(s)\right|\lesssim\ \varepsilon^{-\mu}\left|\partial^\gamma A(y_0,-\log\varepsilon)\right|+\int_{-\log\varepsilon}^se^{\mu s'}\left|F_A^{(\gamma)}\circ\Phi_A^{y_0}(s')\right|ds'.
	\end{equation}
	For different multi-index $\gamma$, we set different $\mu$ in the above inequality. 
	
	\emph{Case 1}. $\gamma=(0,0)$. We set $\mu=0$. From (\ref{initial condition of A})(\ref{estimate of forcing terms of A}), we have
	\begin{equation}
		\begin{aligned}
			\left|A\circ\Phi_A^{y_0}(s)\right|&\lesssim\varepsilon+\int_{-\log\varepsilon}^sM^{\frac{1}{4}}e^{-s'}ds'\lesssim M^{\frac{1}{4}}\varepsilon\le\frac{1}{2}M\varepsilon.
		\end{aligned}
	\end{equation}
	\emph{Case 2}. $\gamma=(0,1)$. We set $\mu=\frac{1}{2}$ and we can deduce that
	\begin{equation}
		\begin{aligned}
			e^{\frac{s}{2}}\left|\partial_2A\circ\Phi_A^{y_0}(s)\right|&\lesssim\varepsilon^{-\frac{1}{2}}\varepsilon+\int_{-\log\varepsilon}^se^{\frac{s'}{2}}M^{\frac{1}{4}}e^{-s'}ds'\lesssim M^{\frac{1}{4}}\varepsilon^\frac{1}{2}\le\frac{1}{2}M\varepsilon^{\frac{1}{2}}.
		\end{aligned}
	\end{equation}
	\emph{Case 3}. $\gamma=(0,2)$. We set $\mu=1$ and we can deduce that
	\begin{equation}
		\begin{aligned}
			e^s\left|\partial_{22}A\circ\Phi_A^{y_0}(s)\right|&\lesssim\varepsilon^{-1}\varepsilon+\int_{-\log\varepsilon}^se^{s'}e^{-\left(1-\frac{3}{k-2}\right)s'}\eta^{-\frac{1}{6}}\circ\Phi_A^{y_0}(s')ds'\\
			&\lesssim1+M^{\frac{1}{4}}\int_{-\log\varepsilon}^se^{\frac{2}{k-2}s'}\left(1+|\Phi_1(s)|\right)^{-\frac{1}{3}}ds'
			\overset{(\ref{integral of eta along trajectories of W and A are bounded})}{\lesssim}1\le\frac{1}{2}M.
		\end{aligned}
	\end{equation}
\end{proof}

\section{Closure of bootstrap argument for $W$ and $\widetilde W$}
In this section we prove the improved bootstrap bounds (\ref{refined bootstrap inequality of W})(\ref{refined bootstrap inequality of tilde W when y=0, |gamma|=3})(\ref{refined bootstrap inequality of tilde W when |y|<L})(\ref{refined bootstrap inequality of tilde W when |y|<l, |gamma|<4})(\ref{refined bootstrap inequality of tilde W when |y|<l, |gamma|=4}) for $W$ and $\widetilde{W}$. 
\subsection{Closure of bootstrap argument for high order derivatives of $\widetilde W$}
As stated in (\ref{evolution of derivatives of tilde W}), $\partial^\gamma\widetilde W$ satisfies the equation
\begin{equation}
	\partial_s\partial^\gamma\widetilde W+D_{\widetilde W}^{(\gamma)}\partial^\gamma\widetilde W+(\mathcal{V}_W\cdot\nabla)\partial^\gamma\widetilde W=\widetilde{F}_{W}^{(\gamma)},
\end{equation}
where the damping term has a lower bound according to (\ref{estimates of f-dependent functions, inequalities})(\ref{estimate of ovl W})(\ref{estimate of D1W that will appear in damping terms}): 
\begin{equation}
	\begin{aligned}
		D_{\widetilde W}^{(\gamma)}&=\frac{3\gamma_1+\gamma_2-1}{2}+\beta_\tau J\left(\partial_1\ovl{W}+\gamma_1\partial_1W\right)\\
		&\ge\frac{3}{2}+\gamma_1-(1+\varepsilon^{\frac{1}{2}})\left(1+\gamma_1(1+\varepsilon^{\frac{1}{12}})\right)\ge\frac{3}{2}-1+\gamma_1-\gamma_1-C\varepsilon^{\frac{1}{12}}\ge\frac{1}{3}.
		\label{lower bound of damping terms of tilde W}
	\end{aligned}
\end{equation}
From the equation of $\partial^\gamma\widetilde{W}$, we have
\begin{equation}
	\frac{d}{ds}\left|\partial^\gamma\widetilde{W}\circ\Phi_W^{y_0}\right|+\left(D_{\widetilde{W}}^{(\gamma)}\circ\Phi_W^{y_0}\right)\left|\partial^\gamma\widetilde{W}\circ\Phi_W^{y_0}\right|\le\left|\widetilde{F}_W^{(\gamma)}\circ\Phi_W^{y_0}\right|.
\end{equation}
If $|\gamma|=4$ and $|y|\le l$, from (\ref{estimate of forcing terms of tilde W})(\ref{lower bound of damping terms of tilde W}), we have
\begin{equation}
	\begin{aligned}
		e^{\frac{s}{3}}\left|\partial^\gamma\widetilde{W}\circ\Phi_W^{y_0}(s)\right|\le\varepsilon^{-\frac{1}{3}}\varepsilon^{\frac{1}{8}}+Ce^{\frac{s}{3}}\left(\varepsilon^{\frac{1}{7}}+\varepsilon^{\frac{1}{10}}(\log M)^{\gamma_2-1}\right).
	\end{aligned}
\end{equation}
Thus for $|\gamma|=4$ and $|y|\le l$, we have
\begin{equation}
	\left|\partial^\gamma\widetilde{W}\circ\Phi_W^{y_0}(s)\right|\le\frac{1}{4}\varepsilon^{\frac{1}{10}}(\log M)^{\gamma_2}.
	\label{bootstrap estimate for D^gamma tilde W when |gamma|=4 and |y|le l}
\end{equation}

Now we consider the case $|\gamma|=3$, $y=0$. Let $y=0$ in (\ref{evolution of derivatives of tilde W}), we have
\begin{equation}
	\begin{aligned}
		\left|\partial_s\partial^\gamma\widetilde{W}^0\right|&=\left|\widetilde{F}_W^{(\gamma),0}-G_W^0\partial_1\partial^\gamma\widetilde{W}^0-h_W^0\partial_2\partial^\gamma\widetilde{W}^0-\left(1-\beta_\tau\right)(1+\gamma_1)\partial^\gamma\widetilde{W}^0\right|\\
		&\lesssim e^{-\left(\frac{1}{2}-\frac{1}{k-3}\right)s}+Me^{-s}\varepsilon^{\frac{1}{10}}(\log M)^4+Me^{-s}\varepsilon^{\frac{1}{4}}\lesssim e^{-\left(\frac{1}{2}-\frac{1}{k-3}\right)s}.
	\end{aligned}
\end{equation}
Thus from (\ref{initial condition of tilde W})
\begin{equation}
	|\partial^\gamma\widetilde{W}^0(s)|\le|\partial^\gamma\widetilde{W}(-\log\varepsilon)|+Ce^{-\left(\frac{1}{2}-\frac{1}{k-3}\right)s}\le\frac{1}{10}\varepsilon^{\frac{1}{4}}.
	\label{bootstrap estimate for tilde W for |gamma|=3 and y=0}
\end{equation}

Next, we consider the case $|\gamma|\le3$, $|y|\le l$. For $|\gamma|=3$, by (\ref{bootstrap estimate for D^gamma tilde W when |gamma|=4 and |y|le l})(\ref{bootstrap estimate for tilde W for |gamma|=3 and y=0}), we have
\begin{equation}
	|\partial^\gamma\widetilde{W}|\le\varepsilon^{\frac{1}{4}}+\frac{1}{2}\varepsilon^{\frac{1}{10}}(\log M)^{\gamma_2+1}|y|\le\frac{1}{2}(\log M)^4\varepsilon^{\frac{1}{10}}|y|+\frac{1}{2}M\varepsilon^{\frac{1}{4}}.
\end{equation}
Now by induction and $\partial^\gamma\widetilde{W}^0=0$ for $|\gamma|\le2$, we can close the bootstrap argument of $\partial^\gamma\widetilde{W}$ as in the case $|\gamma|=3$. 

\subsection{A general discusion of weighted estimates}
In order to close the bootstrap argument for $W$ and the rest part of $\widetilde{W}$ case, we consider the evolution of $q=\eta^\mu R$, where $R$ is the derivatives of $W$ and $\widetilde{W}$, or them itself, and $|\mu|\le\frac{1}{2}$. Suppose $R$ satisfies
\begin{equation}
	\partial_sR+D_RR+\mathcal{V}_W\cdot R=F_R,
\end{equation}
then $q$ satisfies
\begin{equation}
	\partial_sq+D_qq-\mathcal{V}_W\cdot\nabla q=\eta^\mu F_R,
\end{equation}
where
\begin{equation}
	\begin{aligned}
		D_q&=D_R-\mu\eta^{-1}\mathcal{V}_W\cdot\nabla q=D_R-3\mu+3\mu\eta^{-1}-2\mu\eta^{-1}\left(y_1g_W+3y_2^5h_W\right).
	\end{aligned}
\end{equation}
By (\ref{estimates of f-dependent functions, inequalities})(\ref{estimate for GW})(\ref{estimates for h}) and the bootstrap assumption for $W$, one can see that $|D_\eta|\le3\eta^{-\frac{1}{3}}$. Thus $D_q\ge D_R-3\mu+3\mu\eta^{-1}-6|\mu|\eta^{-\frac{1}{3}}$. 

By composing $q$ with the trajectory of $\mathcal{V}_W$, we have
\begin{equation}
	\begin{aligned}
		\left|q\circ\Phi_W^{y_0}(s)\right|\le&\ \left|q(y_0,s_0)\right|\exp\left(-\int_{s_0}^sD_q\circ\Phi_W^{y_0}(s')ds'\right)+\int_{s_0}^s\left|F_q^{(\gamma)}\circ\Phi_W^{y_0}(s')\right|\exp\left(-\int_{s'}^sD_q\circ\Phi_W^{y_0}(s'')ds''\right)ds',
	\end{aligned}
\end{equation}
where $(y_0,s_0)$ is the starting position and starting time of the trajectory. Note that $s_0$ need not to be $-\log\varepsilon$. 

If $|y_0|\ge l$, we have that
\begin{equation}
	\begin{aligned}
		2\mu\int_{s'}^sD_\eta\circ\Phi_W^{y_0}(s'')ds''\overset{|\mu|\le\frac{1}{2}}&{\le}\int_{s_0}^s 3\eta^{-\frac{1}{3}}\circ\Phi_W^{y_0}(s')ds'\\
		&\le3\cdot2^{\frac{1}{3}}\int_{s_0}^\infty\left(1+l^2e^{\frac{2}{5}(s'-s_0)}\right)^{-\frac{1}{3}}ds'\le-30\log l,
	\end{aligned}
\end{equation}
consequently, we can bound $q$ by
\begin{equation}
	\begin{aligned}
		\left|q\circ\Phi_W^{y_0}\right|\le&\ l^{-30}|q(y_0,s_0)|\exp\left[-\int_{s_0}^s\left(D_R-3\mu+3\mu\eta^{-1}\right)\circ\Phi_W^{y_0}(s')ds'\right]\\
		&+l^{-30}\int_{s_0}^s\left|F_q^{(\gamma)}\circ\Phi_W^{y_0}(s')\right|\exp\left[-\int_{s'}^s\left(D_R-3\mu+3\mu\eta^{-1}\right)\circ\Phi_W^{y_0}(s'')ds''\right]ds'.
		\label{estimate of q when y_0 ge l}
	\end{aligned}
\end{equation}
We remark that as long as $|y_0|\ge l$ and $p>0$, one can verify that
\begin{equation}
	\int_{s_0}^\infty\eta^{-p}\circ\Phi_W^{y_0}(s)ds\lesssim_p-\log l
\end{equation} 

If $|y_0|\ge L$, we have another inequality:
\begin{equation}
	\begin{aligned}
		2\mu\int_{s'}^sD_\eta\circ\Phi_W^{y_0}(s'')ds''\overset{|\mu|\le\frac{1}{2}}&{\le}\int_{s_0}^s 3\eta^{-\frac{1}{3}}\circ\Phi_W^{y_0}(s')ds'\\
		&\le3\cdot2^{\frac{1}{3}}\int_{s_0}^\infty\left(1+L^2e^{\frac{2}{5}(s'-s_0)}\right)^{-\frac{1}{3}}ds'\le CL^{-\frac{20}{3}}.
	\end{aligned}
\end{equation}
And $q$ is bound by
\begin{equation}
	\begin{aligned}
		\left|q\circ\Phi_W^{y_0}\right|\le&\ e^\varepsilon|q(y_0,s_0)|\exp\left[-\int_{s_0}^s\left(D_R-3\mu+3\mu\eta^{-1}\right)\circ\Phi_W^{y_0}(s')ds'\right]\\
		&+e^\varepsilon\int_{s_0}^s\left|F_q^{(\gamma)}\circ\Phi_W^{y_0}(s')\right|\exp\left[-\int_{s'}^s\left(D_R-3\mu+3\mu\eta^{-1}\right)\circ\Phi_W^{y_0}(s'')ds''\right]ds'.
	\end{aligned}
	\label{estimate of q when y_0 ge L}
\end{equation}

\subsection{Closure of bootstrap argument for $\widetilde W$}
For different multi-index $\gamma$, we choose different $\mu$, and we will use (\ref{estimate of q when y_0 ge l}) or (\ref{estimate of q when y_0 ge L}), depending on the location of $y$. We establish the estimates case by case. 

\emph{Case 1}. $|\gamma|=0$, $l\le|y|\le L$. In this case we set $\mu=-\frac{1}{6}$, thus we have $q=\eta^{-\frac{1}{6}}\widetilde{W}$ and $D_R-3\mu+3\mu\eta^{-1}=-\frac{1}{2}\eta^{-1}+\beta_\tau J\partial_1\ovl{W}$. We estimate the damping term and the forcing term. 
\begin{equation}
	\begin{aligned}
		-\int_{s'}^s\left(\beta_\tau J\partial_1\ovl{W}-\frac{1}{2}\eta^{-1}\right)\circ\Phi_W^{y_0}(s'')ds''&\le(1+\varepsilon^{\frac{1}{2}})\int_{s_0}^s\left|\partial_1\ovl{W}\circ\Phi_W^{y_0}(s'')\right|ds''+\frac{1}{2}\int_{s_0}^s\eta^{-1}\circ\Phi_W^{y_0}(s'')ds''\\
		&\le2\int_{s_0}^s\eta^{-\frac{1}{3}}\circ\Phi_W^{y_0}(s'')ds''\le-20\log l,
	\end{aligned}
\end{equation}
\begin{equation}
	\begin{aligned}
		\int_{s_0}^s\left|\left(\eta^{-\frac{1}{6}}\widetilde{F}_W\right)\circ\Phi_W^{y_0}(s')\right|ds'&\lesssim\int_{s_0}^s M\varepsilon^{\frac{1}{6}}\eta^{-\frac{1}{3}}\circ\Phi_W^{y_0}(s')ds'\le-\varepsilon^{\frac{1}{8}}\log l.
	\end{aligned}
\end{equation}
According to lemma \ref{lower bound for trajectory of W}, it is possible to require that either $|y_0|=l$ or $s_0=-\log\varepsilon$, thus we can use the initial condition or bootstrap assumptions to bound $|q(y_0,s_0)|$. From (\ref{estimate of q when y_0 ge l})(\ref{initial condition of tilde W})(\ref{bootstrap assumptions of tilde W when |y|<L}), we have
\begin{equation}
	\begin{aligned}
		\left|\eta^{-\frac{1}{6}}\widetilde{W}\circ\Phi_W^{y_0}(s)\right|&\le l^{-30}\left|\widetilde{W}(y_0,s_0)\right|\eta^{-\frac{1}{6}}(y_0)l^{-20}+l^{-30}l^{-20}(-\varepsilon^\frac{1}{8})\log l\\
		&\le l^{-50}\eta^{-\frac{1}{6}}(y_0)\max\left(\varepsilon^{\frac{1}{10}}\eta^{-\frac{1}{6}}(y_0),2(\log M)^4\varepsilon^{\frac{1}{10}}l^4\right)-l^{-50}\varepsilon^{\frac{1}{8}}\log l\le\frac{1}{2}\varepsilon^{\frac{1}{11}}.
	\end{aligned}
\end{equation}

\emph{Case 2}. $\gamma=(1,0)$, $l\le|y|\le L$. Let $\mu=\frac{1}{3}$, then we have $D_R-3\mu+3\mu\eta^{-1}\ge\beta_\tau J(\partial_1\ovl{W}+\partial_1W)$, and
\begin{equation}
	\begin{aligned}
		-\int_{s'}^s\left(D_R-3\mu+3\mu\eta^{-1}\right)\circ\Phi_W^{y_0}(s'')ds''&\le4\int_{s_0}^\infty\eta^{-\frac{1}{3}}\circ\Phi_W^{y_0}(s'')ds''\le -40\log l,
	\end{aligned}
\end{equation}
\begin{equation}
	\begin{aligned}
		\int_{s_0}^s\left|F_q\circ\Phi_W^{y_0}(s')\right|ds'&\lesssim\varepsilon^{\frac{1}{11}}\int_{s_0}^s\left(\eta^{\frac{1}{3}}\eta^{-\frac{1}{2}}\right)\circ\Phi_W^{y_0}(s')ds'\lesssim-\varepsilon^{\frac{1}{11}}\log l.
	\end{aligned}
\end{equation}
Now we can bound $q$ by
\begin{equation}
	\begin{aligned}
		\left|\eta^{\frac{1}{3}}\widetilde{W}\circ\Phi_W^{y_0}(s)\right|&\le l^{-30}\left|\widetilde{W}(y_0,s_0)\right|\eta^{\frac{1}{3}}(y_0)l^{-40}+l^{-30}l^{-40}(-\varepsilon^\frac{1}{11})\log l\\
		&\le l^{-70}\eta^{\frac{1}{3}}(y_0)\max\left(\varepsilon^{\frac{1}{11}}\eta^{-\frac{1}{3}}(y_0),2(\log M)^4\varepsilon^{\frac{1}{10}}l^3\right)-l^{-70}\varepsilon^{\frac{1}{11}}\log l\\
		&\le\frac{1}{2}\varepsilon^{\frac{1}{12}}.
	\end{aligned}
\end{equation}

\emph{Case 3}. $\gamma=(0,1)$, $l\le|y|\le L$. Let $\mu=0$, then we have $D_R-3\mu+3\mu\eta^{-1}=\beta_\tau J\partial_1\ovl{W}$, and $|F_q|\lesssim\varepsilon^{\frac{1}{12}}\eta^{-\frac{1}{3}}$. The rest is almost the same as Case 2. 

\subsection{Closure of bootstrap for $W$}
Similarly, for different $\gamma$ we choose different $\mu$, and we will use (\ref{estimate of q when y_0 ge l}) or (\ref{estimate of q when y_0 ge L}), depending on the location of $y$. 

\emph{Case 1}. $|\gamma|=2$, $|y|\ge l$. Now $R=\partial^\gamma W$, and we let
\begin{equation}
	\mu=\left\{\begin{aligned}
		&\frac{1}{3},\ \ \ \ \gamma=(2,0),(1,1)\\
		&\frac{1}{6},\ \ \ \ \gamma=(0,2).
	\end{aligned}\right.
\end{equation}
The damping term becomes
\begin{equation}
	3\mu-D_R=\left\{
	\begin{aligned}
		&-\gamma_1+\frac{1}{2}-\beta_\tau\left(1+\gamma_1\mathbbm{1}_{\gamma_1\ge2}\right)J\partial_1W,\ \ \ &\gamma_1\ge1\\
		&-\beta_\tau J\partial_1W,\ \ \ \ \ \ \ \ \ \ \ \ \ \ \ \ \ \ \ \ \ \ \ \ \ \ \ \ \ \ \ \ \ \ &\gamma_1=0.
	\end{aligned}\right.
\end{equation}

When $\gamma_1=0$, we have
\begin{equation}
	\begin{aligned}
		\int_{s'}^s\left(3\mu-D_R\right)\circ\Phi_W^{y_0}(s'')ds''&\le2\int_{s_0}^\infty|\partial_1W|\circ\Phi_W^{y_0}(s'')ds''\le-20\log l,
	\end{aligned}
\end{equation}
and the forcing term is bound by
\begin{equation}
	\begin{aligned}
		\int_{s_0}^s\left|\eta^{\frac{1}{6}}F_W^{(0,2)}\right|\circ\Phi_W^{y_0}(s')ds'&\lesssim M^{\frac{2}{3}}\int_{s_0}^s\left(\eta^{\frac{1}{6}}\eta^{-\frac{1}{3}+\frac{1}{3(k-3)}}\right)\circ\Phi_W^{y_0}(s')ds'\le -M^{\frac{5}{6}}\log l.
	\end{aligned}
\end{equation}
Thus, we have that
\begin{equation}
	\begin{aligned}
		\left|\eta^{\frac{1}{6}}\partial_{22}{W}\circ\Phi_W^{y_0}(s)\right|&\le l^{-30}\eta^{\frac{1}{6}}(y_0)\left|\partial_{22}{W}(y_0,s_0)\right|l^{-20}-l^{-30}M^{\frac{5}{6}}\log l\\
		&\le l^{-50}\eta^{\frac{1}{6}}(y_0)\max\left(\eta^{-\frac{1}{6}}(y_0),\frac{6}{7}\eta^{-\frac{1}{6}}(y_0)+2(\log M)^4\varepsilon^{\frac{1}{10}}l^2\eta^{-\frac{1}{6}}(y_0)\|\eta^{\frac{1}{6}}\|_{L^\infty(|y|\le l)}\right)-l^{-50}M^{\frac{5}{6}}\log l\\
		\overset{\varepsilon\text{ small}}&{\le}-2l^{-50}M^{\frac{5}{6}}\log l
		\overset{M\text{ large}}{\le}\frac{1}{2}M.
	\end{aligned}
\end{equation}

When $\gamma_1>0$, we have that
\begin{equation}
	\begin{aligned}
		\exp\left(\int_{s'}^s\left(3\mu-D_R\right)\circ\Phi_W^{y_0}(s'')ds''\right)&\le\exp\left\{3\int_{s'}^s|\partial_1W|\circ\Phi_W^{y_0}(s'')ds''+\int_{s'}^s\left(\frac{1}{2}-1\right)ds''\right\}\\
		&\le\exp\left\{4\int_{s'}^s\eta^{-\frac{1}{3}}\circ\Phi_W^{y_0}(s'')ds''-\frac{1}{2}(s-s')\right\}\le l^{-80}e^{-\frac{1}{2}(s-s')},
	\end{aligned}
\end{equation}
and $|F_q|=\left|\eta^{\frac{1}{3}}F_W^{(\gamma)}\right|\lesssim\eta^{\frac{1}{3}}M^{\frac{1}{3}\gamma_2}\eta^{-\frac{1}{3}}\le M^{\frac{1}{3}\gamma_2+\frac{1}{6}}$. Thus, we have the bound for $\partial^\gamma W$:
\begin{equation}
	\begin{aligned}
		\left|\eta^{\frac{1}{3}}\partial^\gamma W\right|\circ\Phi_W^{y_0}(s)&\le l^{-20}\eta^{\frac{1}{3}}(y_0)|\partial^\gamma W(y_0,s_0)|l^{-80}e^{-\frac{1}{2}(s-s_0)}+L^{-20}\int_{s_0}^sM^{\frac{1}{3}\gamma_2+\frac{1}{6}}l^{-80}e^{-\frac{1}{2}(s-s')}ds'\\
		&\le l^{-100}\eta^{\frac{1}{3}}(y_0)\max\left(\eta^{-\frac{1}{3}}(y_0),C\eta^{-\frac{1}{2}}(y_0)+2(\log M)^4\varepsilon^{\frac{1}{10}}l^2\eta^{-\frac{1}{3}}(y_0)\|\eta^{\frac{1}{3}}\|_{L^\infty(|y|\le l)}\right)e^{-\frac{1}{2}(s-s_0)}\\
		&\ \ \ +l^{-101}M^{\frac{1}{3}\gamma_2+\frac{1}{6}}\\
		&\le l^{-100}\max\left(1,C+3(\log M)^4\varepsilon^{\frac{1}{10}}l^2\right)+l^{-101}M^{\frac{1}{3}\gamma_2+\frac{1}{6}}\\
		&\le M^{\frac{1+\gamma_2}{3}}\underbrace{\left(CM^{-\frac{1}{3}}+l^{-101}M^{-\frac{1}{6}}\right)}_{<\frac{1}{2}\text{ when }M\text{ large}}\le\frac{1}{2}M^{\frac{1+\gamma_2}{3}}.
	\end{aligned}
\end{equation}

\emph{Case 2}. $|\gamma|=0$ and $|y|\ge L$. Let $\mu=-\frac{1}{6}$. Now we have $3\mu-D_R-3\mu\eta^{-1}=\frac{1}{2}\eta^{-1}$ and $F_q=\eta^{-\frac{1}{6}}\left(F_W-e^{-\frac{s}{2}}\beta_\tau\dot\kappa\right)$. And we bound the damping term and the forcing term by
\begin{equation}
	\begin{aligned}
		\int_{s'}^s\frac{1}{2}\eta^{-1}\circ\Phi_W^{y_0}(s'')ds''&\le\int_{s_0}^\infty\left(1+L^2e^{\frac{2}{5}(s''-s_0)}\right)^{-1}ds''\le L^{-2}\int_{s_0}^\infty e^{-\frac{2}{5}(s''-s_0)}ds''\le L^{-1}=\varepsilon^{\frac{1}{10}},
	\end{aligned}
\end{equation}
\begin{equation}
	\begin{aligned}
		\int_{s_0}^s\left|F_q\circ\Phi_W^{y_0}(s')\right|ds'&\lesssim\int_{s_0}^s\left(e^{-\frac{s'}{2}}+Me^{-\frac{s'}{2}}\right)\eta^{-\frac{1}{6}}\circ\Phi_W^{y_0}(s')ds'\lesssim M\int_{s_0}^s e^{-\frac{s'}{2}}ds'\le\varepsilon^\frac{1}{3}.
	\end{aligned}
\end{equation}
Thus, we have that
\begin{equation}
	\begin{aligned}
		\left|\eta^{-\frac{1}{6}}W\right|\circ\Phi_W^{y_0}(s)&\le e^\varepsilon\eta^{-\frac{1}{6}}(y_0)|W(y_0,s_0)|e^{\varepsilon^\frac{1}{10}}+e^\varepsilon\varepsilon^{\frac{1}{3}}e^{\varepsilon^{\frac{1}{10}}}\\
		&\le e^\varepsilon e^{\varepsilon^\frac{1}{10}}\eta^{-\frac{1}{6}}(y_0)\max\left(\eta^{\frac{1}{6}}(y_0)(1+\varepsilon^{\frac{1}{11}}),\eta^{\frac{1}{6}}(y_0)+\varepsilon^{\frac{1}{12}}\eta^{-\frac{1}{3}}(y_0)\right)+e^\varepsilon\varepsilon^{\frac{1}{3}}e^{\varepsilon^{\frac{1}{10}}}\\
		&\le 1+\varepsilon^{\frac{1}{19}}.
	\end{aligned}
\end{equation}

\emph{Case 3}. $\gamma=(1,0)$ and $|y|\ge L$. In this case, we can see that  $q=\eta^{\frac{1}{3}}\partial_1W$, $3\mu-D_R-3\mu\eta^{-1}=-\beta_\tau J\partial_1W-\eta^{-1}\le-\beta_\tau J\partial_1W$, and
\begin{equation}
	\begin{aligned}
		\int_{s'}^s\left(3\mu-D_R-3\mu\eta^{-1}\right)\circ\Phi_W^{y_0}(s'')ds''&\le2\int_{s'}^s|\partial_1W|\circ\Phi_W^{y_0}(s'')ds''\\
		&\lesssim\int_{s_0}^\infty\left(1+L^2e^{\frac{2}{5}(s''-s_0)}\right)^{-\frac{1}{3}}ds''\lesssim L^{-\frac{2}{3}}\le\varepsilon.
	\end{aligned}
\end{equation}
The forcing term is bound by
\begin{equation}
	\begin{aligned}
		\int_{s_0}^s\left|F_q\circ\Phi_W^{y_0}(s')\right|ds'&\lesssim\int_{s_0}^s\varepsilon^{\frac{1}{4}}\left|\eta^{\frac{1}{3}}\eta^{-\frac{1}{2}+\frac{2}{3(k-2)}}\right|\circ\Phi_W^{y_0}(s')ds'\lesssim \varepsilon^{\frac{1}{4}}\int_{s_0}^s \eta^{-\frac{1}{12}}\circ\Phi_W^{y_0}(s')ds'\lesssim\varepsilon^{\frac{1}{12}}.
	\end{aligned}
\end{equation}
Thus we have the bound
\begin{equation}
	\begin{aligned}
		\left|\eta^{\frac{1}{3}}\partial_1W\right|\circ\Phi_W^{y_0}(s)&\le e^\varepsilon\eta^{\frac{1}{3}}(y_0)|\partial_1W(y_0,s_0)|e^{\varepsilon}+Ce^\varepsilon\varepsilon^{\frac{1}{4}}e^{\varepsilon}\\
		&\le e^\varepsilon e^{2\varepsilon}\eta^{\frac{1}{3}}(y_0)\max\left(\eta^{-\frac{1}{3}}(y_0)(1+\varepsilon^{\frac{1}{112}}),\eta^{-\frac{1}{3}}(y_0)+\varepsilon^{\frac{1}{12}}\eta^{-\frac{1}{3}}(y_0)\right)+Ce^{2\varepsilon}\varepsilon^{\frac{1}{4}}\\
		&\le 1+\varepsilon^{\frac{1}{13}}.
	\end{aligned}
\end{equation}

\emph{Case 4}. $\gamma=(0,1)$ and $|y|\ge L$. Let $\mu=0$, we have $q=R=\partial_2W$, and $3\mu-D_R-3\mu\eta^{-1}=-\beta_\tau J\partial_1W$. Thus we have the bound for damping term:
\begin{equation}
	\int_{s'}^s\left(3\mu-D_R-3\mu\eta^{-1}\right)\circ\Phi_W^{y_0}(s'')ds''\le\varepsilon.
\end{equation}
The forcing term is bound by
\begin{equation}
	\begin{aligned}
		\int_{s_0}^s\left|F_q\circ\Phi_W^{y_0}(s')\right|ds'&\lesssim\int_{s_0}^sM^2\varepsilon^{\frac{1}{6}}\eta^{-\frac{1}{3}}\circ\Phi_W^{y_0}(s')ds'\le\varepsilon^{\frac{1}{8}}.
	\end{aligned}
\end{equation}
Finally we have that
\begin{equation}
	\begin{aligned}
		\left|\partial_2W\right|\circ\Phi_W^{y_0}(s)&\le e^\varepsilon|\partial_2W(y_0,s_0)|e^{\varepsilon}+e^\varepsilon\varepsilon^{\frac{1}{8}}e^{\varepsilon}\le e^{2\varepsilon}\max\left(\frac{3}{4},\frac{2}{3}+\varepsilon^{\frac{1}{13}}\right)+e^{2\varepsilon}\varepsilon^{\frac{1}{8}}\le \frac{5}{6}.
	\end{aligned}
\end{equation}

\section{Proof of the main theorem}\label{proof of the main theorem}
In this section we prove the main theorem, discuss the H\"older regularity of $w$ and deduce a lower bound of vorticity. 
\begin{proof}[Proof of the main theorem]
	The local well-posedness of $(u,\sigma)$ in physical variables implies the local well-posedness of $(W,Z,A,\kappa,\tau,\xi,n,\phi)$ in self-similar variables, and the global existence of $(W,Z,A,\kappa,\tau,\xi,n,\phi)$ in self-similar variables is obtained via the bootstrap bound. 
	
	Now we prove the solution has the desired blow-up behavior. From the bootstrap assumptions and  $\tau(t)-t=\int_t^{T_*}(1-\dot\tau(t'))dt'$ we can see that $c(T_*-t)\le\tau-t=e^{-s}\le C(T_*-t)$. Since $R(t)\in SO(2)$, using (\ref{estimates of f-dependent functions, inequalities}) and (\ref{estimates of U,S}), we have that 
	\begin{equation}
		\begin{aligned}
			\left|[(R(t)N)\cdot\nabla_{\mathrm{x}}]u\right|&=|N\cdot\nabla_{\tilde{x}}\tilde{u}|=\left|\left(\frac{\sqrt{1+f_{x_2}^2}}{1+f_{x_1}}\partial_{x_1}-\frac{f_{x_2}}{\sqrt{1+f_{x_2}^2}}\partial_{x_2}\right)\mathring{u}\right|\le (1+\varepsilon^{\frac{2}{3}})(1+\varepsilon^{\frac{3}{4}})e^s+\varepsilon\le\frac{C}{T_*-t}.
		\end{aligned}
	\end{equation}
	Similarly, we can see that the derivative of $u$ that is aligned to the shock is bounded:
	\begin{equation}
		\begin{aligned}
			\left|[(R(t)T)\cdot\nabla_{\mathrm{x}}]u\right|&=|T\cdot\nabla_{\tilde{x}}\tilde{u}|=\left|\frac{1}{\sqrt{1+f_{x_2}^2}}\partial_{x_2}\mathring{u}\right|\le1+\varepsilon^{\frac{1}{2}}.
		\end{aligned}
	\end{equation}
	In a same way, we can prove that $\left|[(R(t)N)\cdot\nabla_{\mathrm{x}}]\sigma\right|\le\frac{C}{T_*-t}$ and $\left|[(R(t)T)\cdot\nabla_{\mathrm{x}}]u\right|\le C$. Consequently, we have that
	\begin{equation}
		|\nabla_\mathrm{x}u(t)|\le\left|[(R(t)N)\cdot\nabla_{\mathrm{x}}]u\right|+\left|[(R(t)T)\cdot\nabla_{\mathrm{x}}]u\right|\le\frac{C}{T_*-t},
	\end{equation}
	\begin{equation}
		|\nabla_\mathrm{x}\sigma(t)|\le\left|[(R(t)N)\cdot\nabla_{\mathrm{x}}]\sigma\right|+\left|[(R(t)T)\cdot\nabla_{\mathrm{x}}]\sigma\right|\le\frac{C}{T_*-t}.
	\end{equation}
	From the bootstrap assumptions $|\dot\xi|\le M^\frac{1}{4}$ and $|\dot n_2|\le M^2\varepsilon^{\frac{1}{2}}$, we know that both $\xi$ and $n$ have limits as $t\rightarrow T_*$. 
	
	Next, by the definition of $n$ and $N$, and the coordinate transformations, we have that $n(t)=R(t)N(0,t)$. Also we can see that
	\begin{equation}
		\begin{aligned}
			\left|[(R(t)N)\cdot\nabla_{\mathrm{x}}]u(\xi(t),t)\right|&=\left|\left(\frac{\sqrt{1+f_{x_2}^2}}{1+f_{x_1}}\partial_{x_1}-\frac{f_{x_2}}{\sqrt{1+f_{x_2}^2}}\partial_{x_2}\right)\mathring{u}(0,t)\right|\\
			&=\left|\frac{-e^s+\partial_{x_1}z(0,t)}{2}\tilde{e}_1+\partial_{{x_1}}a(0,t)\tilde{e}_2\right|\ge(\frac{1}{2}-\varepsilon^{\frac{1}{2}})e^s.
		\end{aligned}
	\end{equation}
	Similarly, we have that
	\begin{equation}
		\begin{aligned}
			\left|[(R(t)N)\cdot\nabla_{\mathrm{x}}]\sigma(\xi(t),t)\right|=\left|\partial_{{x_1}}\mathring{\sigma}(0,t)\right|=\left|\frac{-e^s-\partial_{{x_1}}z(0,t)}{2}\right|\ge(\frac{1}{2}-\varepsilon^{\frac{1}{2}})e^s.
		\end{aligned}
	\end{equation}
	Thus, we can conclude that  $\|\nabla_{\mathrm{x}}u\|_{L^\infty}\ge\left|[(R(t)N)\cdot\nabla_{\mathrm{x}}]u(\xi(t),t)\right|\ge\frac{c}{T_*-t}$, and $\|\nabla_{\mathrm{x}}\sigma\|_{L^\infty}\ge\left|[(R(t)N)\cdot\nabla_{\mathrm{x}}]\sigma(\xi(t),t)\right|\ge\frac{c}{T_*-t}$. 
	
	Next, we prove (\ref{boundedness of u and sigma away from origin}). This follows from that $\|\partial_{{x_1}}w\|_{L^\infty(B_x(0,\delta))}\le C(\delta)$. From (\ref{refined bootstrap inequality of W}), we have that
	\begin{equation}
		\begin{aligned}
			\|\partial_{{x_1}}w\|_{L^\infty(B_x(0,\delta))}&\le(1+\varepsilon^{\frac{1}{13}})e^s\left\|\frac{1}{(1+y_1^2+y_2^6)^{1/3}}\right\|_{L^\infty_y(\{e^{-3s}y_1^2+e^{-s}y_2^2\le\delta^2\}^c)}\\
			&\le2\delta^{-2}(1+\varepsilon^{\frac{1}{13}})\frac{e^s}{(1+e^{3s})^{\frac{1}{3}}}\le3\delta^{-2}.
		\end{aligned}
	\end{equation}
	
	Now we have complete the proof of the main shock formation result and (\ref{blow up speed for u})-(\ref{limit of n}). The H\"older bound is left to the next subsection. 
\end{proof}

\subsection{H\"older regularity for $w$}
We now prove that Riemann invariant $w$ posseses a uniform 1/3-H\"older bound up to the blow-up time. 
\begin{proposition}
	For the Riemann variable $w$, we have that $w\in L^\infty([-\varepsilon,T_*);C^{1/3})$. 
\end{proposition}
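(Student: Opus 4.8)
The statement to prove is that $w \in L^\infty_t([-\varepsilon,T_*); C^{1/3}_{\mathrm{x}})$, i.e.\ there is a constant $C$, uniform in $t$, such that $|w(\mathrm{x},t)-w(\mathrm{x}',t)| \le C|\mathrm{x}-\mathrm{x}'|^{1/3}$ for all $\mathrm{x},\mathrm{x}'$ and all $t \in [-\varepsilon,T_*)$. The plan is to reduce the H\"older bound on $w$ (a function of $(\mathrm{x},t)$) to a H\"older-type bound on $W$ (a function of $(y,s)$), exploiting the relation $w(x,t) = e^{-s/2}W(y,s) + \kappa(t)$ together with the self-similar rescaling $y_1 = x_1 e^{3s/2}$, $y_2 = x_2 e^{s/2}$, and the fact that the coordinate changes $\mathrm{x}\mapsto\tilde x\mapsto x$ are uniformly bi-Lipschitz (this follows from Lemma~\ref{estimates for functions of coordinate transformation, lemma}, which gives $|f|, |\nabla f| = O(M^2\varepsilon^{5/6})$, so the Jacobians are $1+O(\varepsilon^{5/6})$). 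Since $\kappa(t)$ is a function of $t$ alone it drops out of spatial differences, and since the coordinate maps are bi-Lipschitz it suffices to prove $|w(x,t)-w(x',t)| \le C|x-x'|^{1/3}$ in the intermediate $x$-coordinate.

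First I would dispose of the case where $x,x'$ are far apart or outside the support: by (B-S)/(IB-S) and the computation of $W_\infty, \kappa$, $w-\kappa_0$ (equivalently $e^{-s/2}W - $ const) is supported in $\mathcal{X}(s)$, which in $x$-coordinates is the fixed box $\{|x_1|\le 2\varepsilon^{1/2}, |x_2|\le 2\varepsilon^{1/6}\}$, and outside it $w$ is locally constant; so the only issue is the interior. There, write $|w(x,t)-w(x',t)| = e^{-s/2}|W(y,s)-W(y',s)|$ and split: for the $y_2$-direction, $|\partial_2 W| \le 1$ (from (B-$W$)) gives $|W(y,s)-W(y',s)| \le |y_2-y_2'| = e^{s/2}|x_2-x_2'|$, so that contribution to $|w(x,t)-w(x',t)|$ is $\le |x_2-x_2'| \le |x_2-x_2'|^{1/3}$ (the last step using that $|x_2-x_2'|$ is bounded on the support). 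For the $y_1$-direction the decay is weaker: $|\partial_1 W| \lesssim \eta^{-1/3} \le \langle y_1\rangle^{-2/3}$, so $|W(y_1,y_2,s)-W(y_1',y_2,s)| \le \int |\partial_1 W|\,dy_1' \lesssim \min(|y_1-y_1'|, 1+|y_1|^{1/3})$, and more carefully $\lesssim |y_1-y_1'|^{1/3}$ by the standard interpolation $\int_a^b \langle t\rangle^{-2/3}dt \lesssim |b-a|^{1/3}$. Hence $e^{-s/2}|W(y,s)-W(y',s)| \lesssim e^{-s/2}|y_1-y_1'|^{1/3} = e^{-s/2}(e^{3s/2}|x_1-x_1'|)^{1/3} = |x_1-x_1'|^{1/3}$ — the powers of $e^s$ cancel exactly, which is the whole point of the $3/2$--$1/2$ anisotropic scaling. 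Combining the two directions and using $|x-x'| \lesssim |x_1-x_1'| + |x_2-x_2'|$ plus the triangle inequality through an intermediate point gives the uniform $C^{1/3}$ bound with a constant depending only on $\kappa_0,\alpha,M$.

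The estimate $\int_a^b \langle t\rangle^{-2/3}\,dt \lesssim |b-a|^{1/3}$ uniformly in $a,b$ is the one genuinely quantitative lemma; I would prove it by considering separately the regimes $|b-a| \ge 1$ (where the left side is $\lesssim |b-a|^{1/3}$ by $\langle t\rangle^{-2/3}\le 1$ and crude bounds, or more precisely by the explicit antiderivative being $O(\langle b\rangle^{1/3}) = O(|b-a|^{1/3})$ when one endpoint dominates) and $|b-a|\le 1$ (where $\langle t\rangle^{-2/3}\le 1$ trivially gives $\le |b-a| \le |b-a|^{1/3}$). The main obstacle — really the only subtlety — is bookkeeping the coordinate transformations so that the bi-Lipschitz constants and the cancellation of the $e^s$ factors are genuinely uniform in $s \in [-\log\varepsilon, \infty)$; this is guaranteed by Lemma~\ref{estimates for functions of coordinate transformation, lemma} and the bootstrap bounds on the modulation variables, all of which hold globally in $s$ once the bootstrap is closed, so no circularity arises. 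One should also note that the bound is stated for $w$ only (not $z,a$ or $u,\sigma$ in $C^{1/3}$); indeed $z,a$ are far more regular (uniformly Lipschitz, even $C^2$, by (B-$Z$)/(B-$A$)), and $u,\sigma$ inherit $C^{1/3}$ from $w$ via $\mathring\sigma = \tfrac12(w-z)$, $\mathring u\cdot N = \tfrac12(w+z)$ and the smoothness of $N,T$, which is why the enumerated part of the main theorem phrases the H\"older statement for $(u,\sigma)$.
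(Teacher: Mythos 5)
Your proposal is correct and follows essentially the same approach as the paper: pass to self-similar variables, split the difference into $y_1$ and $y_2$ increments, use the bootstrap bounds $|\partial_1 W|\lesssim\eta^{-1/3}$ and $|\partial_2 W|\le 1$ together with $\int_a^b\langle z\rangle^{-2/3}\,dz\lesssim|b-a|^{1/3}$, observe that the powers of $e^{s}$ from the anisotropic scaling cancel exactly in the $y_1$-direction and are harmlessly bounded (using $y\in\mathcal{X}(s)$) in the $y_2$-direction, and finally note that the $\mathrm{x}\mapsto\tilde x\mapsto x$ maps are uniformly bi-Lipschitz so the H\"older bound transfers back. The only differences are cosmetic: you make the interpolation lemma $\int_a^b\langle z\rangle^{-2/3}\,dz\lesssim|b-a|^{1/3}$ explicit and sketch its proof, whereas the paper leaves it implicit, and you add a useful remark about how $u,\sigma$ inherit the $C^{1/3}$ bound from $w$ via the much better ($C^2$) regularity of $z,a$.
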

\begin{proof}
	The proof of this proposition is the same as that in \cite{buckmaster2022formation}, and for the reader's convenience we outline the proof here. 
	
	Using the bootstrap assuptions we directly compute the $C^{1/3}$ norm:
	\begin{equation}
		\begin{aligned}
			\frac{|w(x_1,x_2,t)-w(x_1',x_2',t)|}{|x-x'|^{1/3}}&=\frac{e^{-\frac{s}{2}}|W(y,s)-W(y',s)|}{[e^{-3s}(y_1-y_1')^2+e^{-s}(y_2-y_2')^2]^{1/6}}\\
			&\le\frac{|W(y_1,y_2,s)-W(y_1',y_2,s)|}{|y_1-y_1'|^{1/3}}+e^{-\frac{s}{3}}\frac{|W(y_1',y_2,s)-W(y_1',y_2',s)|}{|y_2-y_2'|^{1/3}}\\
			&\lesssim\frac{\int_{y_1'}^{y_1}(1+z^2)^{-1/3}dz}{|y_1-y_1'|^{1/3}}+e^{-\frac{s}{3}}|y_2-y_2'|^{2/3}
			\overset{y\in\mathcal{X}(s)}{\lesssim}1.
		\end{aligned}
	\end{equation}
	Now we have proved $w$ is uniformly H\"older-1/3 continuous with respect to $x$, and one can check that the transformation $\tilde{x}\mapsto x$, $\mathrm{x}\mapsto\tilde{x}$ do not affect the H\"older-1/3 continuity of $w$. 
\end{proof}

\subsection{Discussion of the vorticity}
From (\ref{evolution of specific vorticity in tilde x coordinate}), we know that in  $\tilde{x}$-coordinate, the specific vorticity $\tilde{\zeta}$ is purely transported by $\tilde{u}+\tilde{v}$. From (\ref{estimate of of V})(\ref{estimates of U,S}) and the estimate (\ref{estimates of f-dependent functions, inequalities}) of $|f|$, we can deduce that $|\tilde{u}+\tilde{v}|\lesssim M^\frac{1}{4}$ on $\{|\tilde{x}_1|\le 10\varepsilon^{\frac{1}{2}},|\tilde{x}_2|\le10\varepsilon^\frac{1}{6}\}\supset B_{\tilde x}(0,\varepsilon^{\frac{3}{4}})$. Note that $|T_*-t_0|=|T_*+\varepsilon|\lesssim\varepsilon$, we have that if $\tilde{\zeta}(\tilde{x},t_0)\ge c_0$ for some $c_0>0$ on $B_{\tilde x}(0,\varepsilon^{\frac{3}{4}})$, then $\tilde{\zeta}(\tilde{x},t)\ge c_0/2$ on $B_{\tilde x}(0,\varepsilon^{\frac{3}{4}}/2)$. 

From the bootstrap assumptions and (\ref{closure of kappa bootstrap}) we have that 
$$\left|S-\frac{\kappa_0}{2}\right|\lesssim|\kappa-\kappa|+e^{-\frac{s}{2}}|W|+|Z|\lesssim M\varepsilon+\varepsilon^{\frac{1}{6}}\lesssim\varepsilon^{\frac{1}{6}}.$$
Thus the sound speed $\tilde{\sigma}\ge\frac{\kappa_0}{4}$, and $|\tilde\omega|=|\tilde\zeta||\tilde\rho|=|\zeta|(\alpha|\sigma|)^{1/\alpha}\ge\frac{c_0}{2}\cdot(\frac{\alpha\kappa_0}{4})^{1/\alpha}$ on $B_{\tilde x}(0,\varepsilon^{\frac{3}{4}}/2)$. 

The initial conditions stated in subsection \ref{Initial data in physical variables} can not rule out the possibility that $\tilde{\zeta}(\tilde{x},t_0)$ have a positive lower bound on $B_{\tilde x}(0,\varepsilon^{\frac{3}{4}})$, thus there do exist solutions satisfying the listed initial condition and present non-zero vorticity at the blow-up point. 

\section*{Data availability statement}
Data sharing is not applicable to this article as no new data were created or analyzed in this study.

\section*{Acknowledgements}
The author is supported by the China Scholarship Council (File No.202106100096), and thank for the warm host of the department of mathematics of the National University of Singapore. The author is grateful to Prof. Xinliang An and Dr. Haoyang Chen for valuable instruction, discussions and suggestions; the author would also like to thank Prof. Lifeng Zhao and Yiya Qiu for helpful correspondence. 

\appendix
\section{Toy model of 1D Burgers profile}\label{universality}
Consider the following cauchy problem for the 1D Burgers equation:  
\begin{equation}
	\left\{
	\begin{aligned}
		&u_t+uu_x=0\\
		&u(x,0)=u_0(x):=-xe^{-x^2}.
	\end{aligned}
	\right.
	\label{original burgers equation}
\end{equation}
It is well-known for the Burgers equation that the blow-up time is $T=-\frac{1}{\inf_{x\in\mathbb{R}} \partial_xu_0}=1$, and the blow-up point is $(x,t)=(0,1)$, and 
\begin{equation}
	\|\partial_xu(\cdot,t)\|_{L^\infty}\le\frac{1}{1-t}.
	\label{upper bound for Dxu, appendix}
\end{equation}

Now we claim that $\frac{1}{\sqrt{1-t}}u\left((1-t)^{\frac{3}{2}}y,t\right)$
converges uniformly to a profile(a fixed stationary function) $\ovl U(y)$ on any compact set as $t\rightarrow1$. This fact characterizes the blow-up behavior of $u$. We can formally write this fact as
\begin{equation}
	u(x,t)\sim(1-t)^{\frac{1}{2}}\ovl U\left((1-t)^{-\frac{3}{2}}x\right),\ \ \ \text{as }t\rightarrow1.
\end{equation}
To closely investigate this fact, we use the "self-similar transformation" $y=(1-t)^{-\frac{3}{2}}x$. $y$ is a "zoom-in" version of $x$ in the sense that any compact set of $y$ correspond to a set of $x$ that converging to 0, thus in $y$-coordinate we can observe the behavior of $u$ near the blow-up point in detail as $t\rightarrow1$.

For the sake of convenience we introduce the self-similar time $s=-\log(1-t)$, thus $1-t=e^{-s}$, this ``self-similar time" has the advantage that $t\rightarrow1$ is equivalent to $s\rightarrow\infty$. Now the self-similar transformation becomes $y=e^{\frac{3}{2}s}x$ and we can rewrite $\frac{1}{\sqrt{1-t}}u\left((1-t)^{\frac{3}{2}}y,t\right)$ in the self-similar coordinate as
\begin{equation}
	U(y,s):=\frac{1}{\sqrt{1-t}}u\left((1-t)^{\frac{3}{2}}y,t\right)=e^{\frac{s}{2}}u(e^{-\frac{3}{2}s}y,1-e^{-s}).
\end{equation}
In this coordinate, the proposition we claimed becomes
\begin{equation}
	\boxed{U(y,s)\overset{s\rightarrow\infty}{\rightrightarrows}\ovl U(y)\ \ \ \ \ y\in K,\ \text{for all compact }K.}
	\label{proposition of convergence, appendix}
\end{equation}
The following figures show the graphs of $U$ and how $U$ converges to $\ovl U$: 

\begin{tikzpicture}[scale=2.2]
	\draw[-Stealth] (-1.2,0)--(1.2,0) node[above] {$x$};
	\draw[-Stealth] (0,-1)--(0,1) node[below left] {$u(x,0)$};
	\draw[domain =-1.2:1.2] plot (\x,{-\x*exp(-(\x)^2)});
	
	\draw[dash dot](-0.8,-0.7) rectangle (0.8,0.7);
	
	\draw[yshift=-2.2cm][-Stealth] (-1.2,0)--(1.2,0) node[above] {$y$};
	\draw[yshift=-2.2cm][-Stealth] (0,-1)--(0,1) node[below left] {$U(y,0)$};
	\draw[yshift=-2.2cm][->][color=red] (-0.4,0.38)..controls(-0.2,0.6) and (0.2,0.4)..(0.3,0.4) node[right]{$\ovl U(y)$};
	\draw[yshift=-2.2cm][domain =-0.8:0.8][smooth,color=red] plot[samples=50] (\x,{(-\x/2+(1/27+(\x)^2/4)^(1/2))^(1/3)-(\x/2+(1/27+(\x)^2/4)^(1/2))^(1/3)});
	\draw[yshift=-2.2cm][domain =-0.8:0.8] plot[samples=100] (\x,{-\x*exp(-(\x)^2)});
	\draw[yshift=-2.2cm][dash dot](-0.8,-0.7) rectangle (0.8,0.7);
\end{tikzpicture} 
\begin{tikzpicture}[scale=2.2]
	\draw[-Stealth] (-1.2,0)--(1.2,0) node[above] {$x$};
	\draw[-Stealth] (0,-1)--(0,1) node[below left] {$u(x,0.5)$};
	\draw (-1.00897761,0.30204478
	)--(-0.984594728,0.310810543
	)--(-0.960261711,0.319476578
	)--(-0.935991496,0.328017007
	)--(-0.911797411,0.336405178
	)--(-0.887693131,0.344613737
	)--(-0.839810203,0.360379595
	)--(-0.792457518,0.375084964
	)--(-0.722680057,0.394639887
	)--(-0.677168674,0.405662652
	)--(-0.610702401,0.418595199
	)--(-0.526015623,0.427968753
	)--(-0.46587807,0.428243859
	)--(-0.427546956,0.424906089
	)--(-0.355372838,0.409254325
	)--(-0.27386368,0.37227264
	)--(-0.162910322,0.274179356
	)--(-0.126716702,0.226566596
	)--(-0.092869267,0.174261466
	)--(-0.050497508,0.099004983
	)--(0,0
	)--(0.030107806,-0.059784388
	)--(0.082022008,-0.155955984
	)--(0.150556768,-0.258886464
	)--(0.229571242,-0.340857516
	)--(0.321600707,-0.396798586
	)--(0.427546956,-0.424906089
	)--(0.526015623,-0.427968753
	)--(0.610702401,-0.418595199
	)--(0.722680057,-0.394639887
	)--(0.792457518,-0.375084964
	)--(0.863692644,-0.352614712
	)--(0.960261711,-0.319476578
	)--(1.033397838,-0.293204324
	);
	\draw[dash dot](-0.2828,-0.49497) rectangle (0.2828,0.49497);
	
	\draw[yshift=-2.2cm][-Stealth] (-1.2,0)--(1.2,0) node[above] {$y$};
	\draw[yshift=-2.2cm][-Stealth] (0,-1)--(0,1) node[below left] {$U(y,-\log0.5)$};
	\draw[yshift=-2.2cm][->][color=red] (-0.4,0.38)..controls(-0.2,0.6) and (0.2,0.4)..(0.3,0.4) node[right]{$\ovl U(y)$};
	\draw[yshift=-2.2cm][domain =-0.8:0.8][smooth,color=red] plot[samples=50] (\x,{(-\x/2+(1/27+(\x)^2/4)^(1/2))^(1/3)-(\x/2+(1/27+(\x)^2/4)^(1/2))^(1/3)});
	\draw[yshift=-2.2cm] (-0.818513421,0.539131599
	)--(-0.731778485,0.51272945
	)--(-0.649325529,0.482045321
	)--(-0.533324394,0.428340829
	)--(-0.460779974,0.387748163
	)--(-0.358408957,0.320413553
	)--(-0.262673953,0.246442929
	)--(-0.172131878,0.167279377
	)--(-0.11385885,0.11241532
	)--(0,0
	)--(0.11385885,-0.11241532
	)--(0.201832718,-0.194147079
	)--(0.293933134,-0.271752291
	)--(0.391730219,-0.343660833
	)--(0.496595588,-0.408501091
	)--(0.5710017,-0.447232065
	)--(0.690024929,-0.497914463
	)--(0.774603461,-0.526473017
	)--(0.818513421,-0.539131599
	);
	\draw[yshift=-2.2cm][dash dot](-0.8,-0.7) rectangle (0.8,0.7);
\end{tikzpicture}
\begin{tikzpicture}[scale=2.2]
	\draw[-Stealth] (-1.2,0)--(1.2,0) node[above] {$x$};
	\draw[-Stealth] (0,-1)--(0,1) node[below left] {$u(x,0.9)$};
	\draw (-0.888159698,0.30204478
	)--(-0.804784693,0.328017007
	)--(-0.749847637,0.344613737
	)--(-0.695658365,0.360379595
	)--(-0.642423532,0.375084964
	)--(-0.590354648,0.388494835
	)--(-0.539664966,0.40037226
	)--(-0.466677978,0.414802246
	)--(-0.39795691,0.424492323
	)--(-0.354828122,0.427968753
	)--(-0.314045372,0.428838476
	)--(-0.275755404,0.42693844
	)--(-0.240080662,0.422132598
	)--(-0.207115922,0.414315642
	)--(-0.162881273,0.396798586
	)--(-0.113700976,0.362554471
	)--(-0.08398482,0.328905756
	)--(-0.060031497,0.288853892
	)--(-0.041295585,0.243004906
	)--(-0.031354983,0.209605574
	)--(-0.02316468,0.174261466
	)--(-0.016445555,0.137282716
	)--(-0.00619405,0.059784388
	)--(0,0
	)--(0.004057554,-0.039936051
	)--(0.010895515,-0.099004983
	)--(0.019639614,-0.155955984
	)--(0.031354983,-0.209605574
	)--(0.047002182,-0.258886464
	)--(0.067405566,-0.302882705
	)--(0.08398482,-0.328905756
	)--(0.113700976,-0.362554471
	)--(0.136898752,-0.381223609
	)--(0.162881273,-0.396798586
	)--(0.207115922,-0.414315642
	)--(0.25758452,-0.424906089
	)--(0.334133793,-0.42874023
	)--(0.420349455,-0.421833939
	)--(0.514903613,-0.405662652
	)--(0.616230021,-0.381966644
	)--(0.695658365,-0.360379595
	)--(0.77723534,-0.336405178
	)--(0.888159698,-0.30204478
	);
	\draw[dash dot](-0.025298,-0.22136) rectangle (0.025298,0.22136);
	
	\draw[yshift=-2.2cm][-Stealth] (-1.2,0)--(1.2,0) node[above] {$y$};
	\draw[yshift=-2.2cm][-Stealth] (0,-1)--(0,1) node[below left] {$U(y,-\log0.1)$};
	\draw[yshift=-2.2cm][->][color=red] (-0.4,0.38)..controls(-0.2,0.6) and (0.2,0.4)..(0.3,0.4) node[right]{$\ovl U(y)$};
	\draw[yshift=-2.2cm][domain =-0.8:0.8][smooth,color=red] plot[samples=50] (\x,{(-\x/2+(1/27+(\x)^2/4)^(1/2))^(1/3)-(\x/2+(1/27+(\x)^2/4)^(1/2))^(1/3)});
	\draw[yshift=-2.2cm][dash dot](-0.8,-0.7) rectangle (0.8,0.7);
	\draw[yshift=-2.2cm] (-0.855645957,0.607656596
	)--(-0.732531513,0.551063142
	)--(-0.621059131,0.493176125
	)--(-0.52005412,0.434126067
	)--(-0.428300661,0.374048059
	)--(-0.344546436,0.313081247
	)--(-0.267507458,0.251368297
	)--(-0.195873075,0.189054836
	)--(-0.128311122,0.126288882
	)--(-0.063473192,0.06322026
	)--(0,0
	)--(0.063473192,-0.06322026
	)--(0.128311122,-0.126288882
	)--(0.195873075,-0.189054836
	)--(0.267507458,-0.251368297
	)--(0.344546436,-0.313081247
	)--(0.428300661,-0.374048059
	)--(0.52005412,-0.434126067
	)--(0.621059131,-0.493176125
	)--(0.732531513,-0.551063142
	)--(0.855645957,-0.607656596
	);
	
	\draw(0.1,-1.1) node[right] {$U(y,s)\overset{s\rightarrow\infty}{\rightrightarrows}$};
	\draw[color=red](0.99,-1.14) node[right]{$\ovl U(y)$};
\end{tikzpicture} 

We now prove the convergence. Firstly, from (\ref{upper bound for Dxu, appendix}) and the self-similar transformation we have that
\begin{equation}
	\|\partial_yU(\cdot,s)\|_{L^\infty}\le1.
	\label{estimate of DU, appendix}
\end{equation}

From chain rule we can deduce that $U(s,y)$ satisfies
\begin{equation}
	\left\{
	\begin{aligned}
		&\left(\partial_s-\frac{1}{2}\right)U+\left(\frac{3}{2}y+U\right)\partial_yU=0\\
		&U(y,0)=U_0(y)=u_0(y)=-ye^{-y^2}.
		\label{evolution of U, appendix}
	\end{aligned}
	\right.
\end{equation}
Ignoring the time-dependent term in the above equation, we have
\begin{equation}
	-\frac{1}{2}W+\left(\frac{3}{2}y+W\right)\partial_yW=0.
	\label{equation for profile, appendix}
\end{equation}
which is called the self-similar Burgers equation. Using ODE techniques we can find a first integral of this equation: $y=-W_C(y)-C W_C(y)^{3}$. If we impose the constraint $W_C'''(0)=6=u_0'''(0)$, we have $C=1$. Thus we select $\ovl U$ to be the function that implicitly determined by the identity $y=-\ovl U(y)-\ovl U(y)^{3}$, the solution of this cubic equation is
\begin{equation}
	\ovl U(y)=\left(-\frac{y}{2}+\left(\frac{1}{27}+\frac{y^2}{4}\right)^{\frac{1}{2}}\right)^{\frac{1}{3}}-\left(\frac{y}{2}+\left(\frac{1}{27}+\frac{y^2}{4}\right)^{\frac{1}{2}}\right)^{\frac{1}{3}}.
\end{equation}
One can verify that $\ovl U(0)=U_0(0)$, $\ovl U'(0)=U_0'(0)$, $\ovl U''(0)=U_0''(0)$, and $\ovl U'''(0)=U_0'''(0)$. Thus we can check by the above explicit expression of $\ovl U$ that
\begin{equation}
	\left|\ovl U(y)-U_0(y)\right|=\left|\ovl U(y)+ye^{-y^2}\right|\le My^4.
	\label{estimate of initial data of tilde U, appendix}
\end{equation} 
holds for some $M>0$, and $-1\le\ovl U_y\le0$.

Now we are ready to prove (\ref{proposition of convergence, appendix}). Define $\widetilde{U}(y)=U(y)-\ovl U(y)$, then subtract (\ref{equation for profile, appendix}) from (\ref{evolution of U, appendix}) and we have
\begin{equation}
	\left\{
	\begin{aligned}
		&\partial_s\widetilde{U}-\widetilde{D}\widetilde{U}+\left(\frac{3}{2}y+U\right)\widetilde{U}_y=0\\
		&\widetilde{D}=\frac{1}{2}-\ovl U_y\\
		&\widetilde{U}(y,0)=\widetilde{U}_0(y):=u_0(y)-\ovl U(y).
	\end{aligned}
	\right.
	\label{evolution of tilde U, appendix}
\end{equation}

Notice that (\ref{evolution of U, appendix}) is a transport equation. We define its Lagrange trajectories by
\begin{equation}
	\left\{
	\begin{aligned}
		&\frac{d}{ds}\Phi_{y_0}(s)=\left(\frac{3}{2}y+U\right)\circ\Phi_{y_0}(s)\\
		&\Phi_{y_0}(0)=y_0.
	\end{aligned}
	\right.
\end{equation}
From (\ref{estimate of DU, appendix}) we have $|U(y)|\le|y|$, and $\left(\frac{3}{2}y+U\right)\cdot y\ge\frac{1}{2}y^2$, thus
\begin{equation}
	\frac{1}{2}\frac{d}{ds}\left|\Phi_{y_0}(s)\right|^2=\Phi_{y_0}(s)\frac{d}{ds}\Phi_{y_0}(s)=\left[\left(\frac{3}{2}y+U\right)\cdot y\right]\circ\Phi_{y_0}(s)\ge\frac{1}{2}\left|\Phi_{y_0}(s)\right|^2.
\end{equation}
If $\Phi_{y_0}(s)=y$, from the above inequality we have $e^{-s}|y|^2\ge|y_0|^2$. Rewrite (\ref{evolution of tilde U, appendix}) in terms of Lagrange trajectories we have
\begin{equation}
	\frac{d}{ds}\widetilde{U}\circ\Phi_{y_0}(s)=\left(\frac{1}{2}-\ovl U_y\right)\circ\Phi_{y_0}(s)\cdot\widetilde{U}\circ\Phi_{y_0}(s).
\end{equation}
From $-1\le\ovl U_y\le0$ we have
\begin{equation}
	\frac{d}{ds}\left|\widetilde{U}\circ\Phi_{y_0}(s)\right|\le\frac{3}{2}\left|\widetilde{U}\circ\Phi_{y_0}(s)\right|.
	\label{estimate of tilde U along trajectories}
\end{equation}
thus we can conclude that
\begin{equation}
	\begin{aligned}
		\left|\widetilde{U}(y,s)\right|&=\left|\widetilde{U}\circ\Phi_{y_0}(s)\right|\\
		\overset{(\ref{estimate of tilde U along trajectories})}&{\le} e^{\frac{3}{2}s}\left|\widetilde{U}\circ\Phi_{y_0}(0)\right|\\
		&=e^{\frac{3}{2}s}\left|\widetilde{U}(y_0,0)\right|\\
		&\le e^{\frac{3}{2}s}My_0^4\\
		\overset{\ref{estimate of initial data of tilde U, appendix}}&{\le}Me^{-\frac{s}{2}}y^4.
	\end{aligned}
\end{equation}
From this inequality we know that $\widetilde{U}$ converge to 0 uniformly on any compact set, or equivalently it holds that $U\rightrightarrows\ovl U$ on any compact set. 

Though we prove the convergence in the case of a specific initial datum, the proof can be modified to apply to almost all initial data. In fact, take any $u_0\in C_c^\infty$, there exists a point $x_0\in\mathbb{R}$ and an integer $k\ge1$, such that $u_0'(x_0)=\inf_{x\in\mathbb{R}} u_0'(x)$, and $u_0^{(j)}(x_0)=0$ holds for $2\le j\le2k$, while $u_0^{(2k+1)}(x_0)>0$. In this case, a rescaled version of the solution $u$ will eventually converge to a solution $\ovl U$ of the self-similar Burgers equation $-\frac{1}{2k}\ovl U(y)+\left[(1+\frac{1}{2k})y+\ovl U(y)\right]\ovl U_y(y)=0$. In this sense, the self-similar Burgers equation plays a universal role in the blow-up of the Burgers equation. 

\section{Interpolation}
Here we state the interpolation inequalities that are used in this paper.
\begin{lemma}[Gagliardo-Nirenberg inequalities]
	Suppose $1\le q,r\le\infty$, $1\le p<\infty$, $j<m$ are non-negative integers, $\theta\in[0,1]$, and they satisfy the relations
	\begin{equation}
		\frac{1}{p}=\frac{j}{n}+\theta\left(\frac{1}{r}-\frac{m}{n}\right)+\frac{1-\theta}{q},\ \ \ \ \ \frac{j}{m}\le\theta\le 1.
	\end{equation}
	Then $\|D^ju\|_{L^p(\mathbb{R}^n)}\le C\|D^mu\|_{L^r(\mathbb{R}^n)}^\theta\|u\|_{L^q(\mathbb{R}^n)}^{1-\theta}$ holds for any $u\in L^q(\mathbb{R}^n)$ such that $D^mu\in L^r(\mathbb{R}^n)$, with two exceptional cases: 
	
	(1)if $j=0$, $q=\infty$ and $rm<n$, then an additional assumption is needed: either $u$ tends to 0 at infinity or $u\in L^s(\mathbb{R}^n)$ for some finite value of $s$; 
	
	(2)if $r>1$ and $m-j-\frac{n}{r}$ is a non-negative integer, then the additional assumption $\frac{j}{m}\le\theta<1$ is needed. 
\end{lemma}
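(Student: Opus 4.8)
Since the statement is a form of the classical Gagliardo--Nirenberg interpolation inequality, the plan is to reproduce Nirenberg's reduction: prove one borderline first-order estimate by bare hands, bootstrap it to the full first-order family in the exponents, then climb to differentiation order $m$ by iterating a second-order interpolation identity, and finally remove the smoothness assumptions by approximation. I observe first that the homogeneity relation
\begin{equation*}
	\frac1p=\frac{j}{n}+\theta\Bigl(\frac1r-\frac{m}{n}\Bigr)+\frac{1-\theta}{q}
\end{equation*}
is exactly the scaling exponent forced by testing the inequality against the dilate $u(\lambda\,\cdot)$ and sending $\lambda\to0$ and $\lambda\to\infty$; hence it is necessary, and conversely it suffices to prove the bound for one convenient normalization and recover the general case by a dilation.

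\textbf{Step 1 (borderline first-order inequality).} For $u\in C_c^\infty(\mathbb{R}^n)$ I would first establish
\begin{equation*}
	\|u\|_{L^{n/(n-1)}(\mathbb{R}^n)}\le\prod_{i=1}^n\|\partial_{x_i}u\|_{L^1(\mathbb{R}^n)}^{1/n}\le\|Du\|_{L^1(\mathbb{R}^n)},
\end{equation*}
which follows from the bound $|u(x)|\le\int_{\mathbb{R}}|\partial_{x_i}u|\,dx_i$ valid for each $i=1,\dots,n$, by multiplying the $n$ inequalities, extracting the $(n-1)$-st root, and integrating one coordinate at a time with the generalized (Loomis--Whitney) H\"older inequality. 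Applying this to $|u|^{\gamma}$ for a suitably chosen power $\gamma>1$ and using H\"older's inequality on the resulting right-hand side yields the whole first-order family $\|u\|_{L^p}\le C\|Du\|_{L^r}^{\theta}\|u\|_{L^q}^{1-\theta}$ for every admissible $(p,q,r)$ and $\theta\in(0,1]$ obeying the scaling identity with $m=1$, $j=0$. This is precisely the step that degenerates when $q=\infty$ and $rm<n$, which is why exceptional case~(1) carries the extra hypothesis that $u$ decays at infinity (or lies in some finite $L^s$).

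\textbf{Step 2 (order one to order $m$, and the endpoint).} The higher-order mechanism is the second-order interpolation
\begin{equation*}
	\|Du\|_{L^p}\le C\,\|D^2u\|_{L^r}^{1/2}\,\|u\|_{L^q}^{1/2}
\end{equation*}
together with its scaled forms, obtained by integrating $\int|Du|^p$ by parts once to transfer a derivative from $Du$ onto $u$ and then applying H\"older. Iterating this along the triple $D^{j-1}u,\,D^ju,\,D^{j+1}u$ and chaining, combined with Step~1 at intermediate exponents, produces the case $\theta=j/m$, namely $\|D^ju\|_{L^p}\le C\|D^mu\|_{L^r}^{j/m}\|u\|_{L^q}^{1-j/m}$; the remaining range $j/m<\theta\le1$ is then obtained by applying a first-order Gagliardo--Nirenberg inequality to $D^ju$ itself and interpolating it against the $\theta=j/m$ estimate. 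Exceptional case~(2)---$r>1$ with $m-j-n/r$ a nonnegative integer---is exactly the configuration in which the relevant Sobolev embedding maps into a H\"older- or $BMO$-type space rather than an $L^p$-space, so the endpoint $\theta=1$ fails and must be excluded. The general $u\in L^q$ with $D^mu\in L^r$ is then reached by mollifying and truncating, $u_\varepsilon=(\chi_\varepsilon u)*\rho_\varepsilon\in C_c^\infty$, applying the inequality to $u_\varepsilon$, and passing to the limit using $D^mu_\varepsilon\to D^mu$ in $L^r$, $u_\varepsilon\to u$ in $L^q$, and Fatou's lemma on the left; the truncation is the second place the $q=\infty$ hypothesis of case~(1) is needed.

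The main obstacle is not any single estimate---each of the above is a short computation---but the exponent bookkeeping: one must verify that every admissible sextuple $(p,q,r,\theta,j,m)$ satisfying the scaling relation is genuinely reachable through the chain ``borderline inequality $\to$ power substitution $\to$ second-order interpolation $\to$ iteration,'' and pin down precisely when that chain is forced through a forbidden endpoint, which is the content of the two exceptional cases. Since the result is standard, an entirely acceptable alternative is to omit the reproduction and cite Nirenberg's paper or a standard PDE text.
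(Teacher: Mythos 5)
The paper does not prove this lemma at all; it is stated in Appendix~B as a known classical interpolation inequality to be used as a black box (the standard Gagliardo--Nirenberg inequalities, originally in Nirenberg's 1959 paper). Your proposal correctly reproduces the outline of Nirenberg's classical proof: the borderline Sobolev inequality $\|u\|_{L^{n/(n-1)}}\le\|Du\|_{L^1}$ via the one-dimensional fundamental-theorem-of-calculus bound followed by generalized H\"older (Loomis--Whitney), the power-substitution $u\mapsto |u|^\gamma$ to reach the full first-order range, the second-order interpolation $\|Du\|_{L^p}\lesssim\|D^2u\|_{L^r}^{1/2}\|u\|_{L^q}^{1/2}$ via integration by parts, iteration to reach $\theta=j/m$, and interpolation with a first-order estimate on $D^j u$ to cover $j/m<\theta\le1$, with the two exceptional cases correctly identified as the points where the chain is forced through a forbidden endpoint (failure of decay when $q=\infty$, and the Sobolev endpoint landing in $BMO$/H\"older rather than $L^p$). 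Your closing remark that it is acceptable to cite a standard reference rather than reproduce the proof is exactly what the paper does; as a reproduction of the classical argument, the proposal is sound, though a full write-up would need to carefully track the exponent bookkeeping that you flag as the main bookkeeping obstacle.
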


A mainly used special case in this paper is that
\begin{equation}
	\|D^j\varphi\|_{L^{\frac{2m}{j}}(\mathbb{R}^n)}\lesssim\|\varphi\|_{\dot H^m(\mathbb{R}^n)}^{\frac{j}{m}}\|\varphi\|_{L^\infty(\mathbb{R}^n)}^{1-\frac{j}{m}}.
	\label{special case 1 of lemma A.1}
\end{equation}
holds for any $u\in\dot H^m(\mathbb{R}^n)\cap L^\infty(\mathbb{R}^n)$. 

\begin{lemma}\label{interpolation lemma of L2 norm of product}Suppose $k\ge 4$, $0\le l\le k-3$ are integers, $q\in(4,2(k-1)]$, then
	\begin{equation}
		\left\|D^{2+l}\phi D^{k-1-l}\varphi\right\|_{L^2(\mathbb{R}^2)}\lesssim_{k,q}\|D^k\phi\|_{L^2(\mathbb{R}^2)}^a\|D^2\phi\|_{L^q(\mathbb{R}^2)}^{1-a}\|D^k\varphi\|_{L^2(\mathbb{R}^2)}^b\|D^2\varphi\|_{L^q(\mathbb{R}^2)}^{1-b},
		\label{interpolation of L2 norm of product}
	\end{equation}
	holds for any $\phi,\varphi\in\dot{H}^k(\mathbb{R}^2)\cap\dot{W}^{2,q}(\mathbb{R}^2)$, where $a$, $b$ are given by
	\begin{equation}
		a=\frac{\frac{1}{q}-\frac{1}{p}+\frac{l}{1}}{\frac{k}{2}+\frac{1}{q}-\frac{3}{2}},\ b=\frac{\frac{1}{q}-\frac{1}{2}+\frac{1}{p}+\frac{k-3-l}{2}}{\frac{k}{2}+\frac{1}{q}-\frac{3}{2}},
	\end{equation}
	and $p=\frac{2q(k-3)}{(q-3)l+2(k-3)}$. Moreover, we have that $a+b=1-\frac{\frac{1}{2}-\frac{1}{q}}{\frac{k-3}{2}+\frac{1}{q}}\in(0,1)$ is independent of $l$. 
\end{lemma}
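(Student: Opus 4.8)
The plan is to prove the estimate factor by factor, combining Hölder's inequality with the Gagliardo--Nirenberg inequality stated above. First I would split the $L^2$ norm of the product. Setting $p'=\frac{2p}{p-2}$ so that $\frac1p+\frac1{p'}=\frac12$, Hölder's inequality gives
\[
\left\|D^{2+l}\phi\,D^{k-1-l}\varphi\right\|_{L^2(\mathbb{R}^2)}\le\left\|D^{2+l}\phi\right\|_{L^{p}(\mathbb{R}^2)}\left\|D^{k-1-l}\varphi\right\|_{L^{p'}(\mathbb{R}^2)},
\]
with $p$ the exponent specified in the statement. Before anything else I would record that this particular $p$ is legitimate: since $0\le l\le k-3$ and $q>4$, one has $(q-3)l+2(k-3)\le(q-1)(k-3)$, hence $p\ge\frac{2q}{q-1}>2$, while $l\ge0$ gives $p\le q$; consequently $p\in(2,q]$ and $p'=\frac{2p}{p-2}\in[2q,\infty)^{c}$ is finite, so both factors are being measured in admissible Lebesgue spaces.

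Next I would interpolate each factor. For $\phi$, write $D^{2+l}\phi=D^{l}(D^2\phi)$ and apply Gagliardo--Nirenberg to the function $D^2\phi$ with $j=l$, $m=k-2$, top-order exponent $r=2$ and zeroth-order exponent $q$ in dimension $n=2$:
\[
\left\|D^{2+l}\phi\right\|_{L^p}\lesssim_{k}\left\|D^{k}\phi\right\|_{L^2}^{a}\left\|D^2\phi\right\|_{L^q}^{1-a},\qquad
\frac1p=\frac{l}{2}+a\Bigl(\tfrac12-\tfrac{k-2}{2}\Bigr)+\frac{1-a}{q}.
\]
Solving the last relation for $a$ reproduces the formula in the statement. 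Similarly, writing $D^{k-1-l}\varphi=D^{k-3-l}(D^2\varphi)$ and applying Gagliardo--Nirenberg with $j=k-3-l$, $m=k-2$, $r=2$ and zeroth-order exponent $q$ yields $\left\|D^{k-1-l}\varphi\right\|_{L^{p'}}\lesssim_{k}\left\|D^{k}\varphi\right\|_{L^2}^{b}\left\|D^2\varphi\right\|_{L^q}^{1-b}$ with the stated $b$. Multiplying these two bounds and substituting into the Hölder inequality gives the assertion. The stated value $a+b=1-\frac{\frac12-\frac1q}{\frac{k-3}{2}+\frac1q}$ and its independence of $l$ are then automatic: adding the two exponent relations and using $\frac1p+\frac1{p'}=\frac12$ forces exactly this value (equivalently, it is the only value consistent with the scaling $x\mapsto\mu x$, under which the left side scales like $\mu^{k}$).

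The routine but genuinely delicate part — and the step I expect to be the main obstacle — is checking that the Gagliardo--Nirenberg hypotheses are satisfied \emph{uniformly} over all admissible $k\ge4$, $0\le l\le k-3$, $q\in(4,2(k-1)]$: the lower bounds $\frac{l}{k-2}\le a$ and $\frac{k-3-l}{k-2}\le b$, the upper bounds $a,b\le1$, and the strictness $a,b<1$ demanded by the exceptional case of the theorem (here $r=2>1$ and $m-j-\frac nr$ equals $k-3-l$, respectively $l$, a non-negative integer), together with the consistency $\frac1p+\frac1{p'}=\frac12$ between the two separately determined GN exponents. Each of these reduces to an elementary polynomial inequality in $k,l,q$, and it is precisely to keep $a,b$ inside their permitted GN ranges and $p,p'$ inside $(1,\infty)$ that the restrictions $q>4$ and $q\le2(k-1)$ are imposed. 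Finally, since $\phi,\varphi$ are only assumed to lie in $\dot H^k(\mathbb{R}^2)\cap\dot W^{2,q}(\mathbb{R}^2)$, I would first establish the inequality for Schwartz functions, where Gagliardo--Nirenberg applies without qualification, and then pass to the general case by density, which is harmless because every norm appearing is lower semicontinuous under the relevant approximation.
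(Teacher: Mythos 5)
Your strategy (Hölder with exponents $p$, $p'=\frac{2p}{p-2}$, then Gagliardo--Nirenberg applied to $D^2\phi$ and $D^2\varphi$ with $m=k-2$ and $r=2$) is the natural one, the inference that $\tfrac{l}{1}$ in the statement is a typo for $\tfrac{l}{2}$ is right, and your derivation of $a+b=1-\frac{\frac12-\frac1q}{\frac{k-3}{2}+\frac1q}$ from the exponent bookkeeping is correct. But the step you defer as ``routine but genuinely delicate'' --- verifying $\frac{l}{k-2}\le a$ --- is not merely delicate: it \emph{fails} for the $p$ appearing in the lemma whenever $q>k$ and $l>0$. Substituting $p=\frac{2q(k-3)}{(q-3)l+2(k-3)}$ into $a=\frac{\frac1q-\frac1p+\frac{l}{2}}{\frac{k-3}{2}+\frac1q}$ gives $a=\frac{l\,(q(k-4)+3)}{(k-3)(q(k-3)+2)}$, and a direct computation shows $a\ge\frac{l}{k-2}\iff q\le k$, while the lemma allows $q$ up to $2(k-1)>k$. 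Concretely, with $k=5$, $l=1$, $q=8$ one gets $p=\tfrac{32}{9}$ and $a=\tfrac{11}{36}<\tfrac13=\tfrac{l}{k-2}$, so the GN application to $\phi$ is illegal. This is not a cosmetic defect: testing $\phi(x)=N^{-3}\cos(N\cdot x)\chi(x)$ and $\varphi=\chi$ fixed, the left side of the claimed inequality is $\sim1$ while the right side is $\sim N^{2a}\,N^{-(1-a)}=N^{3a-1}=N^{-1/12}\to0$, so the inequality with those precise exponents is simply false.

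The resolution is that the exponent $p$ in the statement is mis-stated. The correct Hölder split is $p=\frac{2q(k-2)}{(q-2)\,l+2(k-2)}$ (note $k-3\mapsto k-2$ and $q-3\mapsto q-2$); with this $p$ the GN relation yields exactly the boundary value $a=\frac{l}{k-2}$, which is admissible (the exceptional case only forbids $\theta=1$, not $\theta=\frac{j}{m}$), and the remaining constraint $b=(a+b)-a\ge\frac{k-3-l}{k-2}$ reduces, after simplification, to precisely $q\le 2(k-1)$ --- so the hypothesis on $q$ is exactly what makes the second GN legal. The quantity $a+b$ (which is the only thing the paper actually uses downstream from this lemma) is unaffected by this correction. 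So your proposal becomes a correct proof once you (i) replace the statement's $p$ by the corrected exponent, and (ii) actually carry out, rather than assert, the polynomial inequalities $a\ge\frac{l}{k-2}$ and $b\ge\frac{k-3-l}{k-2}$; as written, your plan would run into a false elementary inequality at exactly the step you labelled the main obstacle.
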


\bibliographystyle{plain}
\bibliography{reference}

\end{document}